\setlist[enumerate]{label = (\alph*), ref=(\text{\alph*)}}
\setlist[itemize]{nolistsep}
\renewcommand{\phi}{\varphi}
\renewcommand{\ge}{\geqslant}
\renewcommand{\le}{\leqslant}
\newcommand{\CC}{\mathbb{C}}
\newcommand{\UU}{\mathbb{U}}
\newcommand{\KK}{\mathbb{K}}
\renewcommand{\AA}{\mathbb{A}}
\newcommand{\QQ}{\mathbb{Q}}
\newcommand{\ZZ}{\mathbb{Z}}
\newcommand{\GG}{\mathbb{G}}
\newcommand{\PP}{\mathbb{P}}
\newcommand{\FF}{\mathbb{F}}
\newcommand{\SAS}{\mathbb{S}}
\newcommand{\mm}{\mathfrak{m}}
\renewcommand{\gg}{\mathfrak{g}}
\def\Of{{\mathcal{O}}}
\def\Uf{{\mathcal{U}}}
\def\Ff{{\mathcal{F}}}
\def\RRR{{\mathfrak R}}
\def\SSS{{\mathfrak S}}
\def\UUU{{\mathfrak U}}
\def\fg{{\mathfrak g}}
\def\fh{{\mathfrak h}}
\def\fb{{\mathfrak b}}
\def\fn{{\mathfrak n}}
\def\ft{{\mathfrak t}}
\def\fq{{\mathfrak q}}
\def\fl{{\mathfrak l}}
\def\fp{{\mathfrak p}}
\newcommand{\NN}{\mathbb{Z}_{>0}}
\newcommand{\Zgezero}{\mathbb{Z}_{\geqslant 0}}
\newcommand{\pa}{\partial}
\DeclareMathOperator{\Ker}{Ker}
\DeclareMathOperator{\GL}{GL}
\DeclareMathOperator{\SL}{SL}
\DeclareMathOperator{\SO}{SO}
\DeclareMathOperator{\Sp}{Sp}
\DeclareMathOperator{\Cl}{Cl}
\DeclareMathOperator{\codim}{codim}
\DeclareMathOperator{\gl}{\mathfrak{gl}}
\DeclareMathOperator{\Lie}{Lie}
\DeclareMathOperator{\Span}{Span}
\DeclareMathOperator{\Mat}{Mat}
\DeclareMathOperator{\Aut}{Aut}
\DeclareMathOperator{\PGL}{PGL}
\DeclareMathOperator{\PSL}{PSL}
\def\PSp{\mathop{\rm PSp}}
\def\PSO{\mathop{\rm PSO}}
\DeclareMathOperator{\End}{End}
\DeclareMathOperator{\id}{id}
\DeclareMathOperator{\Soc}{Soc}
\DeclareMathOperator{\WDiv}{WDiv}
\DeclareMathOperator{\Pic}{Pic}
\DeclareMathOperator{\Spec}{Spec}
\DeclareMathOperator{\Hom}{Hom}
\DeclareMathOperator{\Gr}{Gr}
\DeclareMathOperator{\Lag}{Lag}
\DeclareMathOperator{\dg}{deg}
\DeclareMathOperator{\ddiv}{div}
\DeclareMathOperator{\Sym}{Sym}
\theoremstyle{plain}
\newtheorem{lemma}{Lemma}[section]%
\newtheorem{proposition}[lemma]{Proposition}
\newtheorem{theorem}[lemma]{Theorem}
\newtheorem{corollary}[lemma]{Corollary}
\theoremstyle{definition}
\newtheorem{definition}[lemma]{Definition}
\newtheorem{construction}[lemma]{Construction}
\newtheorem{example}[lemma]{Example}
\newtheorem{conjecture}[lemma]{Conjecture}
\newtheorem{problem}[lemma]{Problem}
\theoremstyle{remark}
\newtheorem{remark}[lemma]{Remark}
\begin{document}

\title[Equivariant completions of affine spaces]{Equivariant completions of affine spaces}
\author{Ivan Arzhantsev}
\address{HSE University, Faculty of Computer Science, Pokrovsky Boulevard 11, Moscow, 109028 Russia}
\email{arjantsev@hse.ru}

\author{Yulia Zaitseva}
\address{HSE University, Faculty of Computer Science, Pokrovsky Boulevard 11, Moscow, 109028 Russia}
\email{yuliazaitseva@gmail.com}

\thanks{Supported by the Russian Science Foundation grant 19-11-00172.}

\subjclass[2010]{Primary 14L30, 14R10; \ Secondary 13E10, 14M25, 20M32}

\keywords{Algebraic variety, algebraic group, additive action, local algebra, projective space, quadric, flag variety, grading, locally nilpotent derivation, toric variety, Cox ring, Demazure root}

\begin{abstract}
We survey recent results on open embeddings of the affine space $\CC^n$ into a complete algebraic variety $X$ such that the action of the vector group $\GG_a^n$ on $\CC^n$ by translations extends to an action of $\GG_a^n$ on $X$. We begin with Hassett-Tschinkel correspondence describing equivariant embeddings of $\CC^n$ into projective spaces and give its generalization for embeddings into projective hypersurfaces. Further sections deal with embeddings into flag varieties and their degenerations, complete toric varieties, and Fano varieties of certain types. 
\end{abstract}

\maketitle

\tableofcontents


\section*{Introduction}

The survey is devoted to the study of completions of the affine space $\CC^n$ by an algebraic variety $X$ such that the action of the vector group $\GG_a^n$ on $\CC^n$ by translations can be extended to a regular action $\GG_a^n\times X\to X$. To obtain such a completion means to construct an effective regular action of the commutative unipotent group $\GG_a^n$ on a complete algebraic variety $X$ with an open orbit. We call an effective regular action $\GG_a^n\times X\to X$ with an open orbit an \emph{additive action} on $X$. One more interpretation comes from the theory of group embeddings. Let $G$ be a linear algebraic group. A \emph{group embedding} is an embedding of $G$ as an open subset into an algebraic variety $X$ such that the actions of $G$ on $G$ by left and right translations can be extended to a regular action of the group $G\times G$ on $X$. In these terms, we are going to study group embeddings of a commutative unipotent group. 

The story began with the work of Hirzebruch. In \cite[Section~3.2]{Hir1954} the author considered complex analytic compactifications of the affine space $\CC^n$. Problem~26 asks to determine all complex analytic compactifications of $\CC^2$, and Problem~27 raises the same question for all $\CC^n$ under the restriction that the compactification has the second Betti number~1. These problems initiated the study of open embeddings of affine spaces both in analytic and algebraic categories. For more information on algebraic compactifications of affine spaces, see e.g. \cite{Fur1993, PZ2018, CPPZ2021} and references therein. 

Clearly, an algebraic variety $X$ that contains an open subset $U$ isomorphic to an affine space possesses some specific properties. In particular, $X$ is rational, every invertible regular function on $X$ is constant, and the divisor class group $\Cl(X)$ is a free finitely generated abelian group. More precisely, the group $\Cl(X)$ is freely generated by classes of irreducible components of the complement $X\setminus U$. At the same time, the class of all compactifications of affine spaces is too wide, and it is natural to study compactifications satisfying some extra conditions.

The first variant is to consider algebraic manifolds $X$ in a naive sense, that is $X$ can be covered by open subsets $U_1,\ldots,U_m$ such that each $U_i$ is isomorphic to an affine space. Manifolds of this type were considered by Gromov in \cite[Section~3.5.D]{Gr1989}. In~\cite[Section~6.4]{For2011} such manifolds are called \emph{manifolds of class~$\mathcal{A}_0$}. They appear in connection with the Oka principle and algebraic ellipticity. It is known that class~$\mathcal{A}_0$ includes smooth projective rational surfaces, smooth complete toric varieties, flag varieties and, more generally, smooth complete spherical varieties. Moreover, this class is closed under taking blowing-up of points. In~\cite[Theorem~A.1]{APS2014} it is proved that any smooth complete rational variety with a torus action of complexity $1$ belongs to class~$\mathcal{A}_0$. A wider class is the class of uniformly rational varieties. A variety $X$ is \emph{uniformly rational} if every point in $X$ admits a Zariski open neighbourhood isomorphic to a Zariski open subset of the affine space. Some recent results on uniformly rational varieties can be found in~\cite{LP2019}.

The second variant is to involve algebraic group actions. Namely, if an algebraic group $G$ acts on the affine space $\CC^n$, we may study open embeddings of $\CC^n$ into complete varieties $X$ such that the action of $G$ on $\CC^n$ extends to an action of $G$ on $X$. Taking $G=\GG_a^n$ with the action $\GG_a^n\times\CC^n\to\CC^n$ by parallel translations, we come to the theory of additive actions. This is the subject of the present survey.

One more motivation to investigate equivariant completions of affine spaces comes from Arithmetic Geometry. In their study of Manin's Conjecture on the distribution of rational points on algebraic varieties, Chambert-Loir and Tschinkel~\cite{CLT2002} gave asymptotic formulas for the number of rational points of bounded height on smooth projective equivariant compactifications of the vector group.
More generally, asymptotic formulas for the number of rational points of bounded height on quasi-projective equivariant embeddings of the vector group are obtained in~\cite{CLT2012}. The limited volume of the survey does not allow us to discuss these results. We recommend the reader articles~\cite{CLT2002,CLT2012,Pey2002,ShT2016,TT2012} and references therein. 

It is natural to compare the theory of additive actions with the theory of toric varieties. At the first glance, two theories should be similar since the formulations of the problems are almost the same: in the toric case we study open equivariant embeddings of the group $\GG_m^n$, and in the theory of additive actions we just replace the multiplicative group $\GG_m$ of the ground field by the additive group $\GG_a$. But it turns out that toric geometry and the theory of additive actions have almost nothing in common. Let us stay a bit more on this. 

The theory of toric varieties plays an important role in modern Algebra, Combinatorics, Geometry, and Topology. It is caused by a beautiful description of toric varieties in terms of rational polyhedral cones and fans of such cones~\cite{CLS2011,Fu1993}. There are several ways to generalize the theory of toric varieties. For example, one may consider arbitrary torus actions on algebraic varieties. Recently a semi-combinatorial description of such actions in terms of so-called polyhedral divisors living on varieties of smaller dimensions was introduced~\cite{AH2006,AHS2008}. Another variant is to restrict the (complex) algebraic torus action on a toric variety to the maximal compact subtorus $(S^1)^n$, to axiomatize this class of $(S^1)^n$-actions, and to consider such actions on wider classes of topological spaces.
This is an active research area called Toric Topology~\cite{BP2002}. Further, one may consider linear algebraic group actions with an open orbit replacing the torus $T$ with a non-abelian connected reductive group $G$. In other words, one may study open equivariant embeddings of homogeneous spaces $G/H$, where $H$ is an algebraic subgroup of $G$. The theory is well-developed in the case when $G/H$ is a spherical homogeneous space, that is a Borel subgroup $B$ in $G$ acts on $G/H$ with an open orbit. Here a description of equivariant embeddings in terms of convex geometry is also available in the framework of the Luna-Vust theory, while it is more complicated than in the toric case~\cite{LV1983,Ti2011}.

Returning to an ``additive analogue'' of toric geometry, i.e. to the case when we replace the acting torus $T$ with the commutative unipotent group $\GG_a^n$, we come across principal differences. Firstly, it is well known that every orbit of an action of a unipotent group on an affine variety is closed~\cite[Section~1.3]{PV1994}. In particular, if a unipotent group acts on an affine variety with an open orbit, then the action is transitive. This means that, in contrast to the toric case, an irreducible algebraic variety with a non-transitive action of a unipotent group $U$  that contains an open $U$-orbit can not be covered by $U$-invariant open affine charts. Secondly, any toric variety contains finitely many $T$-orbits, and if two toric varieties are isomorphic as abstract algebraic varieties, then they are isomorphic in the category of toric varieties \cite[Theorem~4.1]{Be2003}. In the additive case these two properties do not hold: one may consider two actions of $\GG_a^2$ on the projective plane $\PP^2$ given in homogeneous coordinates as
$$
(a_1,a_2)\cdot [z_0:z_1:z_2]=[z_0:z_1+a_1z_0:z_2+a_2z_0]
$$
and
$$
(a_1,a_2)\cdot [z_0:z_1:z_2]=\bigl[z_0:z_1+a_1z_0:z_2+a_1z_1+\Bigl(\frac{a_1^2}{2}+a_2\Bigr)z_0\bigr].
$$
In the first case, there is a line consisting of fixed points, while for the second action there are exactly three $\GG_a^2$-orbits.

At the same time, the absence of analogy with toric geometry is definitely not the end of the theory of additive actions. During the last decades, many general and classification results on varieties with an additive action were obtained and some original methods to deal with this class of actions were developed. The present survey aims to discuss these results and methods. 

Let us describe the content of the paper. In Section~\ref{secpn}, we study additive actions on projective spaces. It is a certain surprise that the space $\CC^n$ can be embedded equivariantly in $\PP^n$ in many different ways. Hassett and Tschinkel~\cite{HaTs1999} observed that such embeddings are in bijection with local commutative associative unital algebras of dimension~$n+1$. This result also follows from a more general correspondence between finite-dimensional commutative associative unital algebras and open equivariant embeddings of commutative linear algebraic groups into projective spaces established by Knop and Lange~\cite{KnLa1984}. We begin with well-known structural theory and classification results on finite-dimensional commutative associative algebras and develop Hassett-Tschinkel correspondence in complete generality. In particular, it includes a nice correspondence with certain subspaces in the polynomial algebra that are invariant under some differential operators with constant coefficients. 

In Section~\ref{secght}, we show how the technique proposed by Hassett and Tschinkel can be applied to the study of additive actions on projective varieties different from projective spaces. 
This started already in~\cite{HaTs1999}, where projective curves, smooth projective surfaces and a special class of smooth projective 3-folds carrying additive actions were described.
Using this technique, we give a proof of Sharoiko's theorem~\cite{Sh2009}. It claims that, in contrast to projective spaces, any non-degenerate projective quadric admits a unique additive action. We also explain how one can describe additive actions on degenerate projective quadrics~\cite{ArSh2011,AP2014} and establish a generalization of Hassett-Tschinkel correspondence to arbitrary projective hypersurfaces in terms of invariant multilinear forms~\cite{AP2014,Ba2013}. In Theorem~\ref{hypGor_prop} we find a correspondence between additive actions on non-degenerate projective hypersurfaces and Gorenstein local algebras. Finally, Theorem~\ref{tnew} generalizes Sharoiko's result and claims that any non-degenerate projective hypersurface admits at most one additive action. Theorem~\ref{hypGor_prop}, Theorem~\ref{tnew} and some other statements in Section~\ref{secght} are original results of this article. 

Section~\ref{secaafv} begins with some general background on varieties with additive actions. Then we show that if a flag variety $G/P$ of a simple linear algebraic group $G$ admits an additive action then the parabolic subgroup $P$ is maximal. We list all varieties $G/P$ admitting an additive action following~\cite{Ar2011}. Then we discuss a uniqueness result which claims that if a flag variety is not isomorphic to the projective space then it admits at most one additive action. This theorem is proved by Fu-Hwang~\cite{FH2014} and independently by Devyatov~\cite{De2015}. The last part presents a construction due to Feigin~\cite{Fe2012} that degenerates arbitrary flag variety to a variety with an additive action. 

In Section~\ref{aatv}, we study additive actions on toric varieties following~\cite{AR2017}. It is proved that if a complete toric variety admits an additive action, then it admits an additive action normalized by the acting torus. Moreover, we show that any two normalized additive actions are equivalent and give a combinatorial criterion of the existence of a normalized additive action on a toric variety. These results are based on the theory of Cox rings and Demazure roots of toric varieties. Also, we present two results of Dzhunusov. The first one is a classification of additive actions on complete toric surfaces~\cite{Dz2019}, and the second one is a criterion of the uniqueness of an additive action on a complete toric variety~\cite{Dz2020}. 

In the last section, we discuss recent classification results on additive actions on generalized del Pezzo surfaces, Fano 3-folds, and varieties with high index due to Fu, Huang, Hwang, Montero, and Nagaoka \cite{FH2014,FH2017,FM2018,HM2018,Nag2021}. The special subsection is devoted to Euler-symmetric projective varieties introduced by Fu and Hwang. Every Euler-symmetric variety admits an additive action. Moreover, for wide classes of varieties including toric varieties and flag varieties the condition to be Euler-symmetric is equivalent to the existence of an additive action. 

We end the text with a list of open problems and possible directions for further research. 

\smallskip

{\it Acknowledgment}. \ The authors are grateful to Anthony Iarrobino and Joachim Jelisiejew for useful comments and references to works on local Artinian algebras. Discussions with Anton Shafarevich helped us a lot to understand the results of Fu and Hwang on Euler-symmetric varieties. Special thanks are due to the referees for many valuable suggestions and corrections that helped us to improve the text. 


\section{Equivariant embeddings into projective spaces} 
\label{secpn}
In this section, we study additive actions on projective spaces. In 1999, Hassett and Tschinkel~\cite{HaTs1999} established a remarkable correspondence between such actions and commutative associative local Artinian unital algebras. This correspondence led to classification results and allowed to employ new methods that were later generalized to some other classes of projective varieties. The main goal of this section is to introduce all objects and concepts that are needed to establish Hassett-Tschinkel correspondence, formulate the correspondence in complete generality and with detailed proofs, and discuss related results and corollaries. We work over an algebraically closed field $\KK$ of characteristic zero. 

In subsection~\ref{localg_subsect}, we begin with basic facts on finite-dimensional commutative associative algebras. Any finite-dimensional commutative associative algebra is a direct sum of local ones. So finite-dimensional local algebras are important building blocks in many problems of algebra and geometry, sometimes compatible with finite simple groups or finite fields. Although the classification of local algebras of small dimensions is known for many years, it is not easy to find it in explicit form in the literature. In Table~\ref{table_localg6}, we list all local algebras up to dimension~6~\footnote{Starting from dimension~7, the number of isomorphy classes of such algebras becomes infinite.}. We also introduce the Hilbert-Samuel sequence of a local algebra and define Gorenstein local algebras. 

Subsection~\ref{SuTy_subsect} is devoted to results of Suprunenko and Tyshkevich~\cite{SuTy1968}. We explain how information on maximal commutative nilpotent subalgebras of a matrix algebra can be used to study abstract commutative algebras and groups. In particular, one can deduce the classification of local algebras in Table~\ref{table_localg6} from the classification results in~\cite{SuTy1968}. The book contains many important facts and observations that are useful for our purposes, but it is not easy to extract them from the text. We hope that the subsection with unified formulations and, where it is possible, short proofs, 
may help the reader to understand better the results of Suprunenko and Tyshkevich. 

In subsection~\ref{subsecKLT} we prove a result of Knop and Lange~\cite{KnLa1984}. This result establishes a bijective correspondence between effective actions of commutative linear algebraic groups on the projective space $\PP^n$ with an open orbit and commutative associative unital algebras $A$ of dimension $n+1$. Also, we characterize the actions with finitely many orbits. 

Subsection~\ref{IVsubsection} contains preparatory results on a duality between subspaces of the polynomial algebra $\KK[x_1,\ldots,x_n]$ and of the algebra 
$\KK[\frac{\pa}{\pa x_1}, \ldots, \frac{\pa}{\pa x_n}]$ of differential operators with constant coefficients. In general, the duality is not bijective, but it defines a bijection being restricted to finite-dimensional subspaces in $\KK[x_1,\ldots,x_n]$ and subspaces in $\KK[\frac{\pa}{\pa x_1}, \ldots, \frac{\pa}{\pa x_n}]$ of finite codimension. Moreover, let us define a generating subspace in $\KK[x_1,\ldots,x_n]$ as a translation invariant subspace that generates the algebra $\KK[x_1,\ldots,x_n]$. It turns out that the duality provides a bijection between generating subspaces of dimension $m$ and non-degenerate ideals of codimension $m$ in $\KK[\frac{\pa}{\pa x_1}, \ldots, \frac{\pa}{\pa x_n}]$ supported at the origin.

Following Hassett and Tschinkel~\cite{HaTs1999}, in subsection~\ref{subsecHTC} we establish a correspondence between 
\begin{enumerate}
\item faithful cyclic representations $\rho\colon \GG_a^n \to \GL_m(\KK)$;
\item pairs $(A, U)$, where $A$ is a local commutative associative unital algebra of dimension $m$ with maximal ideal $\mm$, and $U \subseteq \mm$ is a subspace of dimension $n$ generating the algebra $A$;
\item non-degenerate ideals $I \subseteq \KK[S_1, \ldots, S_n]$ of codimension $m$ supported at the origin;
\item generating subspaces $V \subseteq \KK[x_1, \ldots, x_n]$ of dimension~$m$.
\end{enumerate} 
We give complete proofs including arguments for `up to isomorphism' statements that are usually ignored in the literature. An effective algorithm that finds the generating subspace corresponding to a pair $(A,U)$ is given. We illustrate the theory by explicit computations in low-dimensional cases. Also, it is shown that the $\GG_a^n$-modules $A$ and $V$ are dual to each other. 

In subsection~\ref{subsecAA} we show that restricting either Knop-Lange theorem to the case of a unipotent group or Hassett-Tschinkel correspondence to the case $m=n+1$, we obtain a bijection between additive actions on $\PP^n$ and local commutative associative unital algebras $A$ of dimension $n+1$. In this case, we come to a remarkable class of generating subspaces which we call basic subspaces. 
Such a subspace represents an automorphism of the open orbit $\GG_a^n$ in $\PP^n$ that conjugates an additive action to the standard action by translations in the automorphism group of the affine space. 
It is shown that there is a unique additive action on $\PP^n$ with finitely many orbits, and additive actions of modality one are described. Finally, we observe that an additive action has a unique fixed point if and only if the corresponding local algebra is Gorenstein. 


\subsection{Finite-dimensional algebras}
\label{localg_subsect}
In this subsection we recall basic structural and classification results on Artinian commutative algebras or, equivalently, finite-dimensional commutative associative unital algebras over the ground field $\KK$; see \cite[Chapter~8]{AtMa1969} for more information. Hereafter \emph{algebra} means finite-dimensional commutative associative unital algebra. The base field $\KK$ is embedded into an algebra as the linear span of the unit.

\begin{definition}
An algebra $A$ is called \emph{local} if it contains a unique maximal ideal $\mm$. 
\end{definition}

\begin{lemma}
\label{localg_lem}
An algebra $A$ is local if and only if $A$ is the direct sum of its subspaces $\KK\oplus\mm$, where $\mm$ is an ideal consisting of nilpotent elements. 
\end{lemma}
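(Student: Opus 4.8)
The plan is to prove both implications of this characterization of local algebras. For the forward direction, suppose $A$ is local with unique maximal ideal $\mm$. First I would recall that in a finite-dimensional (Artinian) commutative algebra, the Jacobson radical equals the nilradical, and since the radical is the intersection of all maximal ideals, having a unique maximal ideal $\mm$ forces $\mm$ to coincide with the nilradical. Hence every element of $\mm$ is nilpotent. It remains to show $A = \KK \oplus \mm$ as vector spaces. The key observation is that any element $a \in A$ not lying in $\mm$ must be invertible: indeed, if $a \notin \mm$ then $a$ is not contained in the unique maximal ideal, so the ideal $(a)$ is the whole of $A$, giving invertibility. Then for the scalar $\lambda$ equal to the image of $a$ in the residue field $A/\mm \cong \KK$ (using that $\KK$ is algebraically closed, or simply that $A/\mm$ is a field that is a finite extension of $\KK$, hence equals $\KK$), the element $a - \lambda$ lies in $\mm$, which gives the decomposition $a = \lambda\cdot 1 + (a-\lambda)$ with $\lambda \cdot 1 \in \KK$ and $a - \lambda \in \mm$. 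The directness of the sum follows because $\KK \cap \mm = 0$, since the unit $1$ is not nilpotent.

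For the converse, suppose $A = \KK \oplus \mm$ where $\mm$ is an ideal of nilpotent elements. I would show $\mm$ is the unique maximal ideal. Since $A/\mm \cong \KK$ is a field, $\mm$ is maximal. To see it is the \emph{unique} maximal ideal, I would argue that any proper ideal $J \subsetneq A$ is contained in $\mm$: if some $b \in J$ had a nonzero $\KK$-component, write $b = \lambda \cdot 1 + n$ with $\lambda \neq 0$ and $n \in \mm$; then since $n$ is nilpotent, $\lambda \cdot 1 + n$ is invertible (its inverse is computed via the finite geometric-type series $\lambda^{-1}(1 + \lambda^{-1} n)^{-1}$, which terminates because $n$ is nilpotent), contradicting $J$ being proper. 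Hence every proper ideal sits inside $\mm$, so $\mm$ is the unique maximal ideal and $A$ is local.

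The steps here are all fairly routine, and I do not expect a serious obstacle. The only point requiring care is the identification $A/\mm \cong \KK$, which uses that $\KK$ is algebraically closed (stated in the section preamble) so that a finite field extension of $\KK$ is trivial; this is what lets us conclude the residue field is exactly $\KK$ rather than a larger field, and hence that the $\KK$-summand has dimension one. The invertibility of $\lambda \cdot 1 + n$ for nilpotent $n$ and nonzero $\lambda$ is the recurring engine of the argument in both directions, and it is worth stating explicitly that nilpotency of $n$ makes the inverse a polynomial in $n$.
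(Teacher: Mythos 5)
Your proof is correct, and the converse direction (from $A=\KK\oplus\mm$ to locality) is essentially the paper's argument: both hinge on the invertibility of $\lambda\cdot 1+n$ for $\lambda\neq 0$ and $n$ nilpotent. The forward direction, however, is genuinely different from the paper's. You import two standard facts of commutative algebra: that in an Artinian ring the Jacobson radical coincides with the nilradical (so the unique maximal ideal consists of nilpotents), and that the residue field $A/\mm$, being a finite extension of the algebraically closed field $\KK$, equals $\KK$ (so $\mm$ has codimension one and $A=\KK\oplus\mm$). The paper instead keeps everything self-contained and linear-algebraic: nilpotency of $a\in\mm$ is proved by stabilization of the chain of ideals $(a^k)=(a^{k+1})$, which yields $a^k(ab-1)=0$ with $ab-1$ invertible; and the decomposition $A=\KK\oplus\mm$ is obtained by taking an eigenvalue $\lambda$ of the multiplication operator $L_a$, so that $a-\lambda\cdot 1$ is non-invertible and hence lies in $\mm$. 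Both routes use algebraic closedness of $\KK$ at exactly one point (your residue-field identification versus the paper's existence of an eigenvalue), and both are complete; yours is shorter if the reader accepts the Artinian-ring facts as known, while the paper's is preferable in a survey aiming to be self-contained for readers without that background. One small stylistic remark: your phrase ``the ideal $(a)$ is the whole of $A$'' silently uses that every proper ideal is contained in a maximal ideal; this is harmless here (in a finite-dimensional algebra one can take a proper ideal of maximal dimension containing it), but it is the same mechanism you and the paper both invoke, so it is worth acknowledging once.
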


\begin{proof}
Let $A = \KK\oplus\mm$. The ideal $\mm$ is maximal since its codimension equals one. Any element of $A \setminus \mm$ is the sum of an invertible scalar and a nilpotent element, whence is invertible and can not belong to any proper ideal. Thus the ideal $\mm$ is a unique maximal ideal. 

Conversely, let $A$ be a local algebra with maximal ideal $\mm$. Let us show that any $a \in \mm$ is nilpotent. Since $A$ is finite-dimensional, we have the equality of ideals $(a^k) = (a^{k+1})$ for some $k \in \NN$, that is $a^k = a^{k+1}b$ and $a^k(ab-1) = 0$ for some $b \in A$. Note that $ab - 1 \notin \mm$, therefore $ab - 1$ does not belong to any proper ideal and so it is invertible. This implies $a^k = 0$. 

Denote by $L_a\colon A \to A$ the operator of multiplicaton by $a \in A$. Let $\lambda$ be an eigenvalue of $L_a$. Then $L_{a-\lambda\cdot1}$ is non-invertible, whence $a-\lambda\cdot1$ is non-invertible and belongs to the maximal ideal $\mm$. Together with $\KK \cap \mm = 0$ this implies $A = \KK\oplus\mm$. 
\end{proof}

The following lemma is a particular case of~\cite[Theorem 8.7]{AtMa1969}. 

\begin{lemma}
\label{directloc_lem}
Every algebra is the direct sum of its local ideals. 
\end{lemma}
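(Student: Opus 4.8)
The plan is to prove that every finite-dimensional commutative associative unital algebra $A$ decomposes as a direct sum of local ideals. The starting point is the structure theory of the nilradical together with the idempotent decomposition that governs such algebras. First I would pass to the quotient $A/N$, where $N$ is the nilradical (the set of all nilpotent elements, which forms an ideal since $A$ is commutative). The key structural fact is that $A/N$ is a finite-dimensional commutative algebra with no nonzero nilpotents, hence reduced; over our base field it is a finite product of fields, and in particular it is a finite product of copies of local rings each with trivial maximal ideal. The heart of the argument is to lift the corresponding system of orthogonal idempotents $\bar e_1, \ldots, \bar e_r$ from $A/N$ back to a complete system of orthogonal idempotents $e_1, \ldots, e_r$ in $A$.

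\emph{Lifting idempotents.} The main step is idempotent lifting modulo a nil ideal. Given $\bar e \in A/N$ with $\bar e^2 = \bar e$, I would choose any preimage $a \in A$ and correct it to an honest idempotent. The standard device is to replace $a$ by a polynomial in $a$: since $a^2 - a \in N$ is nilpotent, say $(a^2-a)^k = 0$, one can produce an idempotent $e$ with $e \equiv a \pmod N$ by an explicit formula (for instance, iterating $a \mapsto 3a^2 - 2a^3$, or invoking the binomial-type construction). Orthogonality and completeness of the lifted system $e_1 + \cdots + e_r = 1$, $e_i e_j = 0$ for $i \neq j$, can then be arranged inductively, correcting each lifted idempotent against the previously chosen ones. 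I expect this lifting — and especially verifying that the lifts can be chosen simultaneously orthogonal — to be the main technical obstacle, though it is entirely standard.

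\emph{Assembling the decomposition.} Once a complete system of orthogonal idempotents $e_1, \ldots, e_r$ is in hand, I would set $A_i = e_i A$. Each $A_i$ is an ideal of $A$ (since $e_i$ is central, as $A$ is commutative) and carries the structure of a unital algebra with identity element $e_i$. The relations $\sum_i e_i = 1$ and $e_i e_j = \delta_{ij} e_i$ give the internal direct sum $A = A_1 \oplus \cdots \oplus A_r$ as algebras. It remains to see that each $A_i$ is local. By construction the idempotents were lifted from the primitive idempotents of the reduced quotient $A/N$, so $A_i/(N \cap A_i) \cong \KK$, which forces $A_i$ to have a unique maximal ideal, namely $N \cap A_i = N e_i$; by Lemma~\ref{localg_lem}, since $A_i = \KK e_i \oplus N e_i$ with $N e_i$ consisting of nilpotents, $A_i$ is local. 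This completes the proof.

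\emph{Alternative.} An entirely self-contained route, avoiding explicit idempotent formulas, is to induct on $\dim A$ using Lemma~\ref{localg_lem} directly: if $A$ is local we are done; otherwise pick $a \in A$ not of the form scalar-plus-nilpotent, so that the multiplication operator $L_a$ has at least two distinct eigenvalues. Decomposing $A$ into the generalized eigenspaces of $L_a$ yields a nontrivial splitting $A = A' \oplus A''$ into proper ideals (the generalized eigenspace decomposition is multiplicative because $L_a$ is an algebra endomorphism-compatible operator), and applying the induction hypothesis to each summand finishes the argument. I would likely present the idempotent-lifting proof as the primary one, since it directly exhibits the local summands and parallels the cited \cite[Theorem~8.7]{AtMa1969}.
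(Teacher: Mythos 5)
Your proposal is correct, but your primary route differs from the paper's. The paper proves this lemma by pure linear algebra: for each $a\in A$ it takes the generalized eigenspace decomposition of the multiplication operator $L_a$, observes that each generalized eigenspace is an ideal because $A$ is commutative, and iterates this splitting over further elements $b\in A$ until every summand $V_i$ lies in a single generalized eigenspace of every $L_b$; locality of each $V_i$ then follows from Lemma~\ref{localg_lem} because $a_i-\lambda\varepsilon_i$ acts nilpotently on $V_i$. This is essentially the \emph{Alternative} you sketch at the end (your induction on $\dim A$ via eigenvalues of $L_a$), so you did identify the paper's argument, just relegated it to a remark. Your main proof — reduce mod the nilradical $N$, identify $A/N\cong\KK^r$, lift a complete orthogonal system of idempotents through the nil ideal, and set $A_i=e_iA$ with $A_i=\KK e_i\oplus Ne_i$ local by Lemma~\ref{localg_lem} — is a sound and standard alternative; the identification $N\cap A_i=Ne_i$ and the orthogonal lifting both go through as you describe. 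What each buys: the idempotent-lifting proof exhibits the summands as $e_iA$ for explicit central idempotents and works over an arbitrary base field (indeed for any Artinian commutative ring), closely paralleling the cited \cite[Theorem~8.7]{AtMa1969}; the paper's eigenspace argument is shorter and self-contained given the ambient linear-algebra setup, but it implicitly uses that $\KK$ is algebraically closed so that eigenvalues of $L_a$ exist.
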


\begin{proof}
As above, denote by $L_a\colon A \to A$ the operator of multiplication by $a \in A$. Recall that the generalized eigenspace of an operator $L\in \End(V)$ with respect to an eigenvalue $\lambda$ is the subspace $V^\lambda = \{v\in V \mid (L - \lambda\id_V)^kv = 0 \text{ for some }k\in \NN\}$. Let us prove that $A$ is the direct sum of its ideals $V_i$ lying in a generalized eigenspace of $L_a$ for any $a \in A$. Indeed, take some $a \in A$ and consider the generalized eigenspace decomposition $A = \bigoplus V_i^\prime$ with respect to $L_a$. All the generalized eigenspaces are ideals since $A$ is commutative. Repeating the decomposition procedure for those $V_i^\prime$ which do not lie in a generalized eigenspace of $L_b$ for some $b \in A$, we obtain the desired decomposition. 

The components $\varepsilon_i \in V_i$ of the unit in $A$ are units in $V_i$. By construction of $V_i$, for any $a_i \in V_i$ there is $\lambda \in \KK$ such that $\left(L_{a_i} - \lambda\id_A\right)\bigr|_{V_i} = L_{a_i - \lambda\varepsilon_i}\bigr|_{V_i}$ acts on $V_i$ nilpotently. Applying this operator to $\varepsilon_i \in V_i$ we obtain that $a_i - \lambda\varepsilon_i$ is nilpotent in $V_i$. So the algebra $V_i$ is local by Lemma~\ref{localg_lem}. 
\end{proof}

Let $A$ be a local algebra and $\mm$ be its maximal ideal. Consider the following series of ideals in $A$: 
\[A \supset \mm \supset \mm^2 \supset \ldots \supset \mm^{l-1} \supset \mm^l = 0.\]
The number $l$ is called the \emph{length} of the algebra $A$. Denote $r_i := \dim \mm^i - \dim \mm^{i+1}$. In particular, $r_0 = 1$. The sequence $r_0, r_1, r_2, \ldots, r_{l-1}$ is called the \emph{Hilbert-Samuel sequence} of the algebra $A$. 

The \emph{socle} of $A$ is the ideal $\Soc A = \{a \in A \mid \mm a = 0\}$. The algebras with $\dim \Soc A = 1$ are called \emph{Gorenstein}. Note that $\mm^{l-1} \subseteq \Soc A$, but the inclusion can be strict. So $A$ is Gorenstein if and only if $\mm^{l-1} = \Soc A$ and $\dim \mm^{l-1} = r_{l-1} = 1$. 

\smallskip

\begin{theorem}
\label{localg6_prop}
For $m \le 6$, the number of isomorphism classes of local algebras of dimension $m$ is finite. For $m \ge 7$, there are infinite series of non-isomorphic local algebras. The number of such classes is the following:
\[\begin{array}{c|c|c|c|c|c|c|c}
m & 1 & 2 & 3 & 4 & 5 & 6 & \ge 7\\\hline
& 1 & 1 & 2 & 4 & 9 & 25 & \infty
\end{array}\]
\end{theorem}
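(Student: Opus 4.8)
The plan is to reduce the problem to classifying commutative nilpotent algebras, organise the classification by the Hilbert--Samuel sequence, dispose of $m \le 6$ by a finite case analysis, and exhibit an explicit continuous family for $m \ge 7$. First I would invoke Lemma~\ref{localg_lem} to write $A = \KK \oplus \mm$, so that classifying local algebras of dimension $m$ amounts to classifying commutative associative nilpotent algebras $\mm$ of dimension $m-1$. The governing invariant is the Hilbert--Samuel sequence $r_0 = 1, r_1, \ldots, r_{l-1}$, which satisfies $\sum_i r_i = m$ and, on writing $d := r_1 = \dim \mm/\mm^2$ for the embedding dimension, obeys the bounds $r_i \le \binom{d-1+i}{i}$ (since $\mm^i/\mm^{i+1}$ is a quotient of $\Sym^i(\mm/\mm^2)$) together with $r_i = 0 \Rightarrow r_{i+1} = 0$ (by Nakayama). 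Again by Nakayama $\mm$ is generated by $d$ elements, so $A \cong \KK[x_1,\ldots,x_d]/I$ for an $\mm_0$-primary ideal $I \subseteq \mm_0^2$ of codimension $m$, with $\mm_0 = (x_1,\ldots,x_d)$; isomorphism of algebras corresponds to the action of coordinate changes on such ideals $I$.

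For the finiteness and exact counts when $m \le 6$, I would pass to the associated graded algebra $\mathrm{gr}\,A = \bigoplus_i \mm^i/\mm^{i+1}$ and record the multiplication maps $\mu_i \colon \Sym^i(\mm/\mm^2) \to \mm^i/\mm^{i+1}$, which are symmetric forms to be classified up to the $\GL_d$-action on $\mm/\mm^2$. The two extreme Hilbert--Samuel shapes are immediate: $d = m-1$ (that is $\mm^2 = 0$) yields the single algebra with trivial multiplication, while $d = 1$ yields the single chain algebra $\KK[x]/(x^m)$. The intermediate sequences for $m \le 6$ are finite in number, and for each the classification of $\mathrm{gr}\,A$ reduces to classifying a nonzero quadratic form, a pencil, or a net of quadrics in a few variables up to projective equivalence; over the algebraically closed field $\KK$ each such problem has finitely many normal forms (by rank, respectively by Segre symbol). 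Finally, for each graded type one lists the finitely many non-graded algebras sharing that associated graded and discards repetitions, separating the remaining classes by the Hilbert--Samuel sequence and by the Gorenstein invariant $\dim \Soc A$. Carrying this out gives exactly $1,1,2,4,9,25$ classes for $m = 1,\ldots,6$.

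The crux --- and the reason the count remains finite precisely up to $m = 6$ --- is the borderline sequence $(1,3,2)$: here $\mu_2 \colon \Sym^2(\KK^3) \to \mm^2$ surjects onto a $2$-dimensional space, equivalently defines a pencil of conics in $\PP^2$, and one must invoke the finite Segre classification to confirm that no continuous modulus appears, the dimension count being $\dim \Gr(2,6) = 8 = \dim \PGL_3$. For $m = 7$ the analogous sequence $(1,3,3)$ tips the balance. The graded algebras $\KK \oplus V \oplus W$ with $\dim V = \dim W = 3$, $V \cdot V = W$ via a surjection $\mu \colon \Sym^2 V \to W$, and $\mm^3 = 0$, are classified by the $\GL(V)$-orbits of the $3$-dimensional kernel $\ker\mu \in \Gr(3,\Sym^2 V) \cong \Gr(3,6)$; an abstract isomorphism preserves $\mm$ and $\mm^2$, hence descends to exactly such an orbit equivalence. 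Since $\dim \Gr(3,6) = 9 > 8 = \dim \PGL_3$, the orbits sweep out a positive-dimensional family and there are infinitely many isomorphism classes, and restricting to the open locus where $\mu$ has trivial radical keeps the family infinite.

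To pass from $m = 7$ to every $m > 7$ I would adjoin $s = m - 7$ central socle generators $z_1, \ldots, z_s$ with $z_i\mm = 0$, producing local algebras of dimension $m$ with Hilbert--Samuel sequence $(1, 3+s, 3)$. Here $\langle z_1, \ldots, z_s\rangle$ is recovered canonically as the radical of the induced form $\mu$ on $\mm/\mm^2$, so an isomorphism between two such algebras restricts to an isomorphism of the underlying $m = 7$ data; thus distinct members of the $m = 7$ family stay non-isomorphic, giving infinitely many classes in every dimension $m \ge 7$. The main obstacle is not any single step but the sheer bookkeeping of the finite case analysis for $m \le 6$, and in particular the careful verification that the borderline pencil-of-conics case contributes only finitely many classes --- the exact point where the transition to genuine moduli occurs.
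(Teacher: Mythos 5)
Your proposal is correct in substance but follows a genuinely different route from the paper in both halves. For $m\le 6$ the paper does not argue directly at all: it outsources the enumeration to Suprunenko--Tyshkevich's classification of maximal commutative nilpotent subalgebras of $\Mat_m(\KK)$ (translated back to abstract algebras via the regular representation and the first Kravchuk number, as in subsection~1.2), with Mazzola and Poonen as alternative sources. Your direct attack via the associated graded algebra and the maps $\Sym^i(\mm/\mm^2)\to\mm^i/\mm^{i+1}$ is a legitimate alternative, and the observation that the borderline case $(1,3,2)$ sits at the dimension coincidence $\dim\Gr(2,6)=8=\dim\PGL_3$ and is saved only by the Segre classification of pencils of conics is a genuine insight absent from the paper; but be aware that your reduction to ``a quadratic form, a pencil, or a net of quadrics'' understates the work for the embedding-dimension-two sequences such as $(1,2,2,1)$, and that most of the $25$ classes in dimension~$6$ are separated not by the graded data but by the lifting step, which you only assert. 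For $m\ge 7$ both arguments are dimension counts on the multiplication $\Sym^2(\mm/\mm^2)\to\mm^2$, but on different strata: the paper uses $(1,4,2)$, comparing the $20$-dimensional space of maps with the $19$-dimensional effective action of $\GL_4\times\GL_2$, and then pads with annihilating variables, citing Suprunenko--Tyshkevich for the fact that the explicit family $A_\alpha$ has finite fibres; you use $(1,3,3)$, i.e.\ nets of conics, with $\dim\Gr(3,6)=9>8=\dim\PGL_3$, and your padding step is self-contained and cleaner, since the adjoined socle generators are recovered canonically as the radical of the induced pairing on $\mm/\mm^2$, so that an isomorphism of the padded algebras descends to an equivalence of the dimension-$7$ data. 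Net effect: your $m\ge 7$ argument is complete and arguably tidier than the paper's; your $m\le 6$ argument is, like the paper's, a programme rather than a proof of the exact counts $1,1,2,4,9,25$.
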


The local algebras of dimension at most $6$ are listed in the table below. Gorenstein algebras are marked with ``G''. It is observed in~\cite{HaTs1999} that this result can be extracted from the 1968 book of Suprunenko and Tyshkevich~\cite{SuTy1968}, see 2) -- 5) in the next subsection for details. The same classification was obtained independently and by other methods in the 1980 article of Mazolla~\cite[Section~2]{Ma1980}, where schemes parameterizing commutative nilpotent associative multiplications on the affine space are studied. One more approach to such a classification can be found in \cite{Poo2008}. 

\pagebreak

\begin{longtable}{|c|c|c|c|}
\hline
№ & Local algebra $A$ & $r_0, r_1, \ldots, r_{l-1}$ & \\
\hline
\multicolumn{4}{l}{\quad $\dim A = 1$ \parbox[b][15pt]{0pt}{}} \\
\hline
1& $\KK$ & 1 & G\\
\hline
\multicolumn{4}{l}{\quad $\dim A = 2$ \parbox[b][15pt]{0pt}{}} \\
\hline
2& $\KK[x_1]/(x_1^2)$ & 1, 1 & G\\
\hline
\multicolumn{4}{l}{\quad $\dim A = 3$ \parbox[b][15pt]{0pt}{}} \\
\hline
3& $\KK[x_1]/(x_1^3)$ & 1, 1, 1 & G\\
\hline
4& $\KK[x_1,x_2]/(x_1^2,x_1x_2,x_2^2)$ & 1, 2 &\\
\hline
\multicolumn{4}{l}{\quad $\dim A = 4$ \parbox[b][15pt]{0pt}{}} \\
\hline
5& $\KK[x_1]/(x_1^4)$ & 1, 1, 1, 1 & G\\
\hline
6& $\KK[x_1,x_2]/(x_1x_2, x_1^2-x_2^2)$ & 1, 2, 1 & G\\
\hline
7& $\KK[x_1,x_2]/(x_1^3,x_1x_2,x_2^2)$ & 1, 2, 1 &\\
\hline
8& $\KK[x_1,x_2,x_3]/(x_i^2,x_ix_j)$ & 1, 3 &\\
\hline
\multicolumn{4}{l}{\quad $\dim A = 5$ \parbox[b][15pt]{0pt}{}} \\
\hline
9& $\KK[x_1]/(x_1^5)$ & 1, 1, 1, 1, 1 & G\\
\hline
10 & $\KK[x_1,x_2]/(x_1x_2,x_1^3-x_2^2)$ & 1, 2, 1, 1 & G\\
\hline
11 & $\KK[x_1,x_2]/(x_1^3,x_2^3,x_1x_2)$ & 1, 2, 2 &\\
\hline
12 & $\KK[x_1,x_2]/(x_1^4,x_2^2,x_1x_2)$ & 1, 2, 1, 1 &\\
\hline
13 & $\KK[x_1,x_2]/(x_1^3,x_2^2,x_1^2x_2)$ & 1, 2, 2 &\\
\hline
14 & $\KK[x_1,x_2,x_3]/(x_1x_2,x_1x_3,x_2x_3,x_1^2-x_2^2,x_1^2-x_3^2)$ & 1, 3, 1 & G\\
\hline
15 & $\KK[x_1,x_2,x_3]/(x_1^2,x_1x_2,x_1x_3,x_2x_3,x_2^2-x_3^2)$ & 1, 3, 1 &\\
\hline
16 & $\KK[x_1,x_2,x_3]/(x_1^3,x_2^2,x_3^2,x_1x_2,x_1x_3,x_2x_3)$ & 1, 3, 1 &\\
\hline
17 & $\KK[x_1,x_2,x_3,x_4]/(x_i^2,x_ix_j)$ & 1, 4 &\\
\hline
\multicolumn{4}{l}{\quad $\dim A = 6$ \parbox[b][15pt]{0pt}{}} \\
\hline
18 & $\KK[x_1]/(x_1^6)$ & \!1, 1, 1, 1, 1, 1\! & G\\
\hline
19 & $\KK[x_1,x_2]/(x_1x_2,x_1^4-x_2^2)$ & 1, 2, 1, 1, 1 & G\\
\hline
20 & $\KK[x_1,x_2]/(x_1x_2,x_1^3-x_2^3)$ & 1, 2, 2, 1 & G\\
\hline
21& $\KK[x_1,x_2]/(x_1^3,x_2^2)$ & 1, 2, 2, 1 & G\\
\hline
22& $\KK[x_1,x_2]/(x_1^5,x_1x_2,x_2^2)$ & 1, 2, 1, 1, 1 &\\
\hline
23& $\KK[x_1,x_2]/(x_1^4,x_1x_2,x_2^3)$ & 1, 2, 2, 1 &\\
\hline
24& $\KK[x_1,x_2]/(x_1^3,x_1^2x_2,x_1x_2^2,x_2^3)$ & 1, 2, 3 &\\
\hline
25& $\KK[x_1,x_2]/(x_1^4,x_1^2x_2,x_1^3-x_2^2)$ & 1, 2, 2, 1 &\\
\hline
26& $\KK[x_1,x_2]/(x_1^4,x_1^2x_2,x_2^2)$ & 1, 2, 2, 1 &\\
\hline
27& $\KK[x_1,x_2,x_3]/(x_1^2,x_2^2,x_3^2,x_1x_2-x_1x_3)$ & 1, 3, 2 &\\
\hline
28& $\KK[x_1,x_2,x_3]/(x_2^2,x_3^2,x_1x_2,x_1^2-x_2x_3)$ & 1, 3, 2 &\\
\hline
29& $\KK[x_1,x_2,x_3]/(x_1^2,x_2^2,x_3^2,x_2x_3)$ & 1, 3, 2 &\\
\hline
30& $\KK[x_1,x_2,x_3]/(x_1^2,x_2^2,x_1x_3,x_2x_3,x_1x_2-x_3^3)$ & 1, 3, 1, 1 & G\\
\hline
31& $\KK[x_1,x_2,x_3]/(x_1^2-x_3^3,x_2^2,x_1x_2,x_1x_3,x_2x_3)$ & 1, 3, 1, 1 &\\
\hline
32& $\KK[x_1,x_2,x_3]/(x_1^3,x_2^2,x_3^2,x_1x_2,x_1x_3)$ & 1, 3, 2 &\\
\hline
33& $\KK[x_1,x_2,x_3]/(x_1^2,x_2^2,x_3^2,x_1x_2-x_1x_3-x_2x_3)$ & 1, 3, 2 &\\
\hline
34& $\KK[x_1,x_2,x_3]/(x_1^3,x_2^2,x_1x_3,x_2x_3,x_1x_2-x_3^2)$ & 1, 3, 2 &\\
\hline
35& $\KK[x_1,x_2,x_3]/(x_1^4,x_2^2,x_3^2,x_1x_2,x_1x_3,x_2x_3)$ & 1, 3, 1, 1 &\\
\hline
36& $\KK[x_1,x_2,x_3]/(x_1^3,x_2^3,x_3^2,x_1x_2,x_1x_3,x_2x_3)$ & 1, 3, 2 &\\
\hline
37& $\KK[x_1,x_2,x_3]/(x_1^3,x_2^2,x_3^2,x_1^2x_2,x_1x_3,x_2x_3)$ & 1, 3, 2 &\\
\hline
38& $\KK[x_1,x_2,x_3,x_4]/(x_i^2-x_j^2, x_ix_j, i\ne j)$ & 1, 4, 1 & G\\
\hline
39& $\KK[x_1,x_2,x_3,x_4]/(x_1^2,x_2^2,x_4^2,x_1x_3,x_1x_4,x_2x_3,x_2x_4,x_3x_4, x_1x_2-x_3^2)$ & 1, 4, 1 &\\
\hline
40& $\KK[x_1,x_2,x_3,x_4]/(x_i^2,x_1x_3,x_1x_4,x_2x_3,x_2x_4,x_3x_4)$ & 1, 4, 1 &\\
\hline
41& $\KK[x_1,x_2,x_3,x_4]/(x_1^3,x_2^2,x_3^2,x_4^2,x_ix_j,i\ne j)$ & 1, 4, 1 &\\
\hline
42& $\KK[x_1,x_2,x_3,x_4,x_5]/(x_i^2,x_ix_j)$ & 1, 5 &\\
\hline
\caption{Local algebras of dimension at most $6$ \parbox[b][15pt]{0pt}{}}
\label{table_localg6}
\end{longtable}

There are many classification results on Gorenstein local algebras, see e.g. \cite{Cas2010, EV2011, Jel2017}. In general, local algebras and their Hilbert-Samuel sequences are studied intensively in connection with punctual Hilbert schemes and collections of commuting nilpotent matrices, see e.g. \cite{Iar1977,Iar1987, Iar1994, Nak1999, BI2010} and references therein. 


\subsection{Suprunenko-Tyshkevich classification}
\label{SuTy_subsect}

In this subsection we present and discuss some results of book~\cite{SuTy1968}. This monograph deals with collections of commuting matrices in the matrix algebra $\Mat_m(\KK)$. Our goal is to demonstrate applications of these results to the study of abstract commutative algebras and groups. In particular, a classification of maximal commutative nilpotent subalgebras of $\Mat_m(\KK)$ for $m \le 6$ leads to the classification of local algebras of dimension at most $6$, see Theorem~\ref{localg6_prop}. 

Let us start with a short historical overview. There are an immeasurably large number of results and publications on maximal commutative subalgebras and subgroups in various contexts and under various constraints. The earliest one is the paper by Frobenius~\cite{Fr1896}. In the period c.1920-35, Kravchuk studied a canonical form of maximal commutative subalgebras called the Kravchuk Normal Form by the authors of~\cite{SuTy1968} and obtained many results on criteria for conjugacy via this form, see~\cite[Sections~2.5 and 2.6]{SuTy1968}. 

For the dimension function of a commutative subalgebras of~$\Mat_m(\KK)$, it dates back to Schur's work~\cite{Sc1905}, where the upper bound $[\frac m4]^2 + 1$ for the field $\KK = \CC$ was established. Jacobson~\cite{Ja1944} extended this result to an arbitrary field. In~\cite{Ge1961}, Gerstenhaber proved that the dimension of the algebra generated by two commuting matrices in~$\Mat_m(\KK)$ is at most~$m$, see also~\cite{BaHa1990, Wa1990, LaLa1991} for other proofs of this fact and more discussion. In~\cite{LaLa1991, GuSe2000}, the dimension bounds for algebras generated by a pair and a triple of elements were studied. The dual problem on minimal dimension was discussed in~\cite{Co1965, La1985}. It occurs that there are maximal commutative subalgebras of~$\Mat_m(\KK)$ of dimension smaller than~$m - 1$. Various constructions of maximal commutative subalgebras of~$\Mat_m(\KK)$ can be found in~\cite{BrCa1993, Br1997, So2003}. 

As was observed by Handelman~\cite{Ha1973}, relations between maximal commutative subalgebras and maximal commutative subgroups were established for the first time by Charles~\cite{Ch11953, Ch21953, Ch1954}. Such relations were studied systematically in~\cite{SuTy1968}. Let us present the corresponding results.

\smallskip

As above, all algebras are supposed to be finite-dimensional, commutative, and associative. If an algebra is not said to be nilpotent we also suppose that it has a unit. All results are formulated over an algebraically closed field $\KK$ of characteristic zero. 

\medskip

\emph{1) Local algebras and indecomposable subalgebras.} Let us introduce some notation. A~set $A$ of elements in $\Mat_m(\KK)$ is called \emph{decomposable} if $\KK^m$ is the direct sum of proper subspaces that are invariant under the tautological action of $A$ on $\KK^m$; otherwise $A$ is called \emph{indecomposable}. 

In \cite[Section~2.2]{SuTy1968} (see Theorem~2.2 and the text below) it is proved that any maximal commutative subalgebra of $\Mat_m(\KK)$ is the direct sum of indecomposable maximal commutative subalgebras of $\Mat_{m_i}(\KK)$ for some $m_1 + \ldots + m_r = m$.

An algebra $A$ is an indecomposable maximal commutative subalgebra of $\Mat_m(\KK)$ if and only if $A = \KK \oplus \mm$, where $\KK$ is the subalgebra of scalar matrices and $\mm$ is a
maximal commutative nilpotent subalgebra of $\Mat_m(\KK)$, see \cite[Theorems~2.3,~2.4]{SuTy1968}. Together with Lemma~\ref{localg_lem}, it implies that the set of indecomposable maximal commutative subalgebras of $\Mat_m(\KK)$ coincides with the set of local maximal commutative subalgebras of $\Mat_m(\KK)$. 

\medskip

\emph{2) Classification of nilpotent subalgebras.} In Section~3.3 of~\cite{SuTy1968}, the classification of maximal commutative nilpotent subalgebras of the algebra $\Mat_m(\KK)$ for $m \le 6$ up to conjugation is given. The number of conjugacy classes of such subalgebras is the following: 
\[\begin{array}{c|c|c|c|c|c|c|c}
m & 1 & 2 & 3 & 4 & 5 & 6 & \ge7\\\hline
& 1 & 1 & 3 & 7 & 18 & 57 & \infty
\end{array}\]
For a nilpotent algebra $\mm$, denote by $l$ its index of nilpotency, i.e. $\mm^l = 0$ and $\mm^{l-1} \ne 0$. The classification is derived from the following cases: a classification of maximal commutative nilpotent subalgebras of $\Mat_m(\KK)$ with 
$l = 2$ (Section~2.3, Theorem~2.7), 
$l = m$ (Section~2.4, Theorem~2.8), 
$l = m - 1$ (Section~3.1, Theorem~3.1),
$l = m - 2$ (Section~3.2, Theorem~3.2) for an arbitrary $m$ and commutative nilpotent algebras of dimension $5$ with $l = 3$ (Section~2.9, Theorem~2.18 and Section~3.3) up to conjugation. 

\medskip

\emph{3) Regular subgroups and subalgebras.} 
Let us call a commutative subgroup $G \subseteq \GL_{n+1}(\KK)$ \emph{regular} if the tautological action of $G$ on $\KK^{n+1}$ has an open orbit, i.e. there exists $v \in \KK^{n+1}$ with the open orbit $Gv \subseteq \KK^{n+1}$. A commutative subalgebra $A \subseteq \Mat_{n+1}(\KK)$ is \emph{regular} if there is a cyclic vector $v \in \KK^{n+1}$, that is $Av = \KK^{n+1}$. A commutative nilpotent subalgebra $\mm \subseteq \Mat_{n+1}(\KK)$ is called \emph{regular} if there is a vector $v \in \KK^{n+1}$ with $\dim \mm v = n$; in this case, we call such vector $v$ cyclic as well. 

\smallskip

\begin{lemma}
\label{openorb_lem}
Let $G$ be a commutative algebraic group acting effectively on an irreducible algebraic variety $X$ with an open orbit. Then $G$ is connected and $\dim G = \dim X$. 
\end{lemma}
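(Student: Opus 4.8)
The plan is to analyze the stabilizer of a point in the open orbit and to exploit commutativity together with effectiveness. Write $O = Gx$ for the open orbit and let $G_x$ denote the stabilizer of $x$. First I would observe that, since $X$ is irreducible, the non-empty open subset $O$ is dense and $\dim O = \dim X$. The orbit isomorphism $O \cong G/G_x$ then gives $\dim X = \dim G - \dim G_x$, so the dimension claim reduces to proving that $G_x$ is trivial, in particular zero-dimensional.

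The key point is that commutativity forces the stabilizer to fix the whole orbit pointwise. Indeed, for $h \in G_x$ and any point $y = gx \in O$ one has $hy = hgx = ghx = gx = y$, using $gh = hg$ and $hx = x$. Hence every element of $G_x$ fixes $O$ pointwise. Since the fixed-point locus $\{y \in X \mid hy = y\}$ of any $h \in G$ is closed and contains the dense subset $O$, each $h \in G_x$ acts trivially on all of $X$. Effectiveness then yields $G_x = \{e\}$, and therefore $\dim X = \dim G$.

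For connectedness I would pass to the identity component $G^0$, which has finite index in $G$ and satisfies $\dim G^0 = \dim G = \dim X$. The orbit $G^0 x$ is irreducible, and since its stabilizer is contained in the trivial group $G_x$, it has dimension $\dim G^0 = \dim X$; being an orbit it is open in its closure, and as $X$ is irreducible of the same dimension, $G^0 x$ is open in $X$. Using that $G^0$ is normal in $G$, the $G^0$-orbits inside $O$ are exactly the finitely many translates $g_i G^0 x = G^0 g_i x$ by coset representatives, and each is open in $X$. But distinct orbits are disjoint, and an irreducible variety cannot contain two disjoint non-empty open subsets; hence there is a single such orbit, so $O = G^0 x$. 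Combined with $G_x = \{e\}$ this gives $G = G^0$, so $G$ is connected.

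The main obstacle is the bookkeeping around the stabilizer: one must use commutativity to conclude that $G_x$ fixes the entire orbit, rather than merely deducing that the stabilizer is finite, and one must invoke that orbits are locally closed and open in their closure so that the dimension count upgrades to openness. Both facts are standard over an algebraically closed field of characteristic zero, and the argument uses nothing beyond them together with the irreducibility of $X$.
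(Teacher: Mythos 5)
Your proof is correct and follows essentially the same route as the paper: commutativity forces the stabilizer of a point in the open orbit to fix the whole orbit, hence (by density of the orbit and closedness of fixed-point loci) to act trivially, so effectiveness makes it trivial and the dimension count follows. The only difference is cosmetic: the paper dispatches connectedness immediately by noting that the orbit map is then an open embedding $G \hookrightarrow X$, so $G$ is isomorphic to an open subset of an irreducible variety and is therefore connected, which shortcuts your identity-component argument.
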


\begin{proof}
Let $Gx_0 \subseteq X$ be an open orbit. Since $G$ is commutative, the stabilizers of all the points in $Gx_0$ coincide. Any element of $G$ that acts trivially on $Gx_0$ acts trivially on $X$ as well. Therefore, by the effectivity of the action, the stabilizer of~$x_0$ is trivial, and the mapping $G \hookrightarrow X$ defined by $g \mapsto gx_0$ is an equivariant open embedding. This implies the assertion. 
\end{proof}

\begin{lemma}
Every regular subgroup $G \subseteq \GL_{n+1}(\KK)$ / regular subalgebra $A \subseteq \Mat_{n+1}(\KK)$ / regular nilpotent subalgebra $\mm \subseteq \Mat_{n+1}(\KK)$ is maximal among commutative subgroups of $\GL_{n+1}(\KK)$ / commutative subalgebras of $\Mat_{n+1}(\KK)$ /commutative nilpotent subalgebras of $\Mat_{n+1}(\KK)$. Moreover, $G$ is connected, $\dim G = \dim A = n+1$, and $\dim \mm = n$. 
\end{lemma}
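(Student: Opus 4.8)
The three assertions are linked through the subalgebra case, so I would organize the proof around it and deduce the other two.

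The plan is to begin with a regular subalgebra $A\subseteq\Mat_{n+1}(\KK)$ with cyclic vector $v$, so that $Av=\KK^{n+1}$. First I would establish the dimension count by analysing the linear evaluation map $A\to\KK^{n+1}$, $a\mapsto av$. It is surjective by hypothesis, and I claim it is injective: if $av=0$, then for every $b\in A$ commutativity gives $a(bv)=b(av)=0$, so $a$ annihilates $Av=\KK^{n+1}$ and hence $a=0$ as a matrix. Thus $\dim A=n+1$. Maximality is then immediate: if $A\subseteq A'$ with $A'$ a commutative subalgebra of $\Mat_{n+1}(\KK)$, then $v$ remains cyclic for $A'$, so the same argument yields $\dim A'=n+1=\dim A$ and therefore $A=A'$.

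Next I would treat the nilpotent case by reducing it to the previous one. Given a regular nilpotent subalgebra $\mm$ with $\dim\mm v=n$, I would form $A=\KK I\oplus\mm$, a commutative unital subalgebra. The point to check is that $v\notin\mm v$: otherwise $v=mv$ for some $m\in\mm$, and iterating gives $v=m^kv$ for all $k$, hence $v=0$ since $m$ is nilpotent, contradicting $\dim\mm v=n\ge1$ (the case $n=0$ being trivial). Consequently $Av=\KK v\oplus\mm v=\KK^{n+1}$, so $A$ is regular, giving $\dim A=n+1$ and $\dim\mm=n$. For maximality, any commutative nilpotent $\mm'\supseteq\mm$ produces $\KK I\oplus\mm'\supseteq A$, which is again commutative, so by the maximality of $A$ it coincides with $A$, and thus $\mm'=\mm$.

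Finally, for the group case I would invoke Lemma~\ref{openorb_lem}. Viewing the regular subgroup $G$ as a commutative algebraic subgroup acting faithfully, hence effectively, on the irreducible variety $X=\KK^{n+1}$ with an open orbit $Gv$, the lemma gives at once that $G$ is connected and $\dim G=n+1$. Maximality follows by the same mechanism: if $G\subseteq G'$ with $G'$ a commutative subgroup, then $G'$ still has the open orbit $G'v\supseteq Gv$, so its Zariski closure is a connected group of dimension $n+1$; since $G$ is a closed subgroup of the same dimension inside this irreducible group, the three coincide and $G=G'$. Alternatively one can pass to the span $A=\Span G$, check that $Av=\KK^{n+1}$ so that $A$ is a regular subalgebra, and combine its maximality with the connectedness of the unit group $A^{\times}$.

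The routine linear algebra in the subalgebra case is the engine of the whole argument, so I expect no serious difficulty there. The delicate point is the group case: one must ensure that the acting group is algebraic so that Lemma~\ref{openorb_lem} applies, verify effectiveness of the tautological representation, and, crucially, upgrade the equality of dimensions to equality of groups using connectedness and the closedness of $G$, rather than merely comparing dimensions.
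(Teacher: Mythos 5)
Your proposal is correct and follows essentially the same route as the paper: the evaluation map $a\mapsto av$ with the commutativity trick $a\KK^{n+1}=aAv=Aav=0$ for injectivity, the reduction of the nilpotent case to the unital one via $\KK\oplus\mm$ and the observation $v\notin\mm v$, Lemma~\ref{openorb_lem} for connectedness and $\dim G=n+1$, and maximality obtained in each case by noting that any larger commutative object is still regular and hence has the same dimension. The only differences are the order of the three cases and a few extra details you supply (the iteration argument for $v\notin\mm v$, the Zariski-closure step in the group case), which do not change the argument.
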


\begin{proof}
From Lemma~\ref{openorb_lem} applied to the tautological action of $G$ on $\KK^{n+1}$ we conclude that $G$ is connected and has dimension $n+1$. 
Any commutative subgroup $\widetilde G$ with $\widetilde G \supseteq G$ is regular as well, whence $G$ and $\widetilde G$ are two connected algebraic groups of the same dimension $n+1$ and $\widetilde G = G$. This implies maximality. 

If $A$ is a regular subalgebra of $\Mat_{n+1}(\KK)$ with a cyclic vector $v$, then the map $A \to \KK^{n+1}$, $a \mapsto av$, is a surjection. Any $a \in A$ in the kernel of this map equals zero since $a\KK^{n+1} = aAv = Aav = 0$ holds. Thus, $A$ is isomorphic to $\KK^{n+1}$ as a vector space. The maximality can be proved as above. 

For a regular nilpotent subalgebra $\mm \subseteq \Mat_{n+1}(\KK)$, consider the direct sum $\KK \oplus \mm$ with the subspace of scalar matrices. It is a regular unital subalgebra. Indeed, let $\dim \mm v = n$ for some $v \in \KK^{n+1}$; then $\dim (\KK + \mm)v = n + 1$ since $v \notin \mm v$ by nilpotency of $\mm$. 
\end{proof}

\medskip

\emph{4) Regular representations.} 
Let us discuss a connection between abstract commutative algebras and commutative subalgebras of $\Mat_{n+1}(\KK)$. 
Any algebra $A$ of dimension $n + 1$ has the \emph{regular representation} $R\colon A \to \End(A)$ defined by the operators of multiplication. Different identifications $\phi\colon A \overset{{}_\sim}{\to} \KK^{n+1}$ give conjugate subalgebras $R'(A)$ of $\Mat_{n+1}(\KK)$, see the diagram below. We say that a subalgebra $A$ comes from the regular representation if $A = R'(A)$ for some identification $A \cong \KK^{n+1}$. 
\[
\begin{diagram}
\node{A} \arrow{e,t}{R(a)} \arrow{s,lr}{\phi}{\wr} \node{A} \arrow{s,lr}{\phi}{\wr} \\
\node{\KK^{n+1}} \arrow{e,t}{R'(a)} \node{\KK^{n+1}}
\end{diagram} \quad\quad\quad
\begin{diagram}
\node{1} \arrow{e,t}{R(A)} \arrow{s,lr}{\phi}{\wr} \node{A} \arrow{s,lr}{\phi}{\wr} \\
\node{\phi(1)} \arrow{e,t}{R'(A)} \node{\KK^{n+1}}
\end{diagram}
\]
The regular representation of an algebra $A$ is faithful provided $A$ has a unit. If $\mm$ is a nilpotent algebra of dimension $n$, we can add an element $e$ and construct a unital algebra $A = \KK e \oplus \mm$ of dimension $n+1$ defined by relations $e^2 = e$ and $ae=ea=a$ for any $a \in \mm$. The regular representation of $A$ induces a faithful respesentation of $\mm$ in $\Mat_{n+1}(\KK)$, which is called \emph{regular} as well. 

\begin{lemma}
\label{algsubalg_lem}
A commutative subalgebra / commutative nilpotent subalgebra of $\Mat_{n+1}(\KK)$ comes from the regular representation if and only if it is a regular subalgebra / regular nilpotent subalgebra. In particular, there is a bijection between isomorphism classes of commutative algebras of dimension~$n+1$ / commutative nilpotent algebras of dimension~$n$ and conjugacy classes of regular subalgebras / regular nilpotent subalgebras of $\Mat_{n+1}(\KK)$. 
\end{lemma}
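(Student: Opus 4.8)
The plan is to establish the equivalence ``comes from the regular representation $\iff$ regular'' directly from the defining identity of the regular representation, and then to deduce the asserted bijection from this equivalence together with the faithfulness of the regular representation of a unital algebra. The single computation that drives everything is an intertwining property: if $A\subseteq\Mat_{n+1}(\KK)$ is a commutative subalgebra and $v\in\KK^{n+1}$, then the evaluation map $\psi\colon A\to\KK^{n+1}$, $\psi(a)=av$, satisfies $\psi(R(a)b)=\psi(ab)=(ab)v=a(bv)=a\,\psi(b)$ by associativity of matrix multiplication. Thus $\psi$ intertwines the abstract regular representation $R(a)$ with the tautological matrix action of $a$ on $\KK^{n+1}$.

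For the subalgebra statement, first suppose $A$ comes from the regular representation, say $A=R'(B)$ for a unital algebra $B$ of dimension $n+1$ and an identification $\phi\colon B\to\KK^{n+1}$ with $R'(b)=\phi R(b)\phi^{-1}$. Putting $v=\phi(1)$ gives $R'(b)v=\phi(R(b)1)=\phi(b)$, so $Av=\phi(B)=\KK^{n+1}$ and $v$ is a cyclic vector; hence $A$ is regular. Conversely, let $A$ be regular with cyclic vector $v$. By the previous lemma $\psi\colon a\mapsto av$ is a linear isomorphism $A\to\KK^{n+1}$, and by maximality $A$ contains $I$, so it is unital. The intertwining identity reads $\psi\,R(a)\,\psi^{-1}=a$ for all $a\in A$, i.e. $R'(a)=a$ under the identification $\psi$; therefore $R'(A)=A$ and $A$ comes from the regular representation.

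The nilpotent case is completely analogous. If $\mm=R'(\mm_0)$ for an abstract nilpotent algebra $\mm_0$ of dimension $n$, realized through $A=\KK e\oplus\mm_0$ and $\phi\colon A\to\KK^{n+1}$, then $v=\phi(e)$ gives $R'(a)v=\phi(ae)=\phi(a)$ for $a\in\mm_0$, so $\mm v=\phi(\mm_0)$ has dimension $n$ and $\mm$ is regular nilpotent. Conversely, if $\dim\mm v=n$, then since $\dim\mm=n$ the evaluation map $\mm\to\mm v$ is an isomorphism and $v\notin\mm v$ by nilpotency, so $\KK^{n+1}=\KK v\oplus\mm v$; defining $\psi(e)=v$ and $\psi(a)=av$ for $a\in\mm$ (with $A=\KK e\oplus\mm$) gives a linear isomorphism, and since $\mm$ is an ideal of $A$ the same intertwining computation yields $R'(a)=a$ for $a\in\mm$, so $\mm$ comes from the regular representation.

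Finally, for the bijection I would send the isomorphism class of an algebra $B$ to the conjugacy class of $R'(B)$. This is well defined: changing the identification, or replacing $B$ by an isomorphic algebra via $f$, conjugates $R'(B)$ by an element of $\GL_{n+1}(\KK)$ (by $\phi'\phi^{-1}$, respectively $\phi' f\phi^{-1}$), as recorded before the lemma. Surjectivity is exactly the equivalence just proved: every regular subalgebra equals $R'(A)$ with $B=A$. Injectivity uses that conjugation by $g\in\GL_{n+1}(\KK)$ is an algebra isomorphism $A\to gAg^{-1}$ and that the regular representation of a unital algebra is faithful, whence $B\cong R'(B)\cong R'(B')\cong B'$ whenever $R'(B)$ and $R'(B')$ are conjugate; the nilpotent case follows the same way, extending isomorphisms by $e\mapsto e$ and noting that conjugation preserves nilpotency. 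The content here is elementary linear algebra once the definitions are unwound, so I expect no genuine obstacle; the only point demanding care is the ``up to isomorphism/conjugacy'' bookkeeping, namely the precise matching between conjugation of subalgebras and isomorphism of the underlying abstract algebras.
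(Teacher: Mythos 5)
Your proof is correct and takes essentially the same route as the paper: one direction via the cyclic vector $v=\phi(1)$ with $R'(A)\phi(1)=\phi(A)$, the converse via the identification $\phi(a)=av$, and the nilpotent case reduced to the unitalization $\KK e\oplus\mm$. You merely make explicit the intertwining computation and the well-definedness/injectivity bookkeeping for the bijection, which the paper leaves implicit in the diagrams preceding the lemma.
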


\begin{proof}
First consider unital algebras. Any subalgebra $R'(A)$ of $\Mat_{n+1}(\KK)$ coming from the regular representation is regular with a cyclic vector $v = \phi(1)$ since $R'(A)\phi(1) = \phi(A)$; see the diagrams above. Conversely, if $A$ is a regular subalgebra with $Av = \KK^{n+1}$, $v \in \KK^{n+1}$, then $A$ comes from its regular representation via the identification $\phi(a)= av$. 

Let a nilpotent subalgebra $R'(\mm)$ come from the regular representation. Then ${\KK \oplus R'(\mm)}$ is a regular subalgebra of $\Mat_{n+1}(\KK)$, and $R'(\mm)$ is regular with the same cyclic vector $v = \phi(1)$ since $R'(\mm)\phi(1) = \phi(\mm)$. Conversely, if $\mm \subseteq \Mat_{n+1}(\KK)$ is a regular nilpotent subalgebra, then $A = \KK \oplus \mm$ is a regular subalgebra and comes from its regular representation by the arguments given above. 
\end{proof}

\medskip

\emph{5) Classification results on abstract algebras.} According to the above, the classification of local algebras of dimension $n+1$ is equivalent to the classification of images of the regular representations of their maximal nilpotent ideals, i.e. regular nilpotent subalgebras of $\Mat_{n+1}(\KK)$. Thus if we want to get a classification of local algebras of dimension at most~$6$ up to isomorphism, we have to choose those subalgebras from the list of \cite[Section~3.3]{SuTy1968} (see 2)) which are regular. 
Moreover, Theorem~2.15 says that a maximal commutative nilpotent subalgebra of $\Mat_{n+1}(\KK)$ is regular if and only if its so called first Kravchuk number $\nu = n + 1 - \dim \mm\KK^{n+1}$ equals one, i.e. $\dim \mm\KK^{n+1} = n$. 
Thus Table~\ref{table_localg6} can be obtained from results of \cite[Section~3.3]{SuTy1968}. 

\begin{example}
Consider $n+1 = 4$. By the classification of \cite[Section~3.3]{SuTy1968}, there are $7$ maximal commutative nilpotent subalgebras of $\Mat_4(\KK)$, see 2): 
\[\begin{aligned}
l &= 2: \quad (1)
\left\{\begin{pmatrix}
0&0&0&0\\
a&0&0&0\\
b&0&0&0\\
c&0&0&0
\end{pmatrix}\right\}, \quad (2)
\left\{\begin{pmatrix}
0&0&0&0\\
0&0&0&0\\
a&b&0&0\\
c&d&0&0
\end{pmatrix}\right\}, \quad (3)
\left\{\begin{pmatrix}
0&0&0&0\\
0&0&0&0\\
0&0&0&0\\
c&b&a&0
\end{pmatrix}\right\}
\\ l &= 3: \quad (4)
\left\{\begin{pmatrix}
0&0&0&0\\
a&0&0&0\\
b&a&0&0\\
c&0&0&0
\end{pmatrix}\right\}, \quad (5)
\left\{\begin{pmatrix}
0&0&0&0\\
a&0&0&0\\
b&a&0&c\\
0&0&0&0
\end{pmatrix}\right\}, \quad (6)
\left\{\begin{pmatrix}
0&0&0&0\\
a&0&0&0\\
b&a&0&c\\
c&0&0&0
\end{pmatrix}\right\}
\\l &= 4: \quad (7)
\left\{\begin{pmatrix}
0&0&0&0\\
a&0&0&0\\
b&a&0&0\\
c&b&a&0
\end{pmatrix}\right\}, \quad\quad\quad a, b, c, d \in \KK. 
\end{aligned}\]
Subalgebras (1), (4), (6), and (7) are regular with cyclic vector $v = (1, 0, 0, 0)$. They correspond to four commutative algebras no.~8, 7, 6, and 5 of dimension $4$ from Table~\ref{table_localg6}. For subalgebras (2), (3), and (5) the first Kravchuk number equals 2, 3, 2 respectively, so they are not regular. 
\end{example}

\medskip

\emph{6) Infinite series.} While there is a finite number of nilpotent algebras of dimension $n$ and index of nilpotency $2$, $n-2$, $n-1, n$, there exist infinitely many non-isomorphic nilpotent algebras of dimension $6$ and index of nilpotency~$3$. It follows that there is an infinite number of local algebras of dimension at least $7$. More precisely, consider the algebras with Hilbert-Samuel sequence $(1, 4, 2)$. Since the index of nilpotency of the maximal ideal of such an algebra equals~$3$, the multiplication is determined by a bilinear symmetric map $\mm/\mm^2 \times \mm/\mm^2 \to \mm^2$. 
We have $\dim \mm/\mm^2 = 4$ and $\dim \mm^2 = 2$, so such maps form a space of dimension $20 = 4(4+1)/2 \cdot 2$. 
An isomorphism between such algebras corresponds to a change of coordinates in $\mm/\mm^2$ and $\mm^2$, i.e. we consider the maps up to the action of the group $\GL(4) \times \GL(2)$. It has dimension $20 = 4^2 + 2^2$ and acts on the space of such maps with a one-dimensional inefficiency kernel. Since $19 < 20$, it follows that there are infinitely many generic pairwise non-isomorphic algebras of this type. A disscussion about algebras of similar type can be found in \cite[Example~3.6]{HaTs1999} and the text before and after it. For more information on Hilbert-Samuel sequences corresponding to infinitely many non-isomorphic local algebras, see~\cite{Lo2017}. 

Let us give an explicit example. For $n = 7$, consider the algebras $A_\alpha$ of the form 
\[A_\alpha = \KK[x_1, x_2, x_3, x_4]\,/\,(x_1^2+x_3^2 - 2x_2^2, x_4^2 - x_2^2 - \alpha(x_3^2-x_2^2), x_ix_j, i \ne j).\]
It is shown in \cite[Section~2.8]{SuTy1968} that in any isomorphy class of algebras of the form $A_\alpha$ there is a finite number of algebras. For $n > 7$, we can add variables $x_5, \ldots, x_{n-3}$ to the algebra $A_\alpha$ with $x_ix_k = 0$ for any $1 \le i \le n-3$, $5 \le k \le n-3$ and obtain an infinite series of pairwise non-isomorphic algebras of dimension $n$. 


\subsection{Knop-Lange theorem}
\label{subsecKLT}

In this section we study actions of arbitrary connected commutative linear algebraic groups on projective spaces with an open orbit. It is well known that such a group $G$ is isomorphic to $\GG_m^r \times \GG_a^s$ for some $r, s \in \Zgezero$, see~\cite[Theorem~15.5]{Hu1975}. The numbers $r$ and $s$ are called the \emph{rank} and the \emph{corank} of $G$, respectively. 

\begin{definition}
\label{act_equivdef}
Actions $\alpha_i\colon G_i \times X_i \to X_i$ of algebraic groups~$G_i$ on algebraic varieties~$X_i$, $i=1,2$, are said to be \emph{equivalent} if there are a group isomorphism $\psi\colon G_1 \to G_2$ and a variety isomorphism $\phi\colon X_1 \to X_2$ such that $\phi\circ\alpha_1= \alpha_2\circ(\psi\times\phi)$.
\end{definition}

The following theorem is proved in~\cite[Proposition 5.1]{KnLa1984}.

\begin{theorem}
\label{KnopLange_theor}
There is a bijection between the following objects: 
\smallskip
\begin{enumerate}
	\item effective actions of connected commutative linear algebraic groups $G$ on $\PP^n$ with an open orbit;
	\item commutative associative unital algebras $A$ of dimension $n+1$.
\smallskip
\end{enumerate}
The bijection is considered up to equivalence of actions and algebra isomorphisms. Moreover, if $G$ is of rank $r$ then $A$ contains exactly $r+1$ maximal ideals. 
The number of isomorphism classes is given in Table~2. 
\end{theorem}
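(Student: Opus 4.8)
The plan is to construct the correspondence in both directions and check that the two constructions are mutually inverse up to the equivalences specified. Starting from an effective action of a connected commutative group $G$ on $\PP^n$ with open orbit, I would first lift it to a linear action. Since $G$ is commutative and the open orbit is affine (being an orbit of a commutative group, it is isomorphic to $G$ itself), the action on $\PP^n$ comes from a representation of a suitable extension on $\KK^{n+1}$. Concretely, by Lemma~\ref{openorb_lem} we have $\dim G = n$, and choosing a point in the open orbit I obtain a cyclic vector. The key idea is that the image of $G$, together with the scalars $\GG_m$, generates a commutative subalgebra $A \subseteq \Mat_{n+1}(\KK)$: one takes $A = \Span(\GG_m \cdot \rho(G))$, the linear span of the lifted operators. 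The subalgebra $A$ is commutative because $G$ is, associative and unital as a subalgebra of a matrix algebra, and I would show it has dimension exactly $n+1$ by exhibiting the cyclic vector $v$ and invoking the regularity machinery of subsection~\ref{SuTy_subsect}: $A$ is a regular subalgebra, so by Lemma~\ref{algsubalg_lem} it comes from its own regular representation and $\dim A = n+1$.

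Conversely, given a commutative associative unital algebra $A$ of dimension $n+1$, I would use the regular representation $R\colon A \to \End(A) \cong \Mat_{n+1}(\KK)$ from subsection~\ref{SuTy_subsect}, which is faithful since $A$ has a unit. The group of units of $A$ maps to $\GL_{n+1}(\KK)$, and I pass to its image in $\PGL_{n+1}(\KK)$ acting on $\PP^n = \PP(A)$. The unit $1 \in A$ gives a cyclic vector, so $A \cdot 1 = A = \KK^{n+1}$, which produces an open orbit; the effectivity in $\PGL_{n+1}$ comes from quotienting out the scalars $\KK^\times \subseteq A^\times$. I would identify the acting group as $A^\times/\KK^\times$, which is connected commutative linear algebraic of dimension $n$, and then match it to $G$ via the decomposition $G \cong \GG_m^r \times \GG_a^s$.

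To verify the two constructions are inverse up to equivalence, I would track the cyclic vector: both directions pin down the identification $\phi\colon A \to \KK^{n+1}$, $a \mapsto a\cdot v$, used in Lemma~\ref{algsubalg_lem}, so the composite recovers the original action up to the choice of cyclic vector, and different choices give equivalent actions (an inner change of the identification $\phi$). For the final assertion relating the rank $r$ of $G$ to the number of maximal ideals of $A$, I would use Lemma~\ref{directloc_lem}: write $A = A_1 \oplus \cdots \oplus A_k$ as a direct sum of local ideals, so that $A$ has exactly $k$ maximal ideals. The group of units splits as $A^\times = A_1^\times \times \cdots \times A_k^\times$, and each local factor contributes $A_i^\times \cong \GG_m \times (1 + \mm_i)$, where the second factor is unipotent. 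Passing to $A^\times/\KK^\times$ absorbs one copy of $\GG_m$, so the rank becomes $k-1$; hence $r = k - 1$, i.e.\ $A$ has exactly $r+1$ maximal ideals.

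I expect the main obstacle to be the lifting step in the first direction: showing cleanly that an arbitrary effective $G$-action on $\PP^n$ with open orbit lifts to a faithful linear representation on $\KK^{n+1}$ whose image spans an $(n+1)$-dimensional commutative subalgebra. The subtlety is that the lift of a projective representation of a commutative group need not be a representation of $G$ itself but of a central extension by $\GG_m$; controlling this extension and checking that the scalar-extended span has the correct dimension, rather than something larger, is where the regularity criterion (first Kravchuk number equal to one, equivalently $\dim A = n+1$) does the essential work. The verification of the `up to equivalence' statements—that different identifications and different choices of cyclic vector yield equivalent actions—will be routine but must be stated carefully to make the bijection well defined on equivalence classes.
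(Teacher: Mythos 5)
Your overall route coincides with the paper's: in one direction pass to the preimage $H=\pi^{-1}(G)$ of $G\subseteq\PGL_{n+1}(\KK)$ in $\GL_{n+1}(\KK)$ and take the subalgebra it spans, using the cyclic vector and Lemma~\ref{openorb_lem} to pin down $\dim A=n+1$; in the other direction use the action of $A^\times/\KK^\times$ on $\PP(A)$ with the unit as cyclic vector; and the count $r+1$ of maximal ideals via the decomposition of $A$ into local ideals is exactly the paper's argument.

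The one genuine gap is the commutativity of the lift. You assert early on that $A=\Span(\GG_m\cdot\rho(G))$ ``is commutative because $G$ is,'' and later you correctly observe that the lift is only a central extension of $G$ by $\GG_m$ --- but you then say that controlling this extension is where the regularity criterion (first Kravchuk number, cyclic vector) does the essential work. It does not: regularity only controls the \emph{dimension} of the span; it says nothing about whether two lifts of commuting projective transformations commute on the nose rather than merely up to scalar. A priori the commutator defines an alternating bi-multiplicative pairing $G\times G\to\KK^\times$, and if it were nontrivial (as happens for finite Heisenberg subgroups of $\PGL_{n+1}(\KK)$) the span of $H$ would not be a commutative algebra at all, and the whole construction would collapse. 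The paper closes this with a short separate argument that you need to supply: $[H,H]\subseteq\Ker\pi|_H=\KK^\times$, the commutant of a connected group is connected, and commutators have determinant $1$, so $[H,H]$ lies in the finite group of $(n+1)$-th roots of unity and is therefore trivial; here connectedness of $G$ (hence of $H$) is essential. Everything else in your plan --- the mutual inverseness tracked through the cyclic vector, the equivalence bookkeeping, and the rank computation $r=k-1$ --- is routine and matches the text.
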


\begin{table}[ht]
$\begin{array}{c|c|c|c|c|c|c|c|c|c|c|c|c}
\dim A& 1 & 2 & 3 & 4 & 5 & 6 &     7   &    8   &    9   &    10  &    11  & 12 \\ \hline 
r = 0 & 1 & 1 & 2 & 4 & 9 & 25 & \infty & \infty & \infty & \infty & \infty & \dots \\
r = 1 &   & 1 & 1 & 3 & 6 & 16 &   42   & \infty & \infty & \infty & \infty & \dots \\
r = 2 &   &   & 1 & 1 & 3 & 7  &   18   &   49   & \infty & \infty & \infty & \dots \\
r = 3 &   &   &   & 1 & 1 & 3  &    7   &   19   &   51   & \infty & \infty & \dots \\
r = 4 &   &   &   &   & 1 & 1  &    3   &    7   &   19   &    52  & \infty & \dots \\
r = 5 &   &   &   &   &   & 1  &    1   &    3   &   7    &    19  &   52   & \dots \\ 
\cdots&   &   &   &   &   &    & \ddots & \ddots & \ddots & \ddots & \ddots & \ddots\\\hline
\text{total} & 1 & 2 & 4 & 9 & 20 & 53  & \infty & \infty & \infty & \infty & \infty & \dots
\end{array}$
\caption{The number of algebras of small dimension \parbox[b][15pt]{0pt}{}}
\end{table}

\begin{proof} 
$(b) \to (a)$ The group of invertible elements $A^\times$ of the algebra $A$ is a connected commutative linear algebraic group that is open in $A$. The factor group ${G =\PP(A^\times):= A^\times / \,\KK^\times}$ by the subgroup of invertible scalars $\KK^\times \cdot 1$ is a connected commutative linear algebraic group. It acts in a canonical way on $\PP(A) = \PP^n$ with an open orbit isomorphic to $\PP(A^\times)$. 

\emph{Equivalence.} An algebra isomorphism $\phi\colon A_1 \to A_2$ induces a group isomorphism $\PP(A_1^\times) \to \PP(A_2^\times)$ and a variety isomorphism $\PP(A_1) \to \PP(A_2)$. They determine the equivalence between the actions of $\PP(A_i^\times)$ on~$\PP(A_i)$, $i = 1,2$. 

\smallskip
$(a) \to (b)$ Lemma~\ref{openorb_lem} implies $\dim G = n$. Since $G$ acts on $\PP^n$ effectively, we can consider $G$ as a subgroup of $\Aut(\PP^n) = \PGL_{n+1}(\KK)$. 

Denote by $\pi\colon \GL_{n+1}(\KK) \to \PGL_{n+1}(\KK)$ the canonical projection and let $H := \pi^{-1}(G)$. 
Let us prove that $H$ is a connected commutative linear algebraic group of dimension $n+1$. First note that $H$ contains the group $\KK^\times$ of invertible scalar matrices since $G \ni 1$. Then $\dim H = \dim G + \dim \Ker\pi\bigr|_H = n + 1$. 
Further, $H$ is connected as $\pi(H) = G$ and $\Ker \pi\bigr|_H$ are connected. 
Finally, let us prove that $H$ is commutative. Consider the commutant $[H, H]$ of the group $H$. Since $G$ is commutative, we have $[H, H] \subseteq \Ker\pi\bigr|_H = \KK^\times$. On the other hand, $[H, H]$ is connected as the commutant of a connected group, whence $[H, H] = \{1\}$ or $[H, H] = \KK^\times$. The latter is impossible since the commutant consists of matrices with determinant~$1$. It follows that $[H, H]$ is trivial and $H$ is commutative. 

Consider $\GL_{n+1}(\KK)$ as an open subset of $\Mat_{n+1}(\KK)$ and denote by $A$ the associative subalgebra of $\Mat_{n+1}(\KK)$ generated by $H$. Clearly, $A$ is a commutative unital algebra. Let us prove that $\dim A = n+1$. 

Note that the tautological action of $H \subseteq \GL_{n+1}(\KK)$ on $\KK^{n+1}$ has an open orbit. 
The group of invertible elements $A^\times \subseteq \GL_{n+1}(\KK)$ is open in~$A$. It is commutative, acts effectively on~$\KK^{n+1}$, and the action has an open orbit since the action of $H \subseteq A^\times$ has. By Lemma~\ref{openorb_lem} we obtain $\dim A^\times = \dim \KK^{n+1} = n+1$, whence $\dim A = n+1$. Moreover, $H = A^\times$ since $H$ is an algebraic subgroup of~$A^\times$ of the same dimension. 

\emph{Equivalence.} 
Let $\psi\colon G_1 \to G_2$ and $\phi\colon \PP^n \to \PP^n$ determine the equivalence of two actions. 
Since $\phi \in \PGL_n(\KK)$, there is $\Phi\in\GL_{n+1}(\KK)$ that induces $\phi$ on~$\PP(\KK^{n+1})$. The isomorphism of vector spaces $\Phi$ induces an isomorphism of operator algebras $\Psi\colon\Mat_{n+1}(\KK) \to \Mat_{n+1}(\KK)$, $\Psi(X) = \Phi X\Phi^{-1}$. Considering $G_i$ as the subgroups of $\PGL_n(\KK)$ and setting $H_i = \pi^{-1}(G_i)$, $i = 1,2$, we obtain that $\Psi(H_1) = \Phi \pi^{-1}(G_1) \Phi^{-1} = \pi^{-1}(\phi G_1 \phi^{-1}) = \pi^{-1}(G_2) = H_2$, whence $\Psi(A_1) = A_2$ is the desired algebra isomorphism. 

\smallskip

Let us check that two constructed maps are inverse to each other. Let $A$ be an algebra as in $(b)$. Then we have an action of the group $G = A^\times / \,\KK^\times$ on $\PP(A)$ as in $(a)$. We can consider $G$ as a subgroup of $\PGL(A)$. According to $(a) \to (b)$, this action corresponds to the associative subalgebra of $\Mat_{n+1}(\KK)$ generated by $\pi^{-1}(A^\times / \,\KK^\times) = A^\times$, which coincides with $A$. 

Conversely, let $G$ act on $\PP^n$ with an open orbit. We have an algebra $A$ as in $(a) \to (b)$, in particular, $A^\times = H = \pi^{-1}(G)$. Then $A^\times / \,\KK^\times$ coincides with $G$ in $\PGL_{n+1}(\KK)$. 

\smallskip

For the second assertion, note that if $A = \KK \oplus \mm$ is local, its group of invertible elements equals $A^\times = \KK^\times \oplus \mm = \KK^\times \times (1+\mm)$, where $(1+\mm, \times) \cong (\mm, +) \cong \GG_a^n$ via exponential map and $\KK^\times \cong \GG_m$. Since any commutative algebra $A$ is a sum of local algebras by Lemma~\ref{directloc_lem}, it follows that the rank of the group $A^\times$ equals the number of its local summands, which is equal to the number of maximal ideals. By construction, the rank of $A^\times = H$ is one more than the rank of~$G$. 

\smallskip

The number of isomorphism classes of algebras of dimension $n+1$ can be found by direct computations using the number of local algebras of a fixed dimension, which is given in Table~\ref{table_localg6}. More precisely, any algebra of dimension $n+1$ decomposes into a sum of local algebras, and this decomposition is defined by unordered tuples of local algebras of dimensions $m_1, \ldots, m_r$, where $n+1 = m_1 + \ldots + m_r$. 
\end{proof}

\begin{remark} 
In~\cite[Proposition~5.1]{KnLa1984}, the first assertion of Theorem~\ref{KnopLange_theor} is proved for an arbitrary ground field $\KK$. 
\end{remark}

\begin{remark}
Theorem~2.1 of Suprunenko and Tyshkevich~\cite{SuTy1968} states that there is a one-to-one correspondence between maximal commutative subalgebras of $\Mat_{n+1}(\KK)$ and maximal commutative subgroups of $\GL_{n+1}(\KK)$. More precisely, for a subalgebra $A \subseteq \Mat_{n+1}(\KK)$ and a subgroup $H \subseteq \GL_{n+1}(\KK)$ the bijection is defined by $A \mapsto A^\times$ and $\Span H \mapsfrom H$. Let us reformulate the proof of Knop-Lange theorem in these terms. 

It is easy to see that the correspondence of Theorem~2.1 restricts to the bijection between regular subalgebras of $\Mat_{n+1}(\KK)$ and regular subgroups of $\GL_{n+1}(\KK)$. On the one hand, regular subalgebras of $\Mat_{n+1}(\KK)$ correspond to abstract algebras of dimension $n+1$ by Lemma~\ref{algsubalg_lem}. On the other hand, the arguments from the proof of Knop-Lange theorem show that regular subgroups $H \subseteq \GL_{n+1}(\KK)$ are in bijection with commutative subgroups $G \subseteq \PGL_{n+1}(\KK) = \Aut(\PP^n)$ such that the corresponding action of the group $G$ on $\PP^n$ has an open orbit: the correspondence is given by $G = \pi(H)$ and $H = \pi^{-1}(G)$, where $\pi$ is the canonical projection $\pi\colon \GL_{n+1}(\KK) \to \PGL_{n+1}(\KK)$. Thus we obtain the bijection between $G$-actions on $\PP^n$ with an open orbit and algebras of dimension $n+1$. 
\end{remark}

Now we come to a description of orbits of a commutative group acting on $\PP^n$ in terms of the corresponding algebra. 

\begin{corollary}
\label{orb_as_id_cor}
The correspondence of Theorem~\ref{KnopLange_theor} determines a bijection between $G$-orbits on~$\PP^n$ and nonzero principal ideals in the algebra~$A$. 
\end{corollary}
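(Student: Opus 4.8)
The plan is to assign to the $G$-orbit of a point $[a]\in\PP(A)$ the principal ideal $Aa\subseteq A$, and to check that this is a well-defined bijection onto the set of nonzero principal ideals. Recall from the proof of Theorem~\ref{KnopLange_theor} that $G=A^\times/\KK^\times$ acts on $\PP(A)=\PP^n$ by $[u]\cdot[a]=[ua]$ for $u\in A^\times$. First I would note that the assignment depends only on the orbit: every point of the orbit of $[a]$ has the form $[\mu u a]$ with $\mu\in\KK^\times$ and $u\in A^\times$, and since $\mu u$ is invertible we get $A(\mu u a)=Aa$. Thus $Aa$ is an invariant of the orbit, and it is nonzero because $a\neq 0$ forces $a=1\cdot a\in Aa$.

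Surjectivity is then immediate: a nonzero principal ideal is $Aa$ for some $a\neq 0$, and the orbit of the point $[a]\in\PP(A)$ is sent to it. The entire content of the statement is injectivity, namely that $Aa=Ab$ for nonzero $a,b$ implies that $[a]$ and $[b]$ lie in one $G$-orbit, i.e.\ that $b=va$ for some $v\in A^\times$.

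To prove this I would pass to the decomposition $A=A_1\oplus\cdots\oplus A_r$ into local ideals provided by Lemma~\ref{directloc_lem}, writing $a=(a_1,\dots,a_r)$, $b=(b_1,\dots,b_r)$ and $A^\times=A_1^\times\times\cdots\times A_r^\times$. Since $Aa=\bigoplus_i A_ia_i$, the equality $Aa=Ab$ is equivalent to $A_ia_i=A_ib_i$ for every $i$, so the problem reduces to a single local factor $A_i=\KK\oplus\mm_i$. Here $a_i=0$ forces $b_i=1\cdot b_i\in A_ib_i=A_ia_i=0$ and conversely, so I may assume $a_i,b_i\neq 0$. Writing $b_i=x_ia_i$ and $a_i=y_ib_i$ gives $(1-y_ix_i)a_i=0$. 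By Lemma~\ref{localg_lem} each element of $A_i$ is either invertible or nilpotent, and if both $x_i$ and $y_i$ lay in $\mm_i$, then $y_ix_i\in\mm_i$ would be nilpotent, making $1-y_ix_i$ invertible and forcing $a_i=0$, a contradiction. Hence $x_i$ or $y_i$ is invertible, and in either case $b_i=v_ia_i$ for some $v_i\in A_i^\times$ (take $v_i=x_i$, or $v_i=y_i^{-1}$). Assembling $v=(v_1,\dots,v_r)\in A^\times$ yields $b=va$, whence $[v]\cdot[a]=[b]$ and $[a],[b]$ share a $G$-orbit.

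The delicate point, which I expect to be the main obstacle, is precisely this injectivity argument: the relation $Aa=Ab$ only furnishes $b=xa$ and $a=yb$ with $x,y\in A$ that need not be invertible, and one must upgrade the generator to a genuine unit. The local decomposition combined with the invertible-or-nilpotent dichotomy of Lemma~\ref{localg_lem} is what makes this possible; the only case needing separate care is the vanishing of a single component, which is settled using the unit of each local factor.
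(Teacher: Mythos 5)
Your proof is correct, and its skeleton is the same as the paper's: both identify $G$-orbits with association classes of nonzero elements of $A$ and then match association classes with nonzero principal ideals. The difference is in how the second identification is handled. The paper disposes of it with the single sentence ``it remains to notice that a generator of a principal ideal is defined up to association,'' whereas you actually prove that $Aa=Ab$ (with $a,b\neq 0$) forces $b=va$ for some $v\in A^\times$, by decomposing $A$ into local summands via Lemma~\ref{directloc_lem}, reducing to one factor $A_i=\KK\oplus\mm_i$, and using the unit-or-nilpotent dichotomy of Lemma~\ref{localg_lem} to show that in $(1-y_ix_i)a_i=0$ at least one of $x_i,y_i$ must be a unit. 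This is a genuine value-add rather than pedantry: the implication ``same principal ideal $\Rightarrow$ associates'' fails in arbitrary commutative rings, so it really does need the Artinian/local structure, and your argument is exactly the standard way to supply it. The only cosmetic point is that for the components with $a_i=b_i=0$ you should say explicitly that $v_i$ can be taken to be the identity $\varepsilon_i$ of $A_i$ when assembling the global unit $v$, but that is immediate.
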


\begin{proof}
Let us establish first a bijection between $G$-orbits on~$\PP^n$ and association classes of nonzero elements in the algebra~$A$. If for $a, b \in A$ there exists $c \in A^\times$ such that $a = cb$, then $[b] \in \PP(A)$ is obtained from $[a] \in \PP(A)$ by the action of $[c] \in A^\times / \,\KK^\times$. Conversely, if $[a] = [c] \cdot [b]$ for $a, b \in A$, $c \in A^\times$, then $a = \lambda cb$, $\lambda \in \KK^\times$, whence $a$ and $b$ are associated.

It remains to notice that a generator of a principal ideal is defined up to association. 
\end{proof}

For the following statement, see~\cite[Proposition 3.5]{HaTs1999}. 
\begin{corollary}
\label{fin_orb_Gan_cor}
There is a unique action of $\GG_a^n$ on $\PP^n$ with finitely many orbits. It corresponds to the truncated polynomial algebra $A=\KK[S]\,/\,(S^{n+1})$. 
\end{corollary}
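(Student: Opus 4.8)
The plan is to use the bijection established in Theorem~\ref{KnopLange_theor} together with the orbit description of Corollary~\ref{orb_as_id_cor}, reducing the geometric statement about $\GG_a^n$-actions to a purely algebraic statement about local algebras. First I would observe that, by the second assertion of Theorem~\ref{KnopLange_theor}, an action of $G = \GG_a^n$ (a unipotent group, i.e. of rank $r = 0$) on $\PP^n$ corresponds precisely to a commutative associative unital algebra $A$ of dimension $n+1$ with exactly one maximal ideal, that is, to a \emph{local} algebra $A$ of dimension $n+1$. Thus the task becomes: find all local algebras $A$ of dimension $n+1$ having only finitely many principal ideals, and show there is a unique such algebra, namely the truncated polynomial algebra $\KK[S]/(S^{n+1})$.

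Next I would translate ``finitely many orbits'' via Corollary~\ref{orb_as_id_cor} into ``finitely many nonzero principal ideals.'' The key algebraic step is then to characterize local algebras with only finitely many principal ideals. For the truncated polynomial algebra $A = \KK[S]/(S^{n+1})$, every nonzero element has the form $u \cdot S^k$ for a unique $k \in \{0,1,\ldots,n\}$ and a unit $u \in A^\times$, so the principal ideal it generates is $(S^k) = \mm^k$; hence there are exactly $n+1$ nonzero principal ideals and the action has finitely many orbits. Conversely, I would argue that if a local algebra $A$ with maximal ideal $\mm$ has only finitely many principal ideals, then $A$ must be isomorphic to $\KK[S]/(S^{n+1})$. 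The cleanest route is to consider the quotient $\mm/\mm^2$: if $\dim_\KK \mm/\mm^2 \ge 2$, pick two elements $x,y \in \mm$ whose images are linearly independent modulo $\mm^2$, and consider the family of principal ideals $(x + ty)$ for $t \in \KK$; I expect that for distinct scalars $t$ these ideals are pairwise distinct, producing a one-parameter (hence infinite) family and contradicting finiteness. This forces $\dim_\KK \mm/\mm^2 \le 1$, i.e. $\mm$ is generated by a single element $S$, so $A$ is a quotient of $\KK[S]$; since $\dim_\KK A = n+1$ and $A$ is local with nilpotent maximal ideal, the only possibility is $A \cong \KK[S]/(S^{n+1})$.

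The main obstacle I anticipate is the converse direction, specifically verifying rigorously that a two-dimensional $\mm/\mm^2$ yields infinitely many distinct principal ideals. The subtlety is that two elements $x + t_1 y$ and $x + t_2 y$ could in principle generate the same ideal even when $t_1 \neq t_2$, if they are associates (differ by a unit) or if higher-order relations in $\mm$ collapse the family. I would handle this by working modulo $\mm^2$: since units act on $\mm/\mm^2$ as scalar multiplication by their constant term (an element of $\KK^\times$), the association class of $x + ty$ determines the line $\KK(x + ty) \subseteq \mm/\mm^2$ up to scalar, and distinct values of $t$ give distinct lines in the projectivization $\PP(\mm/\mm^2)$. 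Because $\dim \mm/\mm^2 \ge 2$ gives at least a $\PP^1$ worth of such lines over the infinite field $\KK$, this produces infinitely many non-associate elements and hence infinitely many principal ideals, completing the argument. Uniqueness up to equivalence of actions then follows immediately from the ``up to isomorphism'' clause of Theorem~\ref{KnopLange_theor}.
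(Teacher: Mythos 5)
Your proposal is correct, and the reduction steps (Knop--Lange for $r=0$, then Corollary~\ref{orb_as_id_cor} to pass to principal ideals, and the forward direction for $\KK[S]/(S^{n+1})$) coincide with the paper's. But your converse argument is genuinely different. The paper argues by induction on $n$ via the socle: finiteness of orbits forces the fixed-point locus $\PP(\Soc A)$ to be finite, hence $\dim\Soc A=1$; one then passes to $A/\Soc A$ (finiteness of principal ideals survives the quotient) and applies the inductive hypothesis, with a short final check that $S^n\ne 0$. You instead bound the embedding dimension directly: if $\dim\mm/\mm^2\ge 2$ you produce the one-parameter family $(x+ty)$, $t\in\KK$, of principal ideals, and your verification that these are pairwise distinct is sound --- in a local algebra two generators of the same principal ideal are associates, a unit $\lambda+m$ acts on $\mm/\mm^2$ as multiplication by $\lambda\in\KK^\times$, so the line $\KK(\bar x+t\bar y)\subseteq\mm/\mm^2$ is an invariant of the ideal and distinct $t$ give distinct lines. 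Then $\dim\mm/\mm^2\le 1$ plus nilpotency of $\mm$ gives that $\mm$ is principal and $A\cong\KK[S]/(S^{n+1})$ by dimension count. Your route is non-inductive and purely algebra-internal, and it isolates the ``right'' invariant ($\mm$ must be monogenic); the paper's route is shorter to write because it leans on the geometric identification of fixed points with $\PP(\Soc A)$ and dovetails with the Gorenstein/socle theme used later (Proposition~\ref{GLA}). Either argument is acceptable.
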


\begin{proof}
By Corollary~\ref{orb_as_id_cor}, we have to investigate local $(n+1)$-dimensional algebras~$A$ with finite number of principal ideals. First note that the algebra $\KK[S]\,/\,(S^{n+1})$ is local and has finite number of principal ideals $(S^k)$, $0 \le k \le n + 1$. 
Let us prove the converse statement by induction on~$n$. Let $A$ be a local algebra of dimension~$n+1$ with finitely many principal ideals. The set of fixed points in $\PP^n = \PP(A)$ coincides with $\PP(\Soc A)$, so $\dim \Soc A = 1$. Notice that $\Soc A$ is an ideal in~$A$, so we can consider the factor-algebra $A\,/\,\Soc A$. It is $n$-dimensional and has a finite number of principal ideals as well, so by inductive hypothesis it is isomorphic to $\KK[s]\,/\,(s^{n})$. Let $S + \Soc A \in A\,/\,\Soc A$ corresponds to~$s$. Then $A$ is the direct sum of the vector spaces $\Soc A$ and $\langle S^k, \, 0 \le k \le n-1\rangle$. Moreover, it follows that $S^n \in \Soc A$, whence $S^{n+1} = 0$. If $S^n = 0$, then $S^{n-1}\cdot S = 0$ and $S^{n-1}\Soc A = 0$ imply $S^{n-1}\mm = 0$, a contradiction with $S^{n-1} \notin \Soc A$. Thus $A = \langle S^k, \, 0 \le k \le n\rangle$. 
\end{proof}

For positive integers $n$ and $r$, we denote by $p_r(n)$ the number of partitions $n = n_1 + \ldots + n_r$ with $n_1 \ge \ldots \ge n_r \ge 1$. 

\begin{corollary}
Let $G$ be a connected commutative linear algebraic group of dimension $n$ and rank $r$. Then there exist precisely $p_r(n)$ effective actions of $G$ on $\PP^n$ with finite number of orbits. The corresponding algebras $A$ are precisely the algebras of the form $\KK[S]\,/\,(f(S))$, where $f(S)$ is a polynomial of degree $n$ with precisely $r$ distinct roots. 
\end{corollary}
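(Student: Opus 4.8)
The plan is to translate the problem into algebra via Theorem~\ref{KnopLange_theor} and then to enumerate. Since every orbit of an algebraic group action is locally closed and a dense orbit is therefore open, an effective action of $G$ on $\PP^n$ with only finitely many orbits automatically has an open orbit (one of the finitely many locally closed orbits covering the irreducible $\PP^n$ must be dense). Hence Theorem~\ref{KnopLange_theor} applies: such actions, up to equivalence, correspond bijectively to commutative associative unital algebras $A$ of dimension $n+1$, up to isomorphism, and the dimension $\rk G = r$ corresponds by the ``moreover'' clause of that theorem to $A$ having exactly $r+1$ maximal ideals. By Corollary~\ref{orb_as_id_cor} the $G$-orbits on $\PP^n$ match the nonzero principal ideals of $A$, so I must single out those algebras $A$, with $r+1$ maximal ideals, that possess only finitely many principal ideals.

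First I would reduce to the local case. Write $A = A_1 \oplus \dots \oplus A_{r+1}$ as the sum of its local ideals (Lemma~\ref{directloc_lem}). Every ideal of such a finite product is a product $I_1 \oplus \dots \oplus I_{r+1}$ of ideals of the factors, and $(a_1, \dots, a_{r+1})$ generates $(a_1) \oplus \dots \oplus (a_{r+1})$; hence a product ideal is principal exactly when each $I_j$ is principal in $A_j$. Consequently the principal ideals of $A$ are precisely the products of principal ideals of the summands, their number is the product of the corresponding numbers, and $A$ has finitely many principal ideals if and only if each $A_j$ does.

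Next I would apply Corollary~\ref{fin_orb_Gan_cor}, whose argument shows that a local algebra of dimension $m$ has finitely many principal ideals if and only if it is isomorphic to $\KK[S]/(S^m)$. Thus the algebras we seek are exactly $A \cong \bigoplus_{j=1}^{r+1} \KK[S_j]/(S_j^{m_j})$ with $m_j \ge 1$ and $m_1 + \dots + m_{r+1} = n+1$. Picking distinct scalars $\lambda_1, \dots, \lambda_{r+1} \in \KK$ (possible since $\KK$ is infinite) and setting $f(S) = \prod_{j=1}^{r+1}(S - \lambda_j)^{m_j}$, the Chinese Remainder Theorem gives $\KK[S]/(f(S)) \cong A$; so these algebras are precisely the $\KK[S]/(f(S))$ with $\deg f = n+1$ and exactly $r+1$ distinct roots.

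Finally I would count. The isomorphism type of $\bigoplus_j \KK[S_j]/(S_j^{m_j})$ is determined by, and determines, the multiset of summand dimensions $\{m_1, \dots, m_{r+1}\}$, i.e.\ the partition $n+1 = m_1 + \dots + m_{r+1}$ into exactly $r+1$ positive parts; distinct partitions give non-isomorphic algebras, hence inequivalent actions. Therefore the number of inequivalent finite-orbit actions of $G$ equals the number of partitions of $n+1$ into $r+1$ parts. The only step that is not purely formal is the description of the ideals of the direct product $A = A_1 \oplus \dots \oplus A_{r+1}$ together with the claim that such a product ideal is principal iff each factor is; this is the point where I would be most careful, after which the identification with $\KK[S]/(f(S))$ and the partition count follow directly from the cited results.
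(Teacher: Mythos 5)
Your argument is correct and follows the paper's proof essentially step for step: reduce via Corollary~\ref{orb_as_id_cor} to counting principal ideals, decompose $A$ into local summands by Lemma~\ref{directloc_lem}, observe that principal ideals of the sum are exactly the sums of principal ideals of the summands, and invoke Corollary~\ref{fin_orb_Gan_cor} to force each summand to be a truncated polynomial algebra. The two points you add that the paper leaves tacit — that finitely many orbits on the irreducible $\PP^n$ already forces an open orbit (so Theorem~\ref{KnopLange_theor} applies), and the Chinese Remainder identification of $\bigoplus_j \KK[S_j]/(S_j^{m_j})$ with $\KK[S]/(f(S))$ — are both welcome and correct.

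One discrepancy deserves attention, though it reflects on the statement rather than on your proof. You correctly arrive at the count ``partitions of $n+1$ into exactly $r+1$ positive parts,'' i.e.\ $p_{r+1}(n+1)$, whereas the corollary asserts $p_r(n)$. These are not equal in general: by the standard recursion $p_{r+1}(n+1)=p_r(n)+p_{r+1}(n-r)$, they agree only when $n\le 2r$. For instance, $G=\GG_m\times\GG_a^2$ acting on $\PP^3$ admits two finite-orbit actions, coming from $\KK[S]/(S^3(S-1))$ and $\KK[S]/(S^2(S-1)^2)$, while $p_1(3)=1$. Since $\dim A=n+1$ and $A$ has exactly $r+1$ maximal ideals, the polynomial $f$ must have degree $n+1$ and $r+1$ distinct roots, not degree $n$ and $r$ roots as printed; the count $p_r(n)$ suffers from the same index slip. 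So do not try to force your count into the form $p_r(n)$ — your version is the correct one, and you should simply note that the statement needs $n$ and $r$ replaced by $n+1$ and $r+1$ throughout.
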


\begin{proof}
By Corollary~\ref{orb_as_id_cor}, the number of $G$-orbits in~$\PP^n$ is equal to the number of principal ideals in the corresponding algebra $A$. Let $A = A_1 \oplus \ldots \oplus A_n$ be the decomposition into the sum of local ideals, see Lemma~\ref{directloc_lem}. Principal ideals in~$A$ are precisely the sums of principal ideals in~$A_i$, so the number of principal ideals in $A$ is finite if and only if it is finite for every local summand. By Corollary~\ref{fin_orb_Gan_cor}, this holds if and only if every $A_i$ is isomorphic to $\KK[S]\,/\,(S^{n_i})$, where $n_i = \dim A_i$. Hence the algebra $A$ is of the required form and is uniquely determined by dimensions $n_1,\ldots, n_r$. 
\end{proof}

\begin{example}
Consider the algebra $A = \KK^{n+1}$ with the coordinate-wise multiplication. 
Then $A^\times = (\KK^\times)^{n+1}$, and the group $A^\times / \,\KK^\times$ is isomorphic to $\GG_m^n$: an element ${(t_1, \ldots, t_n) \in \GG_m^n}$ corresponds to the class of $(1, t_1, \ldots, t_n) \in A^\times$ and acts via multiplication on the classes of elements $(z_0, \ldots, z_n) \in A$: 
\[(t_1, \ldots, t_n) \cdot [z_0 : z_1 : \ldots : z_n] = [z_0 : t_1z_1 : \ldots t_nz_n].\]
It is an action of $\GG_m^n$ on $\PP^n$ with an open orbit $\{z_i \ne 0, \; 0 \le i \le n\}$. The other orbits are parameterized by the set of indices $0 \le i \le n$ such that $z_i = 0$, so there are $2^{n+1} - 1$ orbits for this action. 
\end{example}

\begin{example}
\label{KnL_S1S2_ex}
Consider the local algebra $A = \KK[S_1, S_2]\,/\,(S_1^2, S_1S_2, S_2^2)$, $\mm = \langle S_1, S_2\rangle$. Let us find the corresponding action of $A^\times / \,\KK^\times$ on $\PP(A)$. 

Since $A^\times / \,\KK^\times = (1 + \mm, \times) \cong (\mm, +) \cong \GG_a^2$ via exponential map, the action of an element $(x_1, x_2) \in \GG_a^2$ is given by the multiplication by the class of $\exp(x_1S_1+x_2S_2) \in A^\times$. Applying this to $[z_0:z_1:z_2] \in \PP^2$ identified with the class of $z_0 + z_1S_1+z_2S_2 \in A$, we obtain
\begin{multline*}
(x_1, x_2)\cdot [z_0:z_1:z_2] = \exp(x_1S_1+x_2S_2)(z_0 + z_1S_1+z_2S_2) = \\ = (1+x_1S_1+x_2S_2)(z_0 + z_1S_1+z_2S_2) = z_0 + (z_1+x_1z_0)S_1 + (z_2+x_2z_0)S_2 = \\ = [z_0 : z_1+x_1z_0 : z_2+x_2z_0]. 
\end{multline*}
It is an action of $\GG_a^2$ on $\PP^2$ with an open orbit $\{z_0 \ne 0\}$. The other orbits are the fixed points, which form the line $\{z_0 = 0\}$, so there are infinitely many orbits in this case. 
\end{example}

\begin{example}
\label{KnL_S3_ex}
Consider the remaining local algebra of dimension~3: $A = \KK[S]\,/\,(S^3)$ with $\mm = \langle S, S^2\rangle$. As above, the action of $(x_1, x_2) \in \GG_a^2$ on $[z_0:z_1:z_2] \in \PP^2$ is given by
\begin{multline*}
(x_1, x_2)\cdot [z_0:z_1:z_2] = \exp(x_1S+x_2S^2)(z_0 + z_1S+z_2S^2) = \\ = \left(1+x_1S+\Bigl(x_2+\frac{x_1^2}{2}\Bigr)S^2\right)(z_0 + z_1S+z_2S^2) = z_0 + (z_1+x_1z_0)S + \\ + \left(z_2+x_1z_1 + \Bigl(x_2+\frac{x_1^2}{2}\Bigr)z_0\right)S^2 = \Bigl[z_0 : z_1+x_1z_0 : z_2+x_1z_1+\Bigl(x_2+\frac{x_1^2}{2}\Bigr)z_0\Bigr]. 
\end{multline*}
It is an action of $\GG_a^2$ on $\PP^2$ with an open orbit $\{z_0 \ne 0\}$. The other orbits are $\{z_0 = 0, z_1 \ne 0\}$ and $\{z_0 = z_1 = 0\}$, so there are $3$ orbits for this action. 
\end{example}


\subsection{Polynomials and differential operators}
\label{IVsubsection}
We begin with some auxiliary definitions and bijections required for Hassett-Tschinkel correspondence. Similar results are explained in~\cite{IaKa1999} with the reference to~\cite{Ma1916}. Let $\KK$ be a field of characteristic zero. Fix $n \in \NN$ and consider two polynomial algebras $\KK[x_1, \ldots, x_n]$ and $\KK[S_1, \ldots, S_n]$. If we identify $S_i$ with $\frac{\pa}{\pa x_i}$, $1 \le i \le n$, then $\KK[S_1, \ldots, S_n]$ can be considered as the polynomial algebra $\KK[\frac{\pa}{\pa x_1}, \ldots, \frac{\pa}{\pa x_n}]$ of differential operators with constant coefficients. 
\begin{construction}
\label{IVconstr}
Consider the pairing between $\KK[x_1, \ldots, x_n]$ and $\KK[S_1, \ldots, S_n]$:
\begin{equation}
\label{pairing}
\KK[S_1, \ldots, S_n] \times \KK[x_1, \ldots, x_n] \,\to \,\KK, \quad
(g, f) \; \mapsto \; g[f]\bigr|_{(0,\ldots,0)} =: \langle g \mid f \rangle.
\end{equation}
In particular, $\langle S_1^{i_1}\ldots S_n^{i_n} \mid x_1^{j_1}\ldots x_n^{j_n}\rangle$ equals $i_1!\ldots i_n!$ if $i_k = j_k$, $1 \le k \le n$, and $0$ otherwise. 
The pairing is non-degenerate:
\begin{itemize}
\item $f \in \KK[x_1, \ldots, x_n]$ with $\langle g \mid f \rangle = 0 \quad \forall g \in \KK[S_1, \ldots, S_n]$ implies $f = 0$;
\item $g \in \KK[S_1, \ldots, S_n]$ with $\langle g \mid f \rangle = 0 \quad \forall f \in \KK[x_1, \ldots, x_n]$ implies $g = 0$.
\end{itemize}
Moreover, it induces the perfect pairing $\KK[S_1, \ldots, S_n]_{\le d} \times \KK[x_1, \ldots, x_n]_{\le d} \to \KK$ between polynomials and differential operators of total degree at most $d$ since these vector spaces are of finite dimension and the restriction of the pairing is non-degenerate as well. 

For a subspace $V \subseteq \KK[x_1, \ldots, x_n]$, one can define the subspace 
\[I_V = \{g \in \KK[S_1, \ldots, S_n] \colon \langle g \mid f \rangle = 0 \;\, \forall f\in V\},\]
and for a subspace $I \subseteq \KK[S_1, \ldots, S_n]$ one can consider
\[V_I = \{f \in \KK[x_1, \ldots, x_n] \colon \langle g \mid f \rangle = 0 \;\, \forall g \in I\}.\]
\end{construction}

\begin{example}
Let $V = \langle x_1^2\rangle \subseteq \KK[x_1]$. Then $I_V$ consists of elements $g = \sum_{i \ge 0}\alpha_iS_1^i$ with $\langle g \mid x_1^2\rangle = 2!\alpha_2 = 0$, i.e. $I_V = \langle S_1^i, \; i \ne 2\rangle$. Conversely, for $I = \langle S_1^i, \; i \ne 2\rangle \subseteq \KK[S_1]$ we obtain $V_I = \langle x_1^2\rangle$ since any $f = \sum_{i \ge 0}\alpha_ix_1^i \in V_I$ satisfies $\langle S_1^i \mid f \rangle = i!\alpha_i = 0$ for all $i \ne 2$. 
\end{example}

\begin{example}
Consider the ideal $I = (S_1^2 - 1) \subseteq \KK[S_1]$, i.e. $I = \langle S_1^{i+2} - S_1^{i}, \; i \ge 0\rangle$. Any $f = \sum_{i \ge 0}\alpha_ix_1^i \in V_I$ satisfies $\langle S_1^{i+2} - S_1^i \mid f \rangle = (i+2)!\alpha_{i+2} - i!\alpha_i = 0$ for all $i \ge 0$. Then
\[\begin{aligned}
0!\alpha_0 = 2!\alpha_2 = 4!\alpha_4 = \ldots,\\
1!\alpha_1 = 3!\alpha_3 = 5!\alpha_5 = \ldots,
\end{aligned}\]
whence $f = 0$ since it can not contain infinitely many nonzero coefficients. Thus $V_I = \{0\}$. It follows that the correspondences of Construction~\ref{IVconstr} between subspaces in $\KK[x_1, \ldots, x_n]$ and $\KK[S_1, \ldots, S_n]$ are not bijective. 
\end{example}

\begin{lemma}
\label{IVbij}
For fixed $d,m\in \Zgezero$, Construction~\ref{IVconstr} defines a bijection between
\begin{enumerate}
	\item subspaces $V \subseteq \KK[x_1, \ldots, x_n]_{\le d}$ with $\dim V = m$;
	\item subspaces $I \subseteq \KK[S_1, \ldots, S_n]$ with $I \supseteq \KK[S_1, \ldots, S_n]_{>d}$ and 
	$\codim_{\KK[S_1, \ldots, S_n]} I = m$.
\end{enumerate}
\end{lemma}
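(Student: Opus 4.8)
The plan is to reduce the statement to the elementary duality theory of a perfect pairing on finite-dimensional vector spaces, which has already been set up in Construction~\ref{IVconstr}: the restriction of $\langle\,\cdot\mid\cdot\,\rangle$ to $\KK[S_1,\ldots,S_n]_{\le d}\times\KK[x_1,\ldots,x_n]_{\le d}$ is perfect. For any perfect pairing between two spaces $U$ and $W$ of equal finite dimension $N$, the annihilator operation $V\mapsto V^{\perp}=\{u\in U:\langle u\mid v\rangle=0\ \forall v\in V\}$ is an inclusion-reversing bijection on subspaces with $\dim V^{\perp}=N-\dim V$ and $(V^{\perp})^{\perp}=V$. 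Here $N=\dim\KK[x_1,\ldots,x_n]_{\le d}=\dim\KK[S_1,\ldots,S_n]_{\le d}$, the two spaces having the same monomial bases. I would record this finite-dimensional annihilator duality first, and then explain how the constructions $V\mapsto I_V$ and $I\mapsto V_I$, which a priori live in the infinite-dimensional algebras, collapse onto it via the direct-sum decomposition $\KK[S_1,\ldots,S_n]=\KK[S_1,\ldots,S_n]_{\le d}\oplus\KK[S_1,\ldots,S_n]_{>d}$.

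The bridge between the infinite- and finite-dimensional pictures is the single monomial formula $\langle S_1^{i_1}\cdots S_n^{i_n}\mid x_1^{j_1}\cdots x_n^{j_n}\rangle=i_1!\cdots i_n!$ when the multi-indices agree and $0$ otherwise. First I would use it to show that $\KK[S_1,\ldots,S_n]_{>d}\subseteq I_V$ for every $V\subseteq\KK[x_1,\ldots,x_n]_{\le d}$: a monomial of degree $>d$ never matches a monomial of degree $\le d$, so it pairs to zero with all of $V$. Splitting an arbitrary $g$ as $g_{\le d}+g_{>d}$ then shows $g\in I_V$ if and only if $g_{\le d}\in V^{\perp}$, whence $I_V=V^{\perp}\oplus\KK[S_1,\ldots,S_n]_{>d}$; in particular $I_V\supseteq\KK[S_1,\ldots,S_n]_{>d}$ and $\codim I_V=N-\dim V^{\perp}=\dim V=m$, so $I_V$ lies in family (b). In the opposite direction the same formula, together with $i_1!\cdots i_n!\neq0$ in characteristic zero, shows that the requirement $I\supseteq\KK[S_1,\ldots,S_n]_{>d}$ forces every $f\in V_I$ to have vanishing coefficient at each monomial of degree $>d$, i.e. $\dg f\le d$ and $V_I\subseteq\KK[x_1,\ldots,x_n]_{\le d}$; the remaining conditions cutting out $V_I$ come only from $I_0:=I\cap\KK[S_1,\ldots,S_n]_{\le d}$, so $V_I=I_0^{\perp}$ and $\dim V_I=N-\dim I_0=m$, placing $V_I$ in family (a).

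It then remains to check that the two resulting maps between families (a) and (b) are mutually inverse. Writing $I=I_0\oplus\KK[S_1,\ldots,S_n]_{>d}$ for a member of (b) and using the identities just established, I would compute $V_{I_V}=(V^{\perp})^{\perp}=V$ and $I_{V_I}=(I_0^{\perp})^{\perp}\oplus\KK[S_1,\ldots,S_n]_{>d}=I_0\oplus\KK[S_1,\ldots,S_n]_{>d}=I$, where both inner equalities are the double-annihilator identity $U'^{\perp\perp}=U'$ for the perfect pairing restricted to the degree-$\le d$ parts. This yields the asserted bijection. The only genuinely delicate point, which I would isolate as a small lemma inside the proof, is the equivalence between annihilating all of $\KK[S_1,\ldots,S_n]_{>d}$ and the degree bound $\dg f\le d$; it is precisely here that the characteristic-zero hypothesis is used, through the nonvanishing of $i_1!\cdots i_n!$. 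Everything else is routine bookkeeping of the direct-sum decomposition together with the standard finite-dimensional annihilator duality, so I do not expect a serious obstacle beyond keeping the two annihilators (one in $\KK[S_1,\ldots,S_n]_{\le d}$, one in $\KK[x_1,\ldots,x_n]_{\le d}$) carefully distinguished.
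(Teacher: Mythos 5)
Your proof is correct and follows essentially the same route as the paper's: both arguments reduce the statement to the perfect pairing between $\KK[S_1,\ldots,S_n]_{\le d}$ and $\KK[x_1,\ldots,x_n]_{\le d}$ and conclude by a dimension count (the paper via $V\subseteq V_{(I_V)}$ with $\dim V=\codim I_V=\dim V_{(I_V)}$, you via the equivalent double-annihilator identity). Your version merely makes explicit the bookkeeping the paper leaves as ``it is easy to see,'' namely the decompositions $I_V=V^{\perp}\oplus\KK[S_1,\ldots,S_n]_{>d}$ and $I=I_0\oplus\KK[S_1,\ldots,S_n]_{>d}$ and the use of characteristic zero in forcing $V_I\subseteq\KK[x_1,\ldots,x_n]_{\le d}$.
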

\begin{proof}
It is easy to see that $I_V \supseteq \KK[S_1, \ldots, S_n]_{>d}$. Note that $\dim V = \codim_{\KK[S_1, \ldots, S_n]} I_V$ since the pairing between $\KK[x_1, \ldots, x_n]_{\le d}$ and $\KK[S_1, \ldots, S_n]_{\le d}$ is perfect. 
Since $V \subseteq V_{(I_V)}$ and $\dim V = \codim I_V = \dim V_{(I_V)}$, we obtain $V = V_{(I_V)}$. Analogously $I = I_{(V_I)}$. 
\end{proof}

Now we are going to precise the constructed correspondence in a series of lemmas. The main result of this subsection is formulated in Proposition~\ref{IV}.

\smallskip

Notice that there is a canonical action of the group $\GG_a^n$ on the linear span $\langle x_1, \ldots, x_n\rangle$ by translations. It can be extended to the action of $\GG_a^n$ on $\KK[x_1, \ldots, x_n]$: a group element $\beta = (\beta_1, \ldots, \beta_n) \in \GG_a^n$ maps a polynomial $f(x) = f(x_1, \ldots, x_n)$ to 
$$
f(x+\beta) = f(x_1 + \beta_1, \ldots, x_n + \beta_n).
$$ 

We recall Taylor's theorem:
$f(x + \beta) = \sum\limits_{i_1,\ldots,i_n} \frac{\beta_1^{i_1}\ldots\beta_n^{i_n}}{i_1!\ldots i_n!}\frac{\pa^{i_1+\ldots+i_n}f(x)}{\pa x_1^{i_1}\ldots\pa x_n^{i_n}}$.
It follows that 
$$
f(x + \beta) = \exp(\beta_1S_1 + \ldots \beta_nS_n)[f(x)].
$$ 

\begin{definition}
A subspace $V \subseteq \KK[x_1, \ldots, x_n]$ is called \emph{translation invariant} if the following equivalent conditions hold:
\begin{enumerate}
\item[1)] $V$ is invariant under $S_i = \frac{\pa}{\pa x_i}$ for every $1 \le i \le n$;
\item[2)] $V$ is invariant under the $\GG_a^n$-action by translations.
\end{enumerate} 
\end{definition}

\begin{proof}[Proof of equivalence]
The assertion follows from the fact that the subspace $V$ is $\GG_a^n$-invariant if and only if it is ($\Lie \GG_a^n$)-invariant. 
\end{proof}

\begin{example}
\label{IV_dual_S1S2_ex}
Consider the vector subspace $V = \langle1, x_1, x_2\rangle \subseteq \KK[x_1, x_2]$. It is invariant under $\frac{\pa}{\pa x_1}$ and $\frac{\pa}{\pa x_2}$. On the other hand, it is invariant under translations: 
the corresponding representation of $(\beta_1, \beta_2) \in \GG_a^2$ in $V$ is given by 
$\begin{pmatrix}
1 & \beta_1 & \beta_2 \\
0 & 1 & 0 \\
0 & 0 & 1
\end{pmatrix}$
in the basis $1, x_1, x_2$. 
\end{example}

\begin{example}
\label{IV_dual_S3_ex}
Let $V = \bigl\langle1, x_1, x_2 + \frac{x_1^2}{2}\bigr\rangle \subseteq \KK[x_1, x_2]$. It is translation invariant according to both definitions. Since $(\beta_1, \beta_2) \in \GG_a^2$ applied to basis vectors $1, x_1, x_2 + \frac{x_1^2}{2}$ gives $1, x_1 + \beta_1$ and $x_2 + \beta_2 + \frac{(x_1+\beta_1)^2}{2} = x_2 + \frac{x_1^2}{2} + \beta_1x_1+\beta_2+\frac{\beta_1^2}{2}$ respectively, the corresponding representation of $\GG_a^2$ in $V$ is given by
$\begin{pmatrix}
1 & \beta_1 & \beta_2 + \frac{\beta_1^2}{2} \\
0 & 1 & \beta_1 \\
0 & 0 & 1
\end{pmatrix}$.
\end{example}

\begin{lemma}
\label{IVbij_id_ti}
Lemma~\ref{IVbij} defines a bijection between translation invariant subspaces of $\KK[x_1, \ldots, x_n]$ and ideals in $\KK[S_1, \ldots, S_n]$. Moreover, in this case we have 
\begin{equation}
\label{IVconstr_upd}
\begin{aligned}
V_I = \{f \in V \mid g[f] = 0 \;\; \forall g \in I\}; \\ I_V = \{g \in I \mid g[f] = 0 \;\; \forall f \in V\}. 
\end{aligned}
\end{equation}
\end{lemma}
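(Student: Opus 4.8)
The whole statement rests on a single adjunction (``integration by parts'') identity for the pairing~\eqref{pairing}. The plan is to begin by recording that, since differential operators with constant coefficients commute, the operators $S_i g$ and $g S_i$ coincide, so $(S_i g)[f] = \pa_i(g[f]) = g[\pa_i f]$ as polynomials; evaluating at the origin yields
\begin{equation*}
\langle S_i g \mid f \rangle = \langle g \mid \pa_i f \rangle \qquad (1 \le i \le n),
\end{equation*}
and hence, by iteration, $\langle S^\alpha g \mid f \rangle = \langle g \mid \pa^\alpha f \rangle$ for every multi-index $\alpha$. In other words, multiplication by $S_i$ in $\KK[S_1, \ldots, S_n]$ is adjoint, with respect to the pairing, to differentiation $\pa_i$ on $\KK[x_1, \ldots, x_n]$. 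This single identity drives both halves of the argument.

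First I would show that the two maps $V \mapsto I_V$ and $I \mapsto V_I$ of Lemma~\ref{IVbij} interchange translation invariant subspaces and ideals. If $V$ is translation invariant and $g \in I_V$, then for every $\alpha$ and every $f \in V$ the adjunction gives $\langle S^\alpha g \mid f \rangle = \langle g \mid \pa^\alpha f \rangle = 0$, because $\pa^\alpha f \in V$ by invariance and $g \in I_V$; thus $S^\alpha g \in I_V$ for all $\alpha$, so $I_V$ is an ideal. Conversely, if $I$ is an ideal and $f \in V_I$, then for each $i$ and every $g \in I$ we obtain $\langle g \mid \pa_i f \rangle = \langle S_i g \mid f \rangle = 0$, since $S_i g \in I$ and $f \in V_I$; hence $\pa_i f \in V_I$ and $V_I$ is translation invariant. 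As $V \mapsto I_V$ and $I \mapsto V_I$ are already mutually inverse bijections by Lemma~\ref{IVbij}, these two implications show the bijection restricts to one between translation invariant subspaces and ideals.

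It then remains to upgrade the defining conditions from vanishing at the origin to vanishing identically, which is the content of~\eqref{IVconstr_upd}. The key observation is the Taylor-coefficient formula $\pa^\alpha(g[f])\big|_{0} = (S^\alpha g)[f]\big|_0 = \langle S^\alpha g \mid f \rangle$. Assume $I$ is an ideal and $f \in V_I$; then for any $g \in I$ the product $S^\alpha g$ again lies in $I$, so every term on the right vanishes, i.e. all Taylor coefficients of the polynomial $g[f]$ at the origin are zero and therefore $g[f] \equiv 0$. The reverse inclusion is immediate, since $g[f] \equiv 0$ forces $\langle g \mid f \rangle = g[f]\big|_0 = 0$. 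This proves the first equality of~\eqref{IVconstr_upd}, and the second follows symmetrically, using translation invariance of $V$ to ensure $\pa^\alpha f \in V$.

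The only genuinely non-formal step, and the one I would watch most carefully, is precisely this last passage from pointwise to identical vanishing: it is the ideal property, namely closure under multiplication by every $S^\alpha$, that promotes the single scalar equation $\langle g \mid f \rangle = 0$ to the vanishing of the entire polynomial $g[f]$. Everything else is the adjunction identity combined with the bijection already furnished by Lemma~\ref{IVbij}; no new finiteness hypothesis is required, since a finite-dimensional translation invariant subspace automatically lies in some $\KK[x_1, \ldots, x_n]_{\le d}$, whose partner ideal then contains $\KK[S_1, \ldots, S_n]_{>d}$ as in Lemma~\ref{IVbij}.
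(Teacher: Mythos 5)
Your proof is correct and follows essentially the same route as the paper: the paper's key step is the identity $\langle \widetilde g\,g \mid f\rangle = \langle \widetilde g \mid g[f]\rangle$ together with nondegeneracy of the pairing, which is exactly your adjunction $\langle S^\alpha g \mid f\rangle = \langle g \mid \pa^\alpha f\rangle$ combined with reading off Taylor coefficients of $g[f]$. The only cosmetic difference is the order (the paper derives the formulas~\eqref{IVconstr_upd} first and deduces invariance/ideal-ness from them, while you prove the restriction of the bijection first), which does not affect correctness.
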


\begin{proof}
Let $I$ be an ideal and $f \in V_I$, that is $\langle g \mid f\rangle = 0$ for any $g \in I$. Since $\widetilde gg \in I$ for any $\widetilde g \in \KK[S_1, \ldots, S_n]$, it follows that $0 = \langle\widetilde gg \mid f\rangle = \langle\widetilde g \mid g[f]\rangle$, whence by nondegeneracy of $\langle\,\cdot\mid\cdot\,\rangle$ we obtain $g[f] = 0$. 
This implies the first formula in~\eqref{IVconstr_upd}, and thus $V_I$ is $\frac{\pa}{\pa x_i}$-invariant for any $1 \le i \le n$. 

Conversely, let $V$ be a translation invariant subspace and $g \in I_V$. Since $\widetilde g[f] \in V$ for any $\widetilde g \in \KK[S_1, \ldots, S_n]$ and $f \in V$, it follows that $0 = \langle g \mid \widetilde g[f]\rangle = \langle \widetilde g \mid g[f]\rangle$, whence $g[f] = 0$. Then we obtain the second formula in~\eqref{IVconstr_upd}, which implies that $I_V$ is an ideal. 
\end{proof} 

\begin{example}
\label{IV_S13_ex}
The translation invariant vector subspace $V = \langle 1, x_1, x_1^2\rangle \subseteq \KK[x_1]$ corresponds to the ideal $I = (S_1^3) \subseteq \KK[S_1]$. 
\end{example}

\begin{definition}
Let us call a subspace $V \subseteq \KK[x_1, \ldots, x_n]$ \emph{non-degenerate} if no nonzero operator from $\langle S_1, \ldots, S_n\rangle$ annihilates $V$. A subspace $I \subseteq \KK[S_1, \ldots, S_n]$ is called \emph{non-degenerate} if $I \cap \langle S_1, \ldots, S_n\rangle = 0$. 
\end{definition}

The following lemma is straightforward. 

\begin{lemma}
\label{IV_nondeg}
The bijection in Lemma~\ref{IVbij} restricts to a bijection between non-degenerate subspaces in $\KK[x_1, \ldots, x_n]$ and $\KK[S_1, \ldots, S_n]$. 
\end{lemma}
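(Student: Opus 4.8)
The plan is to reduce the statement to the operator-theoretic description of $I_V$ and $V_I$ established in Lemma~\ref{IVbij_id_ti}, since that is exactly what bridges the two different-looking definitions of non-degeneracy. First I would lay the two notions side by side: $V$ is non-degenerate when no nonzero $g\in\langle S_1,\ldots,S_n\rangle$ satisfies $g[f]=0$ for all $f\in V$, whereas $I$ is non-degenerate when $I\cap\langle S_1,\ldots,S_n\rangle=0$. The entire task is to show that, under the correspondence $V\mapsto I_V$ of Lemma~\ref{IVbij} restricted to the translation invariant subspaces and ideals treated in Lemma~\ref{IVbij_id_ti}, these two conditions match.

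The key step invokes formula~\eqref{IVconstr_upd}: for a translation invariant $V$ one has $I_V=\{g\mid g[f]=0\ \forall f\in V\}$, so a first-order operator $g=\sum_i c_iS_i$ lies in $I_V$ if and only if it annihilates $V$. Consequently a nonzero element of $\langle S_1,\ldots,S_n\rangle$ annihilates $V$ precisely when it belongs to $I_V\cap\langle S_1,\ldots,S_n\rangle$. Hence $V$ is non-degenerate if and only if $I_V\cap\langle S_1,\ldots,S_n\rangle=0$, that is, if and only if $I_V$ is non-degenerate. Since the maps $V\mapsto I_V$ and $I\mapsto V_I$ are mutually inverse on the translation invariant / ideal pairs, the reverse implication is automatic: for a corresponding pair $I=I_{V_I}$, applying the same characterization to $V_I$ shows $V_I$ non-degenerate $\iff I\cap\langle S_1,\ldots,S_n\rangle=0$. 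This proves the bijection of Lemma~\ref{IVbij} carries non-degenerate objects to non-degenerate objects on both sides.

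The only genuinely delicate point, and the one I expect to be the crux, is that non-degeneracy of $V$ is phrased through the vanishing of $g[f]$ as a \emph{polynomial}, while the definition of $I_V$ via the pairing only controls $\langle g\mid f\rangle=g[f]\big|_{(0,\ldots,0)}$, i.e. vanishing at the origin. These are not equivalent for an arbitrary finite-dimensional subspace, so one really must use the translation invariance of $V$ (equivalently, that $I_V$ is an ideal) through Lemma~\ref{IVbij_id_ti} in order to upgrade vanishing at the origin to identical vanishing. Once that bridge is available the argument is purely formal, which is why the lemma can fairly be called straightforward.
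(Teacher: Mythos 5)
Your argument is correct, and in fact the paper offers no proof at all here --- it simply declares the lemma straightforward --- so your write-up supplies the missing content rather than diverging from an existing route. More importantly, the ``delicate point'' you isolate is genuine and worth making explicit: for the \emph{unrestricted} bijection of Lemma~\ref{IVbij} the statement is literally false, since e.g.\ $V=\langle x_1^2\rangle\subseteq\KK[x_1]$ is non-degenerate ($S_1[x_1^2]=2x_1\neq 0$) while $I_V=\langle S_1^i,\ i\neq 2\rangle$ contains $S_1$ and is therefore degenerate. The lemma is only valid after restricting to translation invariant subspaces and ideals as in Lemma~\ref{IVbij_id_ti}, which is exactly how it is used in Proposition~\ref{IV}; your proof correctly routes through formula~\eqref{IVconstr_upd} to identify $I_V\cap\langle S_1,\ldots,S_n\rangle$ with the set of first-order operators annihilating $V$ identically, and the converse direction via $I=I_{(V_I)}$ is then immediate. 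This is a complete and, if anything, more careful argument than the paper's bare assertion.
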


\begin{definition}
Let us call a subspace $V \subseteq \KK[x_1, \ldots, x_n]$ \emph{generating} if one of the following equivalent conditions hold:
\begin{enumerate}
\item[1)] $V$ is translation invariant and non-degenerate;
\item[2)] $V$ is translation invariant and generates $\KK[x_1, \ldots, x_n]$ as an algebra.
\end{enumerate}
\end{definition}

\begin{proof}[Proof of equivalence]
Let $V$ be translation invariant and generate $\KK[x_1, \ldots, x_n]$. There is no nonzero operator from $\langle S_1, \ldots, S_n\rangle$ annihilating $V$ since it would annihilate $\KK[x_1, \ldots, x_n]$ otherwise. 

Conversely, let a translation invariant and non-degenerate subspace $V$ generate a subalgebra $A \subsetneq \KK[x_1, \ldots, x_n]$. Denote $W = A \cap \langle x_1, \ldots, x_n\rangle$. Choosing appropriate variables in $\KK[x_1, \ldots, x_n]$, we can assume that $W = \langle x_1, \ldots, x_k\rangle$ for some $k < n$. 
Note that $\KK[x_1, \ldots, x_k] \subseteq A$ since it is generated by $W \subseteq A$. Let us prove that $A = \KK[x_1, \ldots x_k]$. Assume the converse, let $f$ be a polynomial of minimal degree in $A \setminus \KK[x_1, \ldots, x_k]$. Since $V$ is invariant under translations, $A$ is translation invariant as well. Then polynomials $\frac{\pa f}{\pa x_i}$ belong to $A$ and are of degree less than that of $f$, whence $\frac{\pa f}{\pa x_i} \in \KK[x_1, \ldots, x_k]$ for every $1 \le i \le n$. 

Let $f = \sum\limits_j b_j x_n^j$, $b_j \in \KK[x_1, \ldots, x_{n-1}]$. Since $\frac{\pa f}{\pa x_n} = \sum\limits_j jb_j x_n^{j-1}$ is an element of $\KK[x_1, \ldots, x_k]$, we have $f = b_1x_n + b_0$. 
For every $1 \le i < n$, the polynomial $\frac{\pa f}{\pa x_i} = \frac{\pa b_1}{\pa x_i} x_n + \frac{\pa b_0}{\pa x_i}$ does not contain $x_n$ as well, whence $\frac{\pa b_1}{\pa x_i} = 0$ for any $i$, that is $b_1 \in \KK$. Thus $x_n$ occurs in $f$ only in a linear term. The same holds for $x_{k+1}, \ldots, x_{n-1}$, that is $f$ is a sum of a linear polynomial in $x_{k+1}, \ldots, x_n$ and an element $f_0 \in \KK[x_1, \ldots, x_k]$. Since $f, f_0 \in A$, this linear polynomial belongs to $W$. But $W = \langle x_1, \ldots, x_k\rangle$, whence the linear polynomial is equal to zero, that is $f = f_0 \in \KK[x_1, \ldots, x_k]$, a contradiction. 
Thus, $A = \KK[x_1, \ldots x_k]$. Then $\frac{\pa}{\pa x_n}$ annihilates $A$ and hence $V$, which contradicts nondegeneracy of $V$. 
\end{proof}

\smallskip

Consider the canonical action of the group $\GL_n(\KK)$ on the vector space $\langle x_1, \ldots, x_n\rangle$: $x \mapsto \phi x$, $x \in \langle x_1, \ldots, x_n\rangle$, $\phi \in \GL_n(\KK)$. It induces the action of $\GL_n(\KK)$ on the algebra $\KK[x_1, \ldots, x_n]$: $(\phi f)(x_1, \ldots, x_n) := f(\phi x_1, \ldots, \phi x_n)$. Define the action of $\GL_n(\KK)$ on $\KK[S_1, \ldots, S_n]$ as follows: for $g \in \KK[S_1, \ldots, S_n]$ and $\phi \in \GL_n(\KK)$, let $(\phi g)[f] = g[\phi^{-1}f]$ for any $f \in \KK[x_1, \ldots, x_n]$. 

\begin{definition}
\label{IV_equivdef}
We say that subspaces $V_1, V_2 \subseteq \KK[x_1, \ldots, x_n]$ (resp. $I_1, I_2 \subseteq \KK[S_1, \ldots, S_n]$) are \emph{$\GL$-equivalent}, if there exists $\phi \in \GL_n(\KK)$ such that $\phi V_1 = V_2$ (resp. $\phi I_1 = I_2$).
\end{definition}

\begin{lemma}
\label{IV_GL}
The bijection in Lemma~\ref{IVbij} is well defined on classes of $\GL$-equivalence.
\end{lemma}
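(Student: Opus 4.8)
The plan is to show that the polar (annihilator) construction $V \mapsto I_V$ of Lemma~\ref{IVbij} intertwines the two $\GL_n(\KK)$-actions, i.e.\ that $I_{\phi V} = \phi I_V$ for every $\phi \in \GL_n(\KK)$ and every subspace $V$ as in part~(a) of Lemma~\ref{IVbij}; the statement then follows formally. Indeed, granting this equivariance, if $V_1$ and $V_2$ are $\GL$-equivalent, say $\phi V_1 = V_2$, then $I_{V_2} = I_{\phi V_1} = \phi I_{V_1}$, so $I_{V_1}$ and $I_{V_2}$ are $\GL$-equivalent; the reverse implication follows by applying the same reasoning to the inverse map $I \mapsto V_I$, or simply by bijectivity. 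I first record that $\phi$ respects the two sides of the bijection: as a linear isomorphism preserving the degree filtration, $\phi$ sends $\KK[x_1,\ldots,x_n]_{\le d}$ to itself with $\dim \phi V = \dim V$, and dually it preserves the containment and codimension conditions of part~(b), so $\phi$ genuinely acts on both of the finite collections being matched.

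The key ingredient is the $\GL$-invariance of the pairing~\eqref{pairing}: for all $\phi \in \GL_n(\KK)$, $g \in \KK[S_1,\ldots,S_n]$ and $f \in \KK[x_1,\ldots,x_n]$ one has $\langle \phi g \mid \phi f\rangle = \langle g \mid f\rangle$. This is built into the definition of the action on $\KK[S_1,\ldots,S_n]$: from $(\phi g)[\phi f] = g[\phi^{-1}\phi f] = g[f]$ as polynomials, evaluation at the origin gives $\langle \phi g \mid \phi f\rangle = \langle g \mid f\rangle$.

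With this in hand the equivariance $I_{\phi V} = \phi I_V$ is a one-line computation. For $g \in \KK[S_1,\ldots,S_n]$ we have $g \in \phi I_V$ iff $\phi^{-1}g \in I_V$, that is, iff $\langle \phi^{-1}g \mid f\rangle = 0$ for all $f \in V$. Invariance of the pairing (applied with the operator $\phi$) rewrites $\langle \phi^{-1}g \mid f\rangle = \langle g \mid \phi f\rangle$, so the condition becomes $\langle g \mid \phi f\rangle = 0$ for all $f \in V$, which is precisely $g \in I_{\phi V}$. Hence $\phi I_V = I_{\phi V}$, as required.

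The argument is essentially formal once the invariance of the pairing is noted, so I do not expect a serious obstacle. The one place deserving care, and the only spot where a convention error could intrude, is confirming that the $\GL_n(\KK)$-action on $\KK[S_1,\ldots,S_n]$ is exactly the contragredient of the action on $\KK[x_1,\ldots,x_n]$ with respect to~\eqref{pairing}; once the actions are aligned so that $\langle \phi g \mid \phi f\rangle = \langle g \mid f\rangle$ holds, the rest is bookkeeping.
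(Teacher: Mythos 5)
Your proof is correct and follows essentially the same route as the paper: both hinge on the identity $\langle \phi^{-1}h \mid f\rangle = \langle h \mid \phi f\rangle$ (i.e.\ $\GL$-invariance of the pairing, which is built into the definition of the action on $\KK[S_1,\ldots,S_n]$) and then compute $I_{\phi V} = \phi I_V$ directly, handling the reverse direction symmetrically. Your extra remark that $\phi$ preserves the degree, dimension, and codimension conditions of Lemma~\ref{IVbij} is a small point the paper leaves implicit, but otherwise the two arguments coincide.
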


\begin{proof}
Let $\phi V_1 = V_2$. Then 
\begin{multline*}
I_{V_2} = \{h \in \KK[S_1, \ldots, S_n]\colon \langle h \mid \phi f\rangle = 0 \;\; \forall f \in V_1\} = \\ = 
\{h \in \KK[S_1, \ldots, S_n]\colon \langle\phi^{-1}h \mid f\rangle = 0 \;\; \forall f \in V_1\} = \\ = 
\{\phi g \in \KK[S_1, \ldots, S_n]\colon \langle g \mid f\rangle = 0 \;\; \forall f \in V_1\} = \phi I_{V_1}.
\end{multline*}
In the same way, $\phi I_1 = I_2$ implies $\phi V_1 = V_2$. 
\end{proof}

Let us say that an ideal $I \subseteq \KK[S_1, \ldots, S_n]$ is \emph{supported at the origin} if $I$ contains some powers of $S_i$ for every $1 \le i \le n$. 
It can be easily checked that an ideal $I$ is supported at the origin if and only if $I$ contains $\KK[S_1, \ldots, S_n]_{>d}$ for some~$d$. 

\smallskip

From Lemmas~\ref{IVbij}-\ref{IV_GL} we obtain the following result. 

\begin{proposition}
\label{IV}
Let $m\in\Zgezero$. Formulae~\eqref{IVconstr_upd} give a bijection between classes of $\GL$-equivalence of:
\begin{enumerate}
	\item generating subspaces $V \subseteq \KK[x_1, \ldots, x_n]$ of dimension $m$;
	\item non-degenerate ideals $I \subseteq \KK[S_1, \ldots, S_n]$ of codimension $m$ supported at the origin.
\end{enumerate}
\end{proposition}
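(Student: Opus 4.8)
The plan is to assemble the bijection from the chain of lemmas already established, checking that each structural property is matched correctly on the two sides. Recall that Lemma~\ref{IVbij} provides a bijection between finite-dimensional subspaces $V \subseteq \KK[x_1, \ldots, x_n]_{\le d}$ of dimension $m$ and subspaces $I \subseteq \KK[S_1, \ldots, S_n]$ of codimension $m$ containing $\KK[S_1, \ldots, S_n]_{>d}$; moreover Lemma~\ref{IV_GL} tells us that this bijection descends to classes of $\GL$-equivalence. So the correspondence $V \mapsto I_V$, $I \mapsto V_I$ is already available at the level of $\GL$-equivalence classes, and what remains is to verify that it restricts correctly to the distinguished subclasses appearing in the statement.

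First I would recall that a \emph{generating} subspace is, by definition, one that is translation invariant and non-degenerate. Lemma~\ref{IVbij_id_ti} converts translation invariance of $V$ into the property that the corresponding $I_V$ is an \emph{ideal}, and Lemma~\ref{IV_nondeg} converts non-degeneracy of $V$ into non-degeneracy of $I_V$. Hence under the bijection, generating subspaces of dimension $m$ correspond exactly to non-degenerate ideals $I$ of codimension $m$ that contain $\KK[S_1, \ldots, S_n]_{>d}$ for some $d$. The only remaining point is to identify the condition ``$I \supseteq \KK[S_1, \ldots, S_n]_{>d}$ for some $d$'' with the condition ``$I$ is supported at the origin.'' But this equivalence is precisely the observation made in the text immediately before the Proposition: an ideal contains $\KK[S_1, \ldots, S_n]_{>d}$ for some $d$ if and only if it contains some power of each $S_i$, which is the definition of being supported at the origin. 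Since an ideal of finite codimension automatically contains such high-degree polynomials, the dimension bound in Lemma~\ref{IVbij} is harmless: a generating subspace of dimension $m$ is automatically finite-dimensional, hence lies in $\KK[x_1, \ldots, x_n]_{\le d}$ for some $d$, so the hypothesis of Lemma~\ref{IVbij} is met.

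Putting these together, I would argue as follows. Given a generating subspace $V$ of dimension $m$, choose $d$ with $V \subseteq \KK[x_1, \ldots, x_n]_{\le d}$; then $I_V$ is, by Lemmas~\ref{IVbij_id_ti} and~\ref{IV_nondeg}, a non-degenerate ideal of codimension $m$ containing $\KK[S_1, \ldots, S_n]_{>d}$, hence supported at the origin. Conversely, given such an ideal $I$ of codimension $m$, finite codimension forces $I \supseteq \KK[S_1, \ldots, S_n]_{>d}$ for some $d$, so $V_I$ is a subspace of $\KK[x_1, \ldots, x_n]_{\le d}$ of dimension $m$, and the same two lemmas show it is translation invariant and non-degenerate, i.e. generating. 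That these two assignments are mutually inverse, and compatible with $\GL$-equivalence, is exactly the content of Lemmas~\ref{IVbij} and~\ref{IV_GL}. The main thing to be careful about — rather than any deep obstacle — is the uniformity in $d$: I would note that the dimension $m$ determines a bound on $d$ a priori (a generating subspace of dimension $m$ cannot contain polynomials of arbitrarily high degree, since its degree-filtration dimensions are bounded by $m$), so the correspondence is genuinely canonical and does not depend on the auxiliary choice of $d$. Since every individual statement has already been proved in Lemmas~\ref{IVbij} through~\ref{IV_GL}, the proof is a short bookkeeping argument assembling them, with no calculation to grind through.
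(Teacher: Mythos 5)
Your proposal follows exactly the route the paper takes: the paper's own ``proof'' is simply the observation that Proposition~\ref{IV} follows by combining Lemmas~\ref{IVbij}--\ref{IV_GL} with the remark that an ideal is supported at the origin if and only if it contains $\KK[S_1,\ldots,S_n]_{>d}$ for some $d$, and your assembly of those pieces is correct and complete.

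One clause in your write-up is false and worth correcting, even though it does not sink the argument: you assert twice that \emph{finite codimension} of an ideal forces $I \supseteq \KK[S_1,\ldots,S_n]_{>d}$ for some $d$. It does not --- the paper's own example $I=(S_1^2-1)\subseteq\KK[S_1]$ has codimension $2$, contains no power of $S_1$, and satisfies $V_I=\{0\}$, which is precisely why the hypothesis ``supported at the origin'' cannot be dropped from the statement. The correct justification, which you in fact state correctly a sentence earlier, is that the \emph{supported-at-the-origin} hypothesis (some power of each $S_i$ lies in $I$) forces every monomial of sufficiently large total degree into the ideal, hence $I \supseteq \KK[S_1,\ldots,S_n]_{>d}$. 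With that attribution fixed, the proof is exactly the paper's.
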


\begin{example}
\label{IV_S1S2_ex}
The generating subspace $V = \langle 1, x_1, x_2\rangle \subseteq \KK[x_1, x_2]$ corresponds to the ideal $I = (S_1^2, S_1S_2, S_2^2) \subseteq \KK[S_1, S_2]$ as the latter consists of $g = \sum_{i, j \ge 0} \alpha_{ij}S_1^iS_2^j$ with $\alpha_{00} = \alpha_{01} = \alpha_{11} = 0$. 
\end{example}

\begin{example}
\label{IV_S3_ex}
The generating subspace $V = \bigl\langle1, x_1, x_2+\frac{x_1^2}{2}\bigr\rangle \subseteq \KK[x_1, x_2]$ corresponds to the ideal $I = (S_1^2 - S_2, S_1S_2) \subseteq \KK[S_1, S_2]$ since $g = \sum_{i, j \ge 0} \alpha_{ij}S_1^iS_2^j$ belongs to $I$ if and only if $\alpha_{00} = \alpha_{10} = \alpha_{01} + \frac{2!\alpha_{20}}{2} = 0$. 
\end{example}


\subsection{Hassett-Tschinkel correspondence} 
\label{subsecHTC}
In this subsection we describe and study the correspondence given in~\cite[Section~2.4]{HaTs1999}. 

\begin{definition}
\label{repr_equivdef}
Let $G$ be an algebraic group. Representations $\rho_1\colon G \to \GL(V_1)$ and $\rho_2\colon G \to \GL(V_2)$ are said to be \emph{equivalent} if there exist an automorphism $\psi\colon G \to G$ and an isomorphism of vector spaces $\phi\colon V_1 \to V_2$ such that $\phi(\rho_1(g)v) = \rho_2(\psi(g))\phi(v)$ for any $g \in G$, $v\in V_1$.
\end{definition}

\begin{definition}
\label{alg_equivdef}
Consider pairs $(A, U)$, where $A$ is an algebra and $U \subseteq A$ is a subspace. Two such pairs $(A_1, U_1)$ and $(A_2, U_2)$ are \emph{equivalent} if there is an algebra isomorphism $\phi\colon A_1 \to A_2$ with $\phi(U_1) = U_2$.
\end{definition}

We come to the main result of this subsection. 

\begin{theorem}
\label{HaTsch_theorem}
Let $n,m\in\Zgezero$. There are one-to-one correspondences between
\smallskip
\begin{enumerate}
	\item faithful cyclic representations $\rho\colon \GG_a^n \to \GL_m(\KK)$;
	\item pairs $(A, U)$, where $A$ is a local commutative associative unital algebra of dimension~$m$ with maximal ideal $\mm$, and $U \subseteq \mm$ is a subspace of dimension~$n$ generating the algebra~$A$;
	\item non-degenerate ideals $I \subseteq \KK[S_1, \ldots, S_n]$ of codimension $m$ supported at the origin;
	\item generating subspaces $V \subseteq \KK[x_1, \ldots, x_n]$ of dimension~$m$.
\end{enumerate}
\smallskip
These correspondences are given up to equivalences as in Definitions~\ref{IV_equivdef}-\ref{alg_equivdef}.
\end{theorem}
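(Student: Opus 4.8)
The plan is to use the pairs $(A,U)$ of part (b) as a hub: I would establish the correspondences (a)$\leftrightarrow$(b) and (b)$\leftrightarrow$(c) directly, and then obtain (c)$\leftrightarrow$(d) for free from Proposition~\ref{IV}, which already matches non-degenerate ideals of codimension $m$ supported at the origin with generating subspaces of dimension $m$ up to $\GL$-equivalence. This reduces the theorem to two constructions plus the verification that they respect the three notions of equivalence coming from Definitions~\ref{repr_equivdef}--\ref{alg_equivdef} and~\ref{IV_equivdef}.

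For (b)$\to$(c), I would choose a basis $u_1,\dots,u_n$ of $U$ and consider the unital algebra homomorphism $\pi\colon\KK[S_1,\dots,S_n]\to A$ with $S_i\mapsto u_i$. It is surjective because $U$ generates $A$, so $I:=\ker\pi$ has codimension $\dim A=m$; it is supported at the origin because each $u_i\in\mm$ is nilpotent, giving $S_i^{k_i}\in I$; and it is non-degenerate because $\sum c_iS_i\in I$ forces $\sum c_iu_i=0$, hence $c_i=0$ by linear independence. Conversely, for (c)$\to$(b) I would set $A=\KK[S_1,\dots,S_n]/I$ and let $U$ be the image of $\langle S_1,\dots,S_n\rangle$; then $A$ is local of dimension $m$ by Lemma~\ref{localg_lem} (the image of the augmentation ideal is nilpotent precisely because $I$ is supported at the origin), $\dim U=n$ by non-degeneracy, and $U$ generates $A$ since the $S_i$ generate the polynomial algebra.

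For (a)$\leftrightarrow$(b), given a faithful cyclic representation $\rho$ I would differentiate it: the matrices $N_i=d\rho(e_i)$ are commuting nilpotents, linearly independent by faithfulness, and $\rho(a)=\exp(\sum a_iN_i)$. Let $A\subseteq\Mat_m(\KK)$ be the unital subalgebra they generate; it is commutative and local by Lemma~\ref{localg_lem}, being the sum of $\KK\cdot\mathrm{Id}$ and a nilpotent ideal $\mm$. Cyclicity provides $v$ with $\Span\{\rho(g)v\}=\KK^m$, and since the linear span of the exponentials $\exp(\sum a_iN_i)$ equals $A$ (a Vandermonde-type argument on each one-parameter subgroup shows the span is the full generated algebra), this gives $Av=\KK^m$; thus $A$ is a regular subalgebra, so $\dim A=m$ and, by Lemma~\ref{algsubalg_lem} with $m$ in the role of $n+1$, it is isomorphic to an abstract commutative algebra of dimension $m$. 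Setting $U=\langle N_1,\dots,N_n\rangle\subseteq\mm$ yields the pair. In the reverse direction I would take the regular representation $R\colon A\to\End(A)$, faithful because $A$ is unital, fix an identification $A\cong\KK^m$, and put $\rho(a)=R(\exp(\sum a_iu_i))$ for a basis $u_1,\dots,u_n$ of $U$; the cyclic vector is $1\in A$, and the same exponential-span fact gives $\Span\{\rho(g)\cdot1\}=A$ because $U$ generates~$A$.

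The hard part, and the part the paper stresses is usually omitted, will be the bookkeeping of the ``up to equivalence'' claims rather than the constructions themselves. The key observation is that all the ambiguities reduce to the same $\GL_n$- and conjugation-data. The automorphism group of $\GG_a^n$ as an algebraic group is exactly $\GL_n(\KK)$, so a change of the intertwining automorphism $\psi$ in Definition~\ref{repr_equivdef} is the same as a change of basis of $U$, which is precisely the $\GL_n$-action defining $\GL$-equivalence in Definition~\ref{IV_equivdef}; and a conjugation $\Phi X\Phi^{-1}$ of the representation corresponds to a different identification $A\cong\KK^m$, that is, to an algebra isomorphism of pairs as in Definition~\ref{alg_equivdef}. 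I would therefore verify that each construction carries a single equivalence class to a single class, and that the composites in both directions differ from the identity only by such data, so that the maps descend to genuine bixjections between equivalence classes; the duality between the $\GG_a^n$-modules $A$ and $V$ then follows by tracking the pairing~\eqref{pairing} through Proposition~\ref{IV}.
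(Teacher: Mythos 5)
Your proposal is correct and follows essentially the same route as the paper: the pairs $(A,U)$ serve as the hub, $(a)\leftrightarrow(b)$ is obtained by differentiating the representation (your unital subalgebra generated by the commuting nilpotents $N_i$ is exactly the paper's image $\tau(\UU(\gg))$ of the universal enveloping algebra) and by exponentiating the regular representation in the reverse direction, $(b)\leftrightarrow(c)$ is the kernel/quotient of the evaluation map $S_i\mapsto u_i$, and $(c)\leftrightarrow(d)$ is delegated to Proposition~\ref{IV}. Your reduction of the equivalence bookkeeping to the $\GL_n(\KK)=\Aut(\GG_a^n)$ action on bases of $U$ and to conjugation as a change of identification $A\cong\KK^m$ is precisely what the paper's commutative diagrams verify.
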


\begin{proof}
$(a) \to (b)$ Here we follow~\cite[Section~1]{ArSh2011}. 
Let $\rho\colon \GG_a^n \to \GL_m(\KK)$ be a faithful representation. The differential gives a representation $d\rho\colon \gg \to \gl_m(\KK)$ of the tangent algebra $\gg = \Lie(\GG_a^n)$. This defines a representation $\tau\colon \UU(\gg) \to \Mat_{m}(\KK)$ of the universal enveloping algebra $\UU(\gg)=\KK[S_1,\ldots,S_n]$. 

Let $A := \tau(\UU(\gg))$ and $U := \tau(\gg)$. The subspace $U$ generates the algebra $A$ since $\gg$ generates $\UU(\gg)$. The group $\GG_a^n$ is commutative, so $\gg$ is a commutative Lie algebra. Thus $\UU(\gg)$ is isomorphic to a polynomial algebra in $n$ variables with maximal ideal $(\gg)$ consisting of polynomials without constant term. The algebra $A$ is a commutative associative unital algebra. Since $\GG_a^n$ is a unipotent group, the image $d\rho(\gg) \subseteq \gl_m(\KK)$ consists of commuting nilpotent matrices. By definition, $\tau\bigr|_\gg = d\rho$, so $(U) = \tau((\gg))$ is a nilpotent ideal in $A$ of codimension one and the algebra $A$ is local. Since $\rho$ is faithful, it follows that $\tau\bigr|_\gg\colon \gg \to U$ is an isomorphism of vector spaces and $\dim U = n$. 

Let $v$ be a cyclic vector, that is $\langle\rho(\GG_a^n)v\rangle = \KK^m$. Note that the subspace $Av = \tau(\UU(\gg))v$ is $\gg$- and $\GG_a^n$-invariant and contains $v$, whence $Av = \KK^m$. Consider $\pi\colon A \to \KK^m$, $a \mapsto av$. Note that $\Ker\pi = 0$. Indeed, if $av = 0$ for some $a \in A$, then $a\KK^m = aAv = Aav = 0$, whence $a = 0$. Thus, $\pi$ is an isomorphism of vector spaces and $\dim A = m$. 

\emph{Equivalence.} Let $\rho_1\colon \GG_a^n \to \GL_m(\KK)$ and $\rho_2\colon \GG_a^n \to \GL_m(\KK)$ be two equivalent representations, that is there are isomorphisms $\phi\colon \KK^m \to \KK^m$ and $\psi\colon \GG_a^n \to \GG_a^n$ such that the first diagram below is commutative for any $g \in \GG_a^n$. If we differentiate it and extend $d\psi\colon \gg \to \gg$ to $\Psi\colon \UU(\gg) \to \UU(\gg)$, we obtain the central part of the second diagram for every $y \in \UU(\gg)$. 
Denote by $v_1$ a cyclic vector of $\rho_1$ and set $v_2 = \phi(v_1)$. Then $v_2$ is a cyclic vector for $\rho_2$. Identifying $A_i$ with $\KK^m$ by corresponding $\pi_i$, $i = 1,2$, and applying the diagram to $1 \in A_1$, we obtain that $\pi_2^{-1}\phi\pi_1$ maps $\tau_1(y)$ to $\tau_2(\Psi(y))$ for any $y \in \UU(\gg)$, which implies that $\pi_2^{-1}\phi\pi_1$ is an algebra isomorphism. The third diagram implies $\pi_2^{-1}\phi\pi_1(U_1) = U_2$, since $d\psi = \Psi\bigr|_\gg$ maps $\gg$ to $\gg$. 
\[
\begin{diagram}
\node{\KK^m} \arrow{e,tb}{\phi}{\sim} \arrow{s,r}{\rho_1(g)} \node{\KK^m} \arrow{s,r}{\rho_2(\psi(g))} \\
\node{\KK^m} \arrow{e,tb}{\phi}{\sim} \node{\KK^m}
\end{diagram}\quad\quad
\begin{diagram}
\node{A_1} \arrow{e,tb}{\pi_1}{\sim} \arrow{s,r}{\substack{mult. by \\ \tau_1(y)}} \node{\KK^m} \arrow{e,tb}{\phi}{\sim} \arrow{s,r}{\tau_1(y)} \node{\KK^m} \arrow{e,tb}{\pi_2^{-1}}{\sim} \arrow{s,r}{\tau_2(\Psi(y))} \node{A_2} \arrow{s,r}{\substack{mult. by \\ \tau_2(\Psi(y))}} \\
\node{A_1} \arrow{e,tb}{\pi_1}{\sim} \node{\KK^m} \arrow{e,tb}{\phi}{\sim} \node{\KK^m} \arrow{e,tb}{\pi_2^{-1}}{\sim} \node{A_2}
\end{diagram}\quad\quad
\begin{diagram}
\node{\!\UU(\gg)\!} \arrow{e,t}{\Psi} \arrow{s,r}{\tau_1} \node{\!\UU(\gg)\!} \arrow{s,r}{\tau_2} \\
\node{A_1} \arrow{e,tb}{\pi_2^{-1}\phi\pi_1}{\sim} \node{A_2}
\end{diagram}
\]

$(b) \to (a)$ Let $A$ be a local algebra with maximal ideal $\mm$, $U \subseteq \mm$ generate $A$, $\dim A = m$, and $\dim U = n$. Since $U$ consists of nilpotent elements, one can consider the subgroup $\exp U \cong \GG_a^n$ in $A^\times$ and its representation $\rho\colon \exp U \to \GL(A)$ which maps $a \in \exp U \subseteq A$ to the operator of multiplication by $a$ in $A$. 

Clearly, $\rho$ is faithful. Let us prove that $\rho$ is cyclic with the cyclic vector $1 \in A$. Let ${W := \langle\exp U\rangle}$. Note that $W$ is $(\exp U)$-invariant, it follows that $W$ is $\Lie(\exp U)$-invariant, that is $W$ is invariant under multiplication by elements in $U$. Since $U$ generates the algebra~$A$, we obtain $W = A$. 

\emph{Equivalence.} Let $\phi\colon A_1 \to A_2$ be an algebra isomorphism with $\phi(U_1) = U_2$. Then $\phi(\exp U_1) = \exp U_2$, and for any $u \in U_1$ we have $\rho_1(\exp u)\circ\phi = \phi\circ\rho_2(\phi(\exp u))$. 

\smallskip
Let us show that two constructed maps are inverse to each other. For a given representation $\rho$ we have $A = \tau(\UU(\gg)) \subseteq \Mat_m(\KK)$ and $U = \tau(\gg) = d\rho(\gg)$. The corresponding representation maps $\exp U$ to the operators of multiplication by $\exp U$ in $A$. It is equivalent to the initial representation since $\exp U \subseteq \Mat_m(\KK)$ coincides with $\exp d\rho(\gg) = \rho(\GG_a^n)$.

Conversely, for given $(A, U)$, let $\rho\colon \exp U \to \GL(A)$ be the corresponding representation. Then $d\rho\colon U \to \gl(A)$ maps $u$ to the operator of multiplication by $u$. Since the image of $\tau$ coincides with the associative algebra generated by $d\rho(U)$ and $U$ generates $A$, we obtain the algebra of operators of multiplication by elements of $A$, which is isomorphic to $A$. 

\smallskip 

$(b) \to (c)$ Denote by $s_1, \ldots, s_n$ a basis of the vector space $U$. Since $U$ generates $A$, the algebra $A$ is the image of a polynomial algebra for projection $\pi\colon \KK[S_1, \ldots, S_n] \to A$, $S_i \mapsto s_i$. Then $A \cong \KK[S_1, \ldots, S_n] \,/\, I$ for some ideal $I \subseteq \KK[S_1, \ldots, S_n]$. Since $s_i$ are nilpotent in $A$, the ideal $I$ contains some powers of all $S_i$. Since $s_i$ form a basis of $U$, it follows that $I \cap \langle S_1, \ldots, S_n\rangle = 0$ and $I$ is non-degenerate. Since $\dim A = m$, we have $\codim I = m$. 

\emph{Equivalence.} First let us check that the above construction does not depend on the choice of basis in $U$. Let $(s_1, \ldots, s_n)$ and $(\tilde s_1, \ldots, \tilde s_n)$ be two bases of $U$ corresponding to ideals $I$ and $\tilde I$; $(s_1, \ldots, s_n) = (\phi\tilde s_1, \ldots, \phi\tilde s_n)$ for some $\phi\in \GL_n(\KK)$. Then $g(S_1, \ldots, S_n) \in I$ if and only if $(\phi g)(S_1, \ldots, S_n) = g(\phi S_1, \ldots, \phi S_n) \in \tilde I$, whence $I$ is equivalent to $\tilde I$. 

Now let $(A_1, U_1)$ be equivalent to $(A_2, U_2)$, that is there is an isomorphism $\phi\colon A_1 \to A_2$ with $\phi(U_1) = U_2$. According to the above, we can choose a basis in $U_2$ as the image of a basis in $U_1$ under $\phi$ and obtain $I_1 = I_2 \subseteq \KK[S_1, \ldots S_n]$. 

\smallskip 

$(c) \to (b)$ For a given ideal $I \subseteq\KK[S_1, \ldots, S_n]$, let $A := \KK[S_1, \ldots, S_n]\,/\,I$, $s_i := S_i + I$, and $U := \langle s_1, \ldots, s_n\rangle$. 

The elements $s_i$ are nilpotent since some powers of $S_i$ belong to $I$. It follows that the ideal $(s_1, \ldots, s_n)$ is nilpotent of codimension one, whence the algebra $A$ is local. As above, $\dim U = n$ since $I$ is non-degenerate and $\dim A = \codim I = m$. 

\emph{Equivalence.} If ideals $I_1$ and $I_2$ are equivalent, we have an automorphism of $\KK[S_1, \ldots, S_n]$ that induces the desired isomorphism of factor-algebras $A_1 \to A_2$. 

Clearly, two constructed maps are inverse to each other. 

\smallskip 

$(c) \leftrightarrow (d)$ See Proposition~\ref{IV}.
\end{proof}

Below we explain a method to compute the generating subspace $V$ corresponding to a given pair $(A, U)$, see~\cite[Proposition~2.11]{HaTs1999}. 

\begin{construction}
\label{AtoVconstr}
Suppose $A$ is a local algebra of dimension $m$ with maximal ideal $\mm$, and a subspace $U \subseteq \mm$ of dimension $n$ generates the algebra $A$, see Theorem~\ref{HaTsch_theorem}~(b). These data define a representation of $A$ as a factor-algebra $A = \KK[S_1, \ldots, S_n]\,/\,I$: for a basis $s_1, \ldots, s_n$ of the subspace $U$, let the ideal $I$ be the kernel of the surjective homomorphism $\pi\colon\KK[S_1, \ldots, S_n] \to A$, $S_i \mapsto s_i$. 

For the sequel we need a basis of the algebra $A$. Consider a homogeneous lexicographic order on $\KK[S_1, \ldots, S_n]$. Let $\mu_1, \ldots, \mu_k$ be monomials that are not leading terms of polynomials from $I$. Let us prove that $\mu_i$ form a basis of $A$. They are linearly independent in $A$ since a linear combination of $\mu_i$ has one of $\mu_i$ as a leading term and can not belong to $I$. Further, consider any element of $A$. It is a linear combination of some monomials; if some of these monomials is not equal to $\mu_i$, then it is the leading term for some $f \in I$ and we can reduce the given element by $f$. In such a way we obtain a representation of the element as a linear combination of $\mu_i$. 

Since $x_1s_1+\ldots+x_ns_n \in U \subseteq \mm$ is nilpotent for any $x_1, \ldots, x_n \in \KK$ and $\mu_i$ form a basis of $A$, we can expand
\[
\exp(x_1s_1+\ldots+x_ns_n) = \sum_{i = 1}^m f_i(x_1, \ldots, x_n)\mu_i.
\]
For $g \in \KK[S_1, \ldots, S_n]$, denote by $g_x$ the same polynomial in variables $\frac{\pa}{\pa x_i}$. One can easily check that
\[
\frac{\pa}{\pa x_i}[\exp(x_1S_1+\ldots+x_nS_n)]=S_i\exp(x_1S_1+\ldots+x_nS_n).
\] 
This leads to the identity
\[
g_x[\exp(x_1S_1 + \ldots + x_nS_n)] = g \exp(x_1S_1 + \ldots + x_nS_n).
\]
Substituting $S_i = s_i$ to this identity, we obtain
\begin{equation}
\label{AtoVconstr_eq}
\sum\limits_{i = 1}^m g_x[f_i(x_1, \ldots, x_n)]\mu_i = \pi(g)\sum\limits_{i = 1}^m f_i(x_1, \ldots, x_n)\mu_i.
\end{equation}
Note that $\{\sum f_i(x_1, \ldots, x_n)\mu_i \mid x_i \in \KK\} = \exp U$ by definition and $\langle\exp U\rangle = A$ by the proof of $(b) \to (a)$ in Theorem~\ref{HaTsch_theorem}. In particular, $f_i$ are linearly independent. Then the right side of~\eqref{AtoVconstr_eq} equals zero for any $x_i \in \KK$ if and only if $\pi(g) = 0$ in $A$, that is $g \in I$. On the other hand, the left side equals zero for any $x_i \in \KK$ if and only if $g_x[f_i] = 0$ for any $1 \le i \le m$. It follows that $f_i \in V$, where $V$ is the generating subspace corresponding to the ideal $I$, see Lemma~\ref{IVbij_id_ti}. So we obtain

\begin{lemma}
\label{AtoVconstr_lem}
The polynomials $f_i$, $1 \le i \le m$, form a basis of the generating subspace~$V$ corresponding to the given pair $(A, U)$. 
\end{lemma}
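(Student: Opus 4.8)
The plan is to assemble the lemma from two facts already surfacing in Construction~\ref{AtoVconstr}---that each $f_i$ lies in the generating subspace $V$ attached to $I$, and that the $f_i$ are linearly independent---and then to close the argument by a dimension count, so that $m$ independent vectors sitting inside the $m$-dimensional space $V$ are forced to be a basis. First I would pin down $\dim V = m$: the ideal $I = \Ker\pi$ is non-degenerate of codimension $m$ supported at the origin (as recorded in the proof of $(b)\to(c)$ of Theorem~\ref{HaTsch_theorem}), so Proposition~\ref{IV} identifies the corresponding generating subspace $V$ as an $m$-dimensional one, and by Lemma~\ref{IVbij_id_ti} it is described explicitly as $V = \{f \in \KK[x_1, \ldots, x_n] \mid g_x[f] = 0 \text{ for all } g \in I\}$.

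Next I would check the inclusion $f_i \in V$ directly from~\eqref{AtoVconstr_eq}. Take any $g \in I$; then $\pi(g) = 0$ in $A$, so the right-hand side of~\eqref{AtoVconstr_eq} vanishes identically in $x_1, \ldots, x_n$, giving $\sum_{i=1}^m g_x[f_i]\,\mu_i = 0$. Since $\mu_1, \ldots, \mu_m$ is a basis of $A$, each coefficient must vanish, that is $g_x[f_i] = 0$ for every $i$. As $g$ ranged over all of $I$, the description of $V$ above yields $f_i \in V$ for all $i$.

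It remains to establish linear independence, which is the only delicate point. The map $(x_1, \ldots, x_n) \mapsto \exp(x_1 s_1 + \ldots + x_n s_n) = \sum_i f_i(x)\,\mu_i$ has image $\exp U$, whose linear span equals $A$ by the proof of $(b)\to(a)$ in Theorem~\ref{HaTsch_theorem}. If a nontrivial relation $\sum_i c_i f_i = 0$ held, then, reading off coordinates in the basis $\mu_i$, every element of $\exp U$ would satisfy $\sum_i c_i y_i = 0$ and hence lie in a proper hyperplane of $A$; this would force $\langle \exp U\rangle \neq A$, a contradiction. Thus the $f_i$ are linearly independent, and being $m$ such elements of the $m$-dimensional space $V$ they constitute a basis. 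The main obstacle is exactly this independence step: it hinges on translating the purely multiplicative statement $\langle\exp U\rangle = A$---the spanning property of the nilpotent one-parameter images---into a linear constraint on the coordinate polynomials $f_i$.
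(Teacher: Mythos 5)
Your proposal is correct and follows essentially the same route as the paper: membership $f_i \in V$ is read off from equation~\eqref{AtoVconstr_eq} using that $\mu_1,\ldots,\mu_m$ is a basis of $A$, linear independence comes from $\langle\exp U\rangle = A$, and the conclusion follows from $\dim V = m$ via the duality of Proposition~\ref{IV}. The only cosmetic difference is that you make the dimension count explicit up front, whereas the paper leaves it implicit in the correspondence.
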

\end{construction}

\begin{example}
\label{AtoVconstr_S3_ex}
Let us consider a local algebra $A = \KK[S]\,/\,(S^3)$ with maximal ideal $\mm = \langle S, S^2\rangle$. 

\smallskip

1) Take $U = \mm$. According to Construction~\ref{AtoVconstr}, choose a basis $s_1 = S + (S^3), \, s_2 = S^2 + (S^3)$ of $U$ and let $I$ be the kernel of the projection $\pi\colon \KK[S_1, S_2] \to A, \; S_i \mapsto s_i$:
\[\begin{aligned}
I = (S_1^2 &-S_2, S_1S_2), \quad A = \KK[S_1, S_2]\,/\,I,\\
s_1 &= S_1 + I, \;\; s_2 = S_2 + I.
\end{aligned}\]
We will omit $+\,I$ for convenience. Elements $\mu_1 = 1$, $\mu_2 = S_1$, $\mu_3 = S_2$ form a basis of $A$. Since $S_2 = S_1^2$ and $S_1^3 = 0$ in $A$, it follows that
\begin{multline*}
\exp(x_1s_1 + x_2s_2) = \exp(x_1S_1 + x_2S_1^2) = 1 + x_1S_1 + \Bigl(x_2+\frac{x_1^2}{2}\Bigr)S_1^2 = 1 + x_1\mu_1 + \Bigl(x_2+\frac{x_1^2}{2}\Bigr)\mu_2,
\end{multline*}
whence $f_1 = 1$, $f_2 = x_1$, and $f_3 = x_2+\frac{x_1^2}{2}$. By Lemma~\ref{AtoVconstr_lem}, $V = \bigl\langle1, x_1, x_2+\frac{x_1^2}{2}\bigr\rangle$. This agrees with Example~\ref{IV_S3_ex}. 

\smallskip

2) Take $U = \langle S\rangle$. Its basis $s_1 = S + (S^3)$ corresponds to
\[\begin{aligned}
I = (S_1^3) \subseteq \KK[S_1], &\quad A = \KK[S_1]\,/\,I\\
s_1 &= S_1 + I.
\end{aligned}\]
For $\mu_1 = 1$, $\mu_2 = S_1$, $\mu_3 = S_1^2$, we have $\exp(x_1S_1) = 1 + x_1S_1 + \frac{x_1^2}{2}S_1^2$,
whence $V = \langle1, x_1, x_1^2\rangle$ in $\KK[x_1]$. This agrees with Example~\ref{IV_S13_ex}.
\end{example}

\begin{example}
\label{AtoVconstr_S1S2_ex}
In the same way one can see that the algebra $A = \KK[S_1, S_2]\,/\,(S_1^2, S_1S_2, S_2^2)$ with $U = \mm = \langle S_1, S_2\rangle$ corresponds to the generating vector space $\langle1, x_1, x_2\rangle \subseteq \KK[x_1, x_2]$, which agrees with Example~\ref{IV_S1S2_ex}. There is no other subspace $U \subseteq \mm$ generating the algebra~$A$. 
\end{example}

Now we are going to discuss duality properties for modules under consideration. In particular, we provide complete proofs for results mentioned in~\cite[Remark~2.13]{HaTs1999}.
Let us recall that a generating subspace $V$ contains constants, so the action of $\GG_a^n$ on $V$ by translations is linear.

\begin{lemma}
\label{dual_lem}
In notation of Theorem~\ref{HaTsch_theorem}, the dual of a representation $\rho\colon\GG_a^n \to \GL_m(\KK)$ is equivalent to the representation $\tau\colon \GG_a^n \to \GL(V)$ by translations. 
\end{lemma}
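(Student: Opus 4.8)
The plan is to make the duality concrete via the pairing of Construction~\ref{IVconstr}. By Theorem~\ref{HaTsch_theorem} everything is taken up to the equivalence of representations in Definition~\ref{repr_equivdef}, so I realize $\rho$ as in the proof of $(b)\to(a)$: the representation space is $A$, and $\rho(\beta)$ is multiplication by $\exp(\beta_1s_1+\dots+\beta_ns_n)$, while $V=V_I$ for $A=\KK[S_1,\dots,S_n]/I$ and $\tau(\beta)f=f(x+\beta)$. Writing $\pi\colon\KK[S_1,\dots,S_n]\to A$ and $\bar g:=\pi(g)$, the first step is to descend the pairing: since $\langle g\mid f\rangle=0$ whenever $g\in I$ and $f\in V_I$, the form $g[f]|_0$ factors through a bilinear map $A\times V\to\KK$, $(\bar g,f)\mapsto\langle g\mid f\rangle$. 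I check it is perfect: non-degeneracy in $V$ is the first bullet of Construction~\ref{IVconstr}, non-degeneracy in $A$ follows from $I_{V_I}=I$ (Lemma~\ref{IVbij}), and $\dim A=\dim V=m$. Thus $\Phi\colon V\to A^\ast$, $\Phi(f)=\langle\,\cdot\mid f\rangle$, is a linear isomorphism.

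The heart of the argument is that, under this pairing, multiplication on $A$ is adjoint to differentiation on $V$. The identity $\langle g\tilde g\mid f\rangle=\langle\tilde g\mid g[f]\rangle$ (constant-coefficient operators commute, and their product is their composition) gives, for every $\bar g\in A$ and $f\in V$,
\[\langle\bar g\,\bar h\mid f\rangle=\langle\bar h\mid g[f]\rangle\qquad(\bar h\in A),\]
where $g[f]\in V$ because $V$ is translation invariant, hence stable under every $g\in\KK[S_1,\dots,S_n]$. I then specialise $g$ to a truncation $E_{\le d}(\beta)=\sum_{|\alpha|\le d}\tfrac{\beta^\alpha}{\alpha!}S^\alpha$ of $\exp(\beta_1S_1+\dots+\beta_nS_n)$, with $d$ so large that $\KK[S_1,\dots,S_n]_{>d}\subseteq I$ and $V\subseteq\KK[x_1,\dots,x_n]_{\le d}$. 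Then $\pi(E_{\le d}(\beta))=\exp(\beta_1s_1+\dots+\beta_ns_n)$, so $\bar g=\exp(\sum\beta_is_i)$ gives $\bar g\,\bar h=\rho(\beta)\bar h$, while $E_{\le d}(\beta)[f]=\exp(\sum\beta_iS_i)[f]=f(x+\beta)=\tau(\beta)f$ by Taylor's theorem. This yields the clean relation
\[\langle\rho(\beta)\bar h\mid f\rangle=\langle\bar h\mid\tau(\beta)f\rangle\qquad(\bar h\in A,\ f\in V).\]

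Finally I read off the equivalence. The last display says $\Phi(\tau(\beta)f)=\rho(\beta)^{\mathsf T}\Phi(f)$; since $\rho(\beta)^{\mathsf T}=\rho^\ast(-\beta)$ for the contragredient $\rho^\ast$, taking $\phi=\Phi$ together with the group automorphism $\psi\colon\beta\mapsto-\beta$ of $\GG_a^n$ gives $\Phi(\tau(\beta)f)=\rho^\ast(\psi(\beta))\Phi(f)$, which is exactly the condition of Definition~\ref{repr_equivdef} for $\tau$ and $\rho^\ast$ to be equivalent.

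The only delicate point is the passage through the exponential: $\exp(\sum\beta_iS_i)$ is a power series rather than a polynomial, so the adjointness identity cannot be applied to it verbatim. The remedy is to truncate to degree $d$ and to verify the degree bookkeeping — that against the fixed finite-dimensional $V$ neither the action on $f$ nor the class in $A$ is affected by discarding terms of degree $>d$. Beyond this everything is formal: the content is simply that the transpose of multiplication equals differentiation, with the sign twist $\beta\mapsto-\beta$ accounting for passage to the dual.
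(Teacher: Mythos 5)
Your proof is correct and follows essentially the same route as the paper: descend the Macaulay pairing of Construction~\ref{IVconstr} to a perfect pairing $A\times V\to\KK$, use Taylor's theorem to show that multiplication by $\exp(\sum\beta_is_i)$ is adjoint to translation by $\beta$, and absorb the resulting sign into the automorphism $\beta\mapsto-\beta$ permitted by Definition~\ref{repr_equivdef}. The only differences are that you spell out two points the paper leaves implicit — the non-degeneracy of the descended pairing and the truncation needed to apply the polynomial adjointness identity to the exponential series — both of which are welcome but not a change of method.
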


\begin{proof}
Let $\langle\,\cdot\mid\cdot\,\rangle$ be the pairing between $\KK[S_1, \ldots, S_n]$ and $\KK[x_1, \ldots, x_n]$ as in Construction~\ref{IVconstr}. Note that 
\begin{equation}
\label{dual_eq}
\bigl\langle \exp(\beta_1S_1 + \ldots + \beta_nS_n) g \mid f(x)\bigr\rangle = \bigl\langle g \mid f(x + \beta)\bigr\rangle
\end{equation}
for any $\beta = (\beta_1, \ldots, \beta_n) \in \GG_a^n$, $f \in \KK[x_1, \ldots, x_n]$, $g \in \KK[S_1, \ldots, S_n]$. Indeed, the left side equals $\langle g \mid \exp(\beta_1S_1 + \ldots + \beta_nS_n)[f(x)]\rangle$, which coincides with $\langle g \mid f(x + \beta)\rangle$ by Taylor's theorem. 
Since $\langle I_V \mid V\rangle = 0$, we can consider $\langle\,\cdot\mid\cdot\,\rangle$ as a pairing between
$A = \KK[S_1, \ldots, S_n]\,/\,I_V$ and $V \subseteq \KK[x_1, \ldots, x_n]$. According to the proof of Theorem~\ref{HaTsch_theorem}, we have $\rho\colon \exp U \to \GL(A)$, where $U = \langle S_1, \ldots, S_n\rangle$, so equation~\eqref{dual_eq} implies 
\[\langle\rho(-\beta)g \mid f\rangle = \langle g \mid \tau(\beta)f\rangle\]
for any $\beta \in \GG_a^n$, $f \in V$, $g \in A$ (we identify $\beta_1S_1 + \ldots \beta_nS_n$ with $-\beta$ for $\exp U \cong \GG_a^n$). It follows that the representations $\rho$ and $\tau$ are dual. 
\end{proof}

\begin{example}
\label{dual_S1S2S3_ex}
Let $A = \KK[S]\,/\,(S^3)$ and $U = \mm = \langle S, S^2\rangle$ as in Example~\ref{AtoVconstr_S3_ex}.1). According to $(b) \to (a)$ of Theorem~\ref{HaTsch_theorem}, the corresponding representation $\rho\colon \GG_a^2 \to \GL_3(\KK)$ is the representation of $\exp U$ in $A$ via multiplication. For an element $x_1S + x_2S^2$ in $U$, we have
\[
\exp(x_1S + x_2S^2) = 1 + x_1S + \bigl(x_2+\frac{x_1^2}{2}\bigr)S^2, 
\]
whence the representation $\rho$ in the basis $1, S, S^2$ of the algebra $A$ is given by
$\rho(x_1, x_2) = \begin{pmatrix}
1 & 0 & 0 \\
x_1 & 1 & 0 \\
x_2 + \frac{x_1^2}{2} & x_1 & 1
\end{pmatrix}$.
For $A = \KK[S_1, S_2]\,/\,(S_1^2, S_1S_2, S_2^2)$ and $U = \mm = \langle S_1, S_2\rangle$ we obtain 
$\rho(x_1, x_2) = \begin{pmatrix}
1 & 0 & 0\\
x_1 & 1 & 0 \\
x_2 & 0 & 1
\end{pmatrix}$. 
This agrees with Lemma~\ref{dual_lem}: the matrices of representation in~$V$ in Examples~\ref{IV_dual_S3_ex} and~\ref{IV_dual_S1S2_ex} are transposed to the above ones. 
\end{example}

In other words, Lemma~\ref{dual_lem} states that $A$ and $V$ are dual $\GG_a^n$-modules. 

\begin{proposition}
\label{LKJH}
In notation of Theorem~\ref{HaTsch_theorem}, the following conditions are equivalent:
\begin{enumerate}
	\item $\GG_a^n$-modules $A$ and $V$ are equivalent;
	\item $\GG_a^n$-module $V$ is cyclic;
	\item the algebra $A$ is Gorenstein.
\end{enumerate}
\end{proposition}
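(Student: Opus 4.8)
The plan is to regard both $\GG_a^n$-modules as modules over the ring $R = \UU(\gg) = \KK[S_1, \ldots, S_n]$: on $A = R/I$ the variable $S_i$ acts as multiplication by $s_i = S_i + I$, while on $V = V_I$ it acts as $\frac{\pa}{\pa x_i}$. These operators are the differentials of the two $\GG_a^n$-actions, so an $R$-module isomorphism commuting with all $S_i$ is exactly a module equivalence with $\psi = \id$ in the sense of Definition~\ref{repr_equivdef}. Two facts will be used repeatedly, both for finite-dimensional $R$-modules $M$ on which the augmentation ideal $\mm = (S_1, \ldots, S_n)$ acts nilpotently: by the proof of Theorem~\ref{HaTsch_theorem} the module $A$ is cyclic with cyclic vector $1$ (since $\langle\exp U\rangle = A$); and, $\mm$ acting nilpotently, Nakayama's lemma shows that $M$ is cyclic if and only if $\dim M/\mm M = 1$.

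Granting this, (a)~$\Rightarrow$~(b) is immediate, as cyclicity is preserved under module equivalence and $A$ is cyclic. For (b)~$\Rightarrow$~(a) I would pick a generator $F$ of $V$ and compare annihilators in $R$. Since $F \in V = V_I$, formula~\eqref{IVconstr_upd} gives $g[F] = 0$ for every $g \in I$, so $I \subseteq \operatorname{Ann}_R(F)$; on the other hand $V = R\cdot F \cong R/\operatorname{Ann}_R(F)$ forces $\codim_R \operatorname{Ann}_R(F) = \dim V = m = \codim_R I$. Hence $I = \operatorname{Ann}_R(F)$, and the map $[g] \mapsto g[F]$ is an isomorphism $A = R/I \to R\cdot F = V$ intertwining the $S_i$, which is the desired $\GG_a^n$-module equivalence.

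The equivalence (b)~$\Leftrightarrow$~(c) is the heart of the matter, and the key step is the identity $\dim \Soc A = \dim V/\mm V$. I would extract it from the pairing $\langle\,\cdot\mid\cdot\,\rangle$ of Construction~\ref{IVconstr}, which induces a perfect pairing between $A = R/I$ and $V$ (as in the proof of Lemma~\ref{dual_lem}), and under which the orthogonal complement of $V$ inside $R$ is precisely $I$. Writing $\mm V$ for the span of the derivatives $\frac{\pa f}{\pa x_i}$, $f \in V$, the adjunction $\langle S_i g \mid f\rangle = \langle g \mid S_i[f]\rangle$ shows that $[g] \in A$ is orthogonal to $\mm V$ if and only if $S_i g \in I$ for all $i$, i.e.\ if and only if $\mm[g] = 0$ in $A$; thus the orthogonal complement of $\mm V$ is exactly $\Soc A$. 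Since the pairing is perfect and $\dim A = \dim V = m$, this yields $\dim \Soc A = m - \dim \mm V = \dim V/\mm V$. Combining with Nakayama: $A$ is Gorenstein $\Leftrightarrow \dim \Soc A = 1 \Leftrightarrow \dim V/\mm V = 1 \Leftrightarrow V$ is cyclic.

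The main obstacle is this socle computation: one has to correctly recognize $\Soc A$ as the orthogonal complement of $\mm V$, which relies on the adjunction between multiplication in $R$ and differentiation on $\KK[x_1, \ldots, x_n]$ together with the identification of $V^{\perp}$ with $I$; once this is in place the rest is a dimension count and Nakayama's lemma. I note that $\dim \Soc A = \dim V/\mm V$, together with (b)~$\Rightarrow$~(a), is exactly the module-theoretic form of Macaulay's inverse-system duality between Gorenstein quotients of $R$ and cyclic submodules of $\KK[x_1, \ldots, x_n]$, so one could instead invoke that theory; the self-contained pairing argument, however, fits the framework already developed in this section.
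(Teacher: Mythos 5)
Your proof is correct, and parts (a)$\Leftrightarrow$(b) are essentially the paper's argument: the paper also builds the isomorphism $A\to V$, $[g]\mapsto g[f_0]$, from a cyclic vector $f_0$, the only difference being that it identifies the kernel with $I$ via the commutation $g[h[f_0]]=h[g[f_0]]$ rather than via your dimension count $\codim \Ker = \dim V = \codim I$. For (b)$\Leftrightarrow$(c) the underlying duality is the same in both proofs, but the machinery is genuinely different. The paper works at the group level: it identifies $\Soc A$ with the set of $\GG_a^n$-fixed points in $A$, matches invariant lines in $A$ with invariant hyperplanes in the dual module $V$, and invokes the Lie--Kolchin theorem to show every proper invariant subspace of $V$ lies in an invariant hyperplane, so $V$ is cyclic exactly when that hyperplane is unique, i.e. $\dim\Soc A=1$. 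You work at the module level: the adjunction $\langle S_ig\mid f\rangle=\langle g\mid S_i[f]\rangle$ gives $(\mm V)^{\perp}=\Soc A$ under the perfect pairing of $A$ with $V$, hence $\dim\Soc A=\dim V/\mm V$, and the nilpotent form of Nakayama's lemma converts this into the cyclicity criterion. These are dual formulations of the same fact; your version is a little more self-contained (no Lie--Kolchin) and makes the link to Macaulay's inverse systems explicit, whereas the paper's phrasing in terms of fixed points is the one it reuses later, e.g. in the proof of Proposition~\ref{GLA}.
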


\begin{proof}
$(a) \Rightarrow (b)$ The module $V \cong A$ is cyclic since the algebra $A$ contains a unit. 

\smallskip

$(b) \Rightarrow (a)$ Since the module structure on $V$ is given by translation operators from $\exp U$, $U = \langle S_1, \ldots, S_n\rangle$, and $V$ is cyclic, it follows that there exists a polynomial $f_0 \in V$ such that $V = \langle(\exp U)[f_0]\rangle = (\KK[S_1, \ldots, S_n])[f_0]$. Hence the kernel of the valuation $\pi\colon \KK[S_1, \ldots, S_n] \to V$, $g \mapsto g[f_0]$, is equal to 
\[\Ker \pi = \{g \in \KK[S_1, \ldots, S_n] \mid g[f_0] = 0\} =
 \{g \in \KK[S_1, \ldots, S_n] \mid g[V] = 0\} = I.\]
Thus $\pi$ gives an isomorphism between $A = \KK[S_1, \ldots, S_n]\,/\,I$ and $V$, which is an isomorphism of $\GG_a^n$-modules since the module structure on $A$ is given by $\exp U$ as well. 

\smallskip

$(b) \Leftrightarrow (c)$ 
Invariant one-dimensional subspaces $\langle a\rangle$ in $A$ correspond to invariant hyperplanes $\langle a\rangle^\perp$ in the dual module $V$. Since $\GG_a^n$ is unipotent, a one-dimensional vector space is invariant if and only if it consists of fixed points. Notice that $\Soc A$ is the set of fixed points in $A$. Indeed, $(\exp U)a = a$ if and only if $Ua = 0$, i.e. $\mm a = 0$. 

If $\dim \Soc A > 1$, the corresponding invariant hyperplanes cover $V$. Indeed, any $f \in V$ is contained in $\langle a\rangle^\perp$, where $a \in \Soc A \cap \langle f\rangle^\perp$. So there is no cyclic vector in this case. 

If $\dim \Soc A = 1$, there is a unique invariant hyperplane in $V$. Let us prove that any vector in the complement of this hyperplane is cyclic. It is sufficient to show that any proper invariant subspace in $V$ is contained in an invariant hyperplane. Indeed, for $W \subseteq V$, consider the invariant subspace $W^\perp \subseteq A$; by the Lie-Kolchin theorem there exists an invariant one-dimensional subspace $\langle a\rangle \subseteq W^\perp$, which corresponds to the required hyperplane $\langle a\rangle^\perp \supseteq~W$. 
\end{proof}


\subsection{The case of additive actions} 
\label{subsecAA}
In this subsection we combine the results of two previous subsections. 

\begin{definition}
A generating subspace $V \subseteq \KK[x_1, \ldots, x_n]$ is called \emph{basic} if $\dim V=n+1$. 
\end{definition}

Basic subspaces are minimal generating subspaces of a polynomial algebra. 

\begin{example}
\label{basic4_ex}
One can check that the following vector subspaces in $\KK[x_1, x_2, x_3, x_4]$ are basic:
\[\begin{aligned}
V_1 = \bigl\langle1, x_1, x_2, x_3, x_4\bigr\rangle, &\quad\quad
V_2 = \Bigl\langle1, x_1, x_2, x_3 + \frac{x_1^2}{2}\Bigr\rangle,\\
V_3 = \Bigl\langle1, x_1, x_2, x_3+x_1x_2\Bigr\rangle, &\quad\quad
V_4 = \Bigl\langle1, x_1, x_2+\frac{x_1^2}{2}, x_3 + x_1x_2 + \frac{x_1^3}{6}\Bigl\rangle.
\end{aligned}\] 
\end{example}

Hassett-Tschinkel correspondence for $m = n + 1$ (see \cite[Proposition~2.15]{HaTs1999}) or Knop-Lange theorem for $r = 0$ implies a description of additive actions on projective spaces. In view of correspondence $(b) \to (d)$ in Theorem~\ref{HaTsch_theorem}, the basic subspace is determined just by the algebra $A$ as we have to set $U=\mm$. 

\begin{theorem}
\label{HaTschKnLa_theor}
There are one-to-one correspondences between the following objects:
\begin{enumerate}
	\item additive actions on $\PP^n$, i.e. effective actions $\alpha\colon \GG_a^n \times \PP^n \to \PP^n$ with an open orbit; 
	\item faithful cyclic representations $\rho\colon \GG_a^n \to \GL_{n+1}(\KK)$;
	\item local commutative associative unital algebras $A$ of dimension $n+1$;
	\item basic subspaces $V \subseteq \KK[x_1, \ldots, x_n]$.
\end{enumerate}
\smallskip
These correspondences are considered up to equivalences as in Definitions~\ref{act_equivdef} and~\ref{IV_equivdef}-\ref{alg_equivdef}.
\end{theorem}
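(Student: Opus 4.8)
The plan is to assemble the four-fold correspondence by gluing two results already proved: Theorem~\ref{HaTsch_theorem} (Hassett--Tschinkel) specialized to $m=n+1$, which will furnish the bijections among (b), (c) and (d), and Theorem~\ref{KnopLange_theor} (Knop--Lange) specialized to rank zero, which will attach (a) to this chain.

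First I would specialize Theorem~\ref{HaTsch_theorem} to $m=n+1$. For a local algebra $A$ of dimension $n+1$ we have $A=\KK\oplus\mm$ by Lemma~\ref{localg_lem}, so $\dim\mm=n$; since a generating subspace $U\subseteq\mm$ of dimension $n$ must then satisfy $U=\mm$, the datum of a pair $(A,U)$ in part (b) of Theorem~\ref{HaTsch_theorem} collapses to the datum of the local algebra $A$ alone. Moreover any algebra isomorphism carries $\mm_1$ onto $\mm_2$, so the equivalence of pairs in Definition~\ref{alg_equivdef} reduces here to plain algebra isomorphism. On the other side, a generating subspace of dimension $m=n+1$ is by definition a basic subspace, so part (d) of Theorem~\ref{HaTsch_theorem} becomes exactly the class of basic subspaces, and part (a) of that theorem with $m=n+1$ is precisely a faithful cyclic representation $\rho\colon\GG_a^n\to\GL_{n+1}(\KK)$. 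Thus Theorem~\ref{HaTsch_theorem} already delivers the one-to-one correspondences among items (b), (c) and (d) of the present statement, with the equivalences of Definitions~\ref{repr_equivdef}, \ref{IV_equivdef} and~\ref{alg_equivdef}; the non-degenerate ideal $I$ of codimension $n+1$ supported at the origin serves as a convenient intermediate object.

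It remains to splice in the additive actions (a), for which I would invoke Theorem~\ref{KnopLange_theor} in the rank-zero case $r=0$. By definition an additive action on $\PP^n$ is an effective action of the connected commutative unipotent group $\GG_a^n$ with an open orbit; conversely, if a connected commutative linear algebraic group $G$ acts effectively on $\PP^n$ with an open orbit and is unipotent (equivalently, of rank $0$), then Lemma~\ref{openorb_lem} gives $\dim G=n$, whence $G\cong\GG_a^n$, so such actions are exactly the additive ones. The final clause of Theorem~\ref{KnopLange_theor} states that $G$ has rank $r$ iff the associated algebra $A$ has $r+1$ maximal ideals, so rank $0$ corresponds precisely to $A$ being local of dimension $n+1$. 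Since the equivalence of actions used in Theorem~\ref{KnopLange_theor} is the same Definition~\ref{act_equivdef}, its restriction to the rank-zero locus is a bijection between additive actions on $\PP^n$ and local algebras of dimension $n+1$, i.e. the correspondence (a)$\leftrightarrow$(c).

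Composing the two gluings yields a single coherent family of bijections among (a), (b), (c) and (d); in particular, following (a)$\to$(c)$\to$(b) recovers the faithful cyclic representation as the multiplication action of $\exp\mm\cong\GG_a^n$ on $A\cong\KK^{n+1}$, which is the unipotent lift $\pi^{-1}(G)=A^\times$ appearing in the proof of Theorem~\ref{KnopLange_theor}. I do not expect a genuine obstacle: the substantive work is carried by the two cited theorems, and the only points demanding care are the bookkeeping of the various equivalence relations and the elementary observation that $\dim\mm=\dim A-1=n$ forces $U=\mm$. The mildest subtlety is checking that ``rank $0$'' on the group side and ``local'' on the algebra side cut out the same subclass under Knop--Lange, which is exactly the content of that theorem's last sentence.
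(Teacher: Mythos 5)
Your proposal is correct and follows essentially the same route as the paper, which likewise obtains this theorem by specializing Theorem~\ref{HaTsch_theorem} to $m=n+1$ (forcing $U=\mm$, so pairs $(A,U)$ collapse to local algebras and generating subspaces become basic) and Theorem~\ref{KnopLange_theor} to $r=0$. The bookkeeping of equivalences and the consistency of the two gluings are exactly the points the paper also relies on.
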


From Theorem~\ref{localg6_prop} we obtain the following statement. 

\begin{corollary} \label{caafin} 
The projective space $\PP^n$ admits a finite number of additive actions if and only if $n \le 5$. 
\end{corollary}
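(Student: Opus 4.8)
The plan is to read off the statement directly from the two main results established above, since \ref{caafin} is a formal consequence of them. By Theorem~\ref{HaTschKnLa_theor}, additive actions on $\PP^n$---taken up to the equivalence of Definition~\ref{act_equivdef}---are in one-to-one correspondence with isomorphism classes of local commutative associative unital algebras $A$ of dimension $n+1$. Hence counting additive actions on $\PP^n$ is literally the same as counting isomorphism classes of local algebras of dimension $m = n+1$, and the question of finiteness transfers verbatim from the geometric side to the algebraic side.

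First I would invoke this correspondence to replace the geometric problem by the algebraic one. Then I would apply Theorem~\ref{localg6_prop}, which asserts that the number of isomorphism classes of local algebras of dimension $m$ is finite precisely when $m \le 6$, while infinite series of non-isomorphic local algebras appear starting from $m = 7$. Substituting $m = n+1$, finiteness holds if and only if $n+1 \le 6$, that is $n \le 5$, which is exactly the assertion of the corollary.

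There is no genuine obstacle here: all the substantive work has already been carried out in Theorem~\ref{HaTschKnLa_theor} (the bijection) and Theorem~\ref{localg6_prop} (the finiteness threshold at dimension~$6$). The only point deserving a word of care is that the correspondence of Theorem~\ref{HaTschKnLa_theor} is stated up to equivalence of actions, so the phrase ``a finite number of additive actions'' must be understood as finitely many up to this equivalence; with that reading the two counts coincide exactly, and the equivalence $n \le 5 \Leftrightarrow n+1 \le 6$ closes the argument.
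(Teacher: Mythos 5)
Your proposal is correct and is precisely the paper's argument: the corollary is stated immediately after Theorem~\ref{HaTschKnLa_theor} with the remark that it follows from Theorem~\ref{localg6_prop}, i.e.\ the bijection with local algebras of dimension $n+1$ plus the finiteness threshold at dimension $6$. Your added caveat that the count is taken up to equivalence of actions is the right reading and matches the paper's conventions.
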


\begin{example} \label{eexx} 
According to Table~\ref{table_localg6}, there are two local algebras of dimension~$3$. The corresponding additive $\GG_a^2$-actions on $\PP^2$ are found in Examples~\ref{KnL_S1S2_ex} and~\ref{KnL_S3_ex}, and basic subspaces are given in Examples~\ref{AtoVconstr_S3_ex}.1) and~\ref{AtoVconstr_S1S2_ex}. Faithful cyclic representations are written in Example~\ref{dual_S1S2S3_ex}. We gather the results in the following table:
\renewcommand{\arraystretch}{1.2}
$$\begin{array}{|l|c|c|}\hline
\text{Additive actions} & [z_0 : z_1+\alpha z_0 : z_2+\beta z_0] & [z_0 : z_1+\alpha z_0 : z_2+\alpha z_1+\Bigl(\beta+\frac{\alpha^2}{2}\Bigr)z_0]\\\hline
\text{Representations} & \begin{pmatrix}1 & 0 & 0 \\ \alpha & 1 & 0 \\ \beta & 0 & 1\end{pmatrix} & \begin{pmatrix}1 & 0 & 0 \\ \alpha & 1 & 0 \\ \beta +\frac{\alpha^2}{2} & \alpha & 1\end{pmatrix}\\\hline
\text{Local algebras} & \KK[S_1, S_2]\,/\,(S_1^2, S_1S_2, S_2^2) & \KK[S]\,/\,(S^3) \\\hline
\text{Basic vector subspaces} & \langle1, x_1, x_2\rangle & \bigl\langle1, x_1, x_2+\frac{x_1^2}{2}\bigr\rangle\\\hline
\end{array}$$
In the same way it can be proved that basic subspaces of Example~\ref{basic4_ex} correspond to four local algebras of dimension~$4$ from Table~\ref{table_localg6} and so are the only basic subspaces in this case. 
\end{example}

Recall that by Corollary~\ref{fin_orb_Gan_cor} there is a unique additive action on~$\PP^n$ with finitely many orbits; it corresponds to the local algebra $A=\KK[S]\,/\,(S^{n+1})$. One may look for a generalization of this result. Namely, the \emph{modality} of an action of a connected algebraic group~$G$ on a variety $X$ is the maximal value of minimal codimension of a $G$-orbit in $Y$ over all irreducible $G$-invariant subvarieties $Y$ in $X$. In other words, the modality is the maximal number of parameters in a continuous family of $G$-orbits on $X$. In particular, the modality is zero if and only if the number of $G$-orbits on $X$ is finite.

A classification of additive actions on $\PP^n$ of modality one is obtained in~\cite[Theorem~3.1]{ArSh2011}. Such actions correspond to the following 2-generated pairwise non-isomorphic local algebras:
\[
A_{a,b}=\KK[S_1,S_2]/(S_1^{a+1},S_2^{b+1},S_1S_2), \ a\ge b\ge 1; \quad B_{a,b}=\KK[S_1,S_2]/(S_1S_2, S_1^a-S_2^b), \ a\ge b\ge 2 ;
\]
\[
C_a=\KK[S_1,S_2]/(S_1^{a+1}, S_2^2 - S_1^3), \ a \ge 3; \quad C_a^1=\KK[S_1,S_2]/(S_1^{a+1}, S_2^2 - S_1^3, S_1^aS_2), \ a\ge 3;
\]
\[
C_a^2=\KK[S_1,S_2]/(S_1^{a+1}, S_2^2 - S_1^3, S_1^{a-1}S_2), \ a \ge 3; \quad C_a^3=\KK[S_1,S_2]/(S_2^2 - S_1^3, S_1^{a-2}S_2), \ a\ge 4;
\]
\[
D=\KK[S_1,S_2]/(S_1^3, S_2^2); \quad E=\KK[S_1,S_2]/(S_1^3, S_2^2 ,S_1^2S_2).
\]

\smallskip

Clearly, the maximum possible value of modality of a non-trivial action of a connected algebraic group on an irreducible $n$-dimensional algebraic variety is $n-1$. It follows from Corollary~\ref{orb_as_id_cor} that this maximal value is achieved only for one additive action on~$\PP^n$. This action corresponds to the local algebra $A$ with the condition $\mm^2=0$, i.e, $A=\KK[S_1,\ldots,S_n]/(S_iS_j, \ 1\le i\le j\le n)$. In this case the hyperplane $\PP(\mm)$, which is the complement to the open orbit in $\PP^n$, consists of $\GG_a^n$-fixed points.  

In this case we may consider the blowup $X$ of the projective space $\PP^n$ along a smooth subvariety contained in $\PP(\mm)$ and lift the additive action from $\PP^n$ to $X$. 
This proves the following result providing many projective varieties admitting an additive action. 

\begin{proposition}
\label{proprr}
Let $X$ be the blowup of the projective space $\PP^n$ along a smooth subvariety contained in a hyperplane in $\PP^n$. Then $X$ admits an additive action.
\end{proposition}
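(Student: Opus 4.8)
The plan is to exploit the distinguished additive action on $\PP^n$ singled out just before the statement, the one attached to the local algebra $A = \KK[S_1,\ldots,S_n]/(S_iS_j,\ 1\le i\le j\le n)$ with $\mm^2 = 0$, whose complement to the open orbit is an \emph{entire hyperplane of fixed points}, and then to lift this action through the blowup. First I would fix homogeneous coordinates $z_0,\ldots,z_n$ so that the given hyperplane is $H = \{z_0 = 0\}$; since $\PGL_{n+1}(\KK)$ acts transitively on hyperplanes, this is no loss of generality. I then take the action
\[
(a_1,\ldots,a_n)\cdot[z_0:z_1:\ldots:z_n] = [z_0 : z_1 + a_1 z_0 : \ldots : z_n + a_n z_0],
\]
whose open orbit is $\{z_0\ne 0\}$, an affine space on which $\GG_a^n$ acts simply transitively, and whose fixed-point locus is exactly $H = \{z_0 = 0\} = \PP(\mm)$. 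Because $Z \subseteq H$, every point of $Z$ is $\GG_a^n$-fixed; in particular $Z$ is a $\GG_a^n$-invariant closed subvariety, so its ideal sheaf $\mathcal{I}_Z$ is $\GG_a^n$-invariant.

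Next I would lift the action to $\sigma\colon X = \mathrm{Bl}_Z\PP^n \to \PP^n$. Writing $\alpha\colon\GG_a^n\times\PP^n\to\PP^n$ for the action, the key observation is that the automorphism $(g,p)\mapsto(g,g\cdot p)$ of $\GG_a^n\times\PP^n$ carries $\GG_a^n\times Z$ to itself (as $Z$ is pointwise fixed) while intertwining $\alpha$ with the second projection, so that $\alpha^{-1}(Z) = \GG_a^n\times Z$ as subschemes. Since blowing up commutes with the flat base change $\GG_a^n\times(-)$, the morphism $\id\times\sigma$ is precisely the blowup of $\GG_a^n\times\PP^n$ along $\GG_a^n\times Z$; hence $\bigl(\alpha\circ(\id\times\sigma)\bigr)^{*}\mathcal{I}_Z = (\id\times\sigma)^{*}\mathcal{I}_{\GG_a^n\times Z}$ is invertible on $\GG_a^n\times X$, and the universal property of the blowup $\sigma$ produces a lift $\tilde\alpha\colon\GG_a^n\times X\to X$. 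The group-action axioms for $\tilde\alpha$ then follow from uniqueness of such lifts: both sides of each identity and associativity diagram descend to the same morphism into $\PP^n$, hence coincide on the dense open locus where $\sigma$ is an isomorphism, hence everywhere by separatedness.

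Finally I would check that $\tilde\alpha$ is an additive action. The variety $X$ is complete, being projective over the projective variety $\PP^n$. The morphism $\sigma$ is an isomorphism over $\PP^n\setminus Z\supseteq\{z_0\ne 0\}$, so $\sigma^{-1}(\{z_0\ne 0\})$ is an open subset of $X$ on which $\tilde\alpha$ coincides with the original action; this open set is a single $\GG_a^n$-orbit isomorphic to $\GG_a^n$ acting on itself by translations. Thus $X$ carries an open $\GG_a^n$-orbit, and the action is effective because it is already simply transitive there. The step requiring genuine care is the regularity of the lifted action, i.e. that the set-theoretic lift is a morphism of varieties and not merely an abstract group action; it is exactly the scheme-theoretic identity $\alpha^{-1}(Z) = \GG_a^n\times Z$ combined with flat base change for blowups that makes the universal property applicable and thereby settles this point.
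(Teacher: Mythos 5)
Your proposal is correct and follows exactly the route the paper sketches: take the additive action on $\PP^n$ corresponding to the local algebra with $\mm^2=0$, whose fixed-point locus is the hyperplane containing $Z$, and lift it to the blowup. The paper states this lift in one line without justification, so your verification via the scheme-theoretic identity $\alpha^{-1}(Z)=\GG_a^n\times Z$, flat base change, and the universal property of the blowup is a welcome (and accurate) filling-in of the details.
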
 

We finish this section with a characterization of Gorenstein local algebras in terms of Hassett-Tschinkel correspondence. For any action of an algebraic group~$G$ on a variety~$X$ there is a closed $G$-orbit. If the variety~$X$ is complete, any closed orbit is complete as well. If $G$ is unipotent, such an orbit is a $G$-fixed point. So, an action of a unipotent group $G$ on a complete variety $X$ has a fixed point. 

\begin{proposition}
\label{GLA}
In notation of Theorem~\ref{HaTschKnLa_theor}, the following conditions are equivalent:
\begin{enumerate}
	\item an additive action on $\PP^n$ has a unique fixed point;
	\item the corresponding local algebra $A$ is Gorenstein. 
\end{enumerate}
\end{proposition}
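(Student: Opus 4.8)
The plan is to identify the fixed-point locus of the additive action inside $\PP(A) = \PP^n$ with $\PP(\Soc A)$, after which the equivalence of (a) and (b) reduces to a statement purely about $\dim \Soc A$.

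First I would describe the fixed points. Under the correspondence of Theorem~\ref{HaTschKnLa_theor} (in the case $U = \mm$) the action of $\GG_a^n$ on $\PP(A)$ is induced by the action of $\exp\mm$ on $A$ by multiplication. A point $[a] \in \PP(A)$ is fixed precisely when the line $\KK a$ is $\GG_a^n$-invariant; since $\GG_a^n$ is unipotent, an invariant line necessarily consists of fixed vectors, exactly as used in the proof of Proposition~\ref{LKJH}. Thus $[a]$ is a fixed point if and only if $(\exp\mm)a = a$, which holds if and only if $\mm a = 0$, i.e. $a \in \Soc A$. Consequently the fixed-point locus of the additive action on $\PP^n$ equals $\PP(\Soc A)$.

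Next I would translate the number of fixed points into $\dim \Soc A$. As $A$ is a nonzero finite-dimensional local algebra, its maximal ideal is nilpotent of some length $l$, and $\mm^{l-1} \subseteq \Soc A$ with $\mm^{l-1} \ne 0$; hence $\dim \Soc A \ge 1$ and at least one fixed point always exists, in agreement with the general fact that a unipotent group acting on a complete variety has a fixed point. The projective subspace $\PP(\Soc A)$ reduces to a single point exactly when $\dim \Soc A = 1$, and consists of more than one point otherwise. By the definition of a Gorenstein algebra, the condition $\dim \Soc A = 1$ is precisely the Gorenstein condition, and this yields the desired equivalence.

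The argument is essentially an assembly of facts already available in the text, so I do not expect a serious obstacle. The only point requiring a line of justification is the equivalence $(\exp\mm)a = a \iff \mm a = 0$ together with the reduction from invariant lines to fixed vectors for the unipotent group $\GG_a^n$; both are handled exactly as in the proof of Proposition~\ref{LKJH}, where it is observed that $\Soc A$ is the set of $\GG_a^n$-fixed vectors of $A$.
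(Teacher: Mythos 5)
Your argument is correct and follows essentially the same route as the paper: identify the fixed-point locus in $\PP(A)$ with $\PP(\Soc A)$ using the fact that a unipotent group has no non-trivial characters (so invariant lines consist of fixed vectors), and then observe that this locus is a single point exactly when $\dim\Soc A=1$, i.e.\ when $A$ is Gorenstein. No gaps.
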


\begin{proof}
As was observed in the proof of $(b) \Leftrightarrow (c)$ in Proposition~\ref{LKJH}, the set of fixed points of the action of $\GG_a^n$ on $A$ is $\Soc A$. Since a unipotent group has no non-trivial character, 
the set of fixed points of the corresponding additive action on $\PP^n=\PP(A)$ is $\PP(\Soc A)$. So a fixed point is unique if and only if the ideal $\Soc A$ is one-dimensional. By definition, it means that the algebra $A$ is Gorenstein. 
\end{proof} 


\section{Generalizations of Hassett-Tschinkel correspondence} 
\label{secght}

In this section we adapt the method of Hassett and Tschinkel to the study of additive actions on projective varieties~$X$ different from projective spaces. We introduce an induced additive action as an additive action on~$X$ that can be extended to an ambient projective space. It turns out that every such action comes from an additive action on the projective space via the restriction to a subgroup of the acting vector group. So such actions are given by pairs $(A,U)$, where $A$ is a local algebra defining an additive action on the projective space and~$U$ is a subspace in~$\mm$ that represents the subgroup. 

In subsection~\ref{subbus} we consider the case when the projective subvariety~$X$ is a hypersurface and describe a method to write down explicitly the homogeneous equation for~$X$ in terms of the pair $(A,U)$. In particular, the degree of this equation is equal to the maximal number~$d$ such that the ideal $\mm^d$ is not contained in~$U$. These results imply that smooth projective hypersurfaces that admit an additive action are precisely hyperplanes and non-degenerate quadrics. Moreover, if a hypersurface in $\PP^n$ admits an additive action then its degree does not exceed~$n$. 

In the next three subsections we apply the methods of multilinear algebra to study additive actions on projective hypersurfaces. Namely, we consider the $d$-linear form on the algebra~$A$ which is the polarization of the equation defining the hypersurface~$X$ and characterize additive actions on~$X$ in terms of this form. This allows to describe additive actions on non-degenerate and degenerate quadrics, some cubics, and to prove in Theorem~\ref{tnew} that any non-degenerate projective hypersurface admits at most one additive action. Also, it is shown that induced additive actions on non-degenerate hypersurfaces come from Gorenstein local algebras. 

\subsection{Additive actions on projective subvarieties}
Let $X$ be a closed subvariety of dimension~$n$ in a projective space~$\PP^{m-1}$. Throughout this section we assume that $X$ is not contained in any hyperplane of~$\PP^{m-1}$, i.e., the subvariety~$X$ is linearly non-degenerate. In this subsection we introduce the notion of induced additive action and give a variant of the Hassett-Tschinkel correspondence for induced additive actions. 

\begin{definition}
\label{indac_equivdef}
An additive action $\GG_a^n \times X \to X$ is \emph{induced} if it can be extended to an action $\GG_a^n \times \PP^{m-1} \to \PP^{m-1}$. 
Two induced additive actions ${\alpha_i\colon\GG_a^n \times X_i \to X_i}$, ${X_i \subseteq \PP^{m-1}}$, $i=1,2$, are said to be \emph{equivalent} if there are automorphisms of groups ${\psi\colon \GG_a^n \to \GG_a^n}$ and of varieties $\phi\colon \PP^{m-1} \to \PP^{m-1}$ such that $\phi(X_1) = X_2$ and 
${\phi\circ\alpha_1= \alpha_2\circ(\psi\times\phi)}$.
\end{definition}

\begin{example}
\label{hypnonind_ex}
Consider a cuspidal cubic plane curve $X = \{z_0^2z_3 = z_1^3\} \subseteq \PP^2$. Let us show that $X$ admits an additive action, but has no induced additive action. Let us act in the affine chart $\{z_0 \ne 0\}$ by translation on~$\frac{z_1}{z_0}$, i.e. an element $a \in \GG_a$ acts via 
\[[z_0:z_1:z_3]=\left[1 \,:\, \tfrac{z_1}{z_0} \,:\, \bigl(\tfrac{z_1}{z_0}\bigr)^{3} \right] \;\mapsto\; \left[1 \,:\, \tfrac{z_1}{z_0}+a \,:\, \bigl(\tfrac{z_1}{z_0}+a\bigr)^{\!3} \right].\] 
Notice that we have the identity $\frac{z_1}{z_0} + a = \left(\!z_1\bigl(\frac{z_1}{z_0} - a\bigr)\!\right)^{-1}\!\!(z_3-a^2z_1)$, which follows from the equation of~$X$. Substituting this identity, multiplying homogeneous coordinates by $\left(\!z_1\bigl(\frac{z_1}{z_0} - a\bigr)\!\right)^3$ and using the equation of~$X$, we obtain that $a \in \GG_a$ acts on $[z_0:z_1:z_3]$ via
\begin{gather*}
\left[1 : \frac{z_3-a^2z_1}{z_1\bigl(\frac{z_1}{z_0} - a\bigr)} : \left(\!\frac{z_3-a^2z_1}{z_1\bigl(\frac{z_1}{z_0} - a\bigr)}\right)^{\!\!3} \right] 
= \left[z_1^3\bigl(\tfrac{z_1}{z_0} - a\bigr)^{\!3} : z_1^2\bigl(\tfrac{z_1}{z_0} - a\bigr)^{\!2}(z_3-a^2z_1) : (z_3-a^2z_1)^3\right] = \\
= \left[z_0z_3^2 - 3az_1^2z_3 + 3a^2z_0z_1z_3 -z_1^3a^3 \,:\, (z_1z_3 - 2az_0z_3 + z_1^2a^2)(z_3-a^2z_1) \,:\, (z_3-a^2z_1)^3\right],
\end{gather*}
which is well defined at the unique point $[0:0:1] \in X$ not belonging to the affine chart $\{z_0 \ne 0\}$. Thus we obtain an additive action on $X$. However, by Corollary~\ref{hypdeg_cor} below the degree of a curve admitting an induced additive action on~$\PP^2$ is at most~$2$, i.e. $X$ has no induced additive action. 

This example is also treated in \cite[Subsection~4.1]{HaTs1999}. Here the action on $X$ is constructed from the additive action on the normalization~$\PP^1$ of~$X$. 
\end{example}

\begin{remark}
We denote the third coordinate $z_3$ instead of $z_2$ for a good reason, see Remark~\ref{hyp_twisted_rem}.
\end{remark}

Let us consider the case of a smooth hypersurface $X \subseteq \PP^{m-1}$ of degree~$d$. Denote by $\Aut(X)$ the group of (regular) automorphisms of $X$ and by $\Aut_l(X) \subseteq \PGL_m(X)$ the group of linear automorphisms of $X$. By~\cite[Theorem~2]{MM1964}, we have $\Aut(X) = \Aut_l(X)$ if $m \ge 5$ or $d \ne m$. Under these assumptions, if $X$ admits an additive action, then it admits an induced additive action. This theorem covers all smooth hypersurfaces except of cases $(d,m)=(3,3)$ and $(d,m)=(4,4)$. In case $(d,m) = (3,3)$ we have a cubic curve with genus one. It does not admit an additive action since any variety admitting an additive action is rational. By~\cite[Theorem~4]{MM1964}, in the case $(d,m)=(4,4)$ the connected component $\Aut(X)^0$ of the automorphism group is trivial, which implies that there is no additive action as well. By~\cite[Theorem~1]{MM1964}, if $m \ge 4$ and $d \ge 3$, then the group $\Aut_l(X)$ is finite, so $X$ admits no induced additive action. Thus, we have the following result. 

\begin{proposition} \label{newprop}
There is no additive action on smooth hypersurfaces $X \subseteq \PP^{m-1}$ of degree~$d$ for $m \ge 3$ and $d \ge 3$.
\end{proposition}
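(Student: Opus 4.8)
The plan is to convert the existence of an additive action into a constraint on the automorphism group of $X$ and then feed in the results of Matsumura and Monsky recalled above. First I would record the basic mechanism. Since $X\subseteq\PP^{m-1}$ is a hypersurface we have $\dim X=m-2$, so an additive action is an effective regular action $\GG_a^{m-2}\times X\to X$, which yields an injective homomorphism of algebraic groups $\GG_a^{m-2}\hookrightarrow\Aut(X)$. For $m\ge 3$ the group $\GG_a^{m-2}$ is connected and positive-dimensional, so its image is a nontrivial connected subgroup; in particular it lies in the identity component $\Aut(X)^0$ and forces $\dim\Aut(X)^0\ge 1$. Hence it suffices to show that for every pair $(m,d)$ in the stated range the group $\Aut(X)^0$ is trivial, and I would split into cases accordingly.

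For $m=3$ the hypersurface $X$ is a smooth plane curve of degree $d\ge 3$, hence of genus $\binom{d-1}{2}\ge 1$, and in particular not rational. Since any variety carrying an additive action is rational (as noted in the introduction), no such action can exist. This disposes of the whole case $m=3$, including the exceptional pair $(d,m)=(3,3)$, without touching automorphism groups.

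For $m\ge 4$ I would follow the dichotomy indicated in the discussion preceding the statement. If $m\ge 5$ or $d\ne m$, then by \cite[Theorem~2]{MM1964} every automorphism of $X$ is linear, so $\Aut(X)=\Aut_l(X)\subseteq\PGL_m(\KK)$; and by \cite[Theorem~1]{MM1964}, since $m\ge 4$ and $d\ge 3$, this linear automorphism group is finite. Thus $\Aut(X)^0$ is trivial and, by the first paragraph, $X$ admits no additive action. The only pair with $m\ge 4$ and $d\ge 3$ left uncovered by this dichotomy is $(d,m)=(4,4)$; there I would instead invoke \cite[Theorem~4]{MM1964}, which asserts directly that $\Aut(X)^0$ is trivial, so again there is no room for a copy of $\GG_a^{2}$.

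Each individual step is short once the cited facts are granted, so the real substance of the argument is external: it rests entirely on the Matsumura-Monsky analysis of automorphism groups of smooth hypersurfaces. The only genuine bookkeeping obstacle is the treatment of the two pairs $(3,3)$ and $(4,4)$, precisely where the equality $\Aut(X)=\Aut_l(X)$ may fail; these are exactly the places where I would replace the finiteness of $\Aut_l(X)$ by the non-rationality argument (for $(3,3)$) and by the triviality of $\Aut(X)^0$ (for $(4,4)$). I would also verify the harmless but necessary input that $\Aut(X)^0$ is a genuine algebraic group for projective $X$, so that the phrases ``nontrivial connected subgroup'' and ``trivial $\Aut(X)^0$'' carry the meaning the argument requires.
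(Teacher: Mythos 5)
Your proof is correct and follows essentially the same route as the paper: reduce to the triviality of $\Aut(X)^0$ and invoke the Matsumura--Monsky theorems, with the non-rationality of positive-genus curves and \cite[Theorem~4]{MM1964} covering the pairs where $\Aut(X)=\Aut_l(X)$ may fail. Your uniform treatment of all of $m=3$ via the genus formula $\binom{d-1}{2}\ge 1$ is a small tidy improvement, since the paper's appeal to \cite[Theorem~1]{MM1964} is stated only for $m\ge 4$ and its genus argument is spelled out only for $(d,m)=(3,3)$.
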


As we will see in Theorem~\ref{hyp_quadr_nondeg_theor} below, there is a unique additive action on the non-degenerate quadric of any dimension. 

\smallskip 

An irreducible subvariety $X \subseteq \PP^{m-1}$ is called \emph{linearly normal} if the map $H^0(\PP^{m-1},\Of(1)) \to H^0(X, \Of(1))$ is surjective, or, equivalently, the subvariety is not contained in any hyperplane and it is not a linear projection of a subvariety from a bigger projective space. 

\begin{proposition} {\cite[Section~2]{AP2014}}
Let $X$ be linearly normal in $\PP^{m-1}$ and admit an additive action. Then this action is induced.
\end{proposition}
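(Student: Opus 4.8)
The plan is to recognize the given embedding as the one attached to a complete linear system and then to make the acting group act linearly on the corresponding space of sections. Set $G = \GG_a^n$ and let $L = \Of_X(1)$ be the restriction of the hyperplane bundle. Linear non-degeneracy of $X$ says that the restriction map $H^0(\PP^{m-1},\Of(1)) \to H^0(X,L)$ is injective, and linear normality says it is surjective; hence it is an isomorphism, and the given embedding coincides with the one defined by the complete linear system $|L|$. We may therefore identify $\PP^{m-1}$ with $\PP(H^0(X,L)^\ast)$, the embedding being given by evaluation of sections.

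First I would check that the line bundle $L$ is $G$-invariant, i.e. $g^\ast L \cong L$ for every $g \in G$. Since $X$ carries an open orbit isomorphic to $\GG_a^n$, it is rational, so $\Pic(X)$ carries no abelian-variety part on which the connected group $G$ could act nontrivially; hence the pullback action of $G$ on $\Pic(X)$ is trivial and $L$ is $G$-invariant. The key step is then to upgrade this invariance to a genuine $G$-linearization of $L$, that is, a lift of the $G$-action to the total space of $L$ which is linear on fibres. Here the group $G = \GG_a^n$ is especially convenient: it is connected with trivial character group and trivial Picard group, and under these hypotheses every $G$-invariant line bundle admits a (unique) $G$-linearization by the standard linearization theory for line bundles on $G$-varieties.

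Once $L$ is $G$-linearized, $G$ acts linearly on $H^0(X,L)$, giving a representation $G \to \GL(H^0(X,L))$, and the evaluation morphism defining $|L|$ becomes $G$-equivariant by the very construction of the linearization. Passing to $\PP(H^0(X,L)^\ast) = \PP^{m-1}$ yields an action $G \times \PP^{m-1} \to \PP^{m-1}$ whose restriction to $X$ is the original additive action, so the action extends to the ambient projective space and is induced in the sense of Definition~\ref{indac_equivdef}. The one point requiring care is the linearization step: one must invoke that a connected group with no nontrivial characters and trivial Picard group linearizes every invariant line bundle, which is exactly the property that makes $\GG_a^n$ amenable to this argument, while the remaining steps are formal.
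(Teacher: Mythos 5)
Your overall strategy --- identify the given embedding with the one defined by the complete linear system of $L=\Of_X(1)$ and then $\GG_a^n$-linearize $L$ so that the action becomes linear on $H^0(X,L)$ and hence extends to $\PP^{m-1}$ --- is exactly the intended argument of the cited source. The gap is concentrated in the step ``$X$ is rational, hence $L$ is $G$-invariant.'' What you need there is that the morphism $g\mapsto g^*L\otimes L^{-1}$ from the connected group $G$ to $\Pic(X)$ is constant, i.e.\ that $\Pic^0(X)$ is trivial. Rationality gives this when $X$ is \emph{normal} (then $\Pic(X)$ injects into the Picard group of a resolution, whose identity component vanishes for a smooth projective rational variety), but normality of $X$ is not among the stated hypotheses, and for non-normal projective rational varieties $\Pic^0$ can be nontrivial. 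This is not a hypothetical worry: the cuspidal cubic $C=\{z_0^2z_3=z_1^3\}\subseteq\PP^2$ of Example~\ref{hypnonind_ex} is linearly normal in the sense of surjectivity of $H^0(\PP^2,\Of(1))\to H^0(C,\Of_C(1))$ (every plane curve is, by the sequence $0\to\Of_{\PP^2}(-2)\to\Of_{\PP^2}(1)\to\Of_C(1)\to 0$ and the vanishing of $H^1(\PP^2,\Of_{\PP^2}(-2))$), yet it carries an additive action that is not induced; here $\Pic^0(C)\cong\GG_a$ and $g^*\Of_C(1)\not\cong\Of_C(1)$ for general $g$, so the invariance you assert genuinely fails. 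The same normality hypothesis is also hidden in the linearization theorem you invoke: the statement that a connected group with trivial character lattice and trivial Picard group linearizes every invariant line bundle is proved for normal $G$-varieties (see the reference of Knop, Kraft, Luna and Vust cited in the bibliography).

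The fix is to add the hypothesis that $X$ is normal (as in the source); then your argument closes once the invariance of $L$ is justified via the resolution as above, and the remaining steps --- the identification of the embedding with the complete linear system, the equivariance of the evaluation map after linearization, and the restriction back to $X$ --- are all correct. Without normality, the step ``hence $L$ is $G$-invariant'' is a genuine gap, and indeed the conclusion itself fails.
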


Let us recall that in Section~\ref{secpn} we established the Hassett-Tschinkel correspondence between faithful cyclic $\GG_a^n$-representations, pairs $(A, U)$ of local algebras and their subspaces, ideals and subspaces in polynomial algebras with some conditions up to equivalences; see Theorem~\ref{HaTsch_theorem}, $(a)-(d)$. Now we are ready to add one more item $(e)$ to this theorem; it characterizes induced additive actions on projective subvarieties. 

\begin{theorem}
\label{hypHaTsch_theorem}
Let $n,m\in\Zgezero$. There are one-to-one correspondences between
\smallskip
\begin{enumerate}
	\item faithful cyclic representations $\rho\colon \GG_a^n \to \GL_m(\KK)$;
	\item pairs $(A, U)$, where $A$ is a local commutative associative unital algebra of dimension~$m$ with maximal ideal $\mm$, and $U \subseteq \mm$ is a subspace of dimension~$n$ generating the algebra~$A$;
	\item non-degenerate ideals $I \subseteq \KK[S_1, \ldots, S_n]$ of codimension $m$ supported at the origin;
	\item generating subspaces $V \subseteq \KK[x_1, \ldots, x_n]$ of dimension~$m$;
	\item classes of equivalence of induced additive actions on projective subvarieties of dimension $n$ in $\PP^{m-1}$ not contained in a hyperplane. 
\end{enumerate}
\end{theorem}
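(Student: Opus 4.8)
Since Theorem~\ref{HaTsch_theorem} already establishes the equivalences $(a)\leftrightarrow(b)\leftrightarrow(c)\leftrightarrow(d)$, the plan is to connect the new item $(e)$ with item $(a)$, i.e. to construct mutually inverse bijections between faithful cyclic representations $\rho\colon\GG_a^n\to\GL_m(\KK)$ and equivalence classes of induced additive actions on linearly non-degenerate subvarieties of dimension $n$ in $\PP^{m-1}$.

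For the map $(e)\to(a)$ I would start from an induced additive action $\alpha\colon\GG_a^n\times X\to X$, $X\subseteq\PP^{m-1}$, extended to $\GG_a^n\times\PP^{m-1}\to\PP^{m-1}$; this gives a homomorphism $\bar\rho\colon\GG_a^n\to\Aut(\PP^{m-1})=\PGL_m(\KK)$. First I would note that $\bar\rho$ is injective: any element of its kernel acts trivially on $\PP^{m-1}\supseteq X$, hence on $X$, so effectivity of $\alpha$ forces it to be the identity. Next I would lift $\bar\rho$ to a representation $\rho\colon\GG_a^n\to\GL_m(\KK)$. At the level of Lie algebras the induced map $\gg\to\gl_m(\KK)/\KK\cdot\id$ lifts after choosing preimages $N_1,\dots,N_n$ of a basis: the brackets $[N_i,N_j]$ are scalar (they lie over the abelian $\bar\rho(\GG_a^n)$) and have trace zero, hence vanish, so the $N_i$ commute; since $\GG_a^n$ is simply connected and unipotent, this exponentiates to $\rho$ with $\pi\circ\rho=\bar\rho$, where $\pi\colon\GL_m\to\PGL_m$. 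Injectivity of $\bar\rho$ gives faithfulness of $\rho$. For cyclicity, pick $x_0\in X$ in the open $\GG_a^n$-orbit and a lift $v\in\KK^m$; since $\GG_a^n x_0$ is dense in $X$ and $X$ spans $\PP^{m-1}$, the orbit $\rho(\GG_a^n)v$ spans $\KK^m$, so $v$ is cyclic. Two lifts of $\bar\rho$ differ by a character $\GG_a^n\to\GG_m$, of which there are none, so $\rho$ is canonically determined by $\alpha$.

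For the map $(a)\to(e)$, given a faithful cyclic $\rho$ with cyclic vector $v$, I would projectivize to an action on $\PP^{m-1}=\PP(\KK^m)$ and set $X=\overline{\rho(\GG_a^n)[v]}$. The orbit $\rho(\GG_a^n)[v]$ is open in its closure $X$, hence dense; its stabilizer in $\GG_a^n$ is trivial because $\rho(\GG_a^n)$ consists of unipotent operators (images of unipotent elements), so $\rho(g)v\in\KK v$ forces $\rho(g)v=v$, and then cyclicity plus faithfulness give $g=e$. Hence $\dim X=n$, cyclicity of $v$ means $X$ is not contained in a hyperplane, and the action on $X$ is effective with a dense open orbit and extends to $\PP^{m-1}$, i.e. it is an induced additive action.

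The remaining work is to verify compatibility with equivalences and mutual inverseness, and I expect this bookkeeping, rather than the constructions above, to be the main obstacle. The one genuinely delicate point is the choice of cyclic vector in $(a)\to(e)$: different cyclic vectors must yield equivalent induced actions. To settle this I would use that the subalgebra $A\subseteq\Mat_m(\KK)$ generated by $\rho(\GG_a^n)$ is exactly $\tau(\UU(\gg))$ from Theorem~\ref{HaTsch_theorem}, that the $\rho$-cyclic vectors coincide with the $A$-cyclic vectors and form a single $A^\times$-orbit, and that $A^\times\subseteq\GL_m(\KK)$ centralizes $\rho(\GG_a^n)$ because $A$ is commutative. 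Then, for $v'=av$ with $a\in A^\times$, the element $[a]\in\PGL_m(\KK)$ maps $X$ to $X'=\overline{\rho(\GG_a^n)[v']}$ and commutes with the projectivized action, giving an equivalence with $\psi=\id$. Finally, the composites are identities up to equivalence: starting from an induced action, the unique lift $\rho$ together with a lift of a point of the open orbit recovers $X$ as the orbit closure by density, and starting from $\rho$, the induced action on $\overline{\rho(\GG_a^n)[v]}$ lifts back to $\rho$ by uniqueness of the lift. Compatibility of both maps with the equivalences of Definitions~\ref{repr_equivdef} and~\ref{indac_equivdef} follows by lifting the variety isomorphism $\phi\in\PGL_m(\KK)$ to $\GL_m(\KK)$ and again invoking the absence of nontrivial characters of $\GG_a^n$.
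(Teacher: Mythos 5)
Your proposal is correct and follows essentially the same route as the paper: both directions are realized by projectivizing the closure of the orbit of a cyclic vector and, conversely, lifting the projective $\GG_a^n$-action to $\GL_m(\KK)$, with independence of the choice of cyclic vector handled via the commutative algebra generated by $\rho(\GG_a^n)$ exactly as in the paper's use of the multiplication operators $L_a$. The only cosmetic difference is that you perform the lifting at the Lie algebra level (scalar, trace-zero commutators), while the paper works with the group $H=\pi^{-1}(\GG_a^n)\cong\KK^\times\times\GG_a^n$; both rest on the same underlying fact.
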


\begin{proof}
Let us construct the correspondence from $(a)$ to $(e)$. Consider the canonical projections $p\colon \KK^m \setminus \{0\} \to \PP^{m-1}$ and $\pi\colon \GL_m(\KK) \to \PGL_m(\KK)$. 

A faithful representation $\GG_a^n \to \GL_m(\KK)$ defines a subgroup $\GG_a^n \subseteq \GL_m(\KK)$. For the group $\KK^\times$ of nonzero scalar matrices, we have the direct product $H = \KK^\times \times \GG_a^n$ in $\GL_m(\KK)$. Let $v$ be a cyclic vector in $\KK^m$ and $X$ be the projectivization of the closure of the orbit $Hv \subseteq \KK^m \setminus \{0\}$, i.e.
\[X = p(\overline{Hv}) \subseteq \PP^{m-1}.\]
Let the effective action on $X$ be given by $\pi(H) \subseteq \PGL_m(\KK) = \Aut(\PP^{m-1})$. Note that $\pi(H) \cong \GG_a^n$ since $\Ker \pi = \KK^\times \subseteq H$. Then $p(Hv)$ is an open orbit in $X$, and $X$ is not contained in any hyperplane since $v$ is a cyclic vector. We will see below that the resulting subvariety~$X$ and the additive action on it do not depend on the choice of a cyclic vector~$v$. 

Conversely, let a subvariety $X \subseteq \PP^{m-1}$ admit an induced additive action. Then $X$ is the closure of an orbit of effective action $\GG_a^n \times \PP^{m-1} \to \PP^{m-1}$. Consider $\GG_a^n$ as a subgroup in $\PGL_m(\KK)$ and let $H = \pi^{-1}(\GG_a^n) \subseteq \GL_m(\KK)$. Then $H \cong \KK^\times \times \GG_a^n$, where $\KK^\times$ is a subgroup of scalar matrices as above, and the subgroup $\{1\} \times \GG_a^n \subseteq H$ gives the corresponding faithful representation of~$\GG_a^n$. Let $\langle v\rangle \in \PP^{m-1}$ be a point in the open orbit of $X$ for some $v \in \KK^m$. Since $X$ is not contained in any hyperplane, the same holds for its open orbit $\GG_a^n\langle v\rangle$, whence an orbit $Hv = p^{-1}(\GG_a^n\langle v\rangle) \subseteq \KK^m$ is not contained in any hyperplane of $\KK^m$ and $v$ is a cyclic vector for $\GG_a^n$. 

\smallskip

Thus, a subvariety in~$\PP^{m-1}$ of dimension~$n$ with induced additive action is the projectivization of the closure of an orbit of a cyclic vector for a $\KK^\times \times \GG_a^n$-representation in~$\KK^m$. In order to show that the construction does not depend on the choice of a cyclic vector, we use item $(b)$. The representation $\GG_a^n \to \GL_m(\KK)$ corresponding to a pair $(A, U)$ is constructed in the following way: $\exp U \cong \GG_a^n$ acts on $A \cong \KK^m$ by multiplication in the algebra $A$. In these terms, the representation of $\KK^\times \times \GG_a^n$ we are interested in is the representation of the group $\KK^\times \exp U$ in $A$ by operators of multiplication, and the orbit of an element $a \in A$ is the set $\KK^\times\exp U\,a$. 

Recall that any element of the maximal ideal $\mm$ in the local algebra $A$ is nilpotent, and any element of $A \setminus \mm$ is invertible. If $a \in \mm$, then $a$ is nilpotent, and all the elements in $\KK^\times\exp U\,a$ are nilpotent as well, so $\KK^\times \exp U\,a \subseteq \mm$. This implies that $X \subseteq \PP(A)$ is contained in the hyperplane $\PP(\mm)$, so we do not consider this case. If $a \in A \setminus \mm$, then $a$ is invertible, so the orbit $\KK^\times \exp U \cdot 1$ is isomorphic to $\KK^\times\exp U\,a$ via the linear operator~$L_a$ of multiplication by~$a$. This isomorphism commutes with $\GG_a^n$-actions on these orbits by commutativity of multiplication in~$A$. Thus, for any pair $(A, U)$ there is a unique induced additive action corresponding to this pair up to equivalence of induced additive actions, which does not depend on the choice of a cyclic vector. 
\end{proof}

Let us sum up the discussion by the correspondence $(b) \to (e)$ between pairs $(A, U)$ and induced additive actions. 

\begin{construction}
\label{hyp_constr}
Suppose that $A$ is a local commutative associative unital algebra of dimension~$m$ with maximal ideal $\mm$, $U \subseteq \mm$ is a subspace of dimension~$n$ generating the algebra~$A$, and let $p\colon A \setminus \{0\} \to \PP(A) = \PP^{m-1}$ be the canonical projection. According to the proof of Theorem~\ref{hypHaTsch_theorem}, the corresponding projective subvariety is the projectivization of an orbit of a cyclic vector, i.e. 
\[X = p(\overline{\KK^\times \exp U}),\]
the additive action on $X$ is given by the operators of multiplication by elements from $\GG_a^n \cong \exp U \subseteq A$, and the set $p(\KK^\times \exp U) = p(\exp U)$ is an open orbit in~$X$. Denote by~$z_0$ the coordinate in $A = \KK \oplus \mm$ along~$\KK$ and consider the affine chart $\{z_0=1\} = 1 + \mm$ of projective space $\PP^{m-1} = \PP(A)$. Notice that $(\KK^\times\exp U) \cap (1+\mm) = \exp U$ is closed as an orbit of a unipotent group, which implies that $X \cap \{z_0\ne0\} = p(\exp U)$ is the open orbit. 
\end{construction}

\begin{example}
Let $n=1$, i.e. we are interested in induced $\GG_a$-actions on curves in~$\PP^{m-1}$. By Theorem~\ref{hypHaTsch_theorem}, classes of equivalence of such actions are in bijection with equivalence classes of pairs $(A,U)$, where $A$ is a local commutative algebra of dimension~$m$ with maximal ideal~$\mm$ and $U$ is a line in~$\mm$ generating the algebra~$A$.
Then we may assume that $A=\KK[S]/(S^m)$ and $U=\langle S\rangle$ up to automorphism of $A$. The pair $(A,U)$ corresponds to the additive action on the rational normal curve of degree $m-1$ in $\PP^{m-1}$. In particular, we have a unique class of equivalence of induced additive $\GG_a$-action on curves in~$\PP^{m-1}$. For explicit computations in the case $m=4$, see Example~\ref{me4} below. 
\end{example}

\subsection{The case of projective hypersurfaces: equations}
\label{subbus} 

In this subsection we obtain the equation of a projective hypersurface $X \subseteq \PP^{n+1}$ admitting an induced additive action in terms of the corresponding pair $(A, U)$, see Theorem~\ref{hypHaTsch_theorem} and Definition~\ref{hyp_Hpair_def}. First we consider a subvariety, not necessarily a hypersurface. The following proposition provides the condition that gives the open orbit $\exp U \subseteq 1 + \mm$, see Construction~\ref{hyp_constr}. 

\smallskip

By $\ln$ we mean the standard logarithm series $\ln(1+z) = \sum\limits_{k=1}^\infty \frac{(-1)^{k-1}}{k}z^k$, which is inverse to the exponential map $\exp$. Applying $\ln$ to $1+z$ with a nilpotent element $z$, we obtain a polynomial in~$z$. 

\begin{proposition}
\label{hypexpU_prop}
Let $A$ be a local commutative associative unital algebra of dimension~$m$ with maximal ideal $\mm$, and let $U \subseteq \mm$ be a subspace of dimension~$n$ generating the algebra~$A$. Denote by $\pi\colon \mm \to \mm / U$ the canonical projection of vector spaces. Then $\exp U$ in $1+\mm$ is given by the condition
\begin{equation}
\label{hypopen_eq}
\pi(\ln(1+z)) = 0
\end{equation}
for $1 + z \in A = \KK \oplus \mm$, $z \in \mm$. 
\end{proposition}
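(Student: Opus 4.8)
The plan is to realize $\exp U$ as the image of the subspace $U$ under the exponential bijection $\mm \to 1+\mm$, and then translate membership in $U$ into the vanishing of $\pi$. The whole statement reduces to the fact that $\exp$ and $\ln$ are mutually inverse on nilpotent elements, together with the tautology that $U$ is the kernel of $\pi$.

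First I would record the basic properties of $\exp$ and $\ln$ on $\mm$. Since $A$ is local, the proof of Lemma~\ref{localg_lem} shows that every element of $\mm$ is nilpotent; say $\mm^l = 0$. Hence for any $z \in \mm$ the series $\exp(z) = \sum_{k \ge 0} z^k/k!$ and $\ln(1+z) = \sum_{k \ge 1} \frac{(-1)^{k-1}}{k} z^k$ are in fact finite sums, all terms with $k \ge l$ being zero, so $\exp(z) \in 1+\mm$ and $\ln(1+z) \in \mm$ are well defined. The formal power series identities $\ln(\exp(T)) = T$ and $\exp(\ln(1+T)) = 1+T$ hold in $\QQ[[T]]$; evaluating at the nilpotent element $z$, where both sides truncate to polynomial expressions in $z$, shows that the maps
\[ \exp\colon \mm \to 1+\mm, \quad z \mapsto \exp(z), \qquad \ln\colon 1+\mm \to \mm, \quad 1+z \mapsto \ln(1+z), \]
are mutually inverse bijections.

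Next I would use this bijection to describe $\exp U$ inside $1+\mm$. For $z \in \mm$, by definition $1+z \in \exp U$ if and only if there is $u \in U$ with $\exp(u) = 1+z$; applying the inverse map $\ln$, this is equivalent to $\ln(1+z) = u \in U$, that is, to $\ln(1+z) \in U$. Since $\pi\colon \mm \to \mm/U$ is the canonical projection, its kernel equals $U$, so $\ln(1+z) \in U$ if and only if $\pi(\ln(1+z)) = 0$. Combining these equivalences yields $\exp U = \{1+z \in 1+\mm : \pi(\ln(1+z)) = 0\}$, which is exactly condition~\eqref{hypopen_eq}.

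The only point requiring care — and it is routine rather than a genuine obstacle — is the justification that the formal identities between $\exp$ and $\ln$ survive evaluation at nilpotent elements. This holds because each composition is given by a formal power series whose coefficients coincide with those of the identity series $T$; upon substituting $z$ with $\mm^l = 0$, only finitely many terms contribute, so the equality of formal series forces the equality of the resulting polynomials in $z$.
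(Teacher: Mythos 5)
Your proof is correct and follows the same route as the paper, which simply observes that $1+z \in \exp U$ if and only if $\ln(1+z) \in U$; you merely make explicit the routine verification that $\exp$ and $\ln$ are mutually inverse bijections between $\mm$ and $1+\mm$, which the paper leaves implicit.
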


\begin{proof}
An element $1 + z \in 1 + \mm$ belongs to $\exp U$ if and only if $\ln(1+z) \in U$. 
\end{proof}

The above proposition helps us to find the subvariety $X \subseteq \PP^{m-1}$ corresponding to a given pair $(A, U)$. 

\begin{example} \label{me4}
Let $A = \KK[S]/(S^4)$ and $U = \langle S\rangle \subseteq \mm = \langle S, S^2, S^3\rangle$. Let $\pi\colon \mm \to \mm/U$ be a projection to $\langle S^2, S^3\rangle$ along $\langle S\rangle$. By Proposition~\ref{hypexpU_prop}, the set of points $z=z_1S + z_2S^2+z_3S^3$ that belong to $\exp U \subseteq 1+\mm$ is given by the condition
\begin{multline*}
\pi\left(z_1S + z_2S^2+z_3S^3 - \frac{(z_1S + z_2S^2+z_3S^3)^2}{2} + \frac{(z_1S + z_2S^2+z_3S^3)^3}{3} - \ldots\right) = \\
= \left(z_2-\frac{z_1^2}{2}\right)S^2 + \left(z_3 - z_1z_2 + \frac{z_1^3}{3}\right)S^3 = 0,
\end{multline*}
i.e. the open orbit of $X$ in the affine chart $\{z_0=1\}$ is given by the system 
$z_2-\frac{z_1^2}{2} = 0$, ${z_3 - z_1z_2 + \frac{z_1^3}{3} = 0}$, or, substituting the first equation to the second one, by the parametrization
\[[z_0:z_1:z_2:z_3] = \left[1 : z_1 : \frac{z_1^2}{2} : \frac{z_1^3}{6}\right] \subseteq \PP^3.\]
Taking the closure, we add one more (fixed) point $[0:0:0:1]$ and obtain a twisted cubic in $\PP^3$. 

Notice that the closure of the intersection of hypersurfaces may not be equal to the intersection of their closures, so $X$ may not be given by a system of homogenized equations. For example, in our case the system
$z_0z_2-\frac{z_1^2}{2} = 0$, $z_0^2z_3 - z_0z_1z_2 + \frac{z_1^3}{3} = 0$ gives outside the considered affine chart $\{z_0=1\}$ the projective line $z_0=z_1=0$, not a point. 
\end{example}

\begin{remark} \label{hyp_twisted_rem}
While the additive action from Example~\ref{hypnonind_ex} is not induced, it is the projection of the induced additive action from the above example along the coordinate~$z_2$. 
\end{remark}

Let us apply the above theory to the case of codimension one. Suppose that $X \subseteq \PP^{m-1}$ is a projective hypersurface that is not a hyperplane, 
$(A, U)$ is the corresponding pair, that is, $A$ is a local commutative associative unital algebra of dimension~$m$ with maximal ideal $\mm$, and $U \subseteq \mm$ is a subspace of dimension~$m-2$ generating the algebra~$A$. The next definition is taken from~\cite{ArSh2011}. 

\begin{definition} \label{hyp_Hpair_def}
The \emph{H-pair} corresponding to an induced additive action on a hypersurface~$X \subseteq \PP^{n+1}$ is the corresponding pair $(A, U)$, where $A$ is a local commutative associative unital algebra of dimension~$n+2$ with maximal ideal $\mm$ and $U \subseteq \mm$ is a subspace of dimension~$n$ generating the algebra~$A$. 
\end{definition}

Let us proceed with a proof of the following technical lemma. 
\begin{lemma} \label{hyp_zd_lem}
Suppose $\mm$ is the maximal ideal of a local commutative associative algebra~$A$ and $U$ is a subspace of~$\mm$. Then $\mm^d \subseteq U$ if and only if $z^d \in U$ for all $z \in \mm$. 
\end{lemma}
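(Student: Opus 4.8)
The plan is to handle the two implications separately: the forward direction is immediate, and the reverse one rests on a polarization identity.

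For the ``only if'' direction, I would simply observe that if $\mm^d \subseteq U$, then for any $z \in \mm$ we have $z^d \in \mm^d \subseteq U$, so there is nothing to prove.

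For the ``if'' direction, assume $z^d \in U$ for every $z \in \mm$; the goal is $\mm^d \subseteq U$. First I would note that, since $\mm$ is an ideal, $\mm^d$ is exactly the $\KK$-linear span of products $z_1 z_2 \cdots z_d$ with all $z_i \in \mm$, so it suffices to place each such product in $U$. The tool is the polarization identity: since multiplication in $A$ is commutative and associative, the product map $(z_1, \ldots, z_d) \mapsto z_1 \cdots z_d$ is the symmetric $d$-linear form whose restriction to the diagonal is $z \mapsto z^d$, and polarization recovers it as
\[
z_1 z_2 \cdots z_d = \frac{1}{d!} \sum_{S \subseteq \{1, \ldots, d\}} (-1)^{d - |S|} \Bigl( \sum_{i \in S} z_i \Bigr)^{d},
\]
where I use that $\KK$ has characteristic zero in order to divide by $d!$. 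Each partial sum $\sum_{i \in S} z_i$ lies in $\mm$ because $\mm$ is a subspace, hence closed under addition, so by hypothesis its $d$-th power lies in $U$; as $U$ is a subspace, the right-hand side is a $\KK$-linear combination of elements of $U$ and thus lies in $U$. This yields $z_1 \cdots z_d \in U$, and taking spans gives $\mm^d \subseteq U$.

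The only step requiring genuine care is the polarization identity itself, together with its reliance on the characteristic-zero hypothesis. I would verify it by expanding each $\bigl( \sum_{i \in S} z_i \bigr)^d$ via the multinomial theorem and checking that, in the alternating sum over $S$, every monomial except the fully mixed term $z_1 \cdots z_d$ cancels, while that term survives with total coefficient $d!$. This combinatorial bookkeeping is the main (and essentially only) obstacle; once it is settled, the conclusion is purely formal.
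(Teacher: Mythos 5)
Your proof is correct, and the polarization identity you invoke is the right one: expanding each $\bigl(\sum_{i\in S} z_i\bigr)^d$ by the multinomial theorem, a monomial $z^\alpha$ with support $T$ contributes $\sum_{S\supseteq T}(-1)^{d-|S|}=(1-1)^{d-|T|}$, which vanishes unless $|T|=d$, leaving only $d!\,z_1\cdots z_d$. The route is genuinely different in execution from the paper's. The paper first proves that if a one-variable polynomial $f_d(t)=\sum_k z_k t^k$ with coefficients in $A$ satisfies $f_d(t)\in U$ for all $t\in\KK$, then every coefficient $z_k$ lies in $U$ (by repeatedly forming $\frac{f_d(t)-f_d(0)}{t}$ and using that $\{t : f(t)\in U\}$ is Zariski closed, hence all of $\KK$); it then applies this iteratively in each variable to $(t_1z_1+\cdots+t_dz_d)^d$ and reads off the coefficient $d!\,z_1\cdots z_d$ of $t_1\cdots t_d$. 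Both arguments ultimately isolate that same fully mixed coefficient and both need $d!$ invertible, but yours replaces the paper's infinitely-many-evaluations / Zariski-closure step with a single finite inclusion--exclusion over subsets, evaluating only at $t_i\in\{0,1\}$. That makes your version slightly more self-contained (it does not use that $\KK$ is infinite), at the cost of having to verify the combinatorial cancellation explicitly, which you correctly flag as the one step needing care.
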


\begin{proof}
Let $f_d(t) = \sum_{k=0}^d z_kt^k$ be a polynomial with coefficients in~$A$ and $f_d(t) \in U$ for any $t \in \KK$. Let us show that $z_0, z_1, \ldots, z_d \in U$. First, $z_0 = f_d(0) \in U$. Then for a polynomial $f_{d-1}(t) = \sum_{k=1}^d z_kt^{k-1}$ we have $f_{d-1}(t) = \frac{f_d(t) - f_d(0)}{t} \in U$ for any $t \in \KK^\times$. Note that for $t = 0$ we also have $f_{d-1}(t) \in U$ since the set $\{t \in \KK \mid f_{d-1}(t) \in U\}$ is closed in~$\KK$. So $z_1 = f_{d-1}(0) \in U$. Arguing as above for polynomials~$f_{d-1}(t)$ of degree~$d - 1$, $f_{d-2}(t)$ of degree~$d-2$, \ldots, $f_0(t)$ of degree~$0$, we finally obtain that $z_0, z_1, \ldots, z_d \in U$. 

Now let $z^d \in U$ for any $z \in \mm$. Then $f(t_1, \ldots, t_d) = (t_1z_1 + \ldots + t_dz_d)^d \in U$ for any $t_1, \ldots, t_d \in \KK$ and $z_1, \ldots z_d \in \mm$. Fixing any $t_2, \ldots, t_d \in \KK$ and applying the above, we obtain that all coefficients of $f$ as a polynomial in $t_1$ belong to $U$ for any $t_2, \ldots, t_d \in \KK$. Consider these coefficients with fixed $t_3, \ldots, t_d$ as polynomials in~$t_2$ and obtain that they also belong to~$U$ for any $t_3, \ldots, t_d \in \KK$. Finally, we see that all coefficients of~$f$ belong to~$U$. In particular, the coefficient $d!z_1\ldots z_d$ at $t_1\ldots t_d$ is an element of~$U$, so $\mm^d \subseteq U$. The converse is immediate. 
\end{proof}

If $f(1+z)=0$, $z \in \mm$, is the equation of degree~$d$, defining the open orbit $\exp U$ in the affine chart $1+\mm$, then $X \subseteq \PP^{n+1}$ is given by the homogenization $h\!f$ of the polynomial~$f$: $h\!f(z_0+z) = z_0^d f(1+\frac{z}{z_0})$, $z_0 \in \KK$, $z \in \mm$. In particular, the degree of the projective hypersurface~$X$ equals the degree~$d$ of the affine hypersurface $\exp U \subseteq 1+\mm$. 

\smallskip 

The first statement in the theorem below is proved in~\cite[Theorem~5.1]{ArSh2011}. 

\begin{theorem}
\label{hypdeg_theorem}
Let $X \subseteq \PP^{n+1}$ be a projective hypersurface admitting an induced additive action, and $(A, U)$ be the corresponding H-pair. Denote by $\pi\colon \mm \to \mm / U \cong \KK$ the canonical projection. Then 

1) the degree of the hypersurface $X$ equals the maximal exponent $d$ with $\mm^d \nsubseteq U$; 

2) $X$ is given by the homogeneous equation of degree~$d$
\begin{equation}
\label{hyp_eq}
z_0^d \, \pi\left(\ln\left(1+\tfrac{z}{z_0}\right)\right) = 0
\end{equation}
for $z_0 + z \in A = \KK \oplus \mm$, $z_0 \in \KK$, $z \in \mm$. 
\end{theorem}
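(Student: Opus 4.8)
The plan is to extract the defining polynomial of the affine open orbit from Proposition~\ref{hypexpU_prop}, compute its degree, and then homogenize. By Proposition~\ref{hypexpU_prop} and Construction~\ref{hyp_constr}, the open orbit $\exp U \subseteq 1+\mm$ is the affine part of $X$ and is cut out by the single scalar equation $f(z) := \pi(\ln(1+z)) = 0$, where $z \in \mm$; here $\mm/U \cong \KK$ is one-dimensional, so $\pi$ is a linear functional and $f$ is a genuine polynomial. Since every $z \in \mm$ is nilpotent, the series terminates, $\ln(1+z) = \sum_{k=1}^{l-1}\frac{(-1)^{k-1}}{k}z^k$ where $l$ is the length of $A$, and applying the linear map $\pi$ gives $f(z) = \sum_{k=1}^{l-1}\frac{(-1)^{k-1}}{k}\pi(z^k)$.

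First I would fix a basis of $\mm$ and note that each $\pi(z^k)$ is homogeneous of degree $k$ in the resulting coordinates of $z$; hence the degree-$k$ homogeneous component of $f$ is exactly $\frac{(-1)^{k-1}}{k}\pi(z^k)$, and no cancellation between different degrees can occur. Thus $\deg f$ is the largest $k$ with $\pi(z^k)$ not identically zero. Over the algebraically closed field $\KK$, $\pi(z^k) \equiv 0$ means $z^k \in \ker\pi = U$ for all $z \in \mm$, which by Lemma~\ref{hyp_zd_lem} is equivalent to $\mm^k \subseteq U$. Therefore the largest such $k$ is precisely the maximal exponent $d$ with $\mm^d \nsubseteq U$, so $\deg f = d$. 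Since the discussion preceding the theorem identifies $X$ with the homogenization of $f$ and its degree with $\deg f$, this establishes part~1).

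For part~2) I would write the homogenization explicitly. By definition $h\!f(z_0+z) = z_0^d f(1+\tfrac{z}{z_0})$, and substituting $f(1+w) = \pi(\ln(1+w))$ with $w = \tfrac{z}{z_0}$ yields the asserted equation~\eqref{hyp_eq}. To confirm the right-hand side is a bona fide homogeneous polynomial of degree $d$, expand $z_0^d\,\pi(\ln(1+\tfrac{z}{z_0})) = \sum_{k=1}^{d}\frac{(-1)^{k-1}}{k}z_0^{d-k}\pi(z^k)$: each term has total degree $d$, and since $\pi(z^k)$ vanishes for $k > d$ every exponent $d-k$ is nonnegative, so no negative powers of $z_0$ appear.

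The individual computations are routine, so the real content lies in the degree count, which is the step I would treat most carefully. The key point is to use the grading by degree in the coordinates of $z$ to rule out cancellation between the homogeneous pieces $\pi(z^k)$, and then to invoke Lemma~\ref{hyp_zd_lem} to convert the vanishing of $\pi(z^k)$ into the algebraic condition $\mm^k \subseteq U$. I would also emphasize that no separate Zariski-closure argument is needed: the identification of $\{f=0\}$ with the affine part of $X$ is exactly what Proposition~\ref{hypexpU_prop} and Construction~\ref{hyp_constr} already provide.
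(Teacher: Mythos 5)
Your computation of $\deg f$ for $f(1+z)=\pi(\ln(1+z))$ is correct and essentially the paper's argument (degree at most $d$ because $\pi$ kills $\mm^k$ for $k>d$, degree exactly $d$ via Lemma~\ref{hyp_zd_lem}), and the homogenization in part~2) is routine as you say. But there is a genuine gap in the passage from $\deg f=d$ to ``the degree of the hypersurface $X$ equals $d$'': all you know is that the \emph{zero set} of $f$ in the chart $1+\mm$ is the open orbit $\exp U$. The degree of $X$ is the degree of its reduced defining equation, so you must rule out that $f$ is a scalar times a proper power of a lower-degree polynomial with the same zero set; if $f=cg^2$, the hypersurface $X$ would have degree $d/2$ and equation~\eqref{hyp_eq} would not be ``the'' degree-$d$ equation of $X$. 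Irreducibility of the set $\exp U\cong\AA^n$ does not settle this: $Z(f)$ irreducible over $\KK$ only forces $f=cg^k$ with $g$ irreducible, and $k$ could a priori exceed $1$.

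The paper closes this gap with an irreducibility argument that your proposal omits entirely. Choose $S\in\mm^d\setminus U$, so that $\mm=U\oplus\langle S\rangle$, and write $z=z_U+z_{n+1}S$. Because $\mm S\subseteq\mm^{d+1}\subseteq U=\ker\pi$, every summand $\pi(z^k)$ with $k\ge2$ is independent of the coordinate $z_{n+1}$, while the $k=1$ summand contributes $z_{n+1}\pi(S)$ with $\pi(S)\ne0$. Hence $f$ is linear in $z_{n+1}$ with nonzero constant coefficient, so $f$ --- and therefore its homogenization $h\!f$ --- is irreducible; only then does the identification of $X$ with $\{h\!f=0\}$ and of $\deg X$ with $d$ follow. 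You should add this step.
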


\begin{proof}
By Proposition~\ref{hypexpU_prop}, the open orbit in the affine chart $\exp U \subseteq 1+\mm$ is given by the polynomial $f(1+z) = \pi(\ln(1+z))$, $z \in \mm$. By definition of the number $d$, we have $\mm^k \subseteq U$ for all $k > d$. It follows that $f$ is of degree at most $d$ since $\pi$ takes all the summands in logarithm series with exponents greater than~$d$ to zero. On the other hand, $\mm^d \nsubseteq U$, so by Lemma~\ref{hyp_zd_lem} there exists $z \in \mm$ with $\pi(z^d) \ne 0$. Thus, the degree of the polynomial~$f$ equals~$d$.

Let us prove that $f$ is irreducible. Since $\mm^d \nsubseteq U$, we have $\mm^d \cap U \subsetneq \mm^d$. Since the codimension of~$U$ in~$\mm$ equals one, the codimension of $\mm^d \cap U$ in~$\mm^d$ is at most one, whence according to the above we can consider a decomposition $\mm^d = (\mm^d \cap U) \oplus \langle S \rangle$ for some vector $S \in \mm^d$. Since $S \notin U$ and $U$ in~$\mm$ is of codimension one, we also have $\mm = U \oplus \langle S \rangle$ in this case. Let $z=z_U+z_{n+1}S$, $z_U \in U$, $z_{n+1} \in \KK$, be the corresponding decomposition of $z \in \mm$. Then
\[\pi(\ln(1+z)) = \pi\left(\sum\limits_{k=1}^d\tfrac{(-1)^{k-1}}{k}(z_U+z_{n+1}S)^k\right),\]
and one can see that the map $\pi$ takes all~$z_{n+1}$ to zero except for $z_{n+1}$ in the summand with $k=1$ since $\mm S \subseteq \mm^{d+1} \subseteq U$. So the variable $z_{n+1}$ appears in the polynomial $f$ only in the linear term, whence $f$ is irreducible. 

Thus, $\exp U$ is given by the irreducible polynomial~$f$ of degree~$d$, whence the degree of a hypersurface~$X$ equals~$d$, and $X$ is given by the homogenization~$h\!f$ as in~\eqref{hyp_eq}. 
\end{proof}

\begin{example} \label{hyp_no30_ex}
Let $A = \KK[S_1,S_2,S_3]/(S_1^2,S_2^2,S_1S_3,S_2S_3,S_1S_2-S_3^3)$ be the 6-dimensional algebra no.~30 from Table~\ref{table_localg6}. Notice that $A = \langle 1,S_1,S_2,S_3,S_3^2,S_3^3=S_1S_2\rangle$, and consider $U = \langle S_1,S_2,S_3,S_3^2\rangle \subseteq \mm$. Since $\mm^3=\langle S_3^3\rangle \nsubseteq U$ and $\mm^4=0$, the H-pair $(A,U)$ corresponds to an induced additive action on a cubic hypersurface $X \subseteq \PP^5$. According to~\eqref{hyp_eq} for the projection $\pi\colon \mm \to \langle S_3^3\rangle$ along $U$, the left side of the equation of $X$ is
\[z_0^3\pi\left(\ln\left(1+\tfrac{z_1}{z_0}S_1+\tfrac{z_2}{z_0}S_2+\tfrac{z_3}{z_0}S_3+\tfrac{z_4}{z_0}S_3^2+\tfrac{z_5}{z_0}S_3^3\right)\right) = z_0^3\left(\tfrac{z_5}{z_0} - \tfrac12\cdot 2\tfrac{z_3}{z_0}\tfrac{z_4}{z_0}- \tfrac12\cdot 2\tfrac{z_1}{z_0}\tfrac{z_2}{z_0} + \tfrac13\tfrac{z_3^3}{z_0^3}\right),\]
which gives $X = \{z_0^2z_5 - z_0z_3z_4 - z_0z_1z_2 + \frac13z_3^3 = 0\}$. 
\end{example}

\begin{corollary} {\cite[Corollary~5.2]{ArSh2011}}
\label{hypdeg_cor}
If $X \subseteq \PP^{n+1}$ is a hypersurface of degree~$d$ admitting an induced additive action, then $d \le n+1$. 
\end{corollary}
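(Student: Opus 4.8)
The plan is to deduce this purely from the description of the degree obtained in Theorem~\ref{hypdeg_theorem}, together with the numerical constraints built into the notion of an H-pair. Let $(A,U)$ be the H-pair corresponding to the given induced additive action, so that by Definition~\ref{hyp_Hpair_def} the algebra $A$ is local of dimension $n+2$, its maximal ideal $\mm$ has dimension $n+1$, and $U\subseteq\mm$ has dimension $n$. By the first part of Theorem~\ref{hypdeg_theorem} the degree $d$ equals the largest exponent with $\mm^d\nsubseteq U$, so it suffices to bound this exponent by $n+1$.

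First I would translate the condition $\mm^d\nsubseteq U$ into a statement about the nilpotency of $\mm$. Write $l$ for the length of $A$, i.e. $\mm^l=0$ and $\mm^{l-1}\neq0$. Since $0\subseteq U$, the relation $\mm^d\nsubseteq U$ forces $\mm^d\neq0$, and hence $d\le l-1$. Thus the whole statement reduces to the bound $l\le n+2$ on the length.

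The length is in turn controlled by the $\mm$-adic filtration $A\supseteq\mm\supseteq\mm^2\supseteq\cdots\supseteq\mm^l=0$. Each inclusion $\mm^i\supseteq\mm^{i+1}$ with $i<l$ is in fact strict: if $\mm^i=\mm^{i+1}=\mm\cdot\mm^i$, then Nakayama's lemma, applicable since $A$ is local and finite-dimensional, would give $\mm^i=0$, contradicting $i<l$. Consequently the Hilbert-Samuel numbers $r_0,\dots,r_{l-1}$ are all strictly positive and sum to $\dim A=n+2$, so $l\le n+2$. Combining this with $d\le l-1$ yields $d\le n+1$.

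I do not expect a genuine obstacle: all of the geometric content has already been absorbed into Theorem~\ref{hypdeg_theorem}, and what remains is an elementary counting argument. The only step deserving care is the strictness of the $\mm$-adic filtration, which is precisely the Nakayama step that upgrades the equality $\dim A=n+2$ to the sharp bound $l\le n+2$. Finally, the estimate is optimal: taking $A=\KK[S]/(S^{n+2})$ and $U=\langle S,S^2,\dots,S^n\rangle$, which generates $A$ and has codimension one in $\mm$, one finds $\mm^{n+1}=\langle S^{n+1}\rangle\nsubseteq U$ while $\mm^{n+2}=0$, so that $d=n+1$ is attained.
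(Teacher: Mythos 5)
Your argument is correct and is essentially the paper's own proof: the paper likewise observes that the chain $\mm\supsetneq\mm^2\supsetneq\cdots$ is strictly decreasing, so $\dim\mm=n+1$ forces $\mm^{n+2}=0\subseteq U$, and then invokes Theorem~\ref{hypdeg_theorem}. Your Nakayama justification of strictness and the sharpness example are fine additions but do not change the route.
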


\begin{proof}
Since $\mm \supsetneq \mm^2 \supsetneq \ldots$ and $\dim \mm = n + 1$, we have $\mm^{n+2} = 0 \subseteq U$.
\end{proof}

Let us illustrate the developed method by proving a variant of Proposition~\ref{newprop}. 

\begin{corollary} {\cite[Proposition~4]{AP2014}}
If $X$ is a smooth hypersurface admitting an induced additive action, then $X$ is a non-degenerate quadric or a hyperplane. 
\end{corollary}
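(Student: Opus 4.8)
The plan is to run everything through the explicit equation furnished by Theorem~\ref{hypdeg_theorem} and then exhibit a concrete singular point as soon as the degree is at least $3$. If $X$ is a hyperplane there is nothing to prove, so let $(A,U)$ be the H-pair of $X$ and set $d:=\deg X$, assuming $d\ge 2$; by Theorem~\ref{hypdeg_theorem}(1) this means $\mm^d\nsubseteq U$. The goal is to show that smoothness forces $d=2$, since a smooth quadric is by definition non-degenerate, while $d=1$ is the hyperplane case.

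First I would choose coordinates that make \eqref{hyp_eq} transparent. Following the proof of Theorem~\ref{hypdeg_theorem}, I decompose $\mm=U\oplus\langle S\rangle$ with $S\in\mm^d$ and normalize the projection so that $\pi(S)=1$. Writing $z=z_U+z_{n+1}S$ with $z_U\in U$, and using $S\mm\subseteq\mm^{d+1}\subseteq U$, one gets $\pi(z^k)=\pi(z_U^k)$ for every $k\ge 2$, while $\pi(z)=z_{n+1}$. Substituting into \eqref{hyp_eq} and discarding the summands annihilated by $\pi$ yields
\[
h\!f = z_0^{d-1}z_{n+1} + \sum_{k=2}^{d}\frac{(-1)^{k-1}}{k}\,z_0^{d-k}\,\pi(z_U^k),
\]
so the variable $z_{n+1}$ enters only through the single monomial $z_0^{d-1}z_{n+1}$, and each $\pi(z_U^k)$ is a homogeneous form of degree $k$ in $z_1,\dots,z_n$ alone.

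Now I would read off the partial derivatives. Since $\partial(h\!f)/\partial z_{n+1}=z_0^{d-1}$ and $d\ge 2$, every singular point of $X$ must lie in $\{z_0=0\}$. I then test the distinguished boundary point $P=[S]=[0:\cdots:0:1]$, i.e. $z_0=0$, $z_U=0$, $z_{n+1}=1$. One checks $h\!f(P)=0$, so $P\in X$. For the gradient: $\partial(h\!f)/\partial z_{n+1}=z_0^{d-1}$ vanishes at $P$; each $\partial(h\!f)/\partial z_i$ ($1\le i\le n$) is a sum of terms $z_0^{d-k}\,\partial_{z_i}\pi(z_U^k)$ with $\partial_{z_i}\pi(z_U^k)$ a form of degree $k-1\ge 1$, hence vanishing at $z_U=0$; and $\partial(h\!f)/\partial z_0=(d-1)z_0^{d-2}z_{n+1}+\sum_{k=2}^{d-1}\tfrac{(-1)^{k-1}}{k}(d-k)z_0^{d-k-1}\pi(z_U^k)$ vanishes at $P$ precisely because $d\ge 3$ makes $z_0^{d-2}=0$ and every $\pi(z_U^k)$ vanishes at $z_U=0$. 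Thus for $d\ge 3$ the point $P$ is a singular point of the hypersurface $X$, which is irreducible by Theorem~\ref{hypdeg_theorem}, contradicting smoothness. (For $d=2$ the same computation gives $\partial(h\!f)/\partial z_0|_P=z_{n+1}\ne 0$, so $P$ is a smooth point, consistent with the existence of smooth quadrics.)

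Hence $d\le 2$: if $d=1$ then $X$ is a hyperplane, and if $d=2$ then $X$ is a smooth, hence non-degenerate, quadric. The one delicate point — really the crux — is the coordinate choice of the second step: the logarithmic equation \eqref{hyp_eq} looks unwieldy to differentiate, but isolating $S\in\mm^d\setminus U$ forces $z_{n+1}$ to appear linearly and forces $\pi(z^k)$ to be independent of $z_{n+1}$ for $k\ge 2$, after which the whole argument collapses to the short gradient computation at the single point $[S]$.
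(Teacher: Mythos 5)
Your proposal is correct and follows essentially the same route as the paper: both choose $S\in\mm^d\setminus U$ to split $A=\KK\oplus U\oplus\langle S\rangle$, observe that $z_{n+1}$ enters the homogenized equation only through the monomial $z_0^{d-1}z_{n+1}$, and conclude that $[0:\cdots:0:1]$ is a singular point of $X$ once $d\ge 3$. Your version merely writes out the gradient computation that the paper leaves implicit.
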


\begin{proof}
As in the proof of Theorem~\ref{hypdeg_theorem}, let us choose a vector $S \in \mm^d \setminus U$ and obtain the decomposition of vector spaces $A = \KK \oplus \mm = \KK \oplus U \oplus \langle S\rangle$. For compatible coordinates $z_0, \ldots, z_{n+1}$, the variable $z_{n+1}$ appears in the equation $h\!f(z_0, \ldots, z_{n+1}) = 0$ of~$X$ only in the term $z_0^{d-1}z_{n+1}$ since $z_{n+1}$ appears in the polynomial $f$ only in a linear term. Thus, the point $[0:\ldots:0:1]$ lies on $X$ and is singular provided $d \ge 3$. It remains to note that the only smooth quadric is a non-degenerate one. 
\end{proof}

\begin{corollary}
If a hypersurface~$X$ of degree~$d$ admits an induced additive action and $(A, U)$ is the corresponding H-pair, then the complement in~$X \subseteq \PP(A)$ of the open orbit is \[p(\{z \in \mm \mid z^d \in U\}),\]
where $p\colon A\setminus\{0\} \to \PP(A)$ is the canonical projection.
\end{corollary}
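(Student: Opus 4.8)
The plan is to observe that the open orbit is exactly the affine part $X \cap \{z_0 \neq 0\}$, so its complement is the hyperplane section $X \cap \{z_0 = 0\}$, and then to read off this section from the explicit homogeneous equation of $X$.

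First I would recall from Construction~\ref{hyp_constr} that the open orbit coincides with $p(\exp U) = X \cap \{z_0 \neq 0\}$, where $z_0$ is the coordinate of $A = \KK \oplus \mm$ along the unit. Since $\{z_0 = 0\}$ and $\{z_0 \neq 0\}$ partition $\PP(A)$, the complement of the open orbit in $X$ equals $X \cap \PP(\mm)$, that is, the set of points $p(z)$ with $0 \neq z \in \mm$ lying on $X$. Thus it remains to describe the intersection of $X$ with the hyperplane $\{z_0 = 0\}$.

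Next I would use Theorem~\ref{hypdeg_theorem}, which presents $X$ as the zero locus of the degree-$d$ homogenization of $f(1+z) = \pi(\ln(1+z))$. Expanding $\ln\bigl(1 + \tfrac{z}{z_0}\bigr) = \sum_{k\ge 1} \tfrac{(-1)^{k-1}}{k}\bigl(\tfrac{z}{z_0}\bigr)^k$ and using that $\pi$ annihilates $\mm^{d+1} \subseteq U$, the series truncates and
\[
z_0^d\,\pi\!\left(\ln\!\left(1+\tfrac{z}{z_0}\right)\right) = \sum_{k=1}^{d}\frac{(-1)^{k-1}}{k}\, z_0^{\,d-k}\,\pi(z^k),
\]
a genuine polynomial of degree $d$ in the coordinates of $z_0 + z$. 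Setting $z_0 = 0$ annihilates every summand except the one with $k = d$, so on the hyperplane $\{z_0 = 0\}$ the equation of $X$ reduces to $\tfrac{(-1)^{d-1}}{d}\,\pi(z^d) = 0$, equivalently $\pi(z^d) = 0$.

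Finally I would translate this condition back: since $\pi\colon \mm \to \mm/U$ has kernel $U$ and $z^d \in \mm^d \subseteq \mm$, the equality $\pi(z^d) = 0$ is exactly $z^d \in U$. Hence $X \cap \{z_0 = 0\} = p(\{z \in \mm \mid z^d \in U\})$, where the zero vector is excluded automatically since $p$ is defined only on $A \setminus \{0\}$, and this is the claimed complement. The one point requiring care is the homogenization step: one must verify that the logarithmic series genuinely truncates after the term of exponent $d$ (which follows from $\mm^{d+1}\subseteq U$ via the definition of $d$ in Theorem~\ref{hypdeg_theorem}) and that precisely the top-degree term in $z$ survives at $z_0 = 0$. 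This is where the number $d$ enters, and it is the heart of the argument.
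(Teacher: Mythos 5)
Your proposal is correct and follows essentially the same route as the paper: identify the complement of the open orbit with the hyperplane section $X\cap\{z_0=0\}$ via Construction~\ref{hyp_constr}, then substitute $z_0=0$ into equation~\eqref{hyp_eq} and observe that only the degree-$d$ term $\tfrac{(-1)^{d-1}}{d}\pi(z^d)$ of the truncated logarithm series survives, giving the condition $z^d\in U$. Your explicit expansion of the homogenization even fixes a small typo in the paper's displayed coefficient, so nothing further is needed.
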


\begin{proof}
According to Construction~\ref{hyp_constr}, the complement of the open orbit consists of points $z \in X$ with zero $z_0$-coordinate. The substituting $z_0 = 0$ to the equation~\eqref{hyp_eq} annihilates all summands of logarithm series except the last one of degree~$d$, so we obtain the equation $\pi\left(\frac{(-1)^d}{d!}(0 + z)^d\right) = 0$, or $z^d \in U$.
\end{proof}

\subsection{The case of projective hypersurfaces: invariant multilinear forms} \label{hyp_mult_subsec}
It is well known that quadratic forms $f(z)$ on a vector space $V$ are in one-to-one correspondence with bilinear maps $F\colon V \times V \to \KK$. If $X$ is a quadric given by the quadratic equation $f(z) = 0$, the corresponding bilinear form $F$ gives a lot of information on~$X$. Let us recall that in the same way any homogeneous polynomial $f(z)$ of degree $d$ corresponds to a $d$-linear symmetric form $F\colon \underbrace{V \times \ldots \times V}_{d} \to \KK$: any $d$-linear form $F$ gives a polynomial $f(z) = F(z, \ldots, z)$, and conversely $F(z^{(1)}, \ldots, z^{(d)})$ can be found from $f$ as a coefficient at $t_1\ldots t_d$ in the polynomial $f(t_1z^{(1)} + \ldots + t_dz^{(d)})$. This fact allows us to study hypersurfaces admitting induced additive actions in terms of multilinear forms. 

\smallskip

Suppose $(A, U)$ is an H-pair. Following~\cite[Section~4]{AP2014}, we call a $d$-linear form $F\colon \underbrace{A \times \ldots \times A}_{d} \to \KK$ \emph{invariant} if the following conditions hold:
\begin{itemize}
\item $F(1, \ldots, 1) = 0$;
\item for any $u \in U$, $z^{(1)}, \ldots, z^{(d)} \in A$, we have
\begin{equation} \label{hyp_invform_eq}
F(uz^{(1)}, z^{(2)}, \ldots, z^{(d)}) + F(z^{(1)}, uz^{(2)}, \ldots, z^{(d)}) + \ldots + F(z^{(1)}, z^{(2)}, \ldots, uz^{(d)}) = 0.
\end{equation}
\end{itemize}
Suppose that an H-pair $(A, U)$ corresponds to an induced additive action on a hypersurface $X \subseteq \PP^{n+1} = \PP(A)$ given by the polynomial $f$ of degree $d$ on~$A$. By the above, there is a $d$-linear form $F\colon \underbrace{A \times \ldots \times A}_{d} \to \KK$ corresponding to the polynomial $f$. It is an invariant $d$-linear form on $(A, U)$. 
Indeed, the first property follows from construction: we take $1$ as a cyclic vector in $A$, so $F(1,\ldots, 1) = f(1) = 0$. 
For equation~\eqref{hyp_invform_eq}, notice that $X = \{f(x) = 0\}$ is invariant with respect to the action of the group~$\GG_a^n \cong \exp U$, i.e. the polynomial~$f$ is semi-invariant. But the group $\GG_a^n$ has no non-trivial character, so $f$ is invariant with respect to~$\GG_a^n \cong \exp U$ and hence with respect to the Lie algebra~$U$, i.e. we have~\eqref{hyp_invform_eq}. The invariant $d$-linear form corresponding to a hypersurface $X \subseteq \PP(A)$ is defined up to scalar. Notice also that the number $d$ is determined by the pair $(A,U)$. 

\smallskip

Let $F$ be a $d$-linear form on a vector space $V$. Define

\smallskip

\begin{itemize}
\item $L^\perp = \{x \in V \mid F(x,z^{(2)}, \ldots, z^{(d)}) = 0 \;\; \forall z^{(2)},\ldots,z^{(d)} \in L\}$ for a subset~${L \subseteq V}$;
\item the kernel $\Ker F = V^\perp$.
\end{itemize}

\begin{lemma} \label{hyp_invform_lem}
Let $F\colon \underbrace{A \times \ldots \times A}_{d} \to \KK$ be an invariant $d$-linear form on an H-pair $(A, U)$. Then
\begin{enumerate}
  \item $U \subseteq 1^\perp$;
  \item $\Ker F$ is the maximal ideal of $A$ contained in~$U$. 
\end{enumerate}
\end{lemma}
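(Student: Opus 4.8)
The plan is to treat the two assertions separately, deriving everything from the defining identity~\eqref{hyp_invform_eq}, the symmetry of $F$ (it is the polarization of the degree-$d$ equation of $X$), and the explicit shape of that equation recorded in Theorem~\ref{hypdeg_theorem}.

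For (1) I would simply specialize~\eqref{hyp_invform_eq}: taking $z^{(1)}=\dots=z^{(d)}=1$ and an arbitrary $u\in U$, every summand becomes $F(1,\dots,u,\dots,1)$, which by symmetry equals $F(u,1,\dots,1)$. Thus the identity reads $d\,F(u,1,\dots,1)=0$, and since $\KK$ has characteristic zero we obtain $F(u,1,\dots,1)=0$, i.e. $u\in 1^\perp$.

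For (2) the argument falls into three steps. First, $\Ker F$ is an ideal: given $x\in\Ker F$ and $u\in U$, I apply~\eqref{hyp_invform_eq} to the tuple $(x,z^{(2)},\dots,z^{(d)})$; all summands except $F(ux,z^{(2)},\dots,z^{(d)})$ carry $x$ in the first slot and hence vanish, so $ux\in\Ker F$. As $U$ generates $A$ and $1\cdot\Ker F=\Ker F$, stability under multiplication by $U$ upgrades to stability under all of $A$, so $\Ker F$ is an ideal. Second, $\Ker F\subseteq U$: from equation~\eqref{hyp_eq} one computes, by differentiating $f(1+tx)=\pi(\ln(1+tx))$ at $t=0$, that $F(x,1,\dots,1)=\tfrac1d\,\pi(x)$ for every $x\in\mm$ (identifying $\mm/U\cong\KK$). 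Choosing $S\in\mm^d\setminus U$ gives $F(S,1,\dots,1)=\tfrac1d\pi(S)\ne0$, so $1\notin\Ker F$; being a proper ideal of the local algebra $A$, $\Ker F\subseteq\mm$. Then for $x\in\Ker F$ we get $\pi(x)=d\,F(x,1,\dots,1)=0$, whence $x\in U$.

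The main step, and the one I expect to be delicate, is maximality: every ideal $J\subseteq U$ lies in $\Ker F$. Here I would pass from the multilinear form to the polynomial $f(w)=F(w,\dots,w)$ and exploit that $f$ is invariant under multiplication by $\exp U$ (semi-invariance with trivial character, as $\GG_a^n$ admits no nonconstant character). For $s\in J$ and any invertible $w\in A$ I factor $w+s=(1+w^{-1}s)\,w$; since $J$ is an ideal, $w^{-1}s\in J$ and all its powers lie in $J\subseteq U$, so $\ln(1+w^{-1}s)\in U$ and therefore $1+w^{-1}s\in\exp U$. Multiplicative invariance then yields $f(w+s)=f(w)$ on the dense locus of invertible $w$, hence on all of $A$; replacing $s$ by $ts\in J$ gives $\partial_s f\equiv0$, i.e. $F(s,w,\dots,w)=0$ for all $w$, so $s\in\Ker F$. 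The only points needing care are the reduction to the invertible locus and the characteristic-zero polarization that turns $\partial_s f\equiv0$ into $s\in\Ker F$; both are routine once the factorization trick is in place. Combining the three steps shows $\Ker F$ is the largest ideal of $A$ contained in $U$.
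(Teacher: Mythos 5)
Your part~(a) and your proof that $\Ker F$ is an ideal coincide with the paper's. The remaining two steps take genuinely different routes. For $\Ker F\subseteq U$ the paper argues by contradiction purely from the two defining axioms of an invariant form: assuming $\mm=\Ker F+U$, an induction on $k$ shows $F(u^{(1)},\ldots,u^{(k)},1,\ldots,1)=0$ for all $u^{(i)}\in U$, which forces $1\in A^\perp=\Ker F\subseteq\mm$, a contradiction. For maximality the paper runs a second induction, again using only~\eqref{hyp_invform_eq}. You instead integrate the infinitesimal invariance to group invariance of $f(w)=F(w,\ldots,w)$ under multiplication by $\exp U$ and use the factorization $w+s=(1+w^{-1}s)w$ with $\ln(1+w^{-1}s)\in J\subseteq U$; this is correct (the passage from the dense invertible locus to all of $A$ and the char-$0$ polarization are indeed routine) and is arguably slicker and more conceptual than the paper's double induction. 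What the paper's approach buys is that it never leaves the axiomatic setting; what yours buys is a transparent geometric reason for maximality.

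There is, however, one step that does not follow from the hypotheses as stated. The lemma takes an \emph{arbitrary} $d$-linear form satisfying $F(1,\ldots,1)=0$ and~\eqref{hyp_invform_eq}, whereas your identity $F(x,1,\ldots,1)=\tfrac1d\pi(x)$ is obtained by differentiating $f=\pi\circ\ln$, i.e.\ it presumes that $F$ is (a scalar multiple of) the polarization of the specific polynomial of Theorem~\ref{hypdeg_theorem}. From the axioms alone, part~(a) only gives $F(x,1,\ldots,1)=c\,\pi(x)$ for some constant $c$, and ruling out $c=0$ is precisely the content of the paper's contradiction argument; it is not free. Since every application of the lemma in the paper is to the form attached to the hypersurface (where formula~\eqref{hyp_Fpi_eq} gives $c\ne0$), your proof covers all the downstream uses, but to prove the lemma in its stated generality you must either show $c\ne0$ directly from~\eqref{hyp_invform_eq} or first prove that the invariant form on $(A,U)$ is unique up to scalar. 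The same remark applies, more mildly, to your appeal to $\exp U$-invariance of $f$: for an abstract invariant form this should be justified by exponentiating~\eqref{hyp_invform_eq} rather than by semi-invariance of the equation of $X$, though that repair is immediate.
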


\begin{proof}
(a) Follows from~\eqref{hyp_invform_eq} with $z^{(1)} = z^{(2)} = \ldots = z^{(d)} = 1$. 

\smallskip

(b) First let us prove that $\Ker F$ is an ideal of $A$. If $z \in \Ker F$ and $u \in U$, then $F(uz, z^{(2)}, \ldots, z^{(d)}) = -F(z, uz^{(2)}, \ldots, z^{(d)}) - \ldots - F(z, z^{(2)}, \ldots, uz^{(d)}) = 0$ for any $z^{(2)}, \ldots, z^{(d)} \in A$, so $uz \in \Ker F$ for any $u \in U$. Since $U$ generates $A$ as an algebra, it follows that $Az \subseteq \Ker F$. 

\smallskip

Now we are going to prove that $\Ker F \subseteq U$. Since $\Ker F$ is an ideal in~$A$ and $F$ is not equal to $0$, the kernel $\Ker F$ contains no invertible elements, i.e. $\Ker F \subseteq \mm$. Assume the converse, i.e. $\Ker F \nsubseteq U$. Since $\dim \mm = n+1$ and $\dim U = n$, it follows that $\mm = \Ker F + U$. 

Let us prove by induction on~$k$ that $F(u^{(1)}, \ldots, u^{(k)}, 1 \ldots, 1) = 0$ for any $u^{(1)}, \ldots, u^{(k)} \in U$. For $k=0$ we have $F(1,\ldots,1) = 0$. Let the assertion be proved for some~$k$. According to~\eqref{hyp_invform_eq}, we have
\[
\sum_{i=1}^k F(u^{(1)}, \ldots, u^{(k+1)}u^{(i)}, \ldots, u^{(k)}, 1, \ldots, 1) + \sum_{i=k+1}^d F(u^{(1)}, \ldots, u^{(k)}, 1, \ldots, \underset{i\;\;}{u^{(k+1)}}, \ldots, 1) = 0.
\]
All $d-k$ summands of the second sum equal $F(u^{(1)}, \ldots, u^{(k+1)}, 1, \ldots, 1)$ since $F$ is a symmetric form. For a summand of the first sum, one can decompose an element $u^{(k+1)}u^{(i)} \in \mm$ into $u^{(k+1)}u^{(i)} = z_i + u_i$, where $z_i \in \Ker F, \, u_i \in U$. Then the summand equals 
$F(u^{(1)}, \ldots, z_i, \ldots, u^{(k)}, 1, \ldots, 1) + F(u^{(1)}, \ldots, u_i, \ldots, u^{(k)}, 1, \ldots, 1)$, and the first one equals zero by the kernel condition, and the second one by induction hypothesis. Thus, $(d-k)F(u^{(1)}, \ldots, u^{(k+1)}, 1, \ldots, 1) = 0$, which completes the induction. 

Since $F$ is multilinear, it follows that $1 \in \langle1,U\rangle^\perp$. Moreover, $1 \in (\Ker F)^\perp$ and $\mm = \Ker F + U$, so $1 \in A^\perp = \Ker F \subseteq \mm$, a contradiction. 

\smallskip

It remains to prove the maximality. Let $J \subseteq U$ be an ideal of the algebra~$A$. Let us prove by induction on~$k$ that $F(z^{(1)}, \ldots, z^{(k)}, y, 1, \ldots, 1) = 0$
for any $y \in J$ and $z^{(1)}, \ldots, z^{(k)} \in A$. For $k = 0$ according to~\eqref{hyp_invform_eq} we have $\sum_{k=1}^d F(1, \ldots, \underset{i}{y}, \ldots, 1) = 0$ since $y \in J \subseteq U$, which gives $F(y, 1, \ldots, 1) = 0$ since $F$ is a symmetric form. Suppose that the assertion is proved for~$k-1$. Then
\[\sum_{i=1}^{k} F(z^{(1)}, \ldots, yz^{(i)}, \ldots, z^{(k)}, 1, \ldots, 1) + 
\sum_{i=k+1}^d F(z^{(1)}, \ldots, z^{(k)}, 1, \ldots, \underset{i}{y}, \ldots, 1) = 0\]
All $d-k$ summands of the second sum equal $F(z^{(1)}, \ldots, z^{(k)}, y, 1, \ldots, 1)$ since $F$ is symmetric. A summand of the first sum equals zero by induction hypothesis since $yz^{(i)} \in J$. Thus, $F(z^{(1)}, \ldots, z^{(k)}, y, 1, \ldots, 1) = 0$, which completes the induction. 

For $k=d-1$ we obtain $F(z^{(1)}, \ldots, z^{(d-1)}, y) = 0$ for any $z^{(1)}, \ldots, z^{(d-1)} \in A$ and $y \in J$. It follows that $y \in \Ker F$, i.e. for any ideal $J \subseteq U$ of $A$ we have $J \subseteq \Ker F$. 
\end{proof}

\smallskip

Now let us define the reduction of an induced additive action. First, return to the general case of a projective subvariety which is not necessarily a hypersurface. 

\begin{proposition}
\label{hypAUreduce_prop}
Let a pair $(A, U)$ correspond to an induced additive action on a projective subvariety $X \subseteq \PP(A)$. 
Suppose there is an ideal $J$ of $A$ such that $J \subseteq U$. Then the pair $(A/J, U/J)$ corresponds to an induced additive action on a projective subvariety $X_0 \subseteq \PP(A/J)$, and $X$ is the projective cone over $X_0$. In other words, if we choose coordinates in $\PP^{n+1} = \PP(A)$ compatible with inclusions $A = \KK \oplus \mm \supseteq \mm \supseteq J$, then $X$ does not depend on coordinates in $J$. 
Moreover, the additive action on $X$ is coherent with the additive action on $X_0$, i.e. the following diagram is commutative for the projection $\phi\colon A \to A/J$: 
\[
\begin{diagram}
\node{\exp U \times A} \arrow[3]{e,t}{\text{mult. in } A} \arrow{s,r}{\phi\times\phi} \node[3]{A} \arrow{s,r}{\phi} \\
\node{\exp(U/J) \times (A/J)} \arrow[3]{e,t}{\text{mult. in } A/J} \node[3]{A/J}
\end{diagram}
\]
\end{proposition}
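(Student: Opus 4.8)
The plan is to reduce everything to the single fact that $\phi\colon A \to A/J$ is a surjective homomorphism of algebras together with the hypothesis $J\subseteq U$. First I would verify that $(A/J, U/J)$ is a legitimate pair in the sense of Theorem~\ref{hypHaTsch_theorem}~(b). Since $J\subseteq U\subseteq\mm$, the quotient $A/J$ is again a local commutative associative unital algebra with maximal ideal $\mm/J$, the image $U/J$ lies in $\mm/J$ and generates $A/J$ because $U$ generates $A$, and $\dim(U/J)=n-\dim J$. Hence Theorem~\ref{hypHaTsch_theorem} associates to $(A/J, U/J)$ an induced additive action on a subvariety $X_0\subseteq\PP(A/J)$ of dimension $n-\dim J$, realised as in Construction~\ref{hyp_constr} as the closure in the chart $\{z_0=1\}=1+\mm/J$ of the open orbit $\exp(U/J)$.

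The key step is the identity $\exp U=\phi^{-1}(\exp(U/J))$ inside $1+\mm$. Since $\phi$ is an algebra homomorphism and $\ln(1+z)$ is a polynomial in the nilpotent element $z$, we have $\phi(\ln(1+z))=\ln(1+\phi(z))$; combined with the equality $\phi^{-1}(U/J)=U+J=U$, which holds precisely because $J\subseteq U$, this gives $\ln(1+z)\in U \Leftrightarrow \ln(1+\phi(z))\in U/J$, that is $1+z\in\exp U\Leftrightarrow \phi(1+z)\in\exp(U/J)$, by Proposition~\ref{hypexpU_prop}. In particular $\exp U$ is stable under the additive translations by $J$ in the chart $1+\mm$, so choosing the coordinate $z_0$ on $\KK$ and a splitting $\mm=W\oplus J$ I would identify $\exp U$ with the cylinder $\exp(U/J)\times J$. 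Passing to projective closures, $X$ becomes the projective cone over $X_0$ with vertex $\PP(J)$; equivalently, in coordinates compatible with $A=\KK\oplus\mm\supseteq\mm\supseteq J$ the equations of $X$ do not involve the coordinates along $J$.

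Finally, the commutativity of the displayed diagram is immediate once the set-theoretic picture is in place: $\phi$ is a ring homomorphism, so $\phi(u\cdot a)=\phi(u)\cdot\phi(a)$ for all $u\in\exp U$ and $a\in A$, and $\phi(\exp U)\subseteq\exp(U/J)$ by the previous paragraph, which is exactly the statement that the multiplication maps in $A$ and $A/J$ intertwine via $\phi$. I expect the only genuinely delicate point to be the passage from the cylinder statement about the open orbits to the claim that the full closure $X$ equals the cone over $X_0$: this requires checking that $X$ is irreducible of the correct dimension $n=(n-\dim J)+\dim J$ and that the linear projection $\PP(A)\dashrightarrow\PP(A/J)$ from the center $\PP(J)$ carries the dense orbit of $X$ onto that of $X_0$ with affine $J$-fibers, so that no spurious components are created along $\PP(J)$.
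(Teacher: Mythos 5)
Your proposal is correct and follows essentially the same route as the paper: both arguments reduce to the observation that the $\ln$ and $\exp$ series commute with reduction modulo the ideal $J$, the paper phrasing this via an explicit splitting $\mm = J \oplus \mm'$ (resp.\ $U = J \oplus U'$) and term-by-term expansion of the series, you via the identity $\phi(\ln(1+z)) = \ln(1+\phi(z))$ for the quotient homomorphism together with $\phi^{-1}(U/J)=U$. Your closing remark about passing from the cylinder structure of the open orbit to the cone structure of the projective closure (irreducibility of the cone plus a dimension count) addresses a step the paper leaves implicit, and your sketch of it is sound.
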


\begin{proof}
If $J$ is an ideal in a local commutative unital algebra $A = \KK \oplus \mm$ with the maximal ideal~$\mm$, then $A/J = \KK \oplus (\mm/J)$ is a local commutative unital algebra with the maximal ideal~$\mm/J$. Since the subspace $U \subseteq \mm$ generates the algebra~$A$, it follows that the subspace~$U/J$ generates the algebra~$A/J$. 
Fix some decomposition $\mm = J \oplus \mm'$, and for $z \in \mm$ let $z = z_J + z'$. Let us find the equation of~$X$ according to Proposition~\ref{hypexpU_prop}. Since $J$ is an ideal of~$A$, we have
\[\ln(1+z) = \sum\limits_{k=1}^\infty\tfrac{(-1)^{k-1}}{k}(z_J+z')^k \in \ln(1+z') + J \subseteq \ln(1+z') + U,\]
i.e. $\pi(\ln(1+z))$ does not depend on coordinates in~$J$. 

For coherency, fix a decomposition $U = J \oplus U'$ and let the projection $\phi\colon A \to A/J$ be the projection on $U'$ along $J$. For $u = u_J + u' \in U = J \oplus U'$, notice that \[\exp(u) = \sum\limits_{k=1}^\infty \tfrac{1}{k!}(u_J+u')^k \in \exp(u')+J\] since $J$ is an ideal in~$A$, so the projection $\phi(\exp U) = \exp U'$. Consider any $a \in A$, then 
$\exp(u)a = \sum\limits_{k=1}^\infty \tfrac{1}{k!}(u_J+u')^ka \in \exp(u')a + J = (\exp(u')+J)(a+J)$,
which proves the required commutativity of the diagram.
\end{proof}

Proposition~\ref{hypAUreduce_prop} motivates the following definition; see~\cite[Section 4]{ArSh2011}. 

\begin{definition}
The induced additive action corresponding to a pair $(A, U)$ is \emph{reducible} to the induced additive action corresponding to a pair $(A', U')$ if there exists an algebra homomorphism $\phi\colon A \to A'$ with $\phi(U) = U'$ and $\codim_A U = \codim_{A'}U'$. 
\end{definition}

In such a case, $\phi$ is surjective since $U'$ generates~$A'$. So there exists an ideal~$J = \Ker \phi$ of the algebra~$A$ such that $J \subseteq U$ and the factorization $A \to A/J \cong A'$ maps $U$ to~$U'$, i.e. we are in the situation of Proposition~\ref{hypAUreduce_prop}.

\begin{definition} \label{hyp_nondeg_def}
Suppose a projective hypersurface $X \subseteq \PP(V)$ of degree $d$ is given by the equation $f(z_1, \ldots, z_n) = 0$ and $F$ is the corresponding $d$-linear form. A hypersurface $X$ is called \emph{non-degenerate} if one of the following equivalent conditions hold:
\begin{itemize}
  \item $\Ker F = 0$;
  \item $\frac{\pa f}{\pa z_1}, \ldots, \frac{\pa f}{\pa z_n}$ are linearly independent $(d-1)$-linear forms; 
  \item there is no linear transform of variables reducing the number of variables in $f$.
\end{itemize}
\end{definition}

\begin{corollary}
Any induced additive action on a hypersurface is reducible to an induced additive action on a non-degenerate hypersurface. More precisely, an induced additive action corresponding to the H-pair $(A,U)$ is reducible to the induced additive action corresponding to the H-pair $(A/\Ker F, \, U/\Ker F)$, where $F$ is the invariant multilinear form of~$X$. 
\end{corollary}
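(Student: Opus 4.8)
The plan is to reduce by the ideal $J=\Ker F$. First I would invoke Lemma~\ref{hyp_invform_lem}(b), which says precisely that $\Ker F$ is an ideal of $A$ contained in $U$; this is exactly the hypothesis needed to apply Proposition~\ref{hypAUreduce_prop}. Applying that proposition with $J=\Ker F$ produces an induced additive action corresponding to the pair $(A/J,\,U/J)$ on a projective subvariety $X_0\subseteq\PP(A/J)$ over which $X$ is the projective cone. Since the quotient map $\phi\colon A\to A/J$ sends $U$ onto $U/J$ and, because $J\subseteq U$, preserves codimension, namely $\codim_A U=\dim A-\dim U=\dim(A/J)-\dim(U/J)=\codim_{A/J}(U/J)$, this is a reduction in the sense of the definition preceding Definition~\ref{hyp_nondeg_def}.

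Next I would verify that $X_0$ is genuinely a hypersurface and compute the degree. The codimension equality above gives $\codim_{A/J}(U/J)=2$, so the open orbit of $X_0$ has dimension $\dim(U/J)=\dim(A/J)-2$ inside $\PP(A/J)$ of dimension $\dim(A/J)-1$; hence $X_0$ has codimension one and is indeed a hypersurface (note $\Ker F\subsetneq U$, as $U$ generates $A$ and therefore cannot itself be an ideal, so $\dim(A/J)\ge 3$). Since $X$ is the cone over $X_0$, its degree is unchanged; equivalently, $(\mm/J)^k\subseteq U/J$ iff $\mm^k\subseteq U$ because $J\subseteq U$, so Theorem~\ref{hypdeg_theorem} assigns both $X$ and $X_0$ the same degree $d$.

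The decisive point is the identification of the invariant $d$-linear form of the H-pair $(A/J,\,U/J)$ with the form $\overline F$ obtained by descending $F$ to $A/J$. Because $J=\Ker F$ and $F$ is symmetric, the value $F(z^{(1)},\ldots,z^{(d)})$ depends only on the cosets $z^{(i)}+J$, so $\overline F$ is well defined; and as $X$ is the cone over $X_0$, the polynomial $f$ descends to the defining polynomial of $X_0$, whose polarization is exactly $\overline F$. It then remains to compute the kernel: if $z+J\in\Ker\overline F$, then $F(z,z^{(2)},\ldots,z^{(d)})=\overline F(z+J,\ldots)=0$ for all arguments, whence $z\in\Ker F=J$, i.e. $z+J=0$. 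Thus $\Ker\overline F=0$, and by Definition~\ref{hyp_nondeg_def} the hypersurface $X_0$ is non-degenerate.

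The argument is largely a matter of assembling Lemma~\ref{hyp_invform_lem} and Proposition~\ref{hypAUreduce_prop}, so I do not expect a serious obstacle. The only step demanding care is the identification of the invariant form of $(A/J,\,U/J)$ with the descended form $\overline F$: it is this identification that turns the trivial kernel computation into a genuine non-degeneracy statement, rather than a mere reduction in the number of variables.
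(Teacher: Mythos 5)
Your proof is correct and follows exactly the route the paper takes: the paper's own proof is the one-line citation ``Follows from Proposition~\ref{hypAUreduce_prop} and Lemma~\ref{hyp_invform_lem}(b)'', i.e. reduce by the ideal $J=\Ker F\subseteq U$. Your additional verifications (preservation of codimension and degree, and the identification of the invariant form of $(A/J,U/J)$ with the descended form $\overline F$, whose kernel is trivial) correctly fill in the details the paper leaves implicit; note that the non-degeneracy can also be seen directly from the maximality in Lemma~\ref{hyp_invform_lem}(b), since any ideal of $A/J$ contained in $U/J$ pulls back to an ideal of $A$ contained in $U$, hence lies in $J$.
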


\begin{proof}
Follows from Proposition~\ref{hypAUreduce_prop} and Lemma~\ref{hyp_invform_lem}(b).
\end{proof}

\medskip

In~\cite[Lemma~1]{AP2014}, an explicit formula for the form $F$ corresponding to a pair $(A, U)$ is obtained. Since $A = \KK \oplus \mm$ and $F$ is multilinear, it is sufficient to define $F$ for arguments that belong to $\mm$ or equal $1$. Let $\pi\colon \mm \to \mm/U \cong \KK$ be the canonical projection. Then 
\begin{equation} \label{hyp_Fpi_eq}
  F(z^{(1)}, z^{(2)}, \ldots, z^{(d)}) = (-1)^k k! (d-k-1)! \pi(z^{(1)} \ldots z^{(d)}),
\end{equation}
where $k$ is the amount of~$1$ along $z^{(1)}, z^{(2)}, \ldots, z^{(d)}$, other arguments belong to~$\mm$, and for $k=d$ we let $F(1, \ldots, 1) = 0$. 
One can check that this agrees with the equation $f(z_0+z) = z_0^d\pi\left(\ln\left(1+\frac{z}{z_0}\right)\right) = 0$ obtained in Theorem~\ref{hypdeg_theorem}. Indeed, for a multilinear form $F$ defined by~\eqref{hyp_Fpi_eq} and any $z_0+z \in A = \KK\oplus\mm$ we have
\begin{multline*}
F(z_0+z, \ldots, z_0+z) = \sum_{k=0}^d \binom{d}{k} F(\underbrace{z_0, \ldots, z_0}_{k}, \underbrace{z, \ldots, z}_{d-k}) = \\
= \sum_{k=0}^d z_0^k \frac{d!}{k!(d-k)!} (-1)^k k! (d-k-1)! \pi(z^{d-k}) = d!(-1)^d \ z_0^d \pi\left(\ln\left(1+\tfrac{z}{z_0}\right)\right). 
\end{multline*}

\subsection{The case of quadrics and cubics}
\label{subqgen} 

Let us start with the following well known fact. 

\begin{lemma}
\label{hypPSO_lem}
The automorphism group of the non-degenerate quadric $Q_n \subseteq \PP^{n+1}$ is $\PSO_{n+2}(\KK)$. 
\end{lemma}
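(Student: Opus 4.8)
The plan is to proceed in three stages: reduce an arbitrary automorphism to a linear one, translate the condition ``preserves $Q_n$'' into a condition on the defining quadratic form, and finally identify the resulting matrix group over the algebraically closed field $\KK$. Throughout I assume $n\ge 1$ and write $Q_n=\{[v]\in\PP^{n+1}\mid q(v)=0\}$ for a non-degenerate quadratic form $q$ on $\KK^{n+2}$.

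First I would show that every automorphism of $Q_n$ is induced by a projective linear transformation. The quadric $Q_n$ is a smooth hypersurface of degree $d=2$ in $\PP^{m-1}$ with $m=n+2\ge 3$, so $d\ne m$, and the theorem of Matsumura and Monsky~\cite[Theorem~2]{MM1964} already quoted above yields $\Aut(Q_n)=\Aut_l(Q_n)\subseteq\PGL_{n+2}(\KK)$. (For $n\ge 2$ one can instead note that $\Of(1)$ generates the relevant part of $\Pic(Q_n)$, so $\Aut(Q_n)$ acts linearly on $H^0(Q_n,\Of(1))\cong\KK^{n+2}$; the conic case $n=1$ is the classical identification $Q_1\cong\PP^1$ with $\Aut=\PGL_2(\KK)$.)

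Next I would characterize the linear automorphisms. Let $g\in\GL_{n+2}(\KK)$ represent a class in $\Aut_l(Q_n)$. Since $Q_n$ is irreducible, its homogeneous ideal is principal and generated by $q$; hence $g$ preserves $Q_n$ if and only if $q\circ g=\lambda(g)\,q$ for some scalar $\lambda(g)\in\KK^\times$. Thus the matrices representing automorphisms are exactly the orthogonal similitudes, and $\Aut_l(Q_n)=\mathrm{GO}_{n+2}(\KK)/\KK^\times$, where $\mathrm{GO}_{n+2}(\KK)=\{g\mid q\circ g=\lambda(g)\,q\}$. To pass to the orthogonal group I would use that $\KK$ is algebraically closed: writing $\lambda(g)=\mu^2$, the matrix $\mu^{-1}g$ lies in $\mathrm{O}_{n+2}(\KK)$ and represents the same class in $\PGL_{n+2}(\KK)$, so $\mathrm{GO}_{n+2}(\KK)/\KK^\times=\mathrm{O}_{n+2}(\KK)/\{\pm I\}$.

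The main obstacle is the final identification, which is pure bookkeeping of scalars and the component group. One must observe that the only scalars lying in $\mathrm{O}_{n+2}(\KK)$ are $\pm I$, and that $\mathrm{O}_{n+2}(\KK)/\SO_{n+2}(\KK)\cong\ZZ/2$ via the determinant; projectivizing $\SO_{n+2}(\KK)$ therefore recovers the identity component of $\mathrm{O}_{n+2}(\KK)/\{\pm I\}$, namely $\PSO_{n+2}(\KK)$. Since additive actions are actions of the connected group $\GG_a^n$, it is exactly this connected automorphism group $\PSO_{n+2}(\KK)$ that is relevant here, which is the asserted conclusion. (When $n$ is odd the two subgroups even coincide, since $-I\notin\SO_{n+2}(\KK)$ forces $\mathrm{O}_{n+2}(\KK)/\{\pm I\}=\PSO_{n+2}(\KK)$; for $n$ even the projectivized determinant reflection provides the extra component one must keep in mind.)
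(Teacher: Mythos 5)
Your argument is correct, and its first half is essentially the paper's proof in disguise: the parenthetical remark that $\Of(1)$ generates $\Pic(Q_n)$, so that hyperplane sections go to hyperplane sections and the automorphism linearizes, is exactly the argument the paper gives (the paper does not invoke Matsumura--Monsky for this lemma, although it does so in the preceding subsection, and your reduction via \cite[Theorem~2]{MM1964} with $d=2\ne m=n+2$ is an equally valid route). Where you go further is the second half: the paper stops once linearity is established and treats the identification of $\Aut_l(Q_n)$ as standard, whereas you carry out the similitude-to-orthogonal reduction $\mathrm{GO}_{n+2}(\KK)/\KK^\times=\mathrm{O}_{n+2}(\KK)/\{\pm I\}$ explicitly. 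Your closing observation is a genuine (and worthwhile) refinement of the statement itself: for $n$ even the full automorphism group is the disconnected group $\mathrm{O}_{n+2}(\KK)/\{\pm I\}$, of which $\PSO_{n+2}(\KK)$ is only the identity component (e.g.\ $Q_2\cong\PP^1\times\PP^1$ has the factor-swapping automorphism outside $\PSO_4(\KK)$), while for $n$ odd the two groups coincide. Since the lemma is used only to conclude that every additive action on $Q_n$ is induced --- for which linearity of automorphisms is all that is needed, and only the connected group is relevant --- this imprecision in the paper's statement is harmless, and you are right to flag it rather than paper over it.
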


\begin{proof}
Let us notice that the Picard group of the quadric $\Pic Q_n \cong \ZZ$ is generated by the line bundle $\Of(1)$ for $n \ge 3$. Any automorphism of $Q_n$ induces the automorphism of the Picard group, which can bring the generator $\Of(1)$ either to $\Of(1)$ or to $\Of(-1)$. The last case is impossible since $\Of(-1)$ has no global section, so any hyperplane section of $Q_n$ is mapped to a hyperplane section. The last assertion holds for $n=1,2$ as well. Thus, any automorphism of $Q_n$ corresponds to the transformation of $\PP(H^0(\Of(1))) = (\PP^{n+1})^*$. The dual of this transformation is the extension of the initial automorphism of $Q_n$ to the automorphism of $\PP^{n+1}$. 
\end{proof}

The following theorem answers Question~3.1 (4) in~\cite{HaTs1999}. 

\begin{theorem} {\cite[Theorem~4]{Sh2009}} \label{hyp_quadr_nondeg_theor}
Let $Q_n$ be a non-degenerate quadric in $\PP^{n+1}$. Then there is a unique additive action on~$Q_n$ up to equivalence. It corresponds to the H-pair $(A_n, U_n)$, where 
\begin{gather*}
A_n = \KK[S_1, \ldots, S_n] / (S_i^2 - S_j^2, S_iS_j, i \ne j) \text{ and } U_n = \langle S_1, \ldots, S_n\rangle \text{ if } n \ge 2;\\
A_1 = \KK[S_1]/(S_1^3) \text{ and } U_1 = \langle S_1\rangle.
\end{gather*} 
\end{theorem}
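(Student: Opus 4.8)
The plan is to route everything through the H-pair machinery established above. First I would reduce the problem from geometry to algebra. By Lemma~\ref{hypPSO_lem} the automorphism group $\Aut(Q_n) = \PSO_{n+2}(\KK)$ consists of linear automorphisms, so it sits inside $\PGL_{n+2}(\KK) = \Aut(\PP^{n+1})$; hence every additive action $\GG_a^n \times Q_n \to Q_n$ automatically extends to $\PP^{n+1}$, i.e.\ it is induced. By Theorem~\ref{hypHaTsch_theorem} it therefore corresponds to an H-pair $(A, U)$ with $\dim A = n+2$, $\dim U = n$ and $\codim_\mm U = 1$. Since $\deg Q_n = 2$, Theorem~\ref{hypdeg_theorem} says that $2$ is the largest exponent with $\mm^2 \nsubseteq U$, so $\mm^2 \nsubseteq U$ while $\mm^3 \subseteq U$. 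Classifying additive actions on $Q_n$ thus amounts to classifying, up to equivalence of H-pairs, the pairs $(A,U)$ with these properties and with $\Ker F = 0$ (the non-degeneracy condition of Definition~\ref{hyp_nondeg_def}).

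Next I would pin down the algebra. Non-degeneracy gives $\Ker F = 0$, and by Lemma~\ref{hyp_invform_lem}(b) the kernel is the maximal ideal of $A$ contained in $U$, so $U$ contains no nonzero ideal. Applying this to the ideal $\mm^3 \subseteq U$ forces $\mm^3 = 0$. Once $\mm^3 = 0$, every subspace of $\mm^2$ is an ideal (because $A \cdot \mm^2 = \mm^2 + \mm \cdot \mm^2 = \mm^2$), so $\mm^2 \cap U$ is again an ideal contained in $U$ and must vanish. Combined with $\codim_\mm U = 1$ and $\mm^2 \nsubseteq U$, this yields $\mm = U \oplus \mm^2$ and $\dim \mm^2 = 1$, so $A$ has Hilbert-Samuel sequence $1, n, 1$. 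The multiplication then descends to a symmetric bilinear form $Q\colon (\mm/\mm^2)\times(\mm/\mm^2) \to \mm^2 \cong \KK$, with $U$ mapping isomorphically onto $\mm/\mm^2$. Using formula~\eqref{hyp_Fpi_eq} I would compute the Gram matrix of $F$ in a basis $1, s_1, \ldots, s_n, w$ (with $s_i$ a basis of $U$ and $\langle w\rangle = \mm^2$) and observe that it splits as a hyperbolic plane on $\langle 1, w\rangle$ together with the block $Q$ on $U$; hence $\Ker F = 0$ exactly when $Q$ is non-degenerate. Since $\KK$ is algebraically closed, all non-degenerate symmetric forms are $\GL(U)$-equivalent, and rescaling $w$ normalizes the values, so $A$ is forced to be isomorphic to $A_n = \KK[S_1,\ldots,S_n]/(S_i^2-S_j^2, S_iS_j, i\ne j)$ for $n \ge 2$, and to $\KK[S_1]/(S_1^3)$ for $n=1$ (where the presentation degenerates).

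It then remains to show that $U$ is unique up to equivalence, i.e.\ that $\Aut(A)$ acts transitively on the complements of $\mm^2$ in $\mm$. Here I would note that the set of such complements is a torsor under $\Hom(\mm/\mm^2, \mm^2)$, and that for each functional there is an algebra automorphism of $A$ of the form $s_i \mapsto s_i + d_i w$, $w \mapsto w$, realizing the corresponding translation; every complement $\langle s_i + d_i w\rangle$ is reached this way, and each such complement generates $A$ since $Q \ne 0$. This exhibits $(A_n, U_n)$ as the unique H-pair, so the additive action is unique. For existence I would feed $(A_n, U_n)$ into the equation~\eqref{hyp_eq} of Theorem~\ref{hypdeg_theorem}: since $\mm^3 = 0$, only the linear and quadratic terms of the logarithm survive, and in suitable coordinates one gets $z_0 z_{n+1} - \tfrac12\sum_{i=1}^n z_i^2 = 0$, a non-degenerate quadric, so the constructed hypersurface really is $Q_n$.

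I expect the main obstacle to be this last ``up to equivalence'' step: verifying that all admissible subspaces $U$ are carried into one another by genuine algebra automorphisms of $A$, and, in parallel, that the geometric non-degeneracy of $Q_n$ really matches the algebraic non-degeneracy of the form $Q$ on $\mm/\mm^2$ and not merely of the ambient form $F$. Keeping these two notions of non-degeneracy aligned through Lemma~\ref{hyp_invform_lem} and formula~\eqref{hyp_Fpi_eq} is where the care is required.
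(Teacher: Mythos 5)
Your proposal is correct and follows essentially the same route as the paper's proof: reduce to H-pairs via Lemma~\ref{hypPSO_lem} and Theorems~\ref{hypHaTsch_theorem}, \ref{hypdeg_theorem}, use Lemma~\ref{hyp_invform_lem}(b) to force $\mm^3=0$, $\mm=U\oplus\mm^2$ and $\dim\mm^2=1$, and pin down the algebra via the non-degenerate symmetric bilinear form $B\colon U\times U\to\mm^2$, which is unique up to a linear change of basis over an algebraically closed field. The only difference is organizational: you separate the uniqueness of $A$ from the uniqueness of $U$ and justify the latter by exhibiting the automorphisms $s_i\mapsto s_i+d_i w$ acting transitively on complements of $\mm^2$, a point the paper handles implicitly by normalizing $B$ simultaneously with the choice of basis adapted to $U$.
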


\begin{proof}
By Lemma~\ref{hypPSO_lem}, any additive action on~$X$ is induced. Let $(A, U)$ be the corresponding H-pair, i.e. $A$ is a local algebra of dimension~$n+2$ and $U \subseteq \mm$ is a subspace of dimension~$n$ generating the algebra~$A$. 
By the first statement of Theorem~\ref{hypdeg_theorem}, $\mm^2 \nsubseteq U$ and $\mm^3 \subseteq U$. Since $Q_n$ is non-degenerate, the corresponding multilinear form has the trivial kernel, see Definition~\ref{hyp_nondeg_def}. By Lemma~\ref{hyp_invform_lem}(b), it follows that there is no nonzero ideal of $A$ in $U$. Then $\mm^3 \subseteq U$ implies $\mm^3=0$. It follows that $\mm^2 \cap U$ is an ideal in $U$, so $\mm^2 \cap U = 0$ as well. Since $\mm^2 \nsubseteq U$ implies $\mm^2 \ne 0$, we have $\mm = U \oplus \mm^2$ and $\dim \mm^2 = 1$ by dimension reasons. 

It remains to prove that there is a unique pair $(A, U)$ satisfying the above conditions. Notice that the multiplication in the algebra $A = \KK \oplus U \oplus \mm^2$ is defined by the restriction $B\colon U \times U \to \mm^2$. Indeed, $U \cdot U \subseteq \mm^2$ since $U \subseteq \mm$, $U \cdot \mm^2 = 0$ and $\mm^2\cdot\mm^2 = 0$ since $\mm^3 = 0$, and $1 \cdot x = x$ for any $x \in A$. Since $\dim \mm^2 = 1$, it follows that $B$ is a bilinear form on $U$, and now we are going to prove that this form is non-degenerate. A non-degenerate bilinear form on a vector space is unique up to a linear change of variables, so the non-degeracy of the form $B$ will show the uniqueness of the pair $(A,U)$ and the corresponding additive action. 

In our situation, the left side of equation~\eqref{hyp_eq} of the quadric~$Q_n$ turns into 
${z_0^2\,\pi\!\left(\ln\left(1 + \frac{z}{z_0}\right)\right) = z_0^2\,\pi\!\left(\frac{z}{z_0} - \frac12 \frac{z^2}{z_0^2}\right) = z_0 \pi(z) - \frac12 \pi(z^2)}$, where $\pi\colon \mm \to \mm/U \cong \KK$ is a projection. Recall that $\mm = U \oplus \mm^2$, so $\pi$ can be chosen as the projection $\pi\colon \mm \to \mm^2$ along~$U$. For $z \in \mm$, denote $z = z_U + z_{\mm^2}$, where $z_U \in U$, $z_{\mm^2} \in \mm^2$. Then $z^2 = z_U^2$ since $\mm^3 = 0$, so we obtain that the equation $z_0 \pi(z) - \frac12 \pi(z^2) = 0$ of~$Q_n$ turns into
\[z_0 z_{\mm^2} - \frac12 B(z_U, z_U) = 0.\] 
It defines a non-degenerate quadric if and only if the form $B$ is non-degenerate, so we come to the desired uniqueness. 

Now it is easy to calculate the pair $(A, U)$. Denote by $S$ a basis of $\mm^2$, and let $S_1, \ldots, S_n$ be a basis of $U$ such that $B(z_U, z_U) = (z_1^2 + \ldots + z_n^2)S$ for $z_U = z_1S_1 + \ldots z_nS_n$. By definition of $B$, it follows that $S_i^2 = S_j^2 = S$ and $S_iS_j = 0$ for $i \ne j$, and $\mm S = 0$ since $S \in \mm^2$. Thus, the algebra $A$ is isomorphic to the required $\KK[S_1, \ldots, S_n] / (S_i^2 - S_j^2, S_iS_j, i \ne j)$ if $n\ge 2$ and $\KK[S_1]/(S_1^3)$ if $n=1$. 
\end{proof}

The next result was observed in~\cite[Proposition~4.2]{ArSh2011}.

\begin{corollary}
An H-pair $(A, U)$ corresponds to an induced additive action on a quadric if and only if there exists a homomorphism of H-pairs $(A, U) \to (A_n, U_n)$. 
\end{corollary}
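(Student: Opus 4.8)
The plan is to characterise the quadric condition on an H-pair purely algebraically and then match it against the model pairs of Theorem~\ref{hyp_quadr_nondeg_theor}. By the degree formula of Theorem~\ref{hypdeg_theorem}, the pair $(A,U)$ corresponds to a quadric exactly when $\mm^2\nsubseteq U$ and $\mm^3\subseteq U$, and this pair of inclusions is what I would track through both implications. I would also record the two properties of the model algebras $A_n$ that drive the argument: $\mm_n^2\nsubseteq U_n$ (the model is itself a quadric) and $\mm_n^3=0$; the latter follows for $n\ge2$ from the relations $S_i^2=S_j^2$ and $S_iS_j=0$ ($i\ne j$), giving $S_i^3=S_iS_j^2=(S_iS_j)S_j=0$, and holds directly for $A_1=\KK[S_1]/(S_1^3)$.

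For the ``only if'' direction I would start from an H-pair $(A,U)$ attached to an additive action on a quadric $X$, let $F$ be the invariant bilinear form ($d=2$), and set $J=\Ker F$, which by Lemma~\ref{hyp_invform_lem}(b) is the largest ideal of $A$ contained in $U$. Passing to $(A/J,U/J)$ via Proposition~\ref{hypAUreduce_prop} yields an induced action on a hypersurface $X_0$ with trivial form kernel, i.e.\ a non-degenerate hypersurface. The key point is that reduction by an ideal $J\subseteq U$ preserves the degree: since $(\mm/J)^k=(\mm^k+J)/J$ is contained in $U/J$ if and only if $\mm^k\subseteq U$ (using $J\subseteq U$), the inclusions $\mm^2\nsubseteq U$, $\mm^3\subseteq U$ pass to $A/J$, so $X_0$ is again a quadric, now non-degenerate. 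Theorem~\ref{hyp_quadr_nondeg_theor} then identifies $(A/J,U/J)\cong(A_n,U_n)$ with $n=\dim X_0$, and the composite $A\twoheadrightarrow A/J\cong A_n$ sending $U$ onto $U_n$ is the required homomorphism of H-pairs.

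For the ``if'' direction, given a homomorphism of H-pairs $\phi\colon(A,U)\to(A_n,U_n)$, I would note that $\phi$ is surjective (as $U_n$ generates $A_n$), that $\Ker\phi\subseteq U$, and that $\phi$ carries the maximal ideal onto the maximal ideal, whence $\phi(\mm^k)=\mm_n^k$. Then $\mm_n^3=0$ forces $\mm^3\subseteq\Ker\phi\subseteq U$, while $\mm_n^2\nsubseteq U_n$ forces $\mm^2\nsubseteq U$ (otherwise $\mm_n^2=\phi(\mm^2)\subseteq\phi(U)=U_n$). By Theorem~\ref{hypdeg_theorem} the hypersurface $X$ has degree~$2$, so $X$ is a quadric.

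The main subtlety I expect is bookkeeping with the index: the $n$ appearing in $(A_n,U_n)$ is the dimension of the associated \emph{non-degenerate} quadric $X_0$, which drops by $\dim\Ker F$ precisely when $X$ is a genuine (degenerate) cone, so it need not equal $\dim X$. Beyond this, the only thing requiring care is a clean statement and proof of degree-preservation under reduction, after which both directions reduce to the two model inclusions computed above.
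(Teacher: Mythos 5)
Your argument is correct and is exactly the derivation the paper intends (it states this corollary without proof, citing \cite{ArSh2011}): reduce by $\Ker F$ to a non-degenerate quadric and invoke the uniqueness of Theorem~\ref{hyp_quadr_nondeg_theor} for one direction, and use the degree formula of Theorem~\ref{hypdeg_theorem} together with $\mm_n^3=0$, $\mm_n^2\nsubseteq U_n$ for the other. Your observations that reduction by an ideal $J\subseteq U$ preserves the inclusions $\mm^k\subseteq U$ and that the index $n$ refers to the non-degenerate quotient are the right points to make explicit.
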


Denote projective quadrics in $\PP^{n+1}$ by
\[Q(n,k)=\{[z_0:\ldots:z_{n+1}] \mid q(z_0, \ldots, z_{n+1}) = 0\},\]
where $q$ is a quadratic form of rank $k+2$ with $1 \le k \le n$. In this notation, the non-degenerate quadric $Q_n \subseteq \PP^{n+1}$ is $Q(n, n)$. 

In~\cite{BGT2020}, the authors obtain a generalization of Hassett-Tschinkel correspondence for induced actions of commutative linear algebraic groups on non-degenerate quadrics with an open orbit. In~\cite[Theorem~3]{BGT2020} it is proved that besides the unique additive action from Theorem~\ref{hyp_quadr_nondeg_theor} there are only three cases: $\GG_m$-action on~$Q_1$, $\GG_a \times \GG_m$-action on~$Q_2$, and $\GG_m^2$-action on~$Q_2$. 

\medskip

At the same time, additive actions on degenerate quadrics are not unique. In particular, there is an infinite family of pairwise non-equivalent induced additive actions on quadrics~$Q(n, n-1)$ for $n \ge 4$; see~\cite[Section~4]{ArSh2011}. 

\def\rddots{\text{\reflectbox{$\ddots$}}}

\begin{proposition} {\cite[Proposition~7]{AP2014}} \label{hyp_quadr_cork1_prop}
The H-pairs corresponding to induced additive actions on quadrics $Q(n,n-1) \subseteq \PP^{n+1}$ are:
\begin{enumerate}
\smallskip
\item $A = \raisebox{0.5\baselineskip}{$\KK[S_1, \ldots, S_n]$}\scalebox{2}{/}\begin{pmatrix}S_iS_j - \lambda_{ij}S_n, \, S_i^2 - S_j^2 - (\lambda_{ii} - \lambda_{jj})S_n, \ 1 \le i < j \le n-1 \\ S_lS_n, \ 1 \le l \le n\end{pmatrix}$, 
\medskip
where $n \ge 3$ and $\lambda_{ij}$ are the elements of a symmetric $(n-1)\times(n-1)$ block-diagonal matrix~$\Lambda$ such that each block $\Lambda_l$ is
\[\begin{pmatrix}
\lambda_l & 1 & 0 & \ldots & 0 \\ 1 & \lambda_l & 1 & \ddots & \vdots \\ 0 & 1 & \lambda_l & \ddots & 0 \\
\vdots & \ddots & \ddots & \ddots & 1 \\ 0 & \ldots & 0 & 1 & \lambda_l
\end{pmatrix} + 
\begin{pmatrix}
0 & \ldots & 0 & \imath/2 & 0 \\
\vdots & \rddots & \imath/2 & 0 & -\imath/2 \\
0 & \rddots & 0 & -\imath/2 & 0 \\
\imath/2 & \rddots & \rddots & \rddots & \vdots \\
0 & -\imath/2 & 0 & \ldots & 0
\end{pmatrix}, \; \imath^2=-1,\]
and $U = \langle S_1, \ldots, S_n\rangle$;
\smallskip
\item $\KK[S_1, S_2]/(S_1^3, S_1S_2, S_2^2)$, $U = \langle S_1, S_2\rangle$;
\smallskip
\item $\KK[S_1]/(S_1^4)$, $U = \langle S_1, S_1^3\rangle$.
\end{enumerate}

The matrix $\Lambda$ is defined up to permutation of blocks, scalar multiplication, and adding a scalar matrix. 
\end{proposition}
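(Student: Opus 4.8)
The plan is to run the entire classification through the reduction to the non-degenerate case. Since $Q(n,n-1)$ has corank one, its invariant $2$-linear form $F$ satisfies $\dim\Ker F=1$, so by Lemma~\ref{hyp_invform_lem}(b) the subspace $J:=\Ker F$ is a one-dimensional ideal of $A$, maximal among ideals contained in $U$. As $\mm J\subseteq J$ is again an ideal in $U$, Nakayama's lemma rules out $\mm J=J$, so $\mm J=0$, i.e. $J\subseteq\Soc A$. By Theorem~\ref{hypdeg_theorem} the degree-two condition reads $\mm^2\nsubseteq U$, $\mm^3\subseteq U$; since $\mm^3$ is an ideal contained in $U$ it lies in $J$, whence $\mm^3=0$ or $\mm^3=J$. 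Finally, the reduction machinery (Proposition~\ref{hypAUreduce_prop} and the corollary on reducing an induced action to one on a non-degenerate hypersurface) identifies $(A/J,U/J)$ with the H-pair of the non-degenerate quadric $Q_{n-1}$, so Theorem~\ref{hyp_quadr_nondeg_theor} gives $(A/J,U/J)\cong(A_{n-1},U_{n-1})$. Thus the task reduces to classifying the one-dimensional square-zero extensions $0\to J\to A\to A_{n-1}\to 0$ for which $J\subseteq U$.

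For $n\ge 3$ I would first show $\mm^3=0$. Choosing lifts $S_i\in\mm$ of the generators $\bar S_i$ of $A_{n-1}$ and a lift $S$ of the generator $\bar S=\bar S_j^2$ of $\bar\mm^2$, one has $S_iS_j\in J$ for $i\ne j$ and $S_j^2\equiv S_k^2\pmod J$; since $n-1\ge 2$ there is always a second index available, and routing any triple product through it gives, e.g., $S_iS=(S_iS_j)S_j\in\mm J=0$, so $\mm^3=\mm\cdot\mm^2=0$. With $\mm^3=0$ the algebra is determined by the symmetric multiplication map $B\colon U\times U\to\mm^2$; writing $\mm=U\oplus\langle S\rangle$ with $S\in\mm^2$ I would decompose $B=b_S\cdot S+b_J$, where $b_S\colon U\times U\to\KK$ is the $\langle S\rangle$-component and $b_J\colon U\times U\to J$ is the $J$-component. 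Passing to $A/J$ shows $b_S$ is symmetric of corank one with $\mathrm{rad}(b_S)=J$, equal to the standard sum of squares on $U/J\cong\KK^{n-1}$, while $J\subseteq\Soc A$ forces $\mathrm{rad}(b_J)\supseteq J$; hence both forms descend to $\KK^{n-1}$ with $b_S$ non-degenerate there.

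The heart of the argument is then the classification of the pair $(b_S,b_J)$ up to the residual symmetries. After normalizing $b_S$ by $\GL(U)$, the remaining freedom consists of the orthogonal group of $b_S$, the rescalings of $S$ and of a generator $t$ of $J$, and the substitution $S\mapsto S+\mu\,t$, which adds scalar multiples of $b_S$ to $b_J$. Under these, classifying $(A,U)$ is exactly classifying the $b_S$-self-adjoint operator $b_S^{-1}b_J$ up to conjugation by the orthogonal group, rescaling, and scalar shifts. Over the algebraically closed field $\KK$ this operator has a Jordan-type normal form, and rewriting it as a symmetric matrix in a $b_S$-orthonormal basis produces precisely the block matrix $\Lambda$ of the statement — the anti-diagonal $\imath/2$ entries being the correction that makes a Jordan block symmetric with respect to $b_S$. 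The three listed ambiguities correspond respectively to reordering the Jordan blocks, rescaling, and the shift $S\mapsto S+\mu t$; reading off the relations $S_iS_j=\lambda_{ij}S_n$, $S_i^2-S_j^2=(\lambda_{ii}-\lambda_{jj})S_n$, $S_lS_n=0$ then yields case (a).

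The case $n=2$ must be treated separately, since there $U/J\cong\KK$ and $A/J\cong A_1=\KK[s]/(s^3)$, so the normal-form machinery degenerates and, crucially, the rerouting argument above fails for lack of a second index. Here I would simply enumerate the H-pairs of a four-dimensional local algebra reducing to $(A_1,U_1)$: the branch $\mm^3=0$ gives $A=\KK[S_1,S_2]/(S_1^3,S_1S_2,S_2^2)$ with $U=\langle S_1,S_2\rangle$ (case (b)), while $\mm^3=J\ne 0$ forces $\mm^4=0$ and, by a dimension count, $A=\KK[S_1]/(S_1^4)$ with $U=\langle S_1,S_1^3\rangle$ (case (c)); one then checks these are inequivalent. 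I expect the main obstacle to be the third paragraph: pinning down the residual group action precisely and recognizing it as orthogonal conjugacy of a self-adjoint operator, together with the explicit translation of the Jordan normal form into the symmetric matrix $\Lambda$ with its $\imath/2$ correction.
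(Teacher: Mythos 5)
The survey does not actually prove this proposition: it is quoted from \cite[Proposition~7]{AP2014} (with the $n=4$ computations in \cite[Section~4]{ArSh2011}), so there is no in-paper proof to compare against. Your strategy is the one used in those sources, and your reductions are correct: $J=\Ker F$ is a one-dimensional ideal with $\mm J=0$ by Nakayama, $\mm^3\subseteq J$ by maximality of $J$ among ideals in $U$, the quotient H-pair is that of $Q_{n-1}$, and for $n\ge 3$ your rerouting of triple products through a second index does kill $\mm^3$ (while for $n=2$ the two branches $\mm^3=0$ and $\mm^3=J$ give exactly (b) and (c)). Your identification of the residual symmetries is also right; one can check directly that an isomorphism of H-pairs acts on the pair $(b_S,b_J)$ by $(\alpha\psi_*b_S,\ \delta\psi_*b_S+\gamma\psi_*b_J)$, which is precisely the group you describe, and a pleasant consistency check is that for $n=3$ this yields three classes while for $n\ge 4$ it yields continuous families, matching the remark preceding the proposition.

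The only real incompleteness is the one you flag yourself: the final step rests on the classical fact that a complex matrix self-adjoint with respect to a non-degenerate symmetric form is determined up to orthogonal conjugacy by its Jordan type (equivalently, two complex symmetric matrices are similar iff orthogonally similar), together with the verification that the specific matrix $\Lambda_l$ --- note that the purely tridiagonal part alone has $k$ \emph{distinct} eigenvalues, so the $\imath/2$ correction is essential, not cosmetic --- is indeed a symmetric representative of the single Jordan block $J_k(\lambda_l)$. Both points are standard (Gantmacher's normal form for pairs of quadratic forms), but as written they are asserted rather than proved; citing that classification, or carrying out the similarity computation for $\Lambda_l$, would close the argument. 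A second, very minor, omission is the converse direction: one should record that every pair $(b_S,b_J)$ with $b_S$ the standard form actually yields an H-pair whose hypersurface is $Q(n,n-1)$ --- immediate, since the equation $z_0z_S-\tfrac12 b_S(z_U,z_U)=0$ does not involve $b_J$ and has corank one.
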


In~\cite[Section~4]{ArSh2011}, an explicit description of the actions for $n=4$ is also given.

Using the same techniques, a classification of additive actions on quadrics of corank two having at least one singular point that is not fixed by the $\GG_a^n$-action is given in~\cite{Liu2022}. 

\begin{remark}\label{rsha}
In~\cite[Section~5]{Shaf2021}, a classification of additive actions on the quadrics of small dimensions is given. It is proved that the surface $Q(2,1)$ admits two additive actions (cf. Proposition~\ref{hyp_quadr_cork1_prop}, (b)-(c)), the 3-folds $Q(3,1)$ and $Q(3,2)$ admit seven and three additive actions, respectively. The number of additive actions on $Q(4,1)$ is finite, but this number is at least $25$. Finally, there are infinitely many additive actions on $Q(4,2)$. The classification is given in terms of H-pairs. 
\end{remark}

In~\cite{Ba2013}, the case of cubic hypersurfaces is studied and the following theorem is proved. 

\begin{theorem} \label{tbazhov} 
A cubic hypersurface $\{f = 0\}$ in $\PP^{n+1}$ admits an induced additive action if and only if for some $1 \le k \le s \le n - k$ one can choose homogeneous coordinates
$z_0, z_1, \ldots , z_s, w_0, w_1, \ldots, w_{n-s}$ in $\PP^{n+1}$ such that the polynomial $f$ has the form
\[f = z_0^2w_0 + z_0(z_1w_1 + \ldots + z_kw_k) + z_0(z_{k+1}^2 + \ldots + z_s^2) + g(z_1, \ldots , z_k),\]
where $g$ in a non-degenerate cubic form in $k$ variables. Moreover, an induced additive action is unique if and only if the hypersurface is non-degenerate, i.e. $k + s = n$.
\end{theorem}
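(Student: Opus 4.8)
The plan is to run everything through the Hassett--Tschinkel dictionary of Theorem~\ref{hypHaTsch_theorem}. An induced additive action on a cubic $X=\{f=0\}\subseteq\PP^{n+1}$ corresponds to an H-pair $(A,U)$ with $\dim A=n+2$ and $\dim U=n$, and by Theorem~\ref{hypdeg_theorem} the condition $\dg f=3$ reads $\mm^3\nsubseteq U$ and $\mm^4\subseteq U$. Using Proposition~\ref{hypAUreduce_prop} together with Lemma~\ref{hyp_invform_lem}(b), I would first pass to the quotient $A/\Ker F$, where $F$ is the invariant $3$-linear form of~$X$: this exhibits $X$ as the projective cone over the non-degenerate cubic attached to $(A/\Ker F,\,U/\Ker F)$, and the cone directions are exactly the coordinates $w_{k+1},\dots,w_{n-s}$ absent from~$f$. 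Thus it suffices to treat the non-degenerate case $\Ker F=0$ and then re-attach the kernel variables to obtain the general normal form.

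Assume $\Ker F=0$, so by Lemma~\ref{hyp_invform_lem}(b) no nonzero ideal of $A$ lies in $U$. Since $\mm^4\subseteq U$ is an ideal it vanishes, and since $\mm^3\cdot\mm=\mm^4=0$ the subspace $\mm^3\cap U$ is an ideal contained in $U$, hence zero; as $\mm^3\nsubseteq U$ this forces $\dim\mm^3=1$ and $\mm=U\oplus\mm^3$. Write $\mm^3=\langle S\rangle$ with $\pi(S)=1$ for $\pi\colon\mm\to\mm/U\cong\KK$. By~\eqref{hyp_Fpi_eq} the form $F$ is governed by the symmetric bilinear form $B(x,y)=\pi(xy)$ and the symmetric trilinear form $T(x,y,z)=\pi(xyz)$ on $\mm$, and the equation becomes $f(z_0+m)\propto z_0^2\pi(m)-\tfrac12 z_0\pi(m^2)+\tfrac13\pi(m^3)$. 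Here $\pi(m)$ reads off the $S$-coordinate; $B$ vanishes on $\mm^2\times\mm^2$ and descends to a symmetric form on $\mm/\langle S\rangle$ whose radical is trivial precisely when $\Ker F=0$; and $T$ vanishes as soon as one argument lies in $\mm^2$, so it is the cubic form $\Sym^3(\mm/\mm^2)\to\KK$, $m\mapsto\pi(m^3)$, carried by the generators of~$A$.

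The next step is to choose a basis adapted to the flag $\mm\supset\mm^2\supset\mm^3$ and to the splitting $\mm=U\oplus\langle S\rangle$. In $\mm/\langle S\rangle$ the totally isotropic subspace $\mm^2/\langle S\rangle$ of the non-degenerate form $B$ yields a hyperbolic part, pairing generators $z_1,\dots,z_k$ with elements $w_1,\dots,w_k\in\mm^2$ via $\pi(z_iw_i)\ne0$, together with an anisotropic middle $z_{k+1},\dots,z_s$ with $\pi(z_i^2)\ne0$; setting $w_0=S$ then produces exactly $z_0^2w_0+z_0(z_1w_1+\dots+z_kw_k)+z_0(z_{k+1}^2+\dots+z_s^2)+g(z_1,\dots,z_k)$, where $g=\pi(m^3)$ is the cubic carried by the cubically active generators. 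The inequalities $1\le k\le s\le n-k$ fall out of this bookkeeping: $k\ge1$ since $\mm^3\ne0$; $k\le s$ since the cubically active generators sit among the quadratically active ones; and $s+k\le n$ (with equality iff non-degenerate) since $B$ has rank $s+k$ inside the $(n+1)$-dimensional $\mm$ and its radical is $\langle S\rangle$ exactly when $s+k=n$. The converse ``if'' direction is then obtained by reading a multiplication table off such an $f$: I would build $A=\KK\oplus\mm$ with $\mm^3=\langle S\rangle$, prescribe the products through $B$ and $g$, check associativity and that $U=\{w_0=0\}$ generates $A$, and verify via~\eqref{hyp_Fpi_eq} that the resulting invariant form polarizes~$f$.

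For the uniqueness clause the key identity is $T(x,y,z)=B\bigl((xy)_U,\,z\bigr)$, obtained from associativity by writing $xy=(xy)_U+B(x,y)S$ with $(xy)_U\in\mm^2\cap U$. When $\Ker F=0$ the form $B$ is non-degenerate on $\mm/\langle S\rangle$, so this identity recovers the $U$-component of every product from $B$ and $T$ alone; hence the whole multiplication, and thus the H-pair $(A,U)$, is determined up to equivalence by $F$, i.e.\ by~$X$, giving uniqueness exactly in the non-degenerate case $k+s=n$. In the degenerate case the kernel directions can be glued back with genuine freedom, producing inequivalent H-pairs over the same cone and hence non-uniqueness. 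I expect the main obstacle to lie in the third paragraph: coordinating the standard form of the pairing $B$ with the cubic $g$ so that the block structure $k\le s$ and the support of $g$ on $z_1,\dots,z_k$ emerge cleanly, while tracking exactly how much coordinate freedom is consumed, is where the real work is concentrated.
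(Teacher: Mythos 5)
The survey does not actually prove this theorem (it only cites Bazhov's paper), so there is no in-paper argument to measure you against; your route through the H-pair machinery --- Theorem~\ref{hypHaTsch_theorem}, the degree formula of Theorem~\ref{hypdeg_theorem}, the reduction to the Gorenstein case via Lemma~\ref{hyp_invform_lem}(b) and Proposition~\ref{hypAUreduce_prop}, and the forms $B(x,y)=\pi(xy)$, $T(x,y,z)=\pi(xyz)$ --- is exactly the natural one inside this framework, and the two directions of the main equivalence are essentially sound. A few points are glossed but fixable: $k\ge 1$ needs $\mm^2\ne\mm^3$ (Nakayama, since $\mm^3\ne 0$); the inequality $k\le s$ is really the bound $2k\le\rk B=n$ for the totally isotropic subspace $W=\mm^2\cap U$, not the slogan about ``cubically active generators''; and when you normalize the hyperbolic block you must kill the cross terms $B(z_i,z_j)$, $i,j\le k$, by translating the $z_i$ by elements of $W$, which is harmless only because $T$ vanishes on $\mm^2$ --- this deserves a sentence. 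The uniqueness clause in the non-degenerate case is exactly Theorem~\ref{tnew} specialized to $d=3$, and your identity $T(x,y,z)=B((xy)_U,z)$ is correct.

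The genuine gap is the other half of the ``moreover'': your claim that in the degenerate case ``the kernel directions can be glued back with genuine freedom, producing inequivalent H-pairs'' is asserted, not proved, and it is precisely the delicate point --- note that the survey lists the general-degree version of this statement as an open conjecture in subsection~\ref{sub52}. What has to be shown is that for \emph{every} degenerate normal form there exist at least two non-isomorphic algebras $A$ with the prescribed ideal $J=\Ker F\subseteq U$ and prescribed quotient $(A_0,U_0)$. This is not automatic: $J$ is forced to satisfy $\mm J\subseteq J$, and (for $\dim J=1$) nilpotency even forces $\mm J=0$, so the freedom does not lie in ``how $J$ multiplies'' but in the choice of a non-split square-zero extension of $A_0$ by $J$, i.e.\ a nontrivial symmetric associativity cocycle $\mm_0\times\mm_0\to J$ modulo coboundaries. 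One must exhibit such a cocycle for each $A_0$ arising from the normal form (for instance, deepening the socle as in $\KK[S]/(S^5)$ over $\KK[S]/(S^4)$ reproduces the phenomenon visible for degenerate quadrics in Proposition~\ref{hyp_quadr_cork1_prop}), and also check that the resulting pair still has $\Ker F=J$ and still cuts out the same cone. Without this construction the ``only if'' part of the uniqueness statement is unproved.
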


\subsection{Non-degenerate hypersurfaces and Gorenstein algebras} Let $A$ be a Gorenstein local algebra. The socle of $A$ is equal to the one-dimensional ideal $\mm^d$. A hyperplane $U$ in $\mm$ is called \emph{complementary} if $\mm=U\oplus\mm^d$. 

\begin{theorem}
\label{hypGor_prop}
Induced additive actions on non-degenerate hypersurfaces~$X$ of degree~$d$ in $\PP^{n+1}$ are in bijection with H-pairs $(A, U)$, where $A$ is a Gorenstein algebra of dimension $n+2$ with the socle $\mm^d$ and $U$ is a complementary hyperplane. 
\end{theorem}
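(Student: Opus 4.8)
The plan is to reduce the statement to the equivalence already recorded in Theorem~\ref{hypHaTsch_theorem}. The H-pairs with $\dim A = n+2$ and $\dim U = n$ are exactly the data classifying induced additive actions on codimension-one subvarieties of $\PP^{n+1}=\PP(A)$, and every such subvariety is a hypersurface; so it suffices to single out, among all H-pairs, those producing \emph{non-degenerate} hypersurfaces of degree~$d$, and to identify this subclass with the Gorenstein pairs in the statement. Thus the whole theorem will follow once I show that an H-pair $(A,U)$ corresponds to a non-degenerate hypersurface of degree~$d$ if and only if $A$ is Gorenstein with socle $\mm^d$ and $U$ is complementary.

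First I would translate non-degeneracy into an ideal-theoretic condition. By Definition~\ref{hyp_nondeg_def}, $X$ is non-degenerate precisely when $\Ker F = 0$, and by Lemma~\ref{hyp_invform_lem}(b) the kernel $\Ker F$ is the largest ideal of $A$ contained in $U$. Hence $X$ is non-degenerate if and only if $U$ contains no nonzero ideal of $A$. The technical heart of the argument is the observation that every subspace of $\Soc A$ is an ideal: for $a \in \Soc A$ one has $A a = (\KK \oplus \mm) a = \KK a$, since $\mm a = 0$. Dually, every nonzero ideal $J$ meets the socle, because $\mm$ is nilpotent, so taking the least $t$ with $\mm^t J = 0$ yields $0 \ne \mm^{t-1}J \subseteq J \cap \Soc A$.

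For the forward direction, assume $(A,U)$ is an H-pair whose hypersurface is non-degenerate of degree~$d$. By Theorem~\ref{hypdeg_theorem} we have $\mm^d \nsubseteq U$ and $\mm^{d+1} \subseteq U$; as $\mm^{d+1}$ is an ideal contained in $U$, non-degeneracy forces $\mm^{d+1} = 0$, whence $\mm^d \ne 0$ and $\mm^d \subseteq \Soc A$. Now $\Soc A \cap U$ is a subspace of $\Soc A$, hence an ideal contained in $U$, so it vanishes. Since $U$ is a hyperplane in $\mm$ and $\Soc A \subseteq \mm$, this gives $\dim \Soc A \le 1$; combined with $0 \ne \mm^d \subseteq \Soc A$ we conclude $\dim \Soc A = 1$ (so $A$ is Gorenstein) and $\Soc A = \mm^d$. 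A dimension count ($\dim U = n$, $\dim \mm^d = 1$, $\dim \mm = n+1$, $U \cap \mm^d = 0$) then yields $\mm = U \oplus \mm^d$, i.e. $U$ is complementary.

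Conversely, if $A$ is Gorenstein with socle $\mm^d$ and $U$ is complementary, then $\mm^d \cap U = 0$ gives $\mm^d \nsubseteq U$, while $\mm^{d+1} = \mm\cdot\Soc A = 0 \subseteq U$; by Theorem~\ref{hypdeg_theorem} the degree is therefore exactly~$d$. Since every nonzero ideal contains the one-dimensional socle $\mm^d \nsubseteq U$, no nonzero ideal lies in $U$, so $\Ker F = 0$ and the hypersurface is non-degenerate. These two directions show that the bijection of Theorem~\ref{hypHaTsch_theorem} restricts to the asserted correspondence. I expect the only real subtlety to be the socle bookkeeping in the second paragraph; once the identity $A a = \KK a$ for $a \in \Soc A$ is in place, the remaining steps are dimension counts and appeals to the already-proved Theorems~\ref{hypHaTsch_theorem} and~\ref{hypdeg_theorem}.
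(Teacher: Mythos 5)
Your proposal is correct and follows essentially the same route as the paper: both arguments hinge on the observation that a subspace $U\subseteq\mm$ contains no nonzero ideal of $A$ if and only if $(\Soc A)\cap U=0$ (proved identically, via the least $t$ with $\mm^{t}J=0$), combined with Lemma~\ref{hyp_invform_lem}(b), Theorem~\ref{hypdeg_theorem}, and a dimension count. The only point you leave implicit in the converse direction is that a complementary hyperplane $U$ actually generates $A$ (needed for $(A,U)$ to be an H-pair); this follows from $\mm=U\oplus\mm^{d}$ and $\mm\cdot\mm^{d}=0$, and the paper notes it in one line.
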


\begin{proof}
Let us observe first that a subspace $U$ in the maximal ideal $\mm$ of a local algebra $A$ contains no nonzero ideal of $A$ if and only if $(\Soc A)\cap U=0$. The `only if' part is clear since $(\Soc A)\cap U$ is an ideal of~$A$ in~$U$. 
Conversely, if $J$ is a nonzero ideal of $A$ in $U$, then for a maximal $s$ such that $J\mm^s$ is nonzero we have $J\mm^s\subseteq(\Soc A)\cap J \subseteq (\Soc A)\cap U$. 

This shows that if $(A,U)$ is the H-pair corresponding to an induced additive action on a non-degenerate hypersurface in $\PP^{n+1}$, then $\mm=U \oplus (\Soc A)$ and $A$ is Gorenstein by dimension reasons. 

Conversely, let $A$ be a Gorenstein local algebra of dimension $n+2$ and $U$ be a complementary hyperplane in $\mm$. Since $A=\KK\oplus U\oplus(\Soc A)$, we conclude that $U$ generates~$A$, so $(A,U)$ is an H-pair corresponding to an induced additive action on a hypersurface~$X$ in~$\PP^{n+1}$. Since $(\Soc A) \cap U = 0$, it follows that $U$ contains no nonzero ideal, so the hypersurface $X$ is non-degenerate. 

Finally, we notice that by Theorem~\ref{hypdeg_theorem} the degree of $X$ is $d$, where $\Soc A = \mm^d$. 
\end{proof}

\begin{remark}
We do not have an example where two different complementary hyperplanes $U$ and $U'$ in the maximal ideal~$\mm$ of the same Gorenstein local algebra~$A$ give rise to additive actions on non-isomorphic hypersurfaces. 
\end{remark}

As an application of Theorem~\ref{hypGor_prop}, we see from Table~1 that for $n\le 5$ there are induced additive actions on non-degenerate hypersurfaces in $\PP^n$ of all degrees from $2$ to $n$. Moreover, in $\PP^5$ there are three types of non-degenerate cubic hypersurfaces that come from different Gorenstein algebras. 

\smallskip

Now let us prove a generalization of the uniqueness of an additive action on non-degenerate quadrics and cubics.

\begin{theorem} \label{tnew}
Let $X \subseteq \PP^{n+1}$ be a non-degenerate hypersurface. Then there is at most one induced additive action on~$X$ up to equivalence.
\end{theorem}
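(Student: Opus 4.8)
The plan is to deduce the statement from Theorem~\ref{hypGor_prop}, which sets up a bijection between induced additive actions on non-degenerate hypersurfaces of degree~$d$ in $\PP^{n+1}$ and H-pairs $(A,U)$ in which $A$ is a Gorenstein local algebra of dimension $n+2$ with socle $\mm^d$ and $U\subseteq\mm$ is a complementary hyperplane, the correspondence being taken up to equivalence of H-pairs (Definition~\ref{alg_equivdef}). Hence the theorem amounts to showing that any two H-pairs $(A,U)$ and $(A',U')$ whose associated hypersurfaces are projectively equivalent are themselves equivalent. Passing to defining equations, a non-degenerate hypersurface $X$ determines its symmetric $d$-linear form $F$ (see Subsection~\ref{hyp_mult_subsec}) up to a nonzero scalar and a linear change of coordinates on $A$, so the whole problem reduces to reconstructing the pair $(A,U)$ from $F$ up to this ambiguity.

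First I would extract the intrinsic linear-algebra data of $A$ from $F$. By Lemma~\ref{hyp_invform_lem} the kernel $\Ker F$ is the largest ideal of $A$ contained in $U$, so the non-degeneracy hypothesis $\Ker F=0$ (Definition~\ref{hyp_nondeg_def}) says that $U$ contains no nonzero ideal; together with $\mm=U\oplus\Soc A$ this is exactly the Gorenstein condition. Using the explicit shape of $F$ in~\eqref{hyp_Fpi_eq} — testing $F$ against the unit reads off the projection $\pi\colon\mm\to\mm/U$, while $F$ evaluated on $\mm$-arguments computes $\pi$ of the $d$-fold product — I would recover the flag $\langle 1\rangle\subseteq\langle 1\rangle\oplus U=1^{\perp}\subseteq A$ together with the socle line $\Soc A=\mm^d$. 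The point is that $1^{\perp}$ is intrinsic to $F$, and the socle is singled out as the set of vectors $s$ for which $F(s,\cdot,\dots,\cdot)$ vanishes as soon as a second argument lies in $\mm$; this pins down $\mm$, $U$ and the unit direction up to the symmetries allowed by equivalence of H-pairs.

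Next I would reconstruct the multiplication. Because $A$ is Gorenstein with one-dimensional socle, the bilinear form $(a,b)\mapsto\pi(ab)$ recording the socle component of a product is a non-degenerate invariant pairing, so $A$ is a Frobenius algebra. The form $F$ encodes the $d$-fold traces $\pi(z^{(1)}\cdots z^{(d)})$ on $\mm$, and associativity together with the invariance identity~\eqref{hyp_invform_eq} expresses these traces through binary products; I would invert this relation, using the non-degeneracy of the Frobenius pairing, to show that the binary multiplication on $A$ is the unique commutative associative product compatible with $F$ and with the reconstructed unit. Equivalently, one may phrase the reconstruction through Macaulay duality: the Gorenstein algebra $A$ is the apolar algebra $\KK[S_1,\dots,S_n]/\operatorname{Ann}(f_0)$ of a dual socle generator $f_0$, and $f_0$ is read off from the equation of $X$ as the part transverse to the unit and socle coordinates, so that the projective class of $X$ fixes $f_0$ up to $\GL$-equivalence and scaling, hence fixes $(A,U)$ up to equivalence.

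The main obstacle I anticipate is precisely this last reconstruction of the \emph{binary} multiplication of $A$ from the single $d$-linear form $F$: the form directly encodes only the top, $\mm^d$-valued $d$-fold products, so recovering products of fewer elements — and, in particular, showing that the (possibly non-graded) algebra structure is rigid — requires carefully using the Frobenius non-degeneracy and~\eqref{hyp_invform_eq} to ``divide out'' one factor at a time. Establishing that this procedure is well defined and independent of the residual coordinate freedom, namely the action of $\exp U$ and the scaling of $F$, is where the real work lies; once it is carried out, the resulting algebra isomorphism carrying $U$ to $U'$ furnishes the desired equivalence of H-pairs, and with it the uniqueness of the induced additive action on $X$.
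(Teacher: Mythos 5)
Your proposal follows essentially the same route as the paper: reduce to showing that the invariant $d$-linear form $F$ determines the H-pair $(A,U)$, and recover the multiplication via the non-degenerate Frobenius-type pairing $B(u_1,u_2)=\pi(u_1u_2)$ on $U$. The ``real work'' you flag is carried out in the paper exactly as you suggest: $B$ is read off from $F(\,\cdot\,,\,\cdot\,,1,\dots,1)$ by~\eqref{hyp_Fpi_eq}, its non-degeneracy follows since $\Ker B$ is an ideal contained in $U$ hence in $\Ker F=0$, and the structure constants of the products $S_iS_j$ are then recovered by pairing against a basis of $U$ via $B$.
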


\begin{proof}
Let $(A, U)$ be the H-pair corresponding to an induced additive action on $X$. The hypersurface $X$ defines the corresponding invariant multilinear form~$F$, see subsection~\ref{hyp_mult_subsec}. We have to prove that $F$ uniquely defines the pair $(A, U)$ up to equivalence, see Theorem~\ref{hypHaTsch_theorem}. 

Denote by $d$ the degree of the hypersurface~$X$. Since $X$ is non-degenerate, by Theorem~\ref{hypGor_prop} the algebra~$A$ is Gorenstein, $\mm^{d+1}=0$, $\mm = U \oplus \mm^d$, and $\dim \mm^d = 1$. Since $1 \cdot a = a$ for any $a \in A$ and $\mm^{d}\cdot\mm = 0$, it remains to define the multiplication of elements in~$U$. Let $\pi\colon \mm \to \mm^d \cong \KK$ be the canonical projection along~$U$, and $B\colon U \times U \to \mm^d \cong \KK$ be a bilinear map defined by the formula $B(u_1, u_2) = \pi(u_1u_2)$. 

Let us prove that $\Ker B$ is an ideal in the algebra $A$. If $u \in \Ker B$, then $B(u, u_2) = \pi(uu_2) = 0$ for any $u_2 \in U$, i.e. $uu_2 \in U$ for any $u_2 \in U$. Moreover, $uU \subseteq U$ implies $uA \subseteq U$ since $u \cdot 1 \in U$ and $u \cdot \mm^d = 0$. Then for any elements $a \in A$, $u_2 \in U$ we have $uau_2 \subseteq uA \subseteq U$, so $B(ua, u_2) = \pi(uau_2) = 0$. Thus, $ua \in \Ker B$ for any $a \in A$, so $\Ker B$ is an ideal of $A$. By Lemma~\ref{hyp_invform_lem}(b), $\Ker F$ is the maximal ideal of $A$ contained in $U$, so $\Ker B \subseteq \Ker F$. But $F$ is non-degenerate, so $B$ is non-degenerate as well. 

Denote by $S$ a nonzero vector in $\mm^d$, and let $S_1, \ldots, S_n$ be a basis of $U$ such that $B(z_U, z_U) = (z_1^2 + \ldots + z_n^2)S$ for $z_U = z_1S_1 + \ldots z_nS_n$. By definition of $B$, it follows that $S_i^2 - S \in U$ and $S_iS_j \in U$ for $i \ne j$. 
Denote $S_i^2 - S = \sum_l \alpha_{il}S_l$ and $S_iS_j = \sum_l \beta_{ijl}S_l$. According to~\eqref{hyp_Fpi_eq}, $F(z^{(1)}, \ldots, z^{(d)}) = (-1)^k k! (d-k-1)! \pi(z^{(1)}\ldots z^{(d)})$, where all arguments are either $1$ or nilpotent and $k$ is the number of~$1$. In particular, $F(z^{(1)}, z^{(2)}, 1, \ldots, 1) = (-1)^{d-2} (d-2)! \pi(z^{(1)}z^{(2)})$ for all $z^{(1)}, z^{(2)} \in \mm$, so the bilinear form $B$ is uniquely defined by the multilinear form~$F$. Notice that $B(S_i^2-S, S_l) = \alpha_{il}S$ and $B(S_iS_j, S_l) = \beta_{ijl}S$, whence the coefficients $\alpha_{il}$ and $\beta_{ijl}$, and consequently the products $S_i^2$ and $S_iS_j$, $i \ne j$, are uniquely defined by the form~$F$. Thus, $F$ defines the multiplication on~$U$. 
\end{proof}

\begin{example} \label{hyp_no30_2_ex}
Let $A = \KK[S_1,S_2,S_3]/(S_1^2,S_2^2,S_1S_3,S_2S_3,S_1S_2-S_3^3)$ be the 6-dimensional Gorenstein algebra no.~30 from Table~\ref{table_localg6} and $U = \langle S_1,S_2,S_3,S_3^2\rangle \subseteq \mm$, see Example~\ref{hyp_no30_ex}. Recall that $A = \langle 1,S_1,S_2,S_3,S_3^2,S_3^3=S_1S_2\rangle$ and the H-pair $(A,U)$ corresponds to an induced additive action on a cubic hypersurface $X \subseteq \PP^5$. Since $\Soc A=\mm^3$ and $\mm=U\oplus\mm^3$, we conclude that $X$ is non-degenerate. 
By Theorem~\ref{tnew}, there is a unique induced additive action on~$X$. One can write it down explicitly by Theorem~\ref{hypHaTsch_theorem}: identifying ${[z_0:z_1:z_2:z_3:z_4:z_5] \in \PP^5}$ with $z_0+z_1S_1+z_2S_2+z_3S_3+z_4S_3^2+z_5S_3^3 \in A$ and multiplying by 
$$
{\exp(\alpha_1S_1+\alpha_2S_2+\alpha_3S_3+\alpha_4S_3^2)} = 1 + \alpha_1S_1+\alpha_2S_2+\alpha_3S_3+\alpha_4S_3^2 + (\alpha_1\alpha_2 + \alpha_3\alpha_4 + \tfrac{\alpha_3^3}{6})S_3^3,
$$ 
we obtain that the action of $(\alpha_1,\alpha_2,\alpha_3,\alpha_4)\in\GG_a^4$ is given by
\begin{multline*}
[z_0:z_1+\alpha_1z_0 : z_2+\alpha_2z_0 : z_3+\alpha_3z_0: z_4+\alpha_3z_3+\alpha_4z_0 : \\ : z_5+\alpha_3z_4+\alpha_4z_3+\alpha_1z_2+\alpha_2z_1 + (\alpha_1\alpha_2 + \alpha_3\alpha_4 + \tfrac{\alpha_3^3}{6})z_0].
\end{multline*}
Finally, the linear change of coordinates 
$$
Z_0=-z_0, \ W_0=z_5, \ Z_1=z_3, \ W_1=z_4, \ Z_2=\frac{\imath(z_2-z_1)}{2}, \ Z_3=\frac{z_2+z_1}{2} 
$$
brings the equation of $X$ into the form of Theorem~\ref{tbazhov} with $k=1$, $s=3$ and $n=4$.  
\end{example}

%
\section{Additive actions on flag varieties}
\label{secaafv}
The main aim of this section is to describe additive actions on flag varieties $G/P$ of a semisimple linear algebraic group $G$. 

We begin with a brief overview of geometric properties of varieties $X$ admitting an additive action: such a variety is rational and it has a free finitely generated divisor class group that is generated by classes of boundary divisors. If the variety $X$ is complete then classes of boundary divisors generate freely the monoid of classes of effective divisors. Moreover, for $X$ smooth the anti-canonical class $-K_X$ is an integer linear combination of classes of boundary divisors with coefficients at least 2.  We also observe that if a commutative subgroup in the automorphism group of a variety $X$ acts on $X$ with an open orbit, then it is a maximal commutative subgroup in $\Aut(X)$. 

In subsection~\ref{sub32} we classify flag varieties $G/P$ which admit an additive action. It turns out that in this case the parabolic subgroup $P$ is maximal and the existence of an additive action on $G/P$ is almost equivalent to commutativity of the unipotent radical $P_u$ with few explicit exceptions. 

In subsection~\ref{sub33} we discuss the uniqueness result: if a flag variety $G/P$ is not isomorphic to a projective space, then $G/P$ admits at most one additive action. 

Finally, in subsection~\ref{sub34} we present a construction due to Feigin that allows to degenerate an arbitrary flag variety $G/P$ to a projective variety with an additive action. 

\subsection{Generalities on additive actions on complete varieties} 
In this subsection we briefly recall basic geometric properties of varieties admitting an additive action. 

Clearly, any variety $X$ with an additive action $\GG_a^n\times X\to X$ contains an open $\GG_a^n$-orbit that is isomorphic to an affine space. This implies that $X$ is a rational variety. 

It is well-known that the complement $X\setminus\Uf$ of an affine open subset $\Uf$ on an irreducible variety $X$ is a union $D_1\cup\ldots\cup D_k$ of prime divisors. If $X$ is a normal variety with an additive action and $\Uf$ is the open orbit, we call $[D_1],\ldots,[D_k]$ the \emph{boundary classes} in $\Cl(X)$. 

\begin{proposition} \label{adcl}
Let $X$ be a normal variety and $\Uf\subseteq X$ be an open subset isomorphic to an affine space. Then any invertible regular function on $X$ is constant and the divisor class group $\Cl(X)$ is a free finitely generated abelian group. Moreover, boundary classes form a basis in $\Cl(X)$. 
\end{proposition}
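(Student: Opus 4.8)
The plan is to derive all three statements from the excision exact sequence for divisor class groups together with two elementary facts about affine space, namely that every nowhere-vanishing regular function on $\AA^N$ is a nonzero constant, i.e.\ $\Of(\AA^N)^\times=\KK^\times$, and that $\Cl(\AA^N)=0$. Here $X$ is irreducible, so $\Uf$ is open and dense, and $N=\dim X$ gives $\Uf\cong\AA^N$.

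For the first assertion, if $f$ is an invertible regular function on $X$ then $f$ and $f^{-1}$ both restrict to regular functions on $\Uf\cong\AA^N$, so $f|_{\Uf}$ lies in $\Of(\AA^N)^\times=\KK^\times$ and equals a nonzero constant $c$. Since $\Uf$ is dense and $X$ is separated, $f\equiv c$ on $X$.

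For the remaining assertions, write $Z=X\setminus\Uf$ and let $D_1,\ldots,D_k$ be the codimension-one irreducible components of $Z$, the rest $Z'$ of $Z$ having codimension at least $2$. The standard excision sequence for the class group of a normal variety (removing the closed subset $Z'$ of codimension at least $2$ does not change the class group, while removing each prime divisor $D_i$ introduces its class) yields the exact sequence
\[
\bigoplus_{i=1}^{k}\ZZ\,[D_i]\;\longrightarrow\;\Cl(X)\;\longrightarrow\;\Cl(\Uf)\;\longrightarrow\;0,
\]
in which the left map sends the $i$-th generator to the boundary class $[D_i]$. Because $\Cl(\Uf)=\Cl(\AA^N)=0$, the boundary classes generate $\Cl(X)$; in particular $\Cl(X)$ is finitely generated.

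It remains to prove that these generators are free, i.e.\ that the left map above is injective; this is the only step requiring care, and it is precisely where the first assertion is used. An element of the kernel is a relation $\sum_i a_i D_i=\ddiv(f)$ for some $f\in\KK(X)^\times$. Then $f$ has neither zeros nor poles on $X\setminus\bigcup_i D_i\supseteq\Uf$, so $f|_{\Uf}$ is an invertible regular function on $\AA^N$ and hence a nonzero constant by the first assertion; by density $f$ is constant on $X$, whence $\ddiv(f)=0$ and all $a_i=0$. Thus the left map is an isomorphism and $[D_1],\ldots,[D_k]$ form a $\ZZ$-basis of $\Cl(X)$, which is therefore free of finite rank.
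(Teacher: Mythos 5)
Your proof is correct and follows essentially the same route as the paper's: the first assertion is proved by restricting to $\Uf$, and the basis claim is obtained by combining $\Cl(\AA^N)=0$ (to see that boundary classes generate) with the constancy of invertible functions on $\AA^N$ (to see that there are no relations). Phrasing the generation step through the excision exact sequence is just a more formal packaging of the paper's direct argument, so there is nothing substantive to distinguish the two.
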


\begin{proof}
If $f$ is an invertible regular function on $X$, its restriction to $\Uf$ is invertible and regular as well. It follows that $f$ is constant on $\Uf$ and so on $X$. 

Since all divisors on $\Uf$ are principal, any divisor on $X$ is linearly equivalent to an integral linear combination of prime divisors in the complement $X\setminus\Uf$. If such a linear combination gives zero in $\Cl(X)$, then it is a principal divisor corresponding to some rational function $f$ on $X$. The function $f$ has neither zero nor pole on $\Uf$, so it is an invertible regular function on the affine space. Such a function is constant, so the combination is trivial. 
\end{proof}

\smallskip

As the next step, let us formulate a result proved in \cite[Theorems~2.5, 2.7]{HaTs1999}. 

\begin{theorem} \label{geHT}
Let $X$ be a complete normal variety with an additive action. Then the monoid of classes of effective divisors is generated freely by the boundary classes. Moreover, if $X$ is smooth then the anti-canonical class $-K_X$ is an integer linear combination of boundary classes, where all coefficients are $\ge 2$. 
\end{theorem}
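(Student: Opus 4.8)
The plan is to exploit the open orbit $\Uf\cong\CC^n$ and the unipotency of $\GG_a^n$ throughout. For the first assertion, recall that by Proposition~\ref{adcl} the boundary classes $[D_1],\ldots,[D_k]$ form a $\ZZ$-basis of $\Cl(X)$, so the submonoid they generate is automatically free; it remains only to show that every effective class lies in it. Given an effective divisor $D$, I would first observe that the line bundle $\Of_X(D)$ admits a $\GG_a^n$-linearization, since $\GG_a^n$ has no nontrivial characters and trivial Picard group. Then $H^0(X,\Of_X(D))$ is a nonzero finite-dimensional rational $\GG_a^n$-module (nonzero because $D\ge 0$ furnishes the tautological section), and since $\GG_a^n$ is unipotent it carries a nonzero invariant section $s$. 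As $\GG_a^n$ has no characters, the divisor $E:=D+\ddiv(s)\ge 0$ is not merely linearly equivalent to $D$ but genuinely $\GG_a^n$-invariant; its support is a proper closed invariant subset and hence avoids the open orbit, so $E=\sum_i a_iD_i$ with all $a_i\ge 0$. This yields $[D]=\sum_i a_i[D_i]$ in the required monoid.

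For the second assertion I would produce an explicit anticanonical section directly from the action. Fixing a basis $\xi_1,\ldots,\xi_n$ of $\gg=\Lie\GG_a^n$, the fundamental vector fields $V_{\xi_1},\ldots,V_{\xi_n}$ are global regular vector fields on $X$ that form a frame on the open orbit, where the action is simply transitive. Hence $W:=V_{\xi_1}\wedge\cdots\wedge V_{\xi_n}$ is a global section of $\det TX=\Of_X(-K_X)$, nonvanishing on $\Uf$, so $\ddiv(W)=\sum_i b_iD_i$ is effective and represents $-K_X$; the whole content is that each $b_i\ge 2$. I would compute $b_i=v_{D_i}(W)$ at the generic point $\eta$ of $D_i$, which is a smooth point of $X$ since $X$ is smooth, using local coordinates $u_1,\ldots,u_n$ with $D_i=\{u_1=0\}$. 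Writing $V_{\xi_j}=\sum_l a_{jl}\,\partial_{u_l}$, invariance of $D_i$ forces the normal components $a_{j1}$ to lie in $(u_1)$ for all $j$, so the first column of $(a_{jl})$ is divisible by $u_1$ and $\det(a_{jl})$, which equals $W$ up to a local unit, already vanishes to order $\ge 1$.

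The main obstacle is upgrading this to order $\ge 2$, and here a dimension count and unipotency enter. Since $\dim D_i=n-1<n=\dim\GG_a^n$, the stabilizer Lie subalgebra $\mathfrak{s}\subseteq\gg$ of $\eta$ has $\dim\mathfrak{s}\ge 1$. If $\dim\mathfrak{s}\ge 2$, I can arrange two basis vectors $\xi_1,\xi_2\in\mathfrak{s}$, whence two entire rows of $(a_{jl})$ lie in $(u_1)$ and $\det(a_{jl})\in(u_1^2)$ immediately. If $\dim\mathfrak{s}=1$, choose $\xi_1\in\mathfrak{s}$; then $V_{\xi_1}$ vanishes along all of $D_i$, so its whole row lies in $(u_1)$, and combined with the first column this leaves only the single entry $a_{11}$ to control. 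The key observation is that $\exp(t\xi_1)$ fixes $D_i$ pointwise and therefore acts on the one-dimensional normal space $T_\eta X/T_\eta D_i$; being a $\GG_a$-action on a line, it must be trivial, which forces the normal derivative of $a_{11}$ to vanish along $D_i$, i.e.\ $a_{11}\in(u_1^2)$. A cofactor expansion along the first row and column then shows $\det(a_{jl})\in(u_1^2)$, giving $b_i\ge 2$ in the remaining case. I expect this triviality of the normal action to be the delicate point, genuinely using smoothness and careful local bookkeeping.
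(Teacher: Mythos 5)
Your proof of the first statement is exactly the paper's: linearize $\Of_X(D)$ (possible and canonical because $\GG_a^n$ has neither nontrivial characters nor nontrivial Picard group), take a $\GG_a^n$-fixed point in $\PP(H^0(X,\Of_X(D)))$, and observe that the resulting invariant effective divisor linearly equivalent to $D$ must be supported on the boundary. For the second statement the survey only cites Hassett--Tschinkel and says the proof ``requires computations with vector fields and related exact sequences'', so your local computation with the section $V_{\xi_1}\wedge\dots\wedge V_{\xi_n}$ of $\Of_X(-K_X)$ is a legitimate independent argument in the same spirit, and its skeleton --- first column of $(a_{jl})$ in $(u_1)$ by invariance of $D_i$, a stabilizer of dimension $\ge 1$ at a general point of $D_i$, and triviality of the induced $\GG_a$-representation on the one-dimensional normal space --- is sound.

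There is, however, one step that is false as stated. In the case $\dim\mathfrak{s}\ge 2$ you claim that two entire rows of $(a_{jl})$ lie in $(u_1)$. That would require two fixed vectors $\xi_1,\xi_2$ lying in the stabilizer subalgebra of \emph{every} general point of $D_i$, i.e.\ the stabilizers to be essentially constant along $D_i$; but precisely when the general orbit in $D_i$ has dimension $\le n-2$ there is no dense orbit and the stabilizers can vary. Concretely, for $X=\PP^3$ with the additive action attached to $A=\KK[S_1,S_2]/(S_1^2,S_2^2)$, the stabilizer of a general boundary point $[z]$, $z\in\mm$, is $\exp\{u\in\mm \mid uz=0\}$, which is $2$-dimensional but moves with $[z]$, and the intersection of all of them is only the $1$-dimensional socle $\langle S_1S_2\rangle$; so only one row vanishes identically along the boundary. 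The conclusion survives, but the computation must be run pointwise: at each general $p\in D_i$ choose a basis of $\gg$ with $\xi_1,\xi_2\in\mathfrak{s}_p$ (this multiplies $\det(a_{jl})$ by a nonzero constant), and let $a'$ be the matrix obtained from $(a_{jl})$ by dividing the first column by $u_1$; then rows $1$ and $2$ of $a'(p)$ are both supported in the first column, so $\det(a')(p)=0$, and since $p$ was arbitrary general, $\det(a')$ vanishes along $D_i$ and $\det(a_{jl})\in(u_1^2)$. Relatedly, in the case $\dim\mathfrak{s}=1$ your assertion that $V_{\xi_1}$ vanishes along all of $D_i$ is true, but only because in that case the general orbit in $D_i$ is $(n-1)$-dimensional, hence dense, and stabilizers of a commutative group are constant along an orbit; this should be said explicitly, since it is exactly the point that fails in the other case. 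With these repairs the argument is complete.
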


To obtain the first statement of Theorem~\ref{geHT} one should use a linearization of an arbitrary divisor with respect to an action of a unipotent group. Namely, consider an effective divisor~$D$ on~$X$. The representation of $\GG_a^n$ on the projectivization of the space $H^0(X,\Of(D))$ has a fixed point that corresponds to an effective divisor supported at the boundary which is linearly equivalent to~$D$. The proof of the second statement is more delicate, it requires computations with vector fields and related exact sequences. 

\begin{remark}
If a complete normal variety $X$ admits an additive action, such a variety need not be projective. Examples of additive actions on smooth non-projective complete toric varieties~$X$
in any dimension starting from~$3$ are constructed in~\cite{Sha2020}.
\end{remark}

Let us proceed with one more observation. We say that a subgroup $H$ in the automorphism group $\Aut(X)$ of an algebraic variety $X$ is \emph{algebraic} if there is a structure of an algebraic group on $H$ such that the action $H\times X\to X$ is regular. Note that if the group $\Aut(X)$ itself has a structure of an algebraic group such that the action $\Aut(X)\times X\to X$ is regular, the notion of an algebraic subgroup coincides with the notion of an algebraic subgroup in an algebraic group.

\begin{proposition} \label{popor}
Assume that a commutative algebraic group $H$ acts effectively on an irreducible variety with an open orbit. Then the subgroup $H$ is a maximal (with respect to inclusion) commutative algebraic subgroup in the group $\Aut(X)$. 
\end{proposition}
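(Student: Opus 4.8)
The plan is to show that any commutative algebraic subgroup $\widetilde H\subseteq\Aut(X)$ containing $H$ must in fact coincide with $H$, and the engine driving this is Lemma~\ref{openorb_lem}. First I would fix a point $x_0$ lying in the open $H$-orbit, so that $Hx_0$ is open, hence dense, in the irreducible variety $X$. Since $H\subseteq\widetilde H$, the $\widetilde H$-orbit $\widetilde Hx_0$ contains $Hx_0$; being an orbit it is locally closed, and since it contains a dense open subset it must itself be open in $X$. Thus $\widetilde H$ also acts with an open orbit, and as a subgroup of $\Aut(X)$ it acts effectively and is commutative by hypothesis, so Lemma~\ref{openorb_lem} applies to it as well.

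Next I would apply Lemma~\ref{openorb_lem} to both actions. It yields that $H$ and $\widetilde H$ are connected, and, as in its proof, that the stabilizer of $x_0$ is trivial in each group, so the orbit maps $H\to Hx_0$ and $\widetilde H\to\widetilde Hx_0$, $g\mapsto gx_0$, are equivariant open embeddings, i.e. isomorphisms onto the respective open orbits. The key observation is that the $H$-orbit map is simply the restriction to $H$ of the $\widetilde H$-orbit map. Hence, under the identification $\widetilde H\cong\widetilde Hx_0$ furnished by the latter isomorphism, the subset $H\subseteq\widetilde H$ corresponds precisely to $Hx_0\subseteq\widetilde Hx_0$.

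From here the conclusion is immediate: $Hx_0$ is open in $X$, therefore open in the subspace $\widetilde Hx_0$, so $H$ is an \emph{open} subgroup of $\widetilde H$. An open subgroup of a topological group is also closed, being the complement of the union of its remaining cosets; since $\widetilde H$ is connected and $H$ is a nonempty clopen subset, I conclude $H=\widetilde H$. This establishes the asserted maximality.

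Finally I would flag where the delicacy lies. The one point requiring care is that $H$ and $\widetilde H$ a priori carry independent algebraic-group structures (each making its own action on $X$ regular), and it is not obvious a priori that $H$ sits inside $\widetilde H$ as a closed algebraic subgroup in a compatible way. The argument sidesteps this by transporting everything into $X$ through the two orbit maps: because both maps send $g\mapsto gx_0$ and agree on $H$, the comparison of the two group structures is reduced to the comparison of the two open orbits inside $X$, which is transparent. I expect this compatibility issue to be the main, and essentially the only, obstacle, and the orbit-map identification is exactly what dissolves it.
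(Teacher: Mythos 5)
Your proof is correct, but it takes a genuinely different route from the paper's. The paper argues pointwise: for a commutative subgroup $F\supseteq H$ of $\Aut(X)$ and $g\in F\setminus H$, commutativity forces $g$ to permute $H$-orbits, hence to preserve the unique open orbit $\Uf$; writing $gx=hx$ for some $h\in H$ and $x\in\Uf$, the element $h^{-1}g$ fixes $x$, hence (again by commutativity) fixes all of $\Uf$, hence all of $X$, so $g=h\in H$. That argument never invokes any algebraic-group structure on $F$, so it actually proves maximality among \emph{all} commutative subgroups of $\Aut(X)$, and it completely sidesteps the compatibility issue you rightly worry about. Your route instead runs through Lemma~\ref{openorb_lem}: you show $\widetilde H x_0$ is open (locally closed and containing the dense $Hx_0$), identify $H$ with $\mu^{-1}(Hx_0)$ via the injective orbit map $\mu\colon\widetilde H\to X$, conclude $H$ is an open, hence closed, subgroup of the connected group $\widetilde H$, and finish. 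This is valid --- the only inputs are local closedness of orbits, triviality of the stabilizer of $x_0$ in $\widetilde H$ (which follows as in the lemma's proof from commutativity and effectivity), and the standard fact that a nonempty open subgroup of a connected group is everything --- and it does resolve the structure-compatibility issue cleanly by working entirely inside $X$. The trade-off is that your argument needs $\widetilde H$ to be an algebraic subgroup (to speak of its Zariski topology, connectedness, and locally closed orbits), whereas the paper's elementary computation with group elements buys a slightly stronger statement for free.
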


\begin{proof}
Assume that $H$ is contained in a bigger commutative algebraic subgroup $F$ and let $g\in F\setminus H$. Then the element $g$ permutes $H$-orbits on $X$ and, in particular, it preserves the open $H$-orbit $\Uf$ on $X$. Take a point $x\in \Uf$. Then there is an element $h\in H$ such that $gx=hx$. This shows that $h^{-1}g$ fixes $x$. So $h^{-1}g$ acts identically on $\Uf$ and on $X$, a contradiction. 
\end{proof} 

This result shows that each additive action $\GG_a^n\times X\to X$ provides a maximal commutative unipotent subgroup in the automorphism group $\Aut(X)$. In particular, the Hassett-Tschinkel correspondence allows to construct many non-conjugate maximal commutative unipotent subgroups of dimension $n$ in the group $\GL_{n+1}(\KK)$. At the same time, there are maximal commutative unipotent subgroups in $\GL_{n+1}(\KK)$ of other dimensions. 

In the next subsections we study additive actions on (generalized) flag varieties, i.e. on homogeneous spaces $G/P$ of a connected semisimple group $G$ modulo a parabolic subgroup~$P$. The following proposition provides an additional motivation to concentrate on varieties of this type.

It is well known that the connected component $\Aut(X)^0$ of the automorphism group of a complete variety is a connected linear algebraic group. In view of importance of the problem of the existence of a K\"ahler-Einstein metric, the cases when the group $\Aut(X)^0$ is reductive are of particular interest. 

\begin{proposition} {\cite[Proposition~1]{AP2014}}
Let $X$ be a complete variety admitting an additive action. Assume that the group $\Aut(X)^0$ is a reductive linear algebraic group. Then $X$ is a flag variety $G/P$ for some semisimple group $G$ and some parabolic subgroup $P$. 
\end{proposition}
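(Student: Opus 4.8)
The plan is to recognize $G:=\Aut(X)^0$ as the natural group to work with, to show it is semisimple, and then to prove that it acts transitively on $X$, so that $X=G/P$ with $P$ parabolic. Since $X$ is complete, $G=\Aut(X)^0$ is a connected reductive linear algebraic group acting regularly on $X$. The group $\GG_a^n$ realizing the additive action is connected, hence $\GG_a^n\subseteq\Aut(X)^0=G$, and its open orbit $\Omega_0\cong\AA^n$ is contained in the open $G$-orbit $\Omega=Gx_0=G/P$, where $P:=G_{x_0}$ for a point $x_0\in\Omega_0$. By Lemma~\ref{openorb_lem} we have $\dim\Omega_0=n=\dim X$, so $\Omega$ is dense and $X$ is an almost homogeneous $G$-variety.

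Next I would reduce to the semisimple case. Let $Z$ be the connected center of the reductive group $G$; it is a central torus, so it commutes with $\GG_a^n$, and $Z\cdot\GG_a^n$ is a commutative algebraic subgroup of $\Aut(X)$ containing $\GG_a^n$. By Proposition~\ref{popor} the subgroup $\GG_a^n$ is a maximal commutative algebraic subgroup of $\Aut(X)$, whence $Z\cdot\GG_a^n=\GG_a^n$ and $Z\subseteq\GG_a^n$. A torus contained in a unipotent group is trivial, so $Z$ is trivial and $G$ is semisimple. This short step is, to my mind, the cleanest part of the argument.

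The core is transitivity. I would pick a Borel subgroup $B=TU\subseteq G$ with $\GG_a^n\subseteq U$, which is possible as $\GG_a^n$ is a connected unipotent subgroup. Then $B$ has a dense orbit in $X$ (its orbit of $x_0$ contains $\Omega_0$), so $X$ is a complete spherical $G$-variety. Every $G$-invariant prime divisor $E$ is $\GG_a^n$-stable, and $\Omega_0$ is a single $\GG_a^n$-orbit of full dimension; thus $E\cap\Omega_0\ne\emptyset$ would force $\Omega_0\subseteq E$, which is impossible by dimension, so $E\subseteq X\setminus\Omega_0$. The same reasoning shows that every color (a $B$-stable, non-$G$-stable prime divisor) lies in $X\setminus\Omega_0$. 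By Proposition~\ref{adcl} the set $X\setminus\Omega_0$ is a union of prime divisors $D_1,\dots,D_k$ whose classes freely generate $\Cl(X)$, so every $G$-invariant prime divisor and every color is among the $D_i$. Now I would invoke the spherical formula for the anticanonical class, which expresses $-K_X$ as an effective sum of the $B$-stable prime divisors in which each $G$-invariant divisor occurs with coefficient exactly $1$. Since these divisors form part of the basis $\{[D_i]\}$ of $\Cl(X)$, the coefficient of any $G$-invariant $D_i$ in $-K_X$ equals $1$; but Theorem~\ref{geHT} forces every such coefficient to be $\ge 2$. This contradiction rules out $G$-invariant prime divisors, and then the completeness criterion of the Luna–Vust theory of spherical embeddings forces $\Omega=G/P$ itself to be complete. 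Hence $P$ is parabolic and $X=G/P$ is a flag variety.

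The main obstacle is precisely this transitivity step, i.e.\ excluding $G$-invariant boundary divisors. The comparison above relies on the spherical anticanonical formula together with the sharp bound in Theorem~\ref{geHT}, whose coefficient estimate $\ge 2$ is stated for \emph{smooth} $X$; thus the genuinely delicate point is to guarantee that we are in the smooth situation, or else to bypass the anticanonical comparison entirely and run the argument through the Luna–Vust completeness criterion, which needs only normality. I would address this either by passing to a $G$-equivariant resolution that is an isomorphism over the open orbit and transferring the conclusion back, or by deriving the absence of $G$-invariant divisors directly from the simplicial structure of the effective cone in Theorem~\ref{geHT} combined with the fact that the boundary of a spherical embedding is divisorial.
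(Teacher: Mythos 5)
Your setup and your semisimplicity step are sound: placing $\GG_a^n$ inside $\Aut(X)^0$, using Proposition~\ref{popor} to kill the connected centre, and observing that a Borel subgroup containing $\GG_a^n$ has a dense orbit, so that $X$ is spherical under $G=\Aut(X)^0$ --- this is also where the paper's proof starts. But your route to transitivity has three concrete problems. First, the proposition assumes only that $X$ is complete, not normal; the paper's proof begins by passing to the normalization $X'$ and returns to $X$ only at the very end, whereas you apply Proposition~\ref{adcl}, Theorem~\ref{geHT}, the divisor class group and the Luna--Vust formalism to $X$ itself, none of which is available without normality. Second, as you yourself flag, the coefficient bound $\ge 2$ in Theorem~\ref{geHT} is stated only for smooth $X$, and neither of the repairs you sketch (equivariant resolution, or ``the simplicial structure of the effective cone'') is carried out; since the comparison with the spherical anticanonical formula, in which $G$-stable divisors occur with coefficient exactly $1$, is the load-bearing step of your argument, this is a genuine gap rather than a technicality. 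Third, the final implication ``no $G$-stable prime divisors $\Rightarrow$ the open orbit is complete'' is not an immediate consequence of the Luna--Vust completeness criterion: a colored cone may be generated entirely by images of colors, so a non-homogeneous complete embedding without $G$-stable divisors is not excluded by the criterion as you invoke it, and this step would need its own argument.

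All of this machinery can be bypassed, and the paper does so. Since $\GG_a^n$ lies in a maximal unipotent subgroup $U$ of the reductive group $\Aut(X)^0$, the group $U$ already acts on the normalization $X'$ with an open orbit; hence every $B$-semiinvariant rational function is $U$-invariant, thus constant on the open $U$-orbit and therefore constant, so $X'$ is a spherical variety of rank zero. Rank-zero complete spherical varieties are exactly the flag varieties (\cite[Proposition~10.1]{Ti2011}), and since $\Aut(X)^0$ then acts transitively on $X'$, the normalization map is an isomorphism and $X=X'=G/P$. If you want to salvage your own approach, the missing ingredients are: normalize first; replace the anticanonical comparison by a rank argument; and justify why the absence of $G$-stable divisors forces the open orbit to be complete.
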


\begin{proof}
Let $X'$ be the normalization of $X$. The action of $\Aut(X)^0$ on $X$ can be lifted to $X'$. This implies that some commutative unipotent group acts on $X'$ with an open orbit. In particular, a maximal unipotent subgroup of the reductive group $\Aut(X)^0$ acts on $X'$ with an open orbit. It means that $X'$ is a spherical variety of rank zero, see~\cite[Section~1.5.1]{Ti2011} for details. It yields that $X'$ is a flag variety $G/P$, see~\cite[Proposition~10.1]{Ti2011}, and $\Aut(X)^0$ acts on $X'$ transitively. This implies $X=X'$. 
\end{proof}

\subsection{Existence of an additive action on a flag variety}
\label{sub32}
In this subsection we follow~\cite{Ar2011} and classify flag varieties that admit an additive action. 

Let $G$ be a connected semisimple linear algebraic group of adjoint type over an algebraically closed field of characteristic zero, and $P$ be a parabolic subgroup of $G$. The homogeneous space $G/P$ is called a \emph{(generalized) flag variety}. Recall that the variety $G/P$ is projective and the action of the unipotent radical $P_u^{-}$ of the opposite parabolic subgroup $P^-$ on $G/P$ by left multiplication has an open orbit. This open orbit $\Uf$ is called the big Schubert cell on $G/P$. Since $\Uf$ is isomorphic to the affine space $\AA^n$, where $n=\dim G/P$, every flag variety may be regarded as a completion of an affine space.

Our goal is to find all flag variety $G/P$ that are equivariant completions of $\GG_a^n$. Clearly, this is the case when the group $P_u^{-}$, or, equivalently, the group $P_u$ is commutative. 

It is a classical result that the connected component $\widetilde{G}$ of the automorphism group of the variety $G/P$ is a semisimple group of adjoint type, and $G/P=\widetilde{G}/Q$ for some parabolic subgroup $Q\subseteq \widetilde{G}$. In most cases the group $\widetilde{G}$ coincides with $G$, and all exceptions are well known, see, e.g., \cite[Theorem~7.1]{On1962} or \cite[page~118]{Ti1962}. If $\widetilde{G} \ne G$, we say that $(\widetilde{G}, Q)$ is the \emph{covering} pair of the \emph{exceptional} pair $(G,P)$. For a simple group $G$, the exceptional pairs are $(\PSp(2r), P_1)$, $(\SO(2r+1), P_r)$ and $(G_2, P_1)$ with the covering pairs $(\PSL(2r), P_1)$, $(\PSO(2r+2), P_{r+1})$ and $(\SO(7), P_1)$ respectively, where $PH$ denotes the quotient of the group $H$ by its center, and $P_i$ is the maximal parabolic subgroup associated with the $i$th simple root. It turns out that for a simple group $G$ the condition $\widetilde{G}\ne G$ implies that the unipotent radical $Q_u$ is commutative and $P_u$ is not. In particular, in this case $G/P$ is an equivariant completion of $\GG_a^n$. Our main result states that these are the only possible cases.

\begin{theorem} {\cite[Theorem~1]{Ar2011}} \label{gpex} 
Let $G$ be a connected semisimple group of adjoint type and $P$ be a parabolic subgroup of $G$. Then the flag variety $G/P$ is an equivariant completion of $\GG_a^n$ if and only if for every pair $(G^{(i)}, P^{(i)})$, where $G^{(i)}$ is a simple component of $G$ and $P^{(i)}= G^{(i)} \cap P$, one of the following conditions holds:
\begin{enumerate}
\item
the unipotent radical $P^{(i)}_u$ is commutative;
\item
the pair $(G^{(i)},P^{(i)})$ is exceptional.
\end{enumerate}
\end{theorem}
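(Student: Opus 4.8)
The plan is to convert the existence of an additive action into the existence of an abelian complement to the parabolic subalgebra, and then to read the answer off the grading attached to the parabolic. The \emph{if} direction is direct: if $P^{(i)}_u$ is commutative, then the opposite unipotent radical is a commutative unipotent group isomorphic to $\GG_a^{n_i}$ acting on $G^{(i)}/P^{(i)}$ with the big Schubert cell as open orbit; and if $(G^{(i)},P^{(i)})$ is exceptional, its covering pair $(\widetilde G^{(i)},Q^{(i)})$ has commutative unipotent radical $Q^{(i)}_u$ and realizes $G^{(i)}/P^{(i)}=\widetilde G^{(i)}/Q^{(i)}$ as an equivariant completion of $\GG_a^{n_i}$. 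Taking the product of these actions over all simple components yields an additive action on $G/P=\prod_i G^{(i)}/P^{(i)}$.

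For the converse I would first reduce to $G$ simple. Put $\widetilde G=\Aut(G/P)^0$, a connected semisimple group of adjoint type with $G/P=\widetilde G/Q$, and write $\widetilde G=\prod_j\widetilde G_j$ as a product of its simple factors, so that $G/P=\prod_j\widetilde G_j/Q_j$. By Proposition~\ref{popor} the group $\GG_a^n$ is a \emph{maximal} commutative unipotent subgroup of $\widetilde G$; since a maximal commutative subgroup of a direct product equals the product of its projections $U_j\subseteq\widetilde G_j$, we get $\GG_a^n=\prod_j U_j$, and the orbit of $\prod_j U_j$ through the base point is the product of the orbits $U_j\cdot o_j$, so openness on the product forces each $U_j$ to have an open orbit on $\widetilde G_j/Q_j$. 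Thus each indecomposable factor $\widetilde G_j/Q_j$ admits an additive action; by uniqueness of the decomposition of a flag variety into indecomposable factors this factor coincides with one of the $G^{(i)}/P^{(i)}$, on which $\widetilde G_j$ is the full connected automorphism group. Those factors with $\widetilde G_j\neq G^{(i)}$ are by definition the exceptional pairs (case (b)), so it suffices to show: if $G$ is simple, $\widetilde G=G$, and $G/P$ admits an additive action, then $P_u$ is commutative.

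So assume $G$ simple, $\widetilde G=G$, and fix an additive action $\GG_a^n\times G/P\to G/P$. Being connected, $\GG_a^n\subseteq\Aut(G/P)^0=G$, and the open orbit condition means that $\mathfrak{a}:=\Lie(\GG_a^n)$ is an abelian subalgebra of $\mathfrak{g}=\Lie(G)$ with $\mathfrak{g}=\mathfrak{a}\oplus\mathfrak{p}$, where $\mathfrak{p}=\Lie(P)$. Grade $\mathfrak{g}=\bigoplus_j\mathfrak{g}_j$ by the total height of roots with respect to the simple roots defining $P$, so that $\mathfrak{p}=\bigoplus_{j\ge0}\mathfrak{g}_j$ and $\mathfrak{p}_u=\bigoplus_{j>0}\mathfrak{g}_j$. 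Projecting $\mathfrak{g}$ onto $\bigoplus_{j<0}\mathfrak{g}_j$ along $\mathfrak{p}$ identifies $\mathfrak{a}$ with the graph of a linear map $\phi\colon\bigoplus_{j<0}\mathfrak{g}_j\to\bigoplus_{j\ge0}\mathfrak{g}_j$, that is $\mathfrak{a}=\{y+\phi(y)\}$. For $y,y'\in\mathfrak{g}_{-1}$ the element $[\,y+\phi(y),\,y'+\phi(y')\,]$ has degree $-2$ component equal to $[y,y']$, because $\phi$ strictly raises degree; since $\mathfrak{a}$ is abelian this yields $[\mathfrak{g}_{-1},\mathfrak{g}_{-1}]=0$.

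It remains to deduce commutativity of $\mathfrak{p}_u$ from $[\mathfrak{g}_{-1},\mathfrak{g}_{-1}]=0$. Writing each root as a sum of simple roots all of whose partial sums are roots, the total degree changes by $0$ or $\pm1$ at each step; hence a root of degree $\le-2$ produces a partial sum of degree exactly $-2$, and such a degree $-2$ root is a bracket of two degree $-1$ root vectors, so lies in $[\mathfrak{g}_{-1},\mathfrak{g}_{-1}]$. Therefore $[\mathfrak{g}_{-1},\mathfrak{g}_{-1}]=0$ forces $\mathfrak{g}_j=0$ for $j\le-2$, and by symmetry of the grading also for $j\ge2$; thus $\mathfrak{p}_u=\mathfrak{g}_1$ is abelian, and if $P$ were defined by two distinct nodes, connectedness of the Dynkin diagram of the simple group $G$ would supply a root of degree $\ge2$, so $P$ is in fact maximal. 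This gives case (a). I expect the main difficulty to lie not in this Lie-algebra computation but in the classical input surrounding the exceptional pairs: identifying $\Aut(G/P)^0$ (equivalently, deciding exactly when $\widetilde G\neq G$) and verifying that the three families $(\PSp(2r),P_1)$, $(\SO(2r+1),P_r)$, $(G_2,P_1)$ are precisely the cases in which a non-commutative $P_u$ still produces an additive action through the larger automorphism group.
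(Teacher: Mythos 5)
Your proof is correct and follows essentially the same strategy as the paper: pass to $\widetilde G=\Aut(G/P)^0$ acting on $\widetilde G/Q$, use the $\ZZ$-grading of the Lie algebra attached to $\Delta_Q$, and compare graded components of brackets of elements of the abelian complement $\fh$ to $\fq$. Two differences are worth noting. First, you reduce to simple factors via maximality of the commutative unipotent subgroup before running the Lie-algebra argument; the paper runs the grading argument directly on the semisimple Lie algebra of $\widetilde G$ and only afterwards splits $Q_u$ into its factors — both routes work. Second, and more substantively, your degree computation only extracts $[\fg_{-1},\fg_{-1}]=0$, and you then need the fact that $\fg_{-2}=[\fg_{-1},\fg_{-1}]$ (equivalently, that the nilradical of a parabolic is generated by its degree-$(-1)$ part) to conclude $\fg_{j}=0$ for $j\le -2$. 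Your partial-sum justification does show that a root of degree $\le -2$ forces the existence of a root of degree exactly $-2$, but it does not by itself show that such a root is a sum of two roots of degree $-1$; that statement is true but requires a separate (standard) argument or a citation. The paper sidesteps this entirely: assuming $\fq_u^-$ non-abelian, it picks any root space $\fg_\beta\subseteq[\fq_u^-,\fq_u^-]$, writes a nonzero $x\in\fg_\beta$ as $[z',z'']$ with $z',z''$ root vectors of strictly larger degree (automatic, since $\deg\beta=\deg\beta'+\deg\beta''$ with both summands negative), lifts $z'$ and $z''$ to elements of $\fh$, and reads off the contradiction from the degree-$\deg\beta$ component of their vanishing bracket. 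You should either cite the generation fact explicitly or adopt the paper's choice of bracket, which needs nothing beyond the grading.
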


For the convenience of the reader, we list all pairs $(G,P)$, where $G$ is a simple group (up to local isomorphism) and $P$ is a parabolic subgroup with a commutative unipotent radical:
$$
(\SL(r+1),\, P_i), \ i=1,\ldots,r; \quad (\SO(2r+1),\, P_1); \quad (\Sp(2r),\, P_r);
$$
$$
(\SO(2r),\, P_i), \ i=1,r-1,r; \quad (E_6,\, P_i), \ i=1,6; \quad (E_7,\, P_7),
$$
see~\cite[Section~2]{RRS1992}. Note that the unipotent radical of $P_i$ is commutative if and only if the simple root $\alpha_i$ occurs in the highest root $\rho$ with
coefficient 1, see \cite[Lemma~2.2]{RRS1992}. Another equivalent condition is that the fundamental weight $\omega_i$ of the dual group $G^\vee$ is minuscule, i.e., the weight system of the simple $G^\vee$-module $V(\omega_i)$ with the highest weight $\omega_i$ coincides with the orbit $W\omega_i$ of the Weyl group $W$.

\begin{proof}[Proof of Theorem~\ref{gpex}]
If the unipotent radical $P_u^-$ is commutative, then the action of $P_u^-$ on $G/P$ by left multiplication is the desired additive action. The same arguments work when for the connected component $\widetilde{G}$ of the automorphism group $\Aut(G/P)$ one has $G/P=\widetilde{G}/Q$ and the unipotent radical $Q_u^-$ is commutative. Since
$$
G/P \, \cong \, G^{(1)}/P^{(1)} \times \ldots \times G^{(k)}/P^{(k)},
$$
where $G^{(1)},\ldots,G^{(k)}$ are the simple components of the group $G$, the group $\widetilde{G}$ is isomorphic to the direct product $\widetilde{G^{(1)}}\times \ldots \times \widetilde{G^{(k)}}$.
Moreover, $Q_u \cong Q_u^{(1)}\times \ldots \times Q_u^{(k)}$ with $Q^{(i)}=\widetilde{G^{(i)}}\cap Q$. Thus the group $Q_u^-$ is commutative if and only if for every pair $(G^{(i)}, P^{(i)})$ either $P_u^{(i)}$ is commutative or the pair $(G^{(i)}, P^{(i)})$ is exceptional.

Conversely, assume that $G/P$ admits an additive action. One may identify $\GG_a^n$ with a commutative unipotent subgroup $H$ of $\widetilde{G}$, and the flag variety $G/P$ with $\widetilde{G}/Q$, where $Q$ is a parabolic subgroup
of $\widetilde{G}$.

Let $T\subseteq B$ be a maximal torus and a Borel subgroup of the group $\widetilde{G}$ such that $B\subseteq Q$. Consider the root system $\Phi$ of the tangent algebra $\fg=\Lie(\widetilde{G})$ defined by the torus $T$, its decomposition $\Phi=\Phi^+ \cup \Phi^-$ into positive and negative roots associated with $B$, the set of simple roots $\Delta \subseteq \Phi^+$, $\Delta=\{\alpha_1,\ldots,\alpha_r\}$, and the root decomposition
$$
\fg \ = \ \bigoplus_{\beta\in\Phi^-} \fg_{\beta} \ \oplus \ft \ \oplus
\bigoplus_{\beta\in\Phi^+} \fg_{\beta},
$$
where $\ft=\Lie(T)$ is a Cartan subalgebra in $\fg$ and $$\fg_{\beta}=\{x\in\fg : [y,x]=\beta(y)x\ \text{for all} \ y\in\ft\}$$ is the root subspace. Set $\fq=\Lie(Q)$ and $\Delta_Q=\{\alpha\in\Delta : \fg_{-\alpha} \nsubseteq \fq \}$.
For every root $\beta = a_1\alpha_1 + \ldots + a_r\alpha_r$ we define $\deg(\beta)=\sum_{\alpha_i\in \Delta_Q} a_i$. This gives a $\ZZ$-grading on the Lie algebra~$\fg:$
$$
\fg = \bigoplus_{k\in\ZZ} \fg_k, \quad \text{where} \quad \ft\subseteq\fg_0 \quad
\text{and} \quad \fg_{\beta} \subseteq \fg_k \quad \text{with} \quad \ k=\deg(\beta).
$$
In particular,
$$
\fq \ = \ \bigoplus_{k\ge 0} \fg_k \quad \text{and} \quad
\fq_u^- \ = \ \bigoplus_{k<0} \fg_k.
$$
Assume that the unipotent radical $Q_u^-$ is not commutative, and consider $\fg_{\beta} \subseteq [\fq_u^-,\fq_u^-]$. For every $x\in \fg_{\beta} \setminus \{0\}$ there exist $z'\in\fg_{\beta'}\subseteq\fq_u^-$ and $z''\in\fg_{\beta''}\subseteq\fq_u^-$ such that $x=[z',z'']$. In this case $\deg(z')>\deg(x)$ and $\deg(z'')>\deg(x)$.

Since the subgroup $H$ acts on $\widetilde{G}/Q$ with an open orbit, one may conjugate $H$ and assume that the $H$-orbit of the point $eQ$ is open in $\widetilde{G}/Q$. This implies $\fg = \fq \oplus \fh$, where $\fh=\Lie(H)$. On the other hand, $\fg = \fq \oplus \fq_u^-$. So every element $y\in\fh$ may be uniquely written as $y=y_1+y_2$, where $y_1 \in \fq$, $y_2 \in \fq_u^-$, and the linear map $\fh \to \fq_u^-$, $y \mapsto y_2$, is bijective.
Take the elements $y,y',y'' \in \fh$ with $y_2=x,\, y'_2=z',\, y''_2=z''$. Since the subgroup $H$ is commutative, one has $[y',y'']=0$. Thus
$$
[y'_1 + y'_2, y''_1+y''_2]\ =\ [y'_1,y''_1] + [y'_2,y''_1] + [y'_1,y''_2] + [y'_2,y''_2]\ =\ 0.
$$
But
$$
[y'_2,y''_2]=x \quad \text{and} \quad [y'_1,y''_1] + [y'_2,y''_1] + [y'_1,y''_2] \in
\bigoplus_{k>\deg(x)} \fg_k.
$$
This contradiction shows that the group $Q_u^-$ is commutative. As we have seen, the latter condition means that for every pair $(G^{(i)}, P^{(i)})$ either the unipotent radical $P^{(i)}_u$ is commutative or the pair $(G^{(i)},P^{(i)})$ is exceptional.
\end{proof}

It is well known that if the ground field $\KK$ is the field of complex numbers then Hermitian symmetric spaces of compact type are precisely the homogeneous spaces $G/P$, where the parabolic subgroup $P$ has an commutative unipotent radical, see e.g.~\cite{RRS1992}. So we conclude from Theorem~\ref{gpex} the following observation. 

\begin{corollary}
A complete complex homogeneous variety $X$ admits an additive action if and only if $X$ is a Hermitian symmetric space of compact type. 
\end{corollary}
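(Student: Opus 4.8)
The plan is to reduce both implications to Theorem~\ref{gpex} together with the quoted identification of Hermitian symmetric spaces of compact type with the flag varieties $G/P$ whose parabolic subgroup has commutative unipotent radical. The first step is to observe that under either hypothesis $X$ is a (generalized) flag variety. In the forward direction $X$ admits an additive action, hence contains an open orbit isomorphic to an affine space and is therefore rational; its Albanese variety is trivial, so the connected automorphism group $\widetilde{G} = \Aut(X)^0$ is a linear algebraic group. Since $X$ is complete and homogeneous, $\widetilde{G}$ acts transitively and the stabilizer $Q$ of a point contains a Borel subgroup, hence is parabolic, so $X = \widetilde{G}/Q$. Passing to the adjoint semisimple quotient, I may take $\widetilde{G}$ to be semisimple of adjoint type and equal to the full connected automorphism group of $X$.

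The key observation is that, because $X$ has been presented via its \emph{full} automorphism group $\widetilde{G}$, no simple factor of the pair $(\widetilde{G}, Q)$ is exceptional. Writing $\widetilde{G} = \prod_i \widetilde{G}^{(i)}$ and $X = \prod_i X_i$ with $X_i = \widetilde{G}^{(i)}/Q^{(i)}$, each $\widetilde{G}^{(i)}$ is the connected automorphism group of the irreducible flag variety $X_i$; by definition an exceptional pair is one for which the automorphism group is strictly larger than the acting group, which cannot occur here. Consequently the alternative (b) in Theorem~\ref{gpex} is vacuous for $(\widetilde{G}, Q)$, and that theorem collapses to the statement that $X = \widetilde{G}/Q$ admits an additive action if and only if each $Q_u^{(i)}$, and hence $Q_u = \prod_i Q_u^{(i)}$, is commutative. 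Combining this with the quoted fact that $\widetilde{G}/Q$ has commutative $Q_u$ exactly when it is a Hermitian symmetric space of compact type yields the forward implication.

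For the converse I would argue directly and not even need the full strength of Theorem~\ref{gpex}. If $X$ is Hermitian symmetric of compact type, the quoted characterization gives $X \cong G/P$ with $G$ semisimple of adjoint type and $P$ parabolic with commutative unipotent radical; then the unipotent radical $P_u^-$ of the opposite parabolic is a commutative unipotent group $\cong \GG_a^n$ acting on $G/P$ by left translations with the big Schubert cell as open orbit, which is precisely an additive action. (Equivalently, condition (a) of Theorem~\ref{gpex} holds for every simple factor.)

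The main obstacle, and the point deserving the most care, is the bookkeeping around the exceptional pairs: one must ensure the equivalence ``additive action $\Leftrightarrow$ commutative unipotent radical'' is applied to the correct presentation of $X$, namely the one coming from the full automorphism group $\widetilde{G}$, so that the exceptional alternative is automatically excluded. If one instead starts from an arbitrary $G$ with $X = G/P$, the exceptional pairs $(\PSp(2r),P_1)$, $(\SO(2r+1),P_r)$, $(G_2,P_1)$ admit additive actions \emph{without} $P_u$ being commutative; the resolution is that their covering pairs $(\PSL(2r),P_1)$, $(\PSO(2r+2),P_{r+1})$, $(\SO(7),P_1)$ do have commutative unipotent radical and define the same underlying varieties (a projective space and quadrics), which are themselves Hermitian symmetric. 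Either route confirms that the underlying variety is always a Hermitian symmetric space, but replacing $G$ by $\widetilde{G}$ makes this transparent.
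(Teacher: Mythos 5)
Your proof is correct and takes essentially the same route as the paper: both directions reduce to Theorem~\ref{gpex} together with the identification of Hermitian symmetric spaces of compact type with the flag varieties $G/P$ whose parabolic has commutative unipotent radical. Your additional care in passing from a complete homogeneous variety to a flag variety and in presenting $X$ via its full connected automorphism group so that the exceptional alternative of Theorem~\ref{gpex} is vacuous only makes explicit what the paper's one-line deduction leaves implicit.
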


\subsection{Uniqueness results} 
\label{sub33}
The following result is a generalization of Theorem~\ref{hyp_quadr_nondeg_theor}; in the case of Grassmannians it was conjectured in~\cite[Section~6]{ArSh2011}. 

\begin{theorem} \label{unFHD}
Let $G$ be a connected simple linear algebraic group and $P$ a parabolic subgroup of $G$. Assume that the flag variety $X=G/P$ is not isomorphic to the projective space $\PP^n$. Then $X$ admits at most one additive action up to equivalence. 
\end{theorem}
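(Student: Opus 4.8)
The plan is to translate the statement into a question about abelian complements inside a graded simple Lie algebra, and to isolate the obstruction to uniqueness as a prolongation space that is nonzero precisely for $\PP^n$. First I would pass to $\widetilde G = \Aut(X)^0$: since $G$ is simple and $X=G/P\ne\PP^n$, the group $\widetilde G$ is again simple and $X=\widetilde G/Q$ with $Q_u$ commutative, so $X$ is an irreducible Hermitian symmetric space of compact type (this is exactly the mechanism in the proof of Theorem~\ref{gpex} and its corollary; note that the pairs whose covering space is a projective space, such as $(\PSp(2r),P_1)$, are excluded by the hypothesis $X\ne\PP^n$). Writing $\fg=\Lie\widetilde G$, commutativity of $Q_u$ means the grading attached to $Q$ is short,
\[
\fg=\fg_{-1}\oplus\fg_0\oplus\fg_1,\qquad \fq=\fg_0\oplus\fg_1,\qquad \fq_u^-=\fg_{-1},
\]
with $\fg_{-1}$ abelian. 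By Definition~\ref{act_equivdef}, an additive action is an $n$-dimensional commutative unipotent subgroup $H\cong\GG_a^n$ of $\widetilde G$ acting with open orbit, and two such are equivalent iff the subgroups are $\Aut(X)$-conjugate. After conjugating so the open orbit passes through $eQ$, this gives $\fg=\fq\oplus\fh$ with $\fh:=\Lie H$ abelian, the ``standard'' action being $\fh=\fg_{-1}$; using uniqueness of the open orbit one checks that two such complements through $eQ$ are $\widetilde G$-conjugate iff they are $Q$-conjugate. Thus it suffices to show $Q$ acts transitively by conjugation on abelian complements to $\fq$.

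Next I would parametrize: since $\fh$ projects isomorphically onto $\fg/\fq\cong\fg_{-1}$, write $\fh=\{x+\psi(x):x\in\fg_{-1}\}$ with $\psi=\psi_0+\psi_1$, $\psi_0\colon\fg_{-1}\to\fg_0$, $\psi_1\colon\fg_{-1}\to\fg_1$. Expanding $[x+\psi x,\,y+\psi y]=0$ in graded components (recall $\fg_{-2}=\fg_2=0$) gives
\begin{align*}
&[\psi_0 x,y]+[x,\psi_0 y]=0,\\
&[\psi_1 x,y]+[x,\psi_1 y]+[\psi_0 x,\psi_0 y]=0,\\
&[\psi_0 x,\psi_1 y]+[\psi_1 x,\psi_0 y]=0,
\end{align*}
for all $x,y\in\fg_{-1}$. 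The first equation reads $[\psi_0 x,y]=[\psi_0 y,x]$, i.e. $\psi_0$ lies in the first prolongation
\[
\fg_0^{(1)}:=\{\varphi\in\Hom(\fg_{-1},\fg_0):[\varphi(x),y]=[\varphi(y),x]\ \forall x,y\in\fg_{-1}\}.
\]

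Then I would normalize by conjugation inside $Q$. For $\eta\in\fg_1$ one has $\mathrm{Ad}(\exp\eta)(x)=x+[\eta,x]+\tfrac12[\eta,[\eta,x]]$, so conjugation by $Q_u=\exp\fg_1$ sends $\psi_0\mapsto\psi_0+\mathrm{ad}(\eta)|_{\fg_{-1}}$ (and corrects $\psi_1$). Since $\mathrm{ad}(\fg_1)|_{\fg_{-1}}\subseteq\fg_0^{(1)}$, the component $\psi_0$ can be killed by a suitable $\eta$ exactly when $\fg_0^{(1)}=\fg_1$. Setting $\psi_0=0$, the remaining equations collapse to the single symmetry condition $[\psi_1 x,y]=[\psi_1 y,x]$; by the grading-reversing symmetry of a Hermitian symmetric space ($\fg_{-1}\cong\fg_1^\ast$, $\fg_0$ self-dual) this space is controlled by the same prolongation and vanishes under the same hypothesis (for the quadric $Q_n$, say, the $\mathfrak{so}(\fg_{-1})$-part of $[\psi_1 x,y]$ already forces $\psi_1=0$). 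Hence $\fh=\fg_{-1}$ and transitivity follows.

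Everything therefore reduces to the vanishing $\fg_0^{(1)}=\fg_1$, i.e. that $\fg$ is the full prolongation of $\fg_{-1}\oplus\fg_0$, and I expect this to be the main obstacle. This is exactly where $\PP^n$ is exceptional: for $\PP^n=\SL(n+1)/P_1$ one has $\fg_0=\mathfrak{gl}(\fg_{-1})$ and $\fg_0^{(1)}=\Sym^2\fg_{-1}^\ast\otimes\fg_{-1}\supsetneq\fg_1$, the extra dimensions being precisely the moduli of local algebras from the Hassett--Tschinkel correspondence, whereas for every other irreducible Hermitian symmetric space $\fg_0^{(1)}=\fg_1$. Proving this equality uniformly is the hard part: Fu--Hwang~\cite{FH2014} obtain it from the theory of the variety of minimal rational tangents and prolongations of the associated graded structure, while Devyatov~\cite{De2015} checks it directly over the classification of the remaining pairs $(\widetilde G,Q)$ (Grassmannians, Lagrangian Grassmannians, spinor varieties, quadrics, and $E_6/P_1$, $E_7/P_7$).
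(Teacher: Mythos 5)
Your reduction is correct and your route is genuinely different from both proofs the paper surveys. The paper records two arguments: Fu--Hwang's, which is geometric (smoothness of the variety of minimal rational tangents plus the Cartan--Fubini extension theorem), and Devyatov's, which, like yours, translates the problem into classifying abelian complements $\fh$ to $\fq$ in $\fg=\fg_{-1}\oplus\fg_0\oplus\fg_1$, but then encodes a complement by an ``$\fl$-compatible multiplication'' on $\fg_{-1}$ (an associative commutative product $\mu$ with each $\mu_v$ nilpotent and realized by an element of the Levi algebra) and classifies those multiplications case by case using representation theory. Your $\psi_0$ is exactly Devyatov's multiplication --- the symmetry $[\psi_0x,y]=[\psi_0y,x]$ is his commutativity condition --- but where he classifies, you invoke the Kobayashi--Nagano/Yamaguchi prolongation theorem that $\fg$ is the full prolongation of $(\fg_{-1},\fg_0)$ for every irreducible compact Hermitian symmetric space other than $\PP^n$. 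That is precisely the tool the paper attributes to Cheong's treatment of the non-commutative analogue, so your argument interpolates between the two published proofs: it buys a uniform, classification-free conceptual statement of the obstruction ($\fg_0^{(1)}/\fg_1$, whose nonvanishing for $\PP^n$ is where the Hassett--Tschinkel local algebras live), at the price of importing a hard classical theorem as a black box --- which is no worse than what the paper itself does.

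One step needs to be stated more carefully: the disposal of $\psi_1$. The ``grading-reversing symmetry'' of the Hermitian pair does not by itself identify the space of admissible $\psi_1$ with $\fg_0^{(1)}/\fg_1$; the correct mechanism is that the trilinear map $T(x,y,z)=[[\psi_1x,y],z]$ is symmetric in $(y,z)$ by the Jacobi identity (since $\fg_{-1}$ is abelian) and symmetric in $(x,y)$ by your second equation, hence fully symmetric, so $\psi_1$ injects into the \emph{second} prolongation $\fg_0^{(2)}=(\fg_{-1}^\ast\otimes\fg_0^{(1)})\cap(\Sym^3\fg_{-1}^\ast\otimes\fg_{-1})$ (injectivity uses that $\fg_0$ and $\fg_1$ act faithfully on $\fg_{-1}$, which holds by simplicity of $\fg$). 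The vanishing $\fg_0^{(2)}=0$ is \emph{not} a formal consequence of $\fg_0^{(1)}=\fg_1$, so you genuinely need the full strength of ``$\fg$ equals its prolongation,'' i.e.\ both $\fg_0^{(1)}=\fg_1$ and $\fg_0^{(2)}=\fg_2=0$; your final sentence does assert the full prolongation statement, so the proof closes, but the intermediate justification should be replaced by the Jacobi-identity argument above.
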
 

Two different ways to prove this result were obtained independently by Fu and Hwang~\cite{FH2014} and by Devyatov~\cite{De2015}. Let us discuss briefly each of these approaches. 

\smallskip

Fu and Hwang's proof is based on the study of varieties of minimal rational tangents (VMRT). In~\cite{FH2014}, the authors prove the following theorem.

\begin{theorem} \label{tFH}
Let $X$ be a smooth Fano variety of dimension $n$ with Picard number one that is not isomorphic to $\PP^n$. Assume that $X$ has a family of minimal rational curves whose variety of minimal rational tangents $C_x\subseteq \PP T_x(X)$ at a general point $x\in X$ is smooth. Then any two additive actions on $X$ are equivalent.
\end{theorem}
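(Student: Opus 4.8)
The plan is to reformulate the uniqueness statement as a conjugacy statement inside the Lie algebra $\fg := \Lie(\Aut(X)^0) = H^0(X, T_X)$ and then exploit the rigidity furnished by the VMRT structure. Differentiating an additive action $\alpha\colon \GG_a^n \times X \to X$ embeds $\Lie(\GG_a^n) = \KK^n$ as an abelian subalgebra $\fn \subseteq \fg$ of $n$ commuting global vector fields whose evaluation $\fn \to T_x(X)$ is an isomorphism for $x$ in the open orbit. Two additive actions are equivalent exactly when the corresponding copies of $\GG_a^n$ inside $\Aut(X)^0$ are conjugate, i.e. when $\fn$ and $\fn'$ lie in one $\Aut(X)^0$-orbit (the group automorphism $\psi$ in the definition of equivalence is absorbed by the induced linear action on $\fn$). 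So the goal becomes: any two abelian subalgebras of $\fg$ acting on $X$ with an open orbit are conjugate.

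First I would record the boundary geometry. Since $X$ is a smooth Fano variety of Picard number one, $\Cl(X) = \Pic(X) \cong \ZZ$, and by Proposition~\ref{adcl} and Theorem~\ref{geHT} the boundary classes form a basis, so the complement of the open orbit is a single irreducible divisor $D$, necessarily $\fn$-invariant. Because $\GG_a^n$ is unipotent and $X$ is complete there is a $\GG_a^n$-fixed point $o \in X$; the isotropy representation of $\fn$ on $T_o(X)$ is then by commuting nilpotent operators, which provides the initial data for a jet analysis along the VMRT filtration.

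The heart of the matter is the machinery of Hwang and Mok, available precisely because $C_x \subseteq \PP T_x(X)$ is assumed smooth. The relevant input is the graded Lie algebra attached to the VMRT structure by Tanaka prolongation, with $\fg_{-1} = T_x(X)$, degree-zero part $\fg_0 = \mathfrak{aut}(\hat C_x) \subseteq \gl(T_x(X))$ the infinitesimal linear automorphisms of the affine cone $\hat C_x$, and $\fg_k$ the higher prolongations. For an irreducible, smooth, linearly nondegenerate $C_x$ with $X \ne \PP^n$, the prolongations $\mathfrak{aut}(\hat C_x)^{(k)}$ vanish for $k \ge 2$ (Hwang--Mok). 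This is a Cartan--Fubini type rigidity: a global vector field is determined by a bounded jet at a general point, and isomorphisms preserving the VMRT structure extend to genuine automorphisms of $X$. Using it, I would expand the commuting fields of $\fn$ in a frame adapted to the filtration at a general $x$: their leading part spans $T_x(X)$, while the higher-order parts are confined to the (now finite-dimensional, eventually vanishing) prolongation spaces; the remaining finitely many parameters can be normalized by the reductive part $\fg_0$ together with the normalizing torus, yielding a canonical model for $\fn$ and hence the common $\Aut(X)^0$-orbit. The hypothesis $X \ne \PP^n$ is essential here: for $\PP^n$ the VMRT is all of $\PP T_x(X)$, the first prolongation of $\mathfrak{aut}(\hat C_x)$ is nonzero, and this extra freedom is exactly what produces the many inequivalent additive actions classified by Hassett--Tschinkel.

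The main obstacle is the prolongation-vanishing itself, namely controlling $\mathfrak{aut}(\hat C_x)^{(k)}$ from smoothness and nondegeneracy of the VMRT, together with the first-prolongation term which does not vanish in general and must be pinned down separately; and, beyond the infinitesimal picture, upgrading the matching of Lie-algebra jets to an actual conjugacy of subgroups of $\Aut(X)^0$, which is where the full Cartan--Fubini extension principle is needed to globalize VMRT-preserving local maps into honest automorphisms of $X$.
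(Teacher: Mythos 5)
A preliminary remark: the survey does not actually prove this theorem --- it is quoted from Fu and Hwang [FH2014], and the text only records that the proof rests on the VMRT and on the Cartan--Fubini extension theorem --- so your sketch has to be measured against the argument of [FH2014] itself. Your reduction of equivalence of additive actions to conjugacy of the corresponding abelian subalgebras of $H^0(X,T_X)$ is correct, and you have assembled the right toolbox, but the step that actually drives the proof is missing. In [FH2014] the two actions are played against each other through the \emph{transition map}: writing them as translation-equivariant open immersions $\iota_1,\iota_2\colon\CC^n\hookrightarrow X$, one observes that the VMRT restricted to each open orbit is translation-invariant, hence a constant subvariety $Z_i\subseteq\PP^{n-1}$ in the canonical trivialization of the tangent bundle; the differential of $\iota_2^{-1}\circ\iota_1$ carries $Z_1$ to $Z_2$ at every point of its domain, so after composing with a single linear map one arranges $Z_1=Z_2$, whence the birational self-map $\iota_2\circ\iota_1^{-1}$ of $X$ preserves the VMRT structure and the Cartan--Fubini extension theorem (this is where Picard number one and $X\not\cong\PP^n$ enter) extends it to a global automorphism of $X$ --- which is exactly the asserted equivalence. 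Your proposal never forms this transition map and never uses the translation-invariance of the VMRT, so the two additive actions are in fact never compared with each other.

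What you put in its place --- normalizing a single abelian algebra $\fn$ ``along the prolongation filtration'' at a general point --- is precisely where the gap sits. Hwang--Mok give vanishing of $\mathfrak{aut}(\hat C_x)^{(k)}$ only for $k\ge 2$; the first prolongation is nonzero exactly for the VMRTs of irreducible Hermitian symmetric spaces, i.e. for quadrics, Grassmannians, the spinor variety, and so on, which are the principal instances of the theorem. In those cases there genuinely are global vector fields beyond the affine ones, and the claim that ``the remaining finitely many parameters can be normalized by $\fg_0$ and the torus'' is an assertion, not an argument; it is the whole difficulty, as the free-rowed maximal commutative nilpotent subalgebras of $\mathfrak{so}_{n+2}(\KK)$ recalled in the last section of this survey show that dimension counts and maximality alone do not force conjugacy. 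You also leave untreated the case of zero-dimensional (finite) VMRT, where Cartan--Fubini in its standard form does not apply and [FH2014] must argue separately. As written, the proposal is an accurate map of the relevant literature rather than a proof.
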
 

\begin{corollary} \label{cFH} 
Let $X\subseteq \PP^N$ be a smooth projective subvariety of Picard number one such that for a general point $x\in X$, there exists a line of $\PP^N$ passing through $x$ and lying on~$X$. If $X$ is different from the projective space, then any two additive actions on $X$ are equivalent.
\end{corollary}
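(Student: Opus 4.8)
The plan is to deduce the corollary from Theorem~\ref{tFH} by taking the family of lines contained in $X$ as the required family of minimal rational curves and checking the hypotheses of that theorem one by one. We may assume that $X$ admits at least one additive action, since otherwise the assertion holds vacuously.

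First I would verify that $X$ is Fano. As $X$ is smooth, projective, and carries an additive action, Theorem~\ref{geHT} shows that $-K_X$ is an integer combination of the boundary classes with all coefficients at least $2$; since the complement of the open orbit $\AA^n$ in the projective (hence non-affine) variety $X$ is a nonempty divisor, $-K_X$ is a nonzero effective class. Because $X$ has Picard number one, $\Cl(X)=\Pic(X)\cong\ZZ$, and on a projective variety of Picard number one every nonzero effective divisor class is ample (the pseudoeffective and nef cones coincide with a single closed ray). Hence $-K_X$ is ample and $X$ is a smooth Fano variety of Picard number one, which by hypothesis is not isomorphic to $\PP^n$.

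Next I would show that lines on $X$ are minimal rational curves. Let $h$ be the ample generator of $\Cl(X)\cong\ZZ$. A line $\ell\subseteq X$ satisfies $\Of_X(1)\cdot\ell=1$, and writing $\Of_X(1)=m\,h$ with $m\ge 1$ and $h\cdot\ell\ge 1$ forces $m=1$, so $\Of_X(1)=h$. Writing $-K_X=c\,h$ with $c$ a positive integer, every irreducible curve $C$ satisfies $-K_X\cdot C=c\,(h\cdot C)\ge c$, with equality exactly when $h\cdot C=1$, i.e.\ when $C$ is a line. Thus lines realise the minimal anticanonical degree among rational curves, and by hypothesis they pass through a general point, so they form a dominating family of minimal rational curves; in fact the minimal rational curves through a general point of $X$ are precisely the lines of $X$ through it.

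The remaining and most delicate point is to prove that the variety of minimal rational tangents $C_x\subseteq\PP T_x(X)$ of this family is smooth at a general point $x$, which is exactly the technical hypothesis of Theorem~\ref{tFH}. I would argue that for general $x$ the lines through $x$ are free (standard) rational curves, by the general position of $x$ and the deformation theory of minimal rational curves (Mori, Kebekus); consequently the space $\mathcal H_x$ parametrising them is smooth at its general member and the tangent map $\tau_x\colon\mathcal H_x\to\PP T_x(X)$ is an immersion. The input special to lines is that $\tau_x$ is \emph{injective}, because a line through $x$ is uniquely determined by its tangent direction at $x$; combined with immersivity this makes $\tau_x$ an embedding, so that $C_x=\tau_x(\mathcal H_x)$ is smooth. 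I expect this smoothness of the VMRT to be the main obstacle, since it is precisely here that the geometric hypothesis (a line through a general point) must be converted, via the freeness of lines and the injectivity of the tangent map, into the abstract smoothness condition of Theorem~\ref{tFH}. Granting this, $X$ is a smooth Fano variety of Picard number one with $X\not\cong\PP^n$ carrying a family of minimal rational curves whose VMRT at a general point is smooth, so Theorem~\ref{tFH} applies and any two additive actions on $X$ are equivalent.
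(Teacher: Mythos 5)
Your proposal is correct and follows essentially the same route as the paper: both deduce the corollary from Theorem~\ref{tFH} by taking the lines on $X$ as the dominating family of minimal rational curves and establishing smoothness of the VMRT at a general point, for which the paper simply cites \cite[Proposition~1.5]{Hw2001} while you sketch its content (freeness of general lines plus injectivity of the tangent map). Your preliminary verification that $X$ is Fano is a reasonable extra detail the paper leaves implicit, and it is argued correctly.
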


One can show that when $X$ has a projective embedding satisfying the assumption of Corollary~\ref{cFH}, some family of lines lying on $X$ gives a family of minimal rational curves, for which the VMRT $C_x$ at a general point $x\in X$ is smooth; see e.g.~\cite[Proposition~1.5]{Hw2001}. Thus Corollary~\ref{cFH} follows from Theorem~\ref{tFH}. This corollary can be applied to smooth quadratic hypersurfaces and Grassmanians because a smooth hyperquadric can be embedded into projective space with the required property.

\smallskip

The approach of Devyatov is completely different. It is based on representation theory of Lie algebras and the classification of certain multiplications on finite-dimensional spaces and may be considered as a generalization of the Hassett-Tschinkel correspondence. 

Let $L$ be a connected reductive algebraic group, $V$ be a finite-dimensional $L$-module, and $\fl$ be the Lie algebra of the group $L$. 
By an \emph{$\fl$-compatible multiplication} on $V$ we mean an associative commutative bilinear map $\mu\colon V\times V\to V$ such that for each $v\in V$
the operator $\mu_v\colon V\to V$, $\mu_v(w)=\mu(v,w)$ is nilpotent and for each $v\in V$ there exists $x\in\fl$ such that the operator
$\mu_v$ coincides with the action of $x$ on $V$. 

The classification of $\fl$-compatible multiplications on an arbitrary module $V$ of a reductive group $L$ is given in~\cite[Sections~5-6]{De2015}. It may be of independent interest. 
After a series of reductions to the case of a simple group $G$ and a simple module $V$, it is proved in \cite[Theorem~21]{De2015} that there exists a nonzero $\fl$-compatible multiplication on $V$ if and only if either $L$ is a group of type $A$ and $V$ is the tautological $L$-module or it dual, or $L$ is of type $C$ and $V$ is the tautological $L$-module.

Let $X=G/P$ be a flag variety, where $G$ is a connected simple linear algebraic group and $P$ is a parabolic subgroup of $G$. Without loss of generality, we may assume that the pair $(G,P)$ is not exceptional. Then the connected component of the group $\Aut(X)$ coincides with $G$. By Theorem~\ref{gpex}, we may assume that the group $P_u$ is commutative and, in particular, the parabolic subgroup $P$ is maximal. 

As we know from the previous subsection, additive actions on $X$ correspond to commutative Lie subalgebras $\fh$ that are complementary to $\fp=\Lie(P)$ in $\fg=\Lie(G)$, that is $\fg=\fp\oplus\fh$. One such subalgebra is $\fp_u^-$, and the uniqueness result means that all other such subalgebras $\fh$ are conjugated to $\fp_u^-$. In~\cite[Theorem~15]{De2015}, Devyatov establishes a correspondence between complementary to $\fp_u$ commutative subalgebras $\fh$ in $\fg$ and $\fl$-compatible multiplications $\fp_u^-\times\fp_u^-\to\fp_u^-$, where $\fl$ is a Levi subalgebra of the algebra $\fp$. Under this correspondence, the subalgebra $\fh=\fp_u^-$ corresponds to the zero multiplication. So, the classification of compatible multiplications mentioned above shows that non-conjugated to $\fp_u^-$ complementary to $\fp_u$ commutative subalgebras $\fh$ appear only in the cases when the flag variety $G/P$ is isomorphic to the projective space $\PP^n$. This proves Theorem~\ref{unFHD}. 

A more detailed analysis shows that in terms of the results of Hassett and Tschinkel, the compatible multiplication in the case of the tautological module $V$ of the group of type $A$ is precisely the multiplication $\mm\times\mm\to\mm$ on the maximal ideal of the local algebra $A=\KK\oplus\mm$ corresponding to a given additive action on $\PP^n$. 

\smallskip

Finishing this subsection, let us mention one more related research. In~\cite{Ch2017}, a uniqueness result for equivariant completions of non-commutative unipotent groups by flag varieties is proved. More precisely, let $G$ be a simple linear algebraic group over the field of complex numbers and $P$ a parabolic subgroup in $G$. By the Bruhat decomposition, the unipotent radical $P_u^-$ acts on $G/P$ with an open orbit isomorphic to $P_u^-$. Cheong proves that the structure of an equivariant completion of $P_u^-$ on $G/P$ is unique up to isomorphism if $G/P$ is not isomorphic to the projective space and the pair $(G,P)$ is not exceptional in the sense of the proceeding subsection. The proof exploits the notion of smooth VMRT and mostly follows a proving scheme of Fu and Hwang~\cite{FH2014}. The main difference lies in which tool to use in order to obtain an extension of a locally defined map to the entire space, which is the most essential in proving the uniqueness: Fu and Hwang use the Cartan-Fubini Extension Theorem, which is applicable only to smooth Fano varieties of Picard number one, while Cheong uses the Yamaguchi's result on the prolongation of simple graded Lie algebras. For exceptional pairs $(G,P)$ the question whether $G/P$ admits a unique equivariant completion structure for $P_u^-$ remains open. 


\subsection{Degeneration of flag varieties to equivariant completions}
\label{sub34} 
By Theorem~\ref{gpex}, not so many flag varieties admit an additive action. At the same time, in~\cite{Fe2012} Feigin proposed a construction of a canonical flat degeneration of an arbitrary flag variety to a projective variety with an additive action. This result may be considered as an additive analogue of intensively studied flat degenerations of flag varieties to toric varieties, see \cite{Ca2002,GL1996,Lak1995}. 

Let $G$ be a simple linear algebraic group with the Lie algebra $\fg$. Recall that each flag variety $G/P$ can be realized as a $G$-orbit $G[v_{\lambda}]\subseteq\PP(V(\lambda))$ in the projectivization of a simple $G$-module $V(\lambda)$ with highest weight vector $v_{\lambda}$. Here we denote $[v_{\lambda}]=\KK v_{\lambda}$ and let $\Ff_{\lambda}:=G[v_{\lambda}]$. Feigin introduces a new family of varieties $\Ff_{\lambda}^a$, which are flat degenerations of $\Ff_{\lambda}$; the superscript $a$ is for abelian.

The variety $\Ff_{\lambda}^a$ is defined as follows. Let $\{F_s, \, s\ge 0\}$ be the PBW filtration on $V(\lambda)$: 
$$
F_s=\Span\{x_1\ldots x_lv_{\lambda} \ : \ x_i\in\fg, l\le s\}.
$$
We define $V(\lambda)^a=F_0\oplus \bigoplus\limits_{s\ge 0} F_{s+1}/F_s$. Let $\fg = \fn\oplus\ft\oplus\fn^-$ be the Cartan decomposition. The space $V(\lambda)^a$ has a natural structure of a module over the degenerate algebra $\fg^a$, where the algebra $\fg^a$ is isomorphic to $\fg$ as a vector space and is a semidirect sum of two subalgebras, the first one is the Borel subalgebra $
\fb=\fn\oplus\ft$ and the second is an abelian ideal $(\fn^-)^a$, which is isomorphic to $\fn$ as a vector space. Here the structure of the $\fb$-module on $(\fn^-)^a$
is given via identification of the space $(\fn^-)^a$ with the factor module $\fg/\fb$. The corresponding algebraic group $G^a$ is the semidirect product $B\rightthreetimes \GG_a^n$, where $n$ is the dimension of $\fn$. We define the variety $\Ff_{\lambda}^a$ as the closure of the $\GG_a^n$-orbit of the highest weight vector: 
$$
\Ff_{\lambda}^a:=\overline{\GG_a^n[v_{\lambda}]}\subseteq\PP(V(\lambda)^a).
$$
By definition, this variety carries an additive action. Since the highest weight vector $v_{\lambda}$ is $B$-semi-invariant, the variety $\Ff_{\lambda}^a$ is invariant under the action of the group $G^a$ as well. Despite the case of usual flag varieties, here the action of $G^a$ on $\Ff_{\lambda}^a$ need not be transitive. 

Let $\fp$ be the parabolic subalgebra annihilating the vector $v_{\lambda}$. Assume for a moment that the nilpotent radical $\fp_u^-$ is commutative. Then all root operators from $\fn^-\setminus\fp_u^-$ annihilate the vector $v_{\lambda}$, while the operators from $\fp_u^-$ act as pairwise commuting operators in $V(\lambda)$ even before passing to $V(\lambda)^a$. This shows that in this case there is no difference between the original variety $\Ff_{\lambda}$ and the degenerate variety $\Ff_{\lambda}^a$. By Theorem~\ref{gpex}, this is precisely the case when the variety $\Ff_{\lambda}$ itself admits an additive action.

In the case of a group of type $A$, the varieties $\Ff_{\lambda}$ are isomorphic to partial flag varieties. In particular, the ones corresponding to fundamental weights $\lambda$ are Grassmannians 
$\Gr(d, n)$. There exist embeddings of partial flags into the product of projective spaces, and the image is given by Pl\"ucker relations. These relations describe the coordinate rings on the affine cones over flag varieties. 

It is shown in~\cite{Fe2012} that each degenerate flag variety can be embedded into the product of Grassmannians and thus into the product of projective spaces. Feigin shows that this embedding can be described in terms of an explicit set of multi-homogeneous algebraic equations that are obtained from the Pl\"ucker relations by a certain degeneration. He proves that the degeneration $\Ff_{\lambda}\to\Ff_{\lambda}^a$ is flat. 
 
For further results on varieties $\Ff_{\lambda}^a$, see e.g. \cite{Fe2012,FF2013,IFR2012} and references therein. Paper~\cite{FF2013} is devoted to the study of varieties $\Ff_{\lambda}^a$ 
for groups of type $A$. The authors prove that in this case the variety $\Ff_{\lambda}^a$ has rational singularities, is a normal and locally complete intersection, and construct explicitly a desingularization $R_{\lambda}$ of $\Ff_{\lambda}^a$. The variety $R_{\lambda}$ can be viewed as a tower of successive $\PP^1$-fibrations, thus providing an analogue of the classical Bott-Samelson-Demazure-Hansen desingularization. It is proved that the variety $R_{\lambda}$ is Frobenius split. This gives Frobenius splitting for the degenerate flag varieties and allows to prove the Borel-Weil type theorem for $\Ff_{\lambda}^a$.

The aim of paper~\cite{IFR2012} is to connect degenerate flag varieties with quiver Grassmannians. By definition, quiver Grassmannians are varieties parametrizing subrepresentations of a quiver representation. It turns out that certain quiver Grassmannians for type $A$ quivers are isomorphic to degenerate flag varieties $\Ff_{\lambda}^a$. This leads to the consideration of a class of Grassmannians of subrepresentations of the direct sum of a projective and an injective representation of a Dynkin quiver. It is proved that these are (typically singular) irreducible normal local complete intersections, which admit a group action with finitely many orbits and a cellular decomposition. 
%
%
\section{Additive actions on toric varieties}
\label{aatv}
In this section we study additive actions on toric varieties. To do this, we need to develop new techniques, namely, we consider graded algebras and homogeneous locally nilpotent derivations of such algebras. In order to apply these techniques, we would like to have global coordinates on any toric variety. Such coordinates are provided by Cox rings. In the case of a toric variety $X$, the Cox ring $R(X)$ is a polynomial ring in $m$ variables, where $m$ is the number of prime torus invariant divisors on $X$. Moreover, the ring $R(X)$ is graded by the divisor class group $\Cl(X)$, and locally nilpotent derivations, which correspond to $\GG_a$-actions on $X$ normalized by the acting torus, are represented by so-called Demazure roots of the corresponding fan. 

This allows us to characterize additive actions on $X$ normalized by the acting torus in terms of certain collections of Demazure roots. In particular, we obtain a combinatorial description of fans of toric varieties that admit a normalized additive action. We show that there is at most one normalized additive action on any toric variety up to equivalence. Further, we prove that if a complete toric variety $X$ admits an additive action, then it admits a normalized additive action. Moreover, this is the case if and only if a maximal unipotent subgroup $U$ in $\Aut(X)$ acts on $X$ with an open orbit. A characterization of polytopes corresponding to projective toric varieties admitting an additive action is obtained. 

In subsection~\ref{subcts} we describe additive actions on complete toric surfaces $X$. It turns out that there are at most two additive actions on $X$ in this case. The last subsection provides a uniqueness criterion for additive actions on a complete toric variety $X$ of arbitrary dimension: if $X$ admits a normalized additive action, then any other additive action on $X$ is equivalent to a normalized one if and only if a maximal unipotent subgroup $U$ in $\Aut(X)$ is commutative. The latter condition can be easily checked in terms of the fan. 

\subsection{Graded algebras and locally nilpotent derivations}
In this subsection we follow the presentation in~\cite{AR2017}. Consider an irreducible affine variety $X$ with an effective action of an
algebraic torus~$T$. Let $M$ be the character lattice of $T$ and $N=\Hom(M,\ZZ)$ be the dual lattice of one-parameter subgroups of $T$. Let $B=\KK[X]$ be the algebra of regular functions on $X$. It is well known that there is a bijection between faithful $T$-actions on $X$ and effective $M$-gradings on~$B$. In fact, the algebra $B$ is graded by a semigroup of lattice points in a convex polyhedral cone $\omega\subseteq M_{\QQ}=M\otimes_{\ZZ}\QQ$. We have
$$
B=\bigoplus_{m\in \omega_{ M}} B_m\chi^m,
$$
where $\omega_{M}=\omega\cap M$ and $\chi^m$ is the character of the torus $T$ corresponding to a point $m\in M$.

A derivation $\partial$ of an algebra $B$ is said to be {\itshape locally nilpotent} (LND) if for every $f\in B$ there exists $k\in\NN$ such that $\partial^k(f)=0$. For any LND $\partial$ on $B$ the map ${\varphi_{\partial}:\GG_a\times B\rightarrow B}$, ${\varphi_{\partial}(s,f)=\exp(s\partial)(f)}$, defines a
structure of a rational $\GG_a$-algebra on $B$. This induces a regular action $\GG_a\times X\to X$, where $X=\Spec B$. In fact, any regular $\GG_a$-action on $X$ arises this way, see \cite[Section~1.5]{Fr2006}. A derivation $\partial$ on $B$ is said to be {\itshape homogeneous} if it respects the $M$-grading, i.e. $\partial$ sends homogeneous elements to homogeneous ones. If ${f,h\in B\backslash \ker\partial}$ are homogeneous then ${\partial(fh)=f\partial(h)+\partial(f)h}$ is homogeneous as well and ${\dg\partial(f)-\dg f=\dg\partial(h)-\dg h}$. So any homogeneous derivation $\partial$ has a well defined {\itshape degree} given as $\dg\partial=\dg\partial(f)-\dg f$ for any homogeneous $f\in B\backslash \ker\partial$. It is easy to see that an LND on $B$ is homogeneous if and only if the corresponding $\GG_a$-action is normalized by the torus~$T$ in the automorphism group~$\Aut (X)$, cf. \cite[Section~3.7]{Fr2006}.

\smallskip

Let $X$ be an affine toric variety, i.~e. a normal affine variety with an effective action of a torus $T$ with an open orbit. In this case
$$
B=\bigoplus_{m\in \omega_{M}}\KK\chi^m=\KK[\omega_{M}]
$$
is the semigroup algebra. Recall that for a given cone $\omega\subseteq M_{\QQ}$, its {\itshape dual cone} $\sigma\subseteq N_{\QQ}$ is defined as
$$
\sigma=\{p\in N_{\mathbb{Q}} \mid \langle p,v\rangle\geqslant0\,\,\,\forall v\in\omega\},
$$
where $\langle\,\cdot\,,\,\cdot\,\rangle$ is the pairing $N_{\mathbb{Q}}\times M_{\mathbb{Q}}\to\QQ$ between dual spaces $N_{\mathbb{Q}}$ and $M_{\mathbb{Q}}$. Let $\sigma(1)$ be the set of rays of a cone $\sigma$ and $p_{\rho}$ be the primitive lattice vector on a ray $\rho$. For $\rho\in\sigma(1)$ we set
$$
\mathfrak{R}_{\rho}:=\{e\in M\,|\, \langle p_{\rho},e\rangle=-1
\,\,\mbox{and}\,\, \langle p_{\rho'},e\rangle\geqslant0
\,\,\,\,\forall\,\rho'\in \sigma(1), \,\rho'\ne\rho\}.
$$
One easily checks that the set $\mathfrak{R}_{\rho}$ is infinite for each $\rho\in\sigma(1)$ provided the cone $\sigma$ has dimension at least two. The elements of the set $\mathfrak{R}:=\bigsqcup\limits_{\rho\in\sigma(1)}\mathfrak{R}_{\rho}$ are called the {\itshape Demazure roots} of the cone $\sigma$. Let $e\in\mathfrak{R}_{\rho}$. Then $\rho$ is  the {\itshape distinguished ray} of the root $e$. One can define the homogeneous LND on the algebra $B$ by the rule
$$
\partial_e(\chi^m)=\langle p_{\rho},m\rangle\chi^{m+e}.
$$
In fact, every homogeneous LND on~$B$ has a form $\alpha\partial_e$ for some $\alpha\in
\KK,\, e\in \mathfrak{R}$, see \cite[Theorem~2.7]{Li2010}. In other words, $\GG_a$-actions on $X$
normalized by the acting torus are in bijection with Demazure roots of the cone $\sigma$.

\begin{example} \label{ex1}
Consider $X=\KK^n$ with the standard action of the torus $(\KK^{\times})^n$.
It is a toric variety with the cone $\sigma=\QQ^n_{\geqslant0}$ having rays
$\rho_i=\langle p_i\rangle_{\QQ_{\geqslant0}}$ with
$p_1=(1,0,\ldots,0),\ldots,p_n=(0,\ldots,0,1)$.
The dual cone $\omega$ is $\QQ^n_{\geqslant0}$ as well. In this case we have
$$
\mathfrak{R}_{\rho_i}=\{(c_1,\ldots,c_{i-1},-1,c_{i+1},\ldots,c_n)\,|\,c_j\in\ZZ_{\geqslant0}\},
$$
\vspace{0.05cm}
\begin{center}
\begin{picture}(100,75)
\multiput(50,15)(15,0){5}{\circle*{3}}
\multiput(35,30)(0,15){4}{\circle*{3}}
\put(20,30){\vector(1,0){100}} \put(50,5){\vector(0,1){80}}
\put(17,70){$\mathfrak{R}_{\rho_1}$} \put(115,7){$\mathfrak{R}_{\rho_2}$}
\put(100,70){$M_{\mathbb{Q}}=\mathbb{Q}^2$} \linethickness{0.5mm}
\put(50,30){\line(1,0){65}} \put(50,30){\line(0,1){50}}
\end{picture}
\end{center}
where $c_j=\langle p_j,e\rangle$. Denote $x_1=\chi^{(1,0,\ldots,0)},\ldots,x_n=\chi^{(0,\ldots,0,1)}$. Then $\KK[X]=\KK[x_1,\ldots,x_n]$. 
Consider the monomial $x^e:=x_1^{c_1}\ldots x_{i-1}^{c_{i-1}} x_{i+1}^{c_{i+1}}\ldots x_n^{c_n}$. 
It is easy to see that the homogeneous LND corresponding to the root
$e=(c_1,\ldots,c_n)\in \mathfrak{R}_{\rho_i}$ is
$$
\partial_e=x^e\frac{\partial}{\partial x_i}. 
$$

This LND gives rise to the $\GG_a$-action
$$
x_i\mapsto x_i+sx^e, \quad x_j\mapsto x_j, \quad j\ne i, \quad s\in\GG_a.
$$
\end{example}


\subsection{Cox rings and Demazure roots}

We keep notation of the previous subsection and continue to follow~\cite{AR2017}. Let $X$ be a toric variety of dimension $n$ with an
acting torus $T$. This time we do not assume that $X$ is affine, and so $X$ is represented by a fan $\Sigma$ of convex polyhedral cones in $N_{\QQ}$; see~\cite{Fu1993} or \cite{CLS2011} for details.

Let $\Sigma(1)$ be the set of rays of the fan $\Sigma$ and $p_{\rho}$ be the primitive lattice vector on a ray $\rho$. For $\rho\in\Sigma(1)$ we consider the set $\mathfrak{R}_{\rho}$ of all vectors $e\in M$ such that
\begin{enumerate}
\item[(1)]
$\langle p_{\rho},e\rangle=-1\,\,\mbox{and}\,\, \langle p_{\rho'},e\rangle\geqslant0
\,\,\,\,\forall\,\rho'\in \Sigma(1), \,\rho'\ne\rho$;
\smallskip
\item[(2)]
if $\sigma$ is a cone of $\Sigma$ and $\langle v,e\rangle=0$ for all $v\in\sigma$, then the cone generated by $\sigma$ and $\rho$ is in $\Sigma$ as well.
\end{enumerate}

Note that~(1) implies~(2) if $\Sigma$ is a fan with convex support. This is the case if $X$ is affine or complete.

The elements of the set $\mathfrak{R}:=\bigsqcup\limits_{\rho\in\Sigma(1)}\mathfrak{R}_{\rho}$ are called the {\itshape Demazure roots} of the fan $\Sigma$, cf.~\cite[D\'efinition~4]{De1970} and \cite[Section~3.4]{Od1988}. Again elements $e\in\mathfrak{R}$ are in bijection with $\GG_a$-actions on $X$ normalized by the acting torus, see~\cite[Th\'eoreme~3]{De1970} and \cite[Proposition~3.14]{Od1988}. If $X$ is affine, the $\GG_a$-action given by a Demazure root~$e$ coincides with the action corresponding to the locally nilpotent derivation $\partial_e$ of the algebra $\KK[X]$ as defined in the previous subsection. Let us denote by $H_e$ the image in $\Aut(X)$ of the group $\GG_a$ under this action. Thus $H_e$ is a one-parameter unipotent subgroup normalized by $T$ in $\Aut(X)$.

We recall basic facts from toric geometry. There is a bijection between cones $\sigma\in\Sigma$ and $T$-orbits $\Of_{\sigma}$ on $X$ such that $\sigma_1\subseteq\sigma_2$ if and only if $\Of_{\sigma_2}\subseteq\overline{\Of_{\sigma_1}}$. Here $\dim\Of_{\sigma}=n-\dim\langle\sigma\rangle$. Moreover, each cone $\sigma\in\Sigma$ defines an open affine $T$-invariant subset $U_{\sigma}$ on $X$ such that $\Of_{\sigma}$ is the unique closed $T$-orbit on $U_{\sigma}$ and $\sigma_1\subseteq\sigma_2$ if and only if $U_{\sigma_1}\subseteq U_{\sigma_2}$.

Let $\rho_e$ be the distinguished ray corresponding to a root $e$, $p_e$ be the primitive lattice vector on $\rho_e$, and $R_e$ be the one-parameter subgroup of $T$ corresponding to $p_e$. Denote by $X^{H_e}$ the set of $H_e$-fixed points on $X$.

\smallskip

We aim to describe the action of $H_e$ on $X$.

\begin{proposition} {\cite[Proposition~1]{AR2017}} \label{phe}
For every point $x\in X\setminus X^{H_e}$ the orbit $H_ex$ meets exactly two $T$-orbits $\Of_1$ and $\Of_2$ on $X$, where $\dim\Of_1=\dim\Of_2+1$. The intersection $\Of_2\cap H_ex$ consists of a single point, while
$$
\Of_1\cap H_ex=R_ey \quad \text{for any} \quad y\in\Of_1\cap H_ex.
$$
\end{proposition}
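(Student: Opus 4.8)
The plan is to reduce the statement to a computation on a single affine chart and then to exploit the commutation of $H_e$ with the one-parameter subgroup $R_e$.

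\emph{Reduction to one chart.} First I would record that $H_e$ preserves every affine chart $U_\sigma$ with $\rho_e\in\sigma(1)$: for $m\in\sigma^\vee\cap M$ the element $\partial_e(\chi^m)=\langle p_e,m\rangle\chi^{m+e}$ stays in $\KK[\sigma^\vee\cap M]$, since $\langle p_{\rho'},m+e\rangle\ge 0$ for $\rho'\ne\rho_e$, while for $\rho'=\rho_e$ the inequality $\langle p_e,m\rangle-1\ge 0$ fails only when the coefficient $\langle p_e,m\rangle$ already vanishes. Hence $\exp(s\partial_e)$ is an automorphism of $U_\sigma$. Next, if $x\in\Of_\tau$ is not $H_e$-fixed, then choosing $m\in\tau^\perp$ with $\chi^m(x)\ne 0$ and $\partial_e(\chi^m)(x)\ne 0$ forces $\chi^{m+e}(x)\ne 0$, so $m+e\in\tau^\perp$ and therefore $\tau\subseteq e^\perp$. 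Condition~(2) in the definition of Demazure roots then yields that the cone generated by $\tau$ and $\rho_e$ lies in $\Sigma$, so $x$ lies in some $U_\sigma$ with $\rho_e\in\sigma(1)$. As $H_e$ preserves this chart, the whole orbit $C:=H_ex$ is contained in $U_\sigma$; being an orbit of a unipotent group on an affine variety, it is closed in $U_\sigma$ and isomorphic to $\AA^1$.

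\emph{The scaling structure.} Let $\lambda$ be the cocharacter with $\lambda(\GG_m)=R_e$. From $\dg\partial_e=e$ and $\langle p_e,e\rangle=-1$ one obtains the commutation relation $\lambda(t)\,h_e(s)\,\lambda(t)^{-1}=h_e(ts)$. The heart of the argument is to locate the unique $R_e$-fixed point $q$ on $C$; writing $C=\{c(s)=h_e(s)q\}$ with this basepoint, the relation gives $\lambda(t)\,c(s)=c(ts)$, so $R_e$ acts on $C\cong\AA^1$ by the scaling $s\mapsto ts$. Consequently $C$ splits into exactly two $R_e$-orbits, the fixed point $\{q\}$ and $C\setminus\{q\}\cong\GG_m$; since $R_e\subseteq T$, each of these is contained in a single $T$-orbit. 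This already gives that $C$ meets at most two $T$-orbits, with $\Of_2\cap C=\{q\}$ a single point and $\Of_1\cap C=C\setminus\{q\}=R_ey$ for any $y$ in it.

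\emph{Exactly two orbits and the dimension drop.} That $\Of_1\ne\Of_2$ follows because $C\cong\AA^1$ cannot lie inside one $T$-orbit: a $T$-orbit is a torus $\GG_m^k$, every morphism $\AA^1\to\GG_m^k$ is constant, and a constant orbit would mean $x$ is fixed. Finally, since $\Of_1$ is moved by $H_e$, its cone $\tau_1$ satisfies $\tau_1\subseteq e^\perp$, and $q=\lim_{t\to 0}\lambda(t)\,y$ is the toric limit of $\lambda=\lambda_{p_e}$ applied to $y\in\Of_1$, which lands in the orbit of the smallest cone containing $\tau_1$ and $\rho_e$; by Condition~(2) this cone $\tau_2$ lies in $\Sigma$. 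Because $\langle p_e,e\rangle=-1\ne 0$ while $\tau_1\subseteq e^\perp$, the vector $p_e$ is not in the span of $\tau_1$, so $\dim\tau_2=\dim\tau_1+1$ and hence $\dim\Of_1=\dim\Of_2+1$.

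\emph{Main obstacle.} I expect the delicate point to be the existence of the $R_e$-fixed basepoint $q$ on $C$ together with the precise identification of the limit orbit. Producing $q$ amounts to the local slice structure of the root derivation $\partial_e$ on $U_\sigma$, generalizing the model $\partial_e=x^e\,\partial/\partial x_i$ of Example~\ref{ex1}, and controlling $\lim_{t\to 0}\lambda(t)y$ so that it does not drop into a deeper orbit is exactly where Condition~(2) for Demazure roots is indispensable.
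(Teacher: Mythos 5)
Your overall architecture matches the paper's proof, which consists of exactly your first step (the charts $U_\sigma$ with $\rho_e\in\sigma(1)$ are $H_e$-invariant and cover $X\setminus X^{H_e}$, quoted from Oda's argument) followed by a citation of \cite[Proposition~2.1]{AKZ2012} for the resulting affine case. Your second and third paragraphs attempt to reprove that cited affine statement, and that is where the genuine gap sits: everything after ``the heart of the argument is to locate the unique $R_e$-fixed point $q$ on $C$'' is conditional on the existence of such a $q$ \emph{on the orbit} $C=H_ex$, and you never produce it. Note that $R_e$ normalizes $H_e$ but does not a priori preserve the individual orbit $C$: the relation only gives $\lambda(t)C=H_e\lambda(t)x$, which could in principle be a different $H_e$-orbit, in which case $C$ carries no $R_e$-fixed point and the scaling picture collapses. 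So one must first show that $\lambda(t)x\in H_ex$, then that the induced $\GG_m$-action on $C\cong\AA^1$ is nontrivial (this part is easy: triviality plus the commutation relation forces the stabilizer of $x$ in $H_e$ to be all of $\GG_a$), and only then extract $q$ as the unique fixed point of a nontrivial $\GG_m$-action on $\AA^1$. Establishing $\lambda(t)x\in H_ex$ in general requires the explicit orbit computation $\chi^m(h_e(s)y)=\chi^m(y)\bigl(1+s\,\chi^e(y)\bigr)^{\langle p_e,m\rangle}$ on the chart and a case analysis according to whether $\chi^e$ vanishes at $x$; this is precisely the content of the cited affine proposition, and deferring it to ``the local slice structure of $\partial_e$'' leaves the central claim unproved.

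There is also an inaccuracy in the reduction step. From $\partial_e(\chi^m)(x)\ne0$ you conclude $m+e\in\tau^\perp$ and hence $\tau\subseteq e^\perp$, but the witnessing $m$ need only lie in $\tau^\vee$, not in $\tau^\perp$, and the conclusion $\tau\subseteq e^\perp$ is in fact false for the lower-dimensional orbit of an $H_e$-connected pair: for $\partial_e=\partial/\partial x_1$ on $\AA^2$ with $e=(-1,0)$ the origin is moved, yet its cone contains $\rho_e$ and pairs nontrivially with $e$. The correct dichotomy is that either $\tau\subseteq e^\perp$, in which case condition~(2) supplies the chart $U_{\tau+\rho_e}$, or some ray of $\tau$ pairs negatively with $e$, which forces that ray to be $\rho_e$ so that $U_\tau$ itself is a suitable chart; your argument covers only the first case. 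The conclusion of the reduction survives, and the remaining ingredients you use (closedness of unipotent orbits on affine varieties, constancy of morphisms $\AA^1\to\GG_m^k$, and the identification of $\Of_2$ as the orbit of the cone $\tau_1+\rho_e$ via the limit of $\lambda_{p_e}$) are correct once the fixed basepoint is in hand.
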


\begin{proof}
It follows from the proof of \cite[Proposition~3.14]{Od1988} that the affine charts $U_{\sigma}$, where $\sigma\in\Sigma$ is a cone containing $\rho_e$, are $H_e$-invariant, and the complement of their union is contained in $X^{H_e}$. This reduces the proof to the case $X$ is affine. Then the assertion is proved in \cite[Proposition~2.1]{AKZ2012}.
\end{proof}

A pair of $T$-orbits $(\Of_1,\Of_2)$ on $X$ is said to be {\itshape $H_e$-connected} if $H_ex\subseteq \Of_1\cup\Of_2$ for some $x\in X\setminus X^{H_e}$. By Proposition~\ref{phe}, $\Of_2\subseteq\overline{\Of_1}$ for such a pair (up to permutation) and $\dim\Of_1=\dim\Of_2+1$. Since the torus normalizes the subgroup $H_e$, any point of $\Of_1\cup\Of_2$ can actually serve as the point $x$.

\begin{lemma} {\cite[Lemma~1]{AR2017}} \label{lpe}
A pair of $T$-orbits $(\Of_{\sigma_1},\Of_{\sigma_2})$ is $H_e$-connected if and only if $e|_{\sigma_2}\le 0$ and $\sigma_1$ is a facet of $\sigma_2$ given by the equation
$\langle v,e\rangle=0$.
\end{lemma}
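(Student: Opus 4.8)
The plan is to reduce to an affine chart and then run an explicit computation with the locally nilpotent derivation $\partial_e$. First, by Proposition~\ref{phe} together with the orbit--cone correspondence recalled above, an $H_e$-connected pair $(\Of_{\sigma_1},\Of_{\sigma_2})$ automatically satisfies $\Of_{\sigma_2}\subseteq\overline{\Of_{\sigma_1}}$ and $\dim\Of_{\sigma_1}=\dim\Of_{\sigma_2}+1$, which translates into $\sigma_1$ being a facet of $\sigma_2$; thus the real content of the lemma is the sign condition $e|_{\sigma_2}\le 0$ and the identification $\sigma_1=\sigma_2\cap e^{\perp}$. Since every non-fixed point lies in a chart $U_\sigma$ with $\rho_e\in\sigma$ (this is exactly the reduction used in the proof of Proposition~\ref{phe}), and such a point of $\Of_{\sigma_2}$ forces $\sigma_2\preceq\sigma$, I would fix a cone $\sigma\in\Sigma$ with $\sigma_2\preceq\sigma$ and $\rho_e\preceq\sigma$ and work in the affine toric variety $U_\sigma=\Spec\KK[\sigma^{\vee}\cap M]$, where both orbits and all nearby non-fixed points live.

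The computational engine is the formula $\exp(s\partial_e)(\chi^m)=\sum_{k\ge 0}\binom{\langle p_e,m\rangle}{k}s^k\chi^{m+ke}$, obtained by iterating $\partial_e(\chi^m)=\langle p_e,m\rangle\chi^{m+e}$ and using $\langle p_e,m+e\rangle=\langle p_e,m\rangle-1$. Applying this to the distinguished point $\gamma_{\sigma_2}$ of $\Of_{\sigma_2}$ (given by $\gamma_{\sigma_2}(\chi^m)=1$ for $m\in\sigma_2^{\perp}$ and $0$ otherwise), the support of $s\cdot\gamma_{\sigma_2}$ for generic $s$ is $\{m\in\sigma^{\vee}\cap M:\ \exists\,k\ge 0,\ \langle p_e,m\rangle\ge k,\ m+ke\in\sigma_2^{\perp}\}$, and a point with this support sits in the orbit $\Of_\tau$ with $\tau^{\perp}\cap\sigma^{\vee}$ equal to that support. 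Two observations drive everything. If $\gamma_{\sigma_2}$ is moved at all, i.e. some such $m$ with $k\ge 1$ exists, then $m+ke\in\sigma_2^{\perp}$ forces $\langle v,e\rangle=-\tfrac1k\langle v,m\rangle\le 0$ for all $v\in\sigma_2$ (as $\langle v,m\rangle\ge 0$), which gives $e|_{\sigma_2}\le 0$. Next, root condition~(1) says every ray $\rho'\ne\rho_e$ has $\langle p_{\rho'},e\rangle\ge 0$, so together with $e|_{\sigma_2}\le 0$ every ray of $\sigma_2$ other than $\rho_e$ lies in $e^{\perp}$; hence either $\rho_e\in\sigma_2(1)$, or else $\sigma_2\subseteq e^{\perp}$.

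In the principal case $\rho_e\in\sigma_2(1)$, the relation $m+ke\in\sigma_2^{\perp}$ pins down $k=\langle p_e,m\rangle$, and a short computation using $(\sigma_2\cap e^{\perp})^{\perp}=\sigma_2^{\perp}+\QQ e$ shows the generic support equals $(\sigma_2\cap e^{\perp})^{\perp}\cap\sigma^{\vee}$; that is, $s\cdot\gamma_{\sigma_2}$ lies generically in $\Of_{\sigma_2\cap e^{\perp}}$, while $s=0$ keeps it in $\Of_{\sigma_2}$. Matching this against Proposition~\ref{phe}, which asserts that $H_e\gamma_{\sigma_2}$ meets exactly two $T$-orbits, identifies the connected pair as $(\Of_{\sigma_2\cap e^{\perp}},\Of_{\sigma_2})$ and forces $\sigma_1=\sigma_2\cap e^{\perp}$. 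This simultaneously yields the converse: given $e|_{\sigma_2}\le 0$ with $\sigma_1=\sigma_2\cap e^{\perp}$ a facet, we have $\sigma_2\not\subseteq e^{\perp}$, so the second observation above already guarantees $\rho_e\in\sigma_2(1)$, and the same computation exhibits $\gamma_{\sigma_2}$ as the non-fixed point witnessing that $(\Of_{\sigma_1},\Of_{\sigma_2})$ is $H_e$-connected.

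The main obstacle I anticipate is the degenerate branch $\sigma_2\subseteq e^{\perp}$, where the formula collapses to $(s\cdot\gamma_{\sigma_2})(\chi^m)=(1+s)^{\langle p_e,m\rangle}$ for $m\in\sigma_2^{\perp}$, so $\gamma_{\sigma_2}$ stays \emph{generically} inside $\Of_{\sigma_2}$ and only drops to a smaller orbit at an isolated parameter. Here I must invoke Demazure condition~(2) to produce $\sigma_2^{+}:=\mathrm{Cone}(\sigma_2,\rho_e)\in\Sigma$ and recognise that the flow connects $\Of_{\sigma_2}$ with $\Of_{\sigma_2^{+}}$, so that relative to the lemma's labelling the larger cone is $\sigma_2^{+}$ and $\sigma_2=\sigma_2^{+}\cap e^{\perp}$ --- consistent with the statement, but only after carefully matching the asymmetric roles in Proposition~\ref{phe} (the big orbit appearing for generic $s$ and the small one at a single point). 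Keeping track of which distinguished point is being flowed, and hence which of the two cones actually plays the role of $\sigma_2$, is the delicate bookkeeping the argument must get right.
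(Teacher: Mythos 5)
Your argument is correct, and it follows the only route the paper indicates: reduce to an $H_e$-invariant affine chart $U_\sigma$ with $\rho_e\preceq\sigma$ and compute with $\exp(s\partial_e)$. The difference is that the paper gives no details at all --- it simply says the statement reduces to the affine case and cites \cite[Lemma~2.2]{AKZ2012} --- so what you have written is in effect a self-contained reconstruction of that cited lemma. All the key steps check out: the binomial formula for $\exp(s\partial_e)(\chi^m)$ is right (the sum terminates because $\langle p_e,m\rangle\ge 0$ on $\sigma^\vee$), the identification of orbits via the support of a semigroup homomorphism is legitimate, the deduction $\langle v,e\rangle=-\tfrac1k\langle v,m\rangle\le 0$ gives $e|_{\sigma_2}\le 0$, and in the principal case the unique admissible $k$ is $k=\langle p_e,m\rangle$, so $(s\cdot\gamma_{\sigma_2})(\chi^m)=s^{\langle p_e,m\rangle}$ on $\sigma_1^\perp\cap\sigma^\vee\cap M$ and $0$ elsewhere, placing $s\cdot\gamma_{\sigma_2}$ in $\Of_{\sigma_2\cap e^\perp}$ for \emph{every} $s\ne 0$ (which is exactly what the converse direction needs).

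One remark that would streamline your write-up: the degenerate branch $\sigma_2\subseteq e^\perp$, which you flag as the main obstacle, is actually never needed. In the forward direction, once you label the pair so that $\Of_{\sigma_2}$ is the smaller orbit, Proposition~\ref{phe} forces the generic translate of $\gamma_{\sigma_2}$ to land in the \emph{other} orbit $\Of_{\sigma_1}$; your own computation shows that in the degenerate branch the generic translate stays in $\Of_{\sigma_2}$, so that branch is immediately excluded and only the principal case survives. In the converse direction the hypothesis that $\sigma_1=\sigma_2\cap e^\perp$ is a proper face already gives $\sigma_2\not\subseteq e^\perp$, hence $\rho_e\in\sigma_2(1)$ by your dichotomy, and $\sigma=\sigma_2$ serves as the chart. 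So condition~(2) in the definition of a Demazure root and the cone $\mathrm{Cone}(\sigma_2,\rho_e)$ play no role in this lemma (they matter for the orbit analysis in Proposition~\ref{phe} itself); the ``delicate bookkeeping'' you anticipate disappears if you always flow the distinguished point of the orbit attached to the \emph{larger} cone.
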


The proof again reduces to the affine case, where the assertion is \cite[Lemma~2.2]{AKZ2012}.

\smallskip

Now we recall the basic ingredients of the Cox construction; see~\cite[Chapter~1]{ADHL2015} for more details. Let $X$ be a normal variety with free finitely generated divisor class group $\Cl(X)$ and only constant invertible regular functions. Denote by $\WDiv(X)$ the group of Weil divisors on $X$ and fix a subgroup $K \subseteq \WDiv(X)$ which maps onto $\Cl(X)$ isomorphically. The {\itshape Cox ring} of the variety $X$ is defined as
$$
R(X)=\bigoplus_{D\in K} H^0(X,D),
$$
where $H^0(X,D)=\{f\in\KK(X)^{\times}\mid \ddiv(f)+D\geqslant0\}\cup\{0\}$ and multiplication on homogeneous components coincides with multiplication in the field of rational functions $\KK(X)$ and extends to $R(X)$ by linearity. It is easy to see that up to isomorphism the graded ring $R(X)$ does not depend on the choice of the subgroup $K$.

\smallskip

It is proved in \cite{Cox1995} that if $X$ is toric, then $R(X)$ is a polynomial algebra $\KK[x_1,\ldots,x_m]$, where the variables $x_i$ correspond to $T$-invariant prime divisors $D_i$ on $X$ or, equivalently, to the rays $\rho_i$ of the fan $\Sigma_X$. The $\Cl(X)$-grading on $R(X)$ is given by $\dg(x_i)=[D_i]$.

\smallskip

Suppose that the Cox ring $R(X)$ is finitely generated. Then ${\overline{X}:=\Spec R(X)}$ is called the \emph{total coordinate space} of the variety $X$. It is an affine factorial variety with an action of the torus $H_X := \Spec\KK[\Cl(X)]$. There is an open $H_X$-invariant subset $\widehat{X}\subseteq \overline{X}$ such that the complement $\overline{X}\backslash\widehat{X}$ is of codimension at least two in $\overline{X}$, there exists a good quotient $p_X\colon\widehat{X}\rightarrow\widehat{X}/\!/H_{X}$, and the quotient space $\widehat{X}/\!/H_{X}$ is isomorphic to $X$, see \cite[Construction~1.6.3.1]{ADHL2015}. If $X$ is smooth, the quotient map $p_X\colon\widehat{X}\rightarrow\widehat{X}/\!/H_{X}$ is called the \emph{universal torsor} over $X$. So we have the diagram
$$
\begin{CD}
\widehat{X} @>{i}>> \overline{X}=\Spec R(X)\\
@VV{/\!/H_{X}}V  \\
X
\end{CD}
$$
If $X$ is toric, then $\overline{X}$ is isomorphic to $\KK^m$ and $\overline{X}\setminus\widehat{X}$ is a union of some coordinate planes in $\KK^m$ of codimension at least two~\cite{Cox1995}.

\smallskip

By~\cite[Theorem~4.2.3.2]{ADHL2015}, any $\GG_a$-action on a variety $X$ can be lifted to a $\GG_a$-action on the variety $\overline{X}$ commuting with the action of the torus $H_X$.

If $X$ is toric and a $\GG_a$-action is normalized by the acting torus $T$, then the lifted $\GG_a$-action on $\KK^m$ is normalized by the diagonal torus $(\KK^{\times})^m$. Conversely,
any $\GG_a$-action on $\KK^m$ normalized by the torus $(\KK^{\times})^m$ and commuting with the subtorus $H_X$ induces a $\GG_a$-action on $X$. This shows that $\GG_a$-actions on $X$ normalized by $T$ are in bijection with locally nilpotent derivations of the Cox ring $\KK[x_1,\ldots,x_m]$ that are homogeneous with respect to the standard grading by the lattice
$\ZZ^m$ and have degree zero with respect to the $\Cl(X)$-grading.
%
\subsection{Normalized additive actions}

Let $X$ be a normal variety admitting an additive action with the open orbit $\Uf$. By Proposition~\ref{adcl}, any invertible regular function on $X$ is constant and the divisor class group $\Cl(X)$ is freely generated by classes $[D_1],\ldots,[D_l]$ of prime divisors such that $X\setminus\Uf=D_1\cup\ldots\cup D_l$. In particular, the Cox ring $R(X)$ is well defined for such a variety $X$.

Now we assume that $X$ is toric and an additive action $\GG_a^n\times X\to X$ is normalized by the acting torus $T$. Then the group $\GG_a^n$ is a direct product of $n$ subgroups $\GG_a$ each normalized by~$T$. They correspond to pairwise commuting homogeneous locally nilpotent derivations on the Cox ring $\KK[x_1,\ldots,x_m]$ having degree zero with respect to the $\Cl(X)$-grading. In turn, such derivations up to scalar are in bijection with Demazure roots of the fan $\Sigma_X$.

Consider a set of homogeneous derivations $\partial_{e}$ of the polynomial algebra $\KK[x_1,\ldots,x_m]$ corresponding to some Demazure roots $e$ of the fan $\Sigma_X$.

\begin{lemma} {\cite[Lemma~2]{AR2017}} \label{com}
Derivations $\partial_e$ and $\partial_{e'}$ commute if and only if either $\rho_e=\rho_{e'}$ or $\langle p_e,e'\rangle=\langle p_{e'},e\rangle = 0$.
\end{lemma}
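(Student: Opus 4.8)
The plan is to compute both derivations explicitly on the variables $x_1,\ldots,x_m$ and then compare the mixed partials. Recall from Example~\ref{ex1} that a Demazure root $e\in\mathfrak{R}_\rho$ with distinguished ray $\rho=\rho_e$ corresponds, in the Cox coordinates, to the homogeneous locally nilpotent derivation
$$
\partial_e = x^e\,\frac{\pa}{\pa x_{j}},
$$
where $x_{j}$ is the variable attached to the ray $\rho_e$ and $x^e=\prod_{i}x_i^{\langle p_{\rho_i},e\rangle}$ is the monomial whose exponent of $x_i$ equals $\langle p_{\rho_i},e\rangle\ge 0$ for $i\ne j$ (and exponent $-1$ at $i=j$, which is exactly cancelled by the $\pa/\pa x_j$, so $\partial_e$ is a genuine polynomial derivation). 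Thus $\partial_e$ lowers the $x_j$-degree by one and raises the other degrees according to $e$. First I would fix this notation: write $\partial_e=x^e\pa_{j}$ and $\partial_{e'}=x^{e'}\pa_{k}$, where $\rho_e=\rho_j$ and $\rho_{e'}=\rho_k$ are the distinguished rays, and $p_e=p_{\rho_j}$, $p_{e'}=p_{\rho_k}$ are the primitive generators.

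Next I would just expand the commutator $[\partial_e,\partial_{e'}]$ using the Leibniz rule. Since each derivation is a monomial times a single partial, the computation is elementary:
$$
[\partial_e,\partial_{e'}] = x^e\pa_{j}\bigl(x^{e'}\bigr)\pa_{k} - x^{e'}\pa_{k}\bigl(x^{e}\bigr)\pa_{j}.
$$
Here $\pa_{j}(x^{e'})$ pulls out the exponent of $x_j$ in $x^{e'}$, which is $\langle p_{e},e'\rangle$ (the power of the variable $x_j$ attached to $\rho_e$ in the monomial $x^{e'}$), giving $\pa_j(x^{e'})=\langle p_e,e'\rangle\,x^{e'}/x_j$; symmetrically $\pa_k(x^e)=\langle p_{e'},e\rangle\,x^e/x_k$. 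Substituting,
$$
[\partial_e,\partial_{e'}] = \langle p_e,e'\rangle\, \frac{x^{e}x^{e'}}{x_j}\,\pa_{k} \;-\; \langle p_{e'},e\rangle\, \frac{x^{e}x^{e'}}{x_k}\,\pa_{j}.
$$
The two terms involve the distinct partial operators $\pa_k$ and $\pa_j$ (when $\rho_e\ne\rho_{e'}$), so the whole expression vanishes identically if and only if both scalar coefficients vanish, i.e. $\langle p_e,e'\rangle=\langle p_{e'},e\rangle=0$. This gives the forward and backward implications of the ``$\langle p_e,e'\rangle=\langle p_{e'},e\rangle=0$'' alternative.

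The remaining case to isolate is $\rho_e=\rho_{e'}$, i.e. $j=k$. Then both derivations differentiate the same variable $x_j$, and the commutator becomes $\bigl(\langle p_e,e'\rangle-\langle p_{e'},e\rangle\bigr)\,(x^ex^{e'}/x_j)\,\pa_j$. I would observe that when $\rho_e=\rho_{e'}=\rho_j$ one has $\langle p_{\rho_j},e\rangle=\langle p_{\rho_j},e'\rangle=-1$, and the defining condition~(1) forces $\langle p_{\rho'},e\rangle,\langle p_{\rho'},e'\rangle\ge 0$ for all other rays; a short check shows $\langle p_e,e'\rangle=\langle p_{e'},e\rangle$ in this situation, so the coefficient is zero and the derivations commute. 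Care is needed here to confirm that $x^ex^{e'}/x_j$ is still a polynomial (so that the vanishing is genuine and not an artifact of a pole), but this follows because the only negative exponent in each of $e,e'$ sits at the index $j$, contributing total exponent $-2$ at $x_j$, which is raised to $-1$ after dividing by $x_j$ and then to a nonnegative net power once one accounts for the single $\pa_j$; I would verify this degree bookkeeping explicitly. The main obstacle, and the step I expect to require the most attention, is precisely this same-ray case: disentangling the exponent at $x_j$ and checking that the numerical equality $\langle p_e,e'\rangle=\langle p_{e'},e\rangle$ holds there, as opposed to the clean distinct-variable case where the two $\pa$-operators are linearly independent and the conclusion is immediate.
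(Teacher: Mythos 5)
Your computation is correct and is essentially the argument of the cited source (the survey itself states this lemma without proof, referring to \cite[Lemma~2]{AR2017}): writing each derivation as a monomial times a single partial and expanding the commutator by the Leibniz rule reduces everything to the two pairings $\langle p_e,e'\rangle$ and $\langle p_{e'},e\rangle$, with linear independence of $\pa_j$ and $\pa_k$ handling the distinct-ray case. One remark: in the same-ray case your degree bookkeeping is unnecessary (and as written the direction of the exponent shifts is garbled), since the honest polynomial coefficients $\prod_{i\ne j}x_i^{\langle p_i,e\rangle}$ and $\prod_{i\ne j}x_i^{\langle p_i,e'\rangle}$ do not involve $x_j$ at all, so both Leibniz terms vanish separately — equivalently, $\langle p_{\rho_j},e\rangle=\langle p_{\rho_j},e'\rangle=-1$ makes your scalar coefficient literally zero, so no question about poles ever arises.
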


Now we come to the key definition. 

\begin{definition} \label{deff}
A set $e_1,\ldots,e_n$ of Demazure roots of a fan $\Sigma$ of dimension $n$ is called a {\it complete collection} if $\langle p_i,e_j\rangle=-\delta_{ij}$ for all $1\le i,j\le n$, where $\delta_{ij}$ is the Kronecker delta. 
\end{definition}

In this case, the vectors $p_1,\ldots,p_n$ form a basis of the lattice $N$, and $-e_1,\ldots,-e_n$ is the dual basis of the dual lattice $M$.

\smallskip

The following result may be considered as a combinatorial description of normalized additive actions on toric varieties.

\begin{theorem} {\cite[Theorem~1]{AR2017}} \label{cc}
Let $X$ be a toric variety. Then additive actions on $X$ normalized by the acting torus $T$ are in bijection with complete collections of Demazure roots of the fan $\Sigma_X$.
\end{theorem}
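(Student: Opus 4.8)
The plan is to realize the bijection concretely in both directions and verify that the two assignments are mutually inverse, using two earlier tools: the dictionary sending normalized $\GG_a$-actions to Demazure roots (every homogeneous locally nilpotent derivation of $\KK[x_1,\ldots,x_m]$ of $\Cl(X)$-degree zero equals $\alpha\partial_e$ for a unique root $e$), and the commutation criterion of Lemma~\ref{com}. The computations are cleanest on the open torus $T\subseteq X$. There each $\partial_e$ restricts to the $T$-semi-invariant field $\partial_e|_T=\chi^e\partial_{p_{\rho_e}}$, that is $\partial_e(\chi^m)=\langle p_{\rho_e},m\rangle\chi^{m+e}$ for $m\in M$; this is just the affine-chart formula of the graded-algebra subsection applied on $U_{\rho_e}\supseteq T$ and extended to all of $\KK[M]$. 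Note also that $\partial_e$ has $T$-weight $\dg\partial_e=e$, so distinct roots carry distinct weights.

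For the construction direction (from a complete collection to a normalized additive action), I would start from $e_1,\ldots,e_n$ with $\langle p_i,e_j\rangle=-\delta_{ij}$. Then $p_1,\ldots,p_n$ is a basis of $N$ and $-e_1,\ldots,-e_n$ the dual basis of $M$, so $t_i:=\chi^{-e_i}$ are coordinates on $T\cong(\KK^\times)^n$. The torus formula gives $\partial_{e_i}(t_j)=\langle p_i,-e_j\rangle\chi^{e_i-e_j}=\delta_{ij}$, hence $\partial_{e_i}=\partial/\partial t_i$ on $T$. In particular the derivations pairwise commute (which Lemma~\ref{com} also yields, since $\langle p_i,e_j\rangle=\langle p_j,e_i\rangle=0$ for $i\ne j$), so exponentiating their commuting flows gives a $\GG_a^n$-action acting on $T$ by the translations $t_i\mapsto t_i+s_i$. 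This action is effective, has $T$ as an open orbit, and is normalized by $T$ because each $\partial_{e_i}$ is homogeneous.

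For the extraction direction (from an action to a collection), let $\mathfrak u=\Lie(\GG_a^n)$, an $n$-dimensional abelian algebra of commuting locally nilpotent derivations that is $T$-stable since the action is normalized by $T$. Decomposing $\mathfrak u$ into $T$-weight spaces and using that each nonzero homogeneous locally nilpotent derivation is $\alpha\partial_e$ for a root $e$ of weight $e$, I obtain a canonical basis $\partial_{e_1},\ldots,\partial_{e_n}$ with pairwise distinct roots $e_i$ (distinct weights force the weight spaces to be at most one-dimensional). By Lemma~\ref{openorb_lem} the open orbit meets $T$ and equals $\GG_a^n$ times a general torus point, so the $\partial_{e_i}$ are linearly independent as tangent vectors at a general point of $T$. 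Since $\partial_{e_i}|_T=\chi^{e_i}\partial_{p_{\rho_{e_i}}}$, any two sharing a distinguished ray would be proportional everywhere on $T$, contradicting independence; hence the distinguished rays $\rho_1,\ldots,\rho_n$ are pairwise distinct. For $i\ne j$ the first alternative of Lemma~\ref{com} then fails, so commutativity forces $\langle p_i,e_j\rangle=\langle p_j,e_i\rangle=0$, and together with $\langle p_i,e_i\rangle=-1$ this is exactly $\langle p_i,e_j\rangle=-\delta_{ij}$. Thus $\{e_1,\ldots,e_n\}$ is a complete collection. Finally the two maps are inverse: the weight set of the constructed action is precisely $\{e_i\}$, and the action rebuilt from an extracted collection has the same generators $\partial_{e_i}$, hence the same subgroup $\GG_a^n\subseteq\Aut(X)$.

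I expect the extraction direction to be the main obstacle: the crux is converting the global open-orbit hypothesis into the pointwise linear independence of the restricted fields $\partial_{e_i}|_T$ and thereby ruling out coincident distinguished rays, which is what unlocks the second alternative of Lemma~\ref{com}. Once that geometric input is in place, the weight-space decomposition, the orthogonality relations, and the reverse construction follow directly from the classification of homogeneous locally nilpotent derivations and the explicit torus formula.
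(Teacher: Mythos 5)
Your argument is correct and rests on the same two pillars as the paper's proof --- the dictionary between Demazure roots and normalized $\GG_a$-actions, and the commutation criterion of Lemma~\ref{com} --- but you carry out the two decisive computations in a different place. The paper works on the total coordinate space: a normalized additive action lifts to $n$ commuting homogeneous LNDs of the Cox ring $\KK[x_1,\ldots,x_m]$; if two roots shared a distinguished ray the lifted $\GG_a^n$ would move at most $n-1$ of the Cox coordinates, so no orbit could be $n$-dimensional; and conversely, for a complete collection the paper checks that $\GG_a^n\times H_X$ has trivial stabilizer at $(1,\ldots,1)\in\KK^m$ and concludes by a dimension count. You instead restrict everything to the dense torus, where $\partial_{e}|_T=\chi^{e}\partial_{p_{\rho_e}}$: coincident distinguished rays make two of the $n$ vector fields proportional at every point of $T$, contradicting $n$-dimensionality of the open orbit, while a complete collection turns the derivations into the partial derivatives $\partial/\partial t_i$ in the coordinates $t_i=\chi^{-e_i}$, so the orbit of a torus point is visibly $n$-dimensional and effectivity is immediate. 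Your version avoids the auxiliary torus $H_X$ entirely and makes the open-orbit claim transparent; the paper's version has the advantage that the Cox-ring lift is also the mechanism producing the regular action on all of $X$, whereas you must separately invoke the Demazure--Oda dictionary (as you do at the outset) to know that each $\partial_{e_i}$ integrates to a regular action on $X$ --- the flow of $\partial/\partial t_i$ by itself does not preserve $T$ and is not locally nilpotent on $\KK[T]$.

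Two small inaccuracies are worth fixing, though neither is a gap. First, the constructed action does not ``act on $T$'' and $T$ is not the open orbit: the open orbit is the $\GG_a^n$-orbit of a torus point, which contains $T$ but is in general strictly larger (for $\PP^n$ it is $\AA^n\supsetneq(\KK^\times)^n$); what you actually use --- that the orbit through a point of $T$ is $n$-dimensional --- is correct. Second, the one-dimensionality of the $T$-weight spaces of $\mathfrak{u}$ follows from the uniqueness, up to scalar, of the homogeneous LND of a given degree, not from ``distinct weights''; and one should record that a weight line in $\mathfrak{u}$ is the Lie algebra of a $T$-normalized $\GG_a$-subgroup, which is why its generator is a homogeneous LND in the first place.
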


As we have seen, a normalized additive action on $X$ determines $n$ pairwise commuting homogeneous locally nilpotent derivations of the Cox ring $\KK[x_1,\ldots,x_m]$. They have the form
$\partial_e$ for some Demazure roots $e$. So Theorem~\ref{cc} follows directly from the next lemma. 

\begin{lemma} {\cite[Lemma~3]{AR2017}}
Homogeneous locally nilpotent derivations $\partial_1,\ldots,\partial_n$ of the Cox ring $\KK[x_1,\ldots, x_m]$ corresponding to Demazure roots $e_1,\ldots,e_n$ define a normalized
additive action on $X$ if and only if $e_1,\ldots,e_n$ form a complete collection.
\end{lemma}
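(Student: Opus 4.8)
The plan is to reduce both implications to the explicit monomial description of the derivations $\partial_{e_j}$ on the Cox ring $\KK[x_1,\ldots,x_m]$ together with a tangent-space computation for the lifted $\GG_a^n$-action on $\widehat{X}\subseteq\KK^m$, pushed down to $X$ via $dp_X$. Writing $\rho_{\ell(j)}$ for the distinguished ray of $e_j$, Example~\ref{ex1} gives $\partial_{e_j}=M_j\,\tfrac{\pa}{\pa x_{\ell(j)}}$ with $M_j=\prod_{k:\,\rho_k\ne\rho_{\ell(j)}}x_k^{\langle p_k,e_j\rangle}$. I will work at a point $x\in\widehat{X}$ over the open $T$-orbit, i.e. with all coordinates nonzero, where the value of $\partial_{e_j}$ is the tangent vector $v_j=M_j(x)\tfrac{\pa}{\pa x_{\ell(j)}}$ and $M_j(x)\ne0$.

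For the forward implication, suppose $e_1,\ldots,e_n$ form a complete collection, so $\langle p_i,e_j\rangle=-\delta_{ij}$ for $p_i=p_{e_i}$. First the distinguished rays are pairwise distinct: if $\rho_{\ell(i)}=\rho_{\ell(j)}$ then $p_i=p_j$ and $\langle p_i,e_j\rangle=\langle p_j,e_j\rangle=-1\ne0$, contradicting $i\ne j$. Since $\langle p_i,e_j\rangle=\langle p_j,e_i\rangle=0$ for $i\ne j$, Lemma~\ref{com} shows the $\partial_{e_j}$ pairwise commute, so they integrate to a $\GG_a^n$-action on $X$ normalized by $T$; the same vanishing means each $M_j$ involves only variables $x_k$ with $\rho_k\notin\{\rho_{\ell(1)},\ldots,\rho_{\ell(n)}\}$. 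To get an open orbit I compute the orbit tangent space at $\overline{x}$ as $dp_X$ of $\Span(v_1,\ldots,v_n)$; a relation $\sum_j c_jv_j\in\ker dp_X$ means the diagonal field $\sum_j c_jM_j(x)\tfrac{\pa}{\pa x_{\ell(j)}}$ lies in $\Lie H_X$, whose image under $(a_k)\mapsto\sum_k a_kp_k$ must vanish; as this field has nonzero entries only in the distinct directions $\ell(1),\ldots,\ell(n)$, this yields $\sum_j a_{\ell(j)}p_j=0$, and linear independence of $p_1,\ldots,p_n$ forces all $c_j=0$. Hence the orbit is $n$-dimensional, so open, and the generic stabilizer is a finite, hence trivial, subgroup of the torsion-free group $\GG_a^n$, giving effectivity.

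For the converse, assume $\partial_{e_1},\ldots,\partial_{e_n}$ define a normalized additive action. Commutativity and Lemma~\ref{com} give, for each pair $i\ne j$, either $\rho_{\ell(i)}=\rho_{\ell(j)}$ or $\langle p_i,e_j\rangle=\langle p_j,e_i\rangle=0$. The open-orbit hypothesis excludes the first alternative: if two distinguished rays coincided, then $v_i$ and $v_j$ would both be multiples of $\tfrac{\pa}{\pa x_{\ell(i)}}$ and thus linearly dependent in $T_x\widehat{X}$, making $dp_X(v_1),\ldots,dp_X(v_n)$ dependent and the orbit of dimension $<n=\dim X$, a contradiction. Therefore $\langle p_i,e_j\rangle=0$ for all $i\ne j$, while $\langle p_j,e_j\rangle=-1$ holds automatically since $e_j$ is a Demazure root with distinguished ray $\rho_{\ell(j)}$. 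This is precisely $\langle p_i,e_j\rangle=-\delta_{ij}$, so $e_1,\ldots,e_n$ form a complete collection.

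The main obstacle is the descent from $\widehat{X}$ to $X$: openness of the orbit must be tested on $X$, not on $\KK^m$, so I must correctly identify the orbit tangent space on $X$ as $dp_X(\Span(v_1,\ldots,v_n))$ and control $\ker dp_X$. This is where the Cox exact sequence $0\to\Lie H_X\to\KK^m\xrightarrow{(a_k)\mapsto\sum_k a_kp_k}N\otimes\KK\to0$ and the linear independence of the distinguished primitive vectors $p_1,\ldots,p_n$ (guaranteed for a complete collection, and forced in the converse by distinctness of the rays) enter decisively; the rest is the routine monomial bookkeeping of Example~\ref{ex1}.
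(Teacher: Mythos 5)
Your proof is correct, and the converse direction is essentially the paper's own argument: commutativity plus Lemma~\ref{com} forces either coinciding distinguished rays or mutual orthogonality, and the open-orbit hypothesis rules out coinciding rays because the orbit dimension would then drop below $n$. Where you genuinely diverge is in the forward direction. The paper never computes a tangent space on $X$: it shows that the enlarged group $\GG_a^n\times H_X$ acts on $\KK^m$ with trivial stabilizer at the single point $(1,\ldots,1)$ — using that the $\GG_a^n$-action moves exactly the $n$ coordinates $x_{\ell(1)},\ldots,x_{\ell(n)}$ while the $\Cl(X)$-degrees of the remaining $m-n$ coordinates form a basis of the character lattice of $H_X$ — and concludes by the dimension count $\dim(\GG_a^n\times H_X)=m$. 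You instead work infinitesimally downstairs: you identify the orbit tangent space on $X$ as $dp_X\bigl(\Span(v_1,\ldots,v_n)\bigr)$ and kill relations using the Cox exact sequence $0\to\Lie H_X\to\KK^m\to N\otimes\KK\to 0$ together with the linear independence of $p_1,\ldots,p_n$. Both are sound; the paper's version is shorter and sidesteps the one point in your argument that deserves an explicit word, namely that over the open torus orbit $p_X$ restricts to the smooth quotient $(\KK^\times)^m\to(\KK^\times)^m/H_X$, so that $\ker dp_X$ at your chosen point is exactly the tangent space to the (free) $H_X$-orbit. Your version, in exchange, produces the orbit-dimension statement directly on $X$ and makes effectivity explicit (a zero-dimensional, hence finite, hence trivial stabilizer in the torsion-free group $\GG_a^n$), which the paper gets for free from the trivial-stabilizer computation upstairs.
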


\begin{proof}
Assume first that the derivations $\partial_1,\ldots,\partial_n$ give rise to an additive action ${\GG_a^n\times X\to X.}$ If some roots $e_i$ and $e_j$ with $i\ne j$ correspond to the same ray of the fan $\Sigma_X$, then the $\GG_a^n$-action changes at most $n-1$ coordinates of the ring $\KK[x_1,\ldots, x_m]$, and any $\GG_a^n$-orbit on $X$ can not be $n$-dimensional. Then Lemma~\ref{com} implies that $\langle p_i,e_j\rangle~=~0$ for $i\ne j$. By definition, we have $\langle p_i,e_i\rangle~=-1$, and thus $e_1,\ldots,e_n$ form a complete collection.

Conversely, if $e_1,\ldots,e_n$ is a complete collection, then the corresponding homogeneous locally nilpotent derivations commute. They define a $\GG_a^n$-action on $\KK[x_1,\ldots, x_m]$,
and hence on $\KK^m$. This action descends to $X$. We have to show that the $\GG_a^n$-action on $X$ has an open orbit. For this purpose, it suffices to check that the group $\GG_a^n\times H_X$ acts on $\KK^m$ with an open orbit.

By construction, the group $\GG_a^n$ changes exactly $n$ of the coordinates $x_1,\ldots,x_m$, while the weights of the remaining $m-n$ coordinates with respect to the $\Cl(X)$-grading form a basis of the lattice of characters of the torus $H_X$. This shows that the stabilizer of the point $(1,\ldots,1)\in\KK^m$ in the group
$\GG_a^n\times H_X$ is trivial. Since $\dim(\GG_a^n\times H_X)=n+m-n=m$, we conclude that the $(\GG_a^n\times H_X)$-orbit of the point $(1,\ldots,1)$ is open in $\KK^m$.
\end{proof}

\begin{corollary} \label{ncc}
A toric variety $X$ admits a normalized additive action if and only if there is a complete collection of Demazure roots of the fan $\Sigma_X$.
\end{corollary}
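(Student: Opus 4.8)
The plan is to read the statement off directly from Theorem~\ref{cc}. That theorem provides a bijection between the additive actions on $X$ normalized by the acting torus $T$ and the complete collections of Demazure roots of the fan $\Sigma_X$. Since a bijection between two sets makes one side nonempty exactly when the other is, I would simply observe that the existence of a normalized additive action on $X$ is equivalent to the existence of a complete collection of Demazure roots of $\Sigma_X$, which is precisely the asserted equivalence.

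Should one wish to argue without invoking the full force of the bijection, I would instead appeal only to the Lemma immediately preceding the corollary. A normalized additive action amounts to a choice of $n$ pairwise commuting homogeneous locally nilpotent derivations of the Cox ring $\KK[x_1,\dots,x_m]$ of degree zero for the $\Cl(X)$-grading whose induced $\GG_a^n$-action on $X$ has an open orbit; each such derivation equals $\partial_{e_i}$ for a Demazure root $e_i$, and the Lemma says exactly that such a family defines a normalized additive action if and only if $e_1,\dots,e_n$ form a complete collection. Thus the existence of a normalized additive action is tantamount to the existence of a complete collection of roots.

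The only point to verify — already handled in the results assumed above — is that both directions hold: a complete collection yields commuting homogeneous derivations and an open $(\GG_a^n\times H_X)$-orbit on $\KK^m$ that descends to an open orbit on $X$, while conversely any normalized additive action forces its defining roots to be a complete collection via Lemma~\ref{com} together with the orbit-dimension count. Consequently there is no genuine obstacle here: everything needed is contained in Theorem~\ref{cc} and its proof, and the corollary follows at once.
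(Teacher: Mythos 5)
Your argument is exactly the paper's: Corollary~\ref{ncc} is stated there without proof as an immediate consequence of Theorem~\ref{cc}, since a bijection between normalized additive actions and complete collections of Demazure roots makes one set nonempty precisely when the other is. Your fallback via the lemma preceding the corollary is also just the paper's own proof of Theorem~\ref{cc}, so nothing differs in substance.
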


\begin{remark}
In~\cite{AK2015}, one more application of Demazure roots to the theory of equivariant completions of commutative linear algebraic groups is given.
\end{remark}

The following theorem shows that a normalized additive action on a toric variety is essentially unique. 

\begin{theorem} {\cite[Theorem~2]{AR2017}} \label{unique}
Any two normalized additive actions on a toric variety are equivalent.
\end{theorem}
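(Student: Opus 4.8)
The plan is to recast equivalence of additive actions as conjugacy inside $\Aut(X)$ and then to produce the conjugating automorphism combinatorially. First I would recall that by Theorem~\ref{cc} a normalized additive action on $X$ corresponds to a complete collection $e_1,\ldots,e_n$ of Demazure roots of the fan $\Sigma_X$, and that the primitive generators $p_1,\ldots,p_n$ of the distinguished rays form a basis of $N$ whose dual basis in $M$ is $-e_1,\ldots,-e_n$; in particular, once the set of distinguished rays is fixed, the roots are determined as the negative dual basis, so a normalized additive action is the same datum as its set of distinguished rays. By Proposition~\ref{popor} the image $G$ of $\GG_a^n$ in $\Aut(X)$ is a maximal commutative unipotent subgroup, and by Definition~\ref{act_equivdef} two additive actions are equivalent exactly when their images $G_1,G_2\subseteq\Aut(X)$ are conjugate (the conjugating automorphism then yields the reparametrization $\psi$ of $\GG_a^n$ automatically). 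Thus it suffices to conjugate $G_1$ onto $G_2$, both being normalized by the acting torus $T$ and generated by the one-parameter root subgroups $H_{e_i}$, respectively $H_{e_i'}$.

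Next I would put each action into a normal form on its open orbit. Using the basis $p_1,\ldots,p_n$ attached to a collection to identify $N\cong\ZZ^n$, the cone $\sigma_0=\mathrm{cone}(p_1,\ldots,p_n)$ is a smooth maximal cone of $\Sigma_X$ whose affine chart $U_{\sigma_0}$ is the open $\GG_a^n$-orbit. In the coordinates $u_i=\chi^{-e_i}$ the formula $\partial_{e_i}(\chi^m)=\langle p_i,m\rangle\chi^{m+e_i}$ gives $\partial_{e_i}(u_j)=\langle p_i,-e_j\rangle\chi^{e_i-e_j}=\delta_{ij}$, so the action becomes the standard translation $u_i\mapsto u_i+s_i$, with $T$ sitting inside $U_{\sigma_0}\cong\AA^n$ as the diagonal torus. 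Since the open orbit is intrinsic to the action, this shows the distinguished ray-set is intrinsic as well: it is the set of rays of the smooth maximal cone cut out as the open orbit. The problem is thereby reduced to showing that for two such collections, with smooth maximal cones $\sigma_0$ and $\sigma_0'$, there is an automorphism $\phi\in\Aut(X)$ with $\phi(U_{\sigma_0})=U_{\sigma_0'}$ carrying the first translation structure to the second.

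I would construct $\phi$ inside the Cox ring $R(X)=\KK[x_1,\ldots,x_m]$, where the lifted actions are the commuting homogeneous locally nilpotent derivations $\partial_{e_i}=\Big(\prod_{\rho\neq\rho_{e_i}}x_\rho^{\langle p_\rho,e_i\rangle}\Big)\partial_{x_{\rho_{e_i}}}$, each of $\Cl(X)$-degree zero; as in the computation establishing Theorem~\ref{cc}, the $n$ distinguished variables are precisely those moved by $\GG_a^n$, and the remaining $m-n$ variables carry weights forming a basis of $\Cl(X)$. The goal is a $\Cl(X)$-graded automorphism $\Phi$ of $\KK[x_1,\ldots,x_m]$ commuting with $H_X$ and preserving the open subset $\widehat{X}$, that sends the family $\{\partial_{e_i}\}$ to $\{\partial_{e_i'}\}$; any such $\Phi$ descends to the required $\phi$ conjugating $G_1$ to $G_2$. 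Concretely, I expect $\Phi$ to be built from a lattice (fan) automorphism $\gamma\colon p_i\mapsto p_i'$ for the \emph{correct} bijection of distinguished rays — which need not respect any prescribed ordering — possibly combined with a degree-preserving triangular change of the non-distinguished variables and a torus rescaling, matched so that $\partial_{e_i}$ is carried to $\partial_{e_i'}$.

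The hard part is exactly the existence of $\Phi$ as a global automorphism, i.e. that it preserves the irrelevant locus $\overline{X}\setminus\widehat{X}$. Here I would use the Demazure conditions $\langle p_\rho,e_i\rangle\ge 0$ for $\rho\neq\rho_{e_i}$: expressing each non-distinguished ray in the basis $p_1,\ldots,p_n$, these inequalities force all of its coordinates to be $\le 0$, so both collections present $\Sigma_X$ as a smooth orthant cone together with rays confined to the opposite (nonpositive) orthant. The content of Theorem~\ref{unique} is the combinatorial rigidity that, together with completeness of $\Sigma_X$, these two ``nonpositive-orthant presentations'' of one and the same fan are interchanged by $\gamma$ (equivalently, that the two special smooth cones are related by a symmetry of $\Sigma_X$, as in the examples behind Corollary~\ref{ncc}); this is the step I expect to be the main obstacle. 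Once $\gamma$, hence $\phi$, is in hand, I would verify that it matches root subgroups, $\phi H_{e_i}\phi^{-1}=H_{e_i'}$ — which is immediate since $\gamma$ sends the distinguished ray $\rho_{e_i}$ to $\rho_{e_i'}$ and the negative dual basis $-e_i$ to $-e_i'$ — so that $\phi G_1\phi^{-1}=G_2$ and the two normalized additive actions are equivalent.
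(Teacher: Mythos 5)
Your reduction steps are sound: for effective actions, equivalence is indeed conjugacy of the images in $\Aut(X)$; a complete collection is determined by its set of distinguished rays, since $\langle p_i,e_j\rangle=-\delta_{ij}$ forces $e_j=-p_j^*$ once $p_1,\dots,p_n$ is a basis; and the identification of the open orbit with the chart $U_{\sigma_0}$ of $\sigma_0=\mathrm{cone}(p_1,\dots,p_n)$ together with the normal form $\partial_{e_i}(u_j)=\delta_{ij}$ is correct. (You should, however, justify that $\sigma_0$ is actually a cone of $\Sigma_X$: this follows either from condition (2) in the definition of a Demazure root of a fan, applied inductively to the faces $\mathrm{cone}(p_1,\dots,p_k)$, or from completeness of the fan, since the positive orthant must be covered by cones whose rays lie among $p_1,\dots,p_n$.)

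The difficulty is that all of this is bookkeeping, and the entire mathematical content of the theorem sits in the step you defer: producing an automorphism of $X$ carrying $\sigma_0$ to $\sigma_0'$. You describe this as "the step I expect to be the main obstacle," but it is not an obstacle to be expected later — it \emph{is} the theorem. The statement is nontrivial precisely because one fan can admit several nonpositive-orthant presentations (already two for $\PP^1$ and $\FF_d$, three for $\PP^2$), and one must exhibit the symmetry interchanging them; no argument is offered, neither a combinatorial analysis of the two mutually nonpositive change-of-basis matrices between $\{p_i\}$ and $\{p_i'\}$, nor an induction on $|\sigma_0(1)\setminus\sigma_0'(1)|$ using reflections attached to semisimple Demazure roots. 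Moreover, even your implicit claim that the conjugating automorphism can be taken to be a \emph{fan} automorphism $\gamma$ (rather than an arbitrary element of $\Aut(X)$) needs justification — it can be deduced a posteriori by conjugating $\phi T\phi^{-1}$ back to $T$ inside the normalizer of $G_2$, but that presupposes the theorem. As it stands the proposal is a correct reformulation, not a proof; the survey itself does not reproduce the argument either and refers to \cite[Theorem~2]{AR2017}, where this combinatorial step is carried out.
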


Let $X$ be a complete toric variety with an acting torus $T$. It is well known that the automorphism group $\Aut(X)$ is a linear algebraic group with $T$ as a maximal torus, see~\cite{De1970}, \cite{Cox1995}. In particular, $\Aut(X)$ contains a maximal unipotent subgroup $U$, and all such subgroups are conjugate in $\Aut(X)$.

\begin{theorem} {\cite[Theorem~3]{AR2017}} \label{3con}
Let $X$ be a complete toric variety with an acting torus~$T$. The following conditions are equivalent:
\begin{itemize}
\item[(1)]
there exists an additive action on $X$ normalized by the acting torus $T$;
\item[(2)]
there exists an additive action on $X$;
\item[(3)]
a maximal unipotent subgroup $U$ of the automorphism group $\Aut(X)$ acts on $X$ with an open orbit.
\end{itemize}
\end{theorem}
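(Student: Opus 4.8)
The plan is to establish the cycle $(1)\Rightarrow(2)\Rightarrow(3)\Rightarrow(1)$, with essentially all the substance in the last implication. The implication $(1)\Rightarrow(2)$ is immediate, since a normalized additive action is in particular an additive action. For $(2)\Rightarrow(3)$ I would argue group-theoretically: an additive action provides a commutative unipotent subgroup $\GG_a^n\subseteq\Aut(X)^0$ acting with an open orbit. Every unipotent subgroup of a connected linear algebraic group lies in the unipotent radical of some Borel subgroup, hence in some maximal unipotent subgroup $U'$; as $U'\supseteq\GG_a^n$, it too acts with an open orbit. Since $\Aut(X)$ is a linear algebraic group with $T$ a maximal torus, all its maximal unipotent subgroups are conjugate, and the property of acting with an open orbit is invariant under conjugation, so every maximal unipotent subgroup $U$ acts with an open orbit, which is $(3)$.

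The core is $(3)\Rightarrow(1)$, and the plan is to reduce it, via Theorem~\ref{cc} and Corollary~\ref{ncc}, to producing a complete collection of Demazure roots of the fan $\Sigma_X$. I would fix a Borel subgroup $B\supseteq T$ with unipotent radical $U$. By the Cox--Demazure description of $\Aut(X)^0$, the $T$-weights occurring in $\Lie\Aut(X)^0/\Lie T$ are exactly the Demazure roots $\mathfrak{R}$, the choice of $B$ singles out a system of positive roots $\mathfrak{R}^+\subseteq\mathfrak{R}$, and $U$ is directly spanned by the one-parameter root subgroups $H_e$, $e\in\mathfrak{R}^+$. Next I would compute, for $e\in\mathfrak{R}^+$, the velocity vector $(\partial_e)_{x_0}$ at a point $x_0$ in the open $T$-orbit, identified with $T\cong(\KK^\times)^n$ so that $T_{x_0}X\cong N_\KK$. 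Using the local model of Example~\ref{ex1}, where $\partial_e=x^e\,\partial/\partial x_i$ with $\rho_e=\rho_i$, this velocity is a nonzero multiple of the primitive vector $p_{\rho_e}$ of the distinguished ray. Since $U$ acts with an open orbit, the velocity map $\Lie U\to T_{x_0}X$ is surjective in characteristic zero, so the primitive vectors $\{p_{\rho_e}:e\in\mathfrak{R}^+\}$ span $N_\QQ$.

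The hard part, which I expect to be the main obstacle, is to upgrade this spanning statement to the existence of an actual complete collection in the sense of Definition~\ref{deff}, that is, roots $e_1,\dots,e_n$ whose distinguished rays $p_1,\dots,p_n$ form a lattice basis of $N$ with $\langle p_i,e_j\rangle=-\delta_{ij}$. The definition of a Demazure root already forces $\langle p_i,e_j\rangle\ge 0$ for $i\ne j$, so the issue is to select the roots so that these cross pairings vanish and the chosen rays generate a smooth maximal cone forming a basis. Here I would exploit the combinatorics of $\Sigma_X$ together with Lemma~\ref{com} and Lemma~\ref{lpe}: commutativity of the relevant root subgroups is equivalent to the vanishing of the cross pairings, while $H_e$-connectedness of $T$-orbits (Proposition~\ref{phe}) translates root data into facet relations among cones. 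The delicate point is to show that among the positive roots whose distinguished rays span $N_\QQ$ one can extract those attached to the rays of a single smooth maximal cone, producing the dual-basis configuration of a complete collection. Once such a collection is obtained, Theorem~\ref{cc} yields a normalized additive action, closing the cycle; the asserted essential uniqueness then follows from Theorem~\ref{unique}.
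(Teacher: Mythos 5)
Your implications $(1)\Rightarrow(2)$ and $(2)\Rightarrow(3)$ are correct, and your setup for $(3)\Rightarrow(1)$ — identifying $\Lie U$ with the span of the $\partial_e$, $e\in\mathfrak{R}^+$, and computing that the velocity of $\partial_e$ at a point of the open $T$-orbit is a nonzero multiple of $p_{\rho_e}$, so that surjectivity of the velocity map forces $\{p_{\rho_e}:e\in\mathfrak{R}^+\}$ to span $N_\QQ$ — is also sound. But at that point the proof stops. The statement you are left with, ``if the distinguished rays of a positive system of Demazure roots span $N_\QQ$, then $\Sigma_X$ admits a complete collection,'' is essentially equivalent to the implication $(3)\Rightarrow(1)$ itself, and you offer no argument for it beyond naming Lemmas~\ref{com} and~\ref{lpe} and hoping to ``extract'' roots attached to a single smooth maximal cone. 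There is no reason the spanning set of rays should a priori lie in one cone, nor that the corresponding roots should have vanishing cross pairings, and the cited lemmas do not produce either conclusion. This is a genuine gap, and it sits exactly where all the content of the theorem lives.

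The missing idea is to move the tangent-space computation from the open $T$-orbit to a $T$-fixed point. Since $U$ is normalized by $T$, the (unique) open $U$-orbit $\Of$ is $T$-stable; since $U$ is unipotent and we are in characteristic zero, $\Of\cong U/U_x$ is isomorphic to an affine space, in particular affine. A $T$-stable affine open subset of $X$ containing the open $T$-orbit is of the form $U_{\sigma_0}$ for a single cone $\sigma_0\in\Sigma_X$, and $U_{\sigma_0}\cong\AA^n$ forces $\sigma_0$ to be a full-dimensional cone generated by a lattice basis $p_1,\dots,p_n$ of $N$. Now apply your velocity argument at the $T$-fixed point $x_{\sigma_0}\in\Of$: the map $\Lie U\to T_{x_{\sigma_0}}X$ is surjective and $T$-equivariant, the source decomposes into weight lines of weights $e\in\mathfrak{R}^+$, and the target has weights exactly $-p_1^*,\dots,-p_n^*$. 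Hence each $-p_j^*$ must occur as a Demazure root, and since $\langle p_i,-p_j^*\rangle=-\delta_{ij}$, the roots $-p_1^*,\dots,-p_n^*$ form a complete collection; Theorem~\ref{cc} then gives the normalized additive action. I recommend replacing your ``extraction'' step with this argument, since it closes the cycle without any further combinatorics.
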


We conclude that Corollary~\ref{ncc} characterizes complete toric varieties admitting some additive action. 

\begin{corollary}\label{cba}
A complete toric variety $X$ admits an additive action if and only if there is a complete collection of Demazure roots of the fan $\Sigma_X$.
\end{corollary}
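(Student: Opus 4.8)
The plan is to obtain Corollary~\ref{cba} by chaining the two deeper results already at our disposal, namely Theorem~\ref{3con} and Corollary~\ref{ncc}; no genuinely new argument is needed once those are granted. The whole point is that each of them has already done half of the equivalence, and the corollary is the formal composition.

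First I would invoke Theorem~\ref{3con}, whose equivalence of conditions (1) and (2) asserts that a complete toric variety $X$ admits an additive action if and only if it admits an additive action normalized by the acting torus $T$. This is the crucial reduction: it replaces the a priori unstructured hypothesis ``there exists some additive action'' by the combinatorially tractable one ``there exists a torus-normalized additive action.''

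Second, I would apply Corollary~\ref{ncc}, which states precisely that $X$ carries a normalized additive action if and only if the fan $\Sigma_X$ admits a complete collection of Demazure roots. Composing the two equivalences yields
\[
X \text{ admits an additive action} \iff X \text{ admits a normalized additive action} \iff \Sigma_X \text{ has a complete collection of Demazure roots},
\]
which is exactly the assertion of the corollary.

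Since the statement is a formal consequence of the cited results, there is no real obstacle at this level. The genuine content lives entirely in the two inputs: Theorem~\ref{3con} requires knowing that $\Aut(X)$ is a linear algebraic group having $T$ as a maximal torus and that all its maximal unipotent subgroups are conjugate, so that an arbitrary additive action can be moved, up to equivalence, into one normalized by $T$; and Corollary~\ref{ncc} rests on the Cox-ring description of normalized $\GG_a$-actions via homogeneous locally nilpotent derivations of $\KK[x_1,\ldots,x_m]$ together with the bijection of Theorem~\ref{cc} between normalized additive actions and complete collections of Demazure roots. Granting these, the proof reduces to the one-line chaining above.
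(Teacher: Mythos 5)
Your proposal is correct and matches the paper's own argument exactly: the corollary is stated immediately after Theorem~\ref{3con} as the composition of its equivalence (1)$\Leftrightarrow$(2) with Corollary~\ref{ncc}. Nothing further is needed.
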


In terms of the fan $\Sigma_X$, this condition means that up to renumbering first $n$ primitive vectors $p_1,\ldots,p_n$ on the rays of $\Sigma_X$ form a basis of the lattice $N$, and the remaining primitive vectors $p_{n+1},\ldots,p_m$ have non-positive coordinates in this basis. 

\subsection{Projective toric varieties and polytopes}

It is well known that there is a correspondence between convex lattice polytopes and projective toric varieties. This subsection aims to characterize polytopes corresponding to toric varieties that admit an additive action.

We begin with preliminary results; see \cite[Chapter~2]{CLS2011} and \cite[Section~1.5]{Fu1993} for more details. Let $M$ be a lattice of rank $n$ and $P$ be a full dimensional convex polytope in the space $M_{\QQ}$. We say that $P$ is a {\it lattice polytope} if all its vertices are in $M$.

A subsemigroup $S\subseteq M$ is called {\it saturated} if $S$ coincides with the intersection of the group $\ZZ S$ and the cone $\QQ_{\ge 0}S$ it generates. A lattice polytope $P$ is {\it very ample} if for every vertex $v\in P$, the semigroup $S_{P,v}:=\ZZ_{\ge 0}(P\cap M - v)$ is saturated. It is known that for every lattice polytope $P$ and every $k\ge n-1$ the polytope $kP$ is very ample, see \cite[Corollary~2.2.19]{CLS2011}.

Let us consider $M$ as a lattice of characters of a torus $T$. Let $P\subseteq M_{\QQ}$ be a very ample polytope and $P\cap M=\{m_1,\ldots,m_s\}$. We consider a map
$$
T\to\PP^{s-1}, \quad t\mapsto [\chi^{m_1}(t):\ldots:\chi^{m_s}(t)]
$$
and define a variety $X_P$ as the closure of the image of this map in $\PP^{s-1}$. It is known that $X_P$ is a projective toric variety with the acting torus $T$, and any projective toric variety appears this way.

\begin{definition} \label{iir}
We say that a very ample polytope $P$ is \emph{inscribed in a rectangle} if there is a vertex $v_0\in P$ such that
\begin{enumerate}
\item[(1)]
the primitive vectors on the edges of $P$ containing $v_0$ form a basis $e_1,\ldots,e_n$
of the lattice $M$;
\item[(2)]
for every inequality $\langle p,x\rangle\le a$ on $P$ that corresponds to a facet of $P$ not passing through $v_0$ we have $\langle p,e_i\rangle\ge 0$ for all $i=1,\ldots,n$.
\end{enumerate}
\end{definition}

\begin{figure}[h]\label{rectangle-polytope}
\centerline{\includegraphics[scale = 0.12]{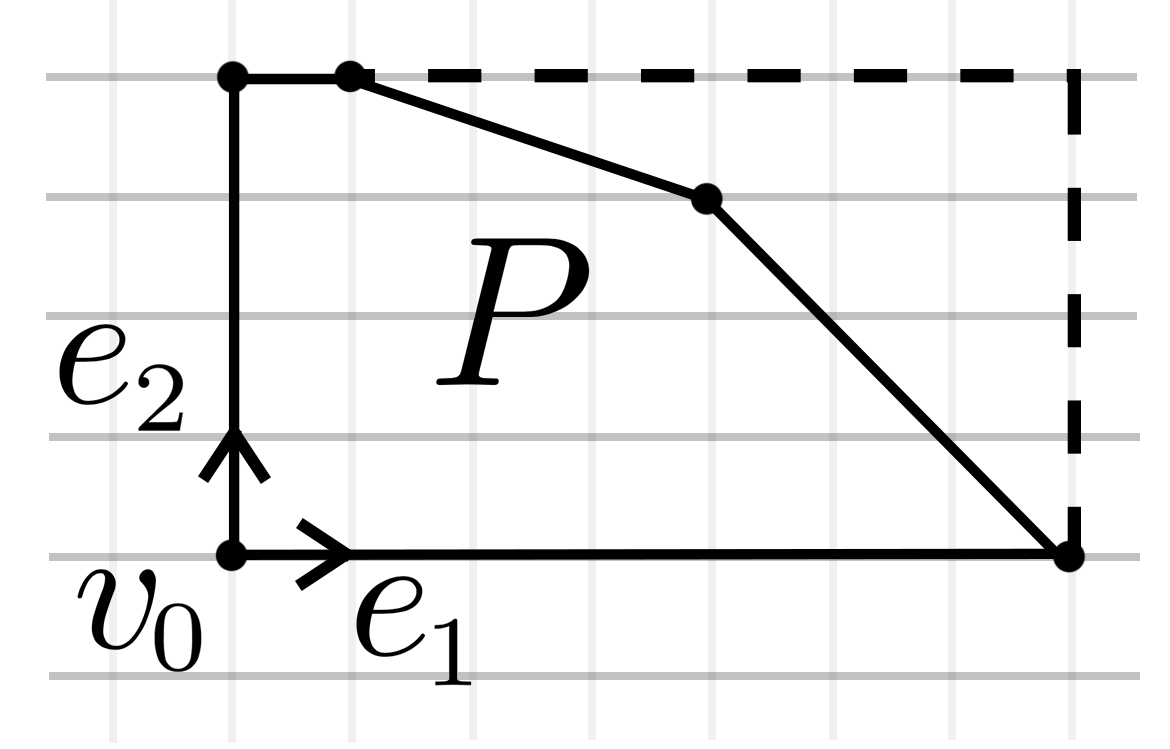}}
\end{figure}

\begin{theorem} {\cite[Theorem~4]{AR2017}} \label{ppoo}
Let $P$ be a very ample polytope and $X_P$ be the corresponding projective toric variety. Then $X_P$ admits an additive action if and only if the polytope $P$ is inscribed in a rectangle.
\end{theorem}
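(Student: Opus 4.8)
The plan is to reduce everything to the combinatorial criterion of Corollary~\ref{cba}: since $X_P$ is projective and hence complete, it admits an additive action if and only if the normal fan $\Sigma_P$ of $P$ possesses a complete collection of Demazure roots in the sense of Definition~\ref{deff}. I would use the standard normal-fan dictionary: the rays of $\Sigma_P$ are in bijection with the facets of $P$, a facet $F$ given by $\langle p_F,x\rangle\le a_F$ contributing the ray generated by its primitive outer normal $p_F\in N$, and the maximal cones of $\Sigma_P$ correspond to the vertices of $P$, the cone $\sigma_v$ at a vertex $v$ being generated by the normals of the facets through $v$. First I would record the elementary duality at a simple vertex: if $v_0$ lies on exactly $n$ facets $F_1,\dots,F_n$ with outer normals $p_1,\dots,p_n$, then the primitive edge vectors $\epsilon_1,\dots,\epsilon_n$ at $v_0$ pointing into $P$ satisfy $\langle p_j,\epsilon_i\rangle=0$ for $j\ne i$ and $\langle p_i,\epsilon_i\rangle<0$, because $\epsilon_i$ stays on every facet except $F_i$ while leaving $F_i$. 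Hence $\{\epsilon_i\}$ is a basis of $M$ if and only if $\{p_i\}$ is a basis of $N$, and in that case $\langle p_i,\epsilon_i\rangle=-1$ and $-\epsilon_1,\dots,-\epsilon_n$ is the basis of $M$ dual to $p_1,\dots,p_n$; this is exactly the normalization in Definition~\ref{deff}.

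For the implication ``inscribed $\Rightarrow$ additive'', suppose $P$ has a vertex $v_0$ as in Definition~\ref{iir}. Condition (1) forces $v_0$ to be simple with exactly $n$ edges, so by the duality above the facet normals $p_1,\dots,p_n$ through $v_0$ form a basis of $N$ and $\langle p_i,\epsilon_j\rangle=-\delta_{ij}$. Condition (2) says precisely that $\langle p,\epsilon_i\rangle\ge 0$ for the outer normal $p$ of every facet not through $v_0$, i.e. for every ray of $\Sigma_P$ other than $\rho_1,\dots,\rho_n$; combined with $\langle p_j,\epsilon_i\rangle=0$ for $j\ne i$ this makes each $\epsilon_i$ a Demazure root with distinguished ray $\rho_i$ (the fan is complete, so condition~(2) in the definition of a Demazure root is automatic). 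Thus $\epsilon_1,\dots,\epsilon_n$ is a complete collection and $X_P$ carries an additive action.

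For the converse I would start from a complete collection $e_1,\dots,e_n$ with distinguished rays $p_1,\dots,p_n$; by Definition~\ref{deff} these form a basis of $N$, and by the remark following Corollary~\ref{cba} every other ray generator has non-positive coordinates in this basis. The crux is to exhibit the vertex $v_0$, i.e. to show that the cone $\sigma$ generated by $p_1,\dots,p_n$ is a maximal cone of $\Sigma_P$. Working in the coordinates given by $p_1,\dots,p_n$, so that $\sigma=\QQ^n_{\ge 0}$ and every other ray lies in $\QQ^n_{\le 0}$, I would take the interior point $x_0=p_1+\dots+p_n$, choose a maximal cone $\tau\ni x_0$ of the complete fan, and write $x_0=\sum_{\rho\in\tau(1)}\lambda_\rho p_\rho$ with $\lambda_\rho\ge 0$. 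Reading off the $i$-th coordinate gives $1=\lambda_{\rho_i}+(\text{a sum of non-positive terms})$, which is impossible unless $\rho_i\in\tau(1)$; hence $\tau$ contains all of $p_1,\dots,p_n$ and therefore $\tau\supseteq\sigma$. Strong convexity of $\tau$ then excludes any further ray $p_k$: being nonzero with all coordinates $\le 0$, such a $p_k$ has a strictly negative coordinate, say the $m$-th, and solving $-p_m=\mu p_k+\sum_j\mu_j p_j$ with $\mu,\mu_j\ge 0$ shows $-p_m\in\sigma+\QQ_{\ge 0}p_k\subseteq\tau$, while $p_m\in\sigma\subseteq\tau$, so $\tau$ would contain the line $\QQ p_m$. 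Thus $\tau=\sigma$, corresponding to a vertex $v_0$ whose facets are $F_1,\dots,F_n$; the duality of the first paragraph then gives condition (1), and the Demazure positivity of the roots $e_i=\epsilon_i$ gives condition (2), so $P$ is inscribed in a rectangle.

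The main obstacle is exactly this last step, namely verifying that the basis rays span a genuine maximal cone of the normal fan rather than being overshadowed by, or subdividing into, cones that use the negative rays. The coordinate-wise reading of $x_0$ together with the strong-convexity argument is what closes this gap; everything else is bookkeeping with the normal-fan dictionary and with the definitions of complete collection and Demazure root.
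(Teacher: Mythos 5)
Your proposal is correct and follows the paper's route exactly: reduce via Corollary~\ref{cba} to the existence of a complete collection of Demazure roots for the normal fan of $P$, and match this with the two conditions of Definition~\ref{iir} --- the paper dismisses this matching as ``straightforward to check'', whereas you supply the details, including the one genuinely non-obvious step in the converse (that the distinguished rays $p_1,\dots,p_n$ span a maximal cone of $\Sigma_P$ and hence cut out a vertex $v_0$), which you settle correctly by reading off the coordinates of an interior point and invoking strong convexity. The only caveat is a sign convention: with the normal fan as in \cite[Proposition~3.1.6]{CLS2011} the rays are generated by \emph{inward} facet normals, so the complete collection is $-\epsilon_1,\dots,-\epsilon_n$ (as the paper states) rather than $\epsilon_1,\dots,\epsilon_n$; your outer-normal convention replaces $\Sigma_P$ by $-\Sigma_P$, which affects neither the existence of a complete collection nor the isomorphism class of $X_P$.
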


\begin{proof}
By Corollary~\ref{cba}, a toric variety $X$ admits an additive action if and only if the fan~$\Sigma_X$ admits a complete collection of Demazure roots. By \cite[Proposition~3.1.6]{CLS2011}, the fan~$\Sigma_{X_P}$ of the toric variety $X_P$ corresponding to the polytope~$P$ coincides with the normal fan~$\Sigma_P$ of the polytope~$P$. It is straightforward to check that two conditions of Definition~\ref{iir} are precisely the conditions on $-e_1,\ldots,-e_n$ to be a complete collection of Demazure roots of the fan $\Sigma_P$.
\end{proof}

\begin{remark}
The result of Theorem~\ref{ppoo} can also be obtained using the language of polytopal linear groups developed in~\cite{BrGu1999}. 
\end{remark}

Let us illustrate this approach with two examples.

\smallskip

The segment $P=[0,d]$ in $\QQ^1$ with $d\in\ZZ_{\ge 1}$ is a polytope inscribed in a rectangle, and the variety
$$
X_P=\overline{\{[1:a:\ldots:a^d] \mid a\in\KK\}}\subseteq\PP^d
$$
is a rational normal curve of degree $d$.

\smallskip

Further, the polytope

\begin{figure}[h]\label{hirtzebruch-polytope}
\centerline{\includegraphics[scale = 0.25]{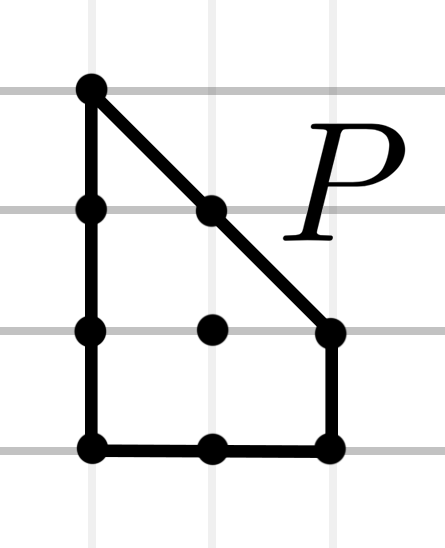}}
\end{figure}

\noindent defines the surface
$$
X_P=\overline{\{[1:a:a^2:b:ab:a^2b:b^2:ab^2:b^3] \mid a,b\in\KK\}}\subseteq\PP^8
$$
isomorphic to the Hirzebruch surface $\FF_1$.

\smallskip

Now we give some explicit formulas for additive actions on toric varieties in terms of Cox rings.

\begin{example} \label{Pn_Demaz_ex}
The fan of the projective space $X = \PP^n$ is generated by a basis $p_1, \ldots, p_n$ of the lattice~$\ZZ^n$ and the vector $p_0=-p_1-\ldots-p_n$. The complete collection of Demazure roots $e_i$, ${1 \le i \le n}$, which consists of vectors opposite to the dual basis of $p_1, \ldots, p_n$, corresponds to pairwise commuting locally nilpotent derivations $\pa_{e_i}=x_0\frac{\partial}{\partial x_i}$, $1\le i \le n$, on the Cox ring $R(X) = \KK[x_0, \ldots, x_n]$. They define a $\GG_a^n$-action 
$$
(x_0,x_1,\ldots,x_n)\mapsto (x_0,x_1+s_1x_0,\ldots,x_n+s_nx_0)
$$
on the total coordinate space $\overline{X}=\AA^{n+1}$, see Example~\ref{ex1}. This action is normalized by the diagonal torus $(\KK^\times)^{n+1}$ and commutes with the action of one-dimensional torus $H_X = \GG_m$ since locally nilpotent derivations are homogeneous with respect to the standard grading by~$\ZZ^{n+1}$ and have degree zero with respect to the grading by $\Cl(X) = \ZZ$, $\deg x_i=1$, $0\le i \le n$. Thus, this action induces the normalized additive action on the projective space~$\PP^n$:
\begin{equation} \label{Pn_normact_eq}
[z_0:z_1:\ldots:z_n]\mapsto [z_0:z_1+s_1z_0:\ldots:z_n+s_nz_0].
\end{equation}
The hyperplane $\{z_0=0\}$ consists of $\GG_a^n$-fixed points and thus for $n\ge 2$ the number of $\GG_a^n$-orbits on $\PP^n$ is infinite.

Consider the case $n=2$. A maximal unipotent subgroup of the automorphism group $\Aut(\PP^2)$ is isomorphic to the unitriangular matrix subgroup of $\GL_3(\KK)$ and consists of automorphisms
$$
[z_0:z_1:z_2]\mapsto [z_0:z_1+a_{12}z_0:z_2+a_{23}z_1+a_{13}z_0], \; a_{12}, a_{23}, a_{13} \in \KK.
$$
Two-dimensional (commutative) subgroups of this group are 
\[
H_{[\alpha:\beta]} = \left\{\begin{pmatrix}1 & \alpha a & b \\ 0 & 1 & \beta a \\ 0 & 0 & 1\end{pmatrix}, \; a,b \in \KK\right\}\]
for $[\alpha:\beta] \in \PP^1$. If $[\alpha:\beta] = [0:1]$ the corresponding action of the group~$\GG_a^2$ has no open orbit, if $[\alpha:\beta] = [1:0]$ we obtain normalized additive action~\eqref{Pn_normact_eq}, and points $[\alpha:\beta] \in \PP^1 \setminus \{0,\infty\}$ determine pairwise isomorphic non-normalized additive actions with three orbits, see Example~\ref{eexx}. 
\end{example}

\begin{example} \label{P1P1_Demaz_ex}
In the same way as in Example~\ref{Pn_Demaz_ex} one can check that the normalized additive action on the product $\PP^1\times\ldots\times\PP^1$ and the corresponding action on the total coordinate space $\AA^{2n}$ are given by
\begin{gather*}
([z_1:z_2],\ldots,[z_{2n-1}:z_{2n}])\mapsto ([z_1:z_2+s_1z_1],\ldots,[z_{2n-1}:z_{2n}+s_nz_{2n-1}]), \\
(x_1,x_2,\ldots,x_{2n-1},x_{2n})\mapsto (x_1,x_2+s_1x_1,\ldots,x_{2n-1},x_{2n}+s_nx_{2n-1}).
\end{gather*}
This shows that the number of $\GG_a^n$-orbits on $\PP^1\times\ldots\times\PP^1$ is $2^n$.
\end{example}

\begin{example} \label{Fd_Demaz_ex}
Let $X$ be the Hirzebruch surface $\FF_d$. Its fan is generated by the vectors
$$
p_1=(1,0), \quad p_2=(0,1), \quad p_3=(-1,d), \quad p_4=(0,-1)
$$
The Cox ring $\KK[x_1,x_2,x_3,x_4]$ carries a $\ZZ^2$-grading given by $$
\dg(x_1)=(1,0), \quad \dg(x_2)=(0,1), \quad \dg(x_3)=(1,0), \quad \dg(x_4)=(d,1).
$$
Moreover, $X$ is obtained as a geometric quotient of 
$$
\widehat{X}=\KK^4\setminus(\{x_1=x_3=0\}\cup\{x_2=x_4=0\})
$$
by the action of the torus $H_X=(\KK^{\times})^2$, which is given by the $\ZZ^2$-grading.

In this case the Demazure roots are $(1,0)$, $(-1,0)$ and $(k,1)$ with $0\le k\le d$, and the corresponding homogeneous locally nilpotent derivations are given as
$$
x_1\frac{\partial}{\partial x_3}, \quad x_3\frac{\partial}{\partial x_1}, \quad
x_1^kx_2x_3^{d-k}\frac{\partial}{\partial x_4}.
$$
There are two complete collections of Demazure roots, namely $(1,0), (d,1)$ and $(-1,0),(0,1)$.
They define two normalized additive actions on $X$ defined on $\widehat{X}$ as
$$
(x_1,x_2,x_3,x_4)\mapsto (x_1,x_2,x_3+s_1x_1,x_4+s_2x_1^dx_2)
$$
and
\begin{equation} \label{Fd_normact_eq}
(x_1,x_2,x_3,x_4)\mapsto (x_1+s_1x_3,x_2,x_3,x_4+s_2x_2x_3^d),
\end{equation}
which are rearranged by the automorphism $(x_1,x_2,x_3,x_4)\mapsto (x_3,x_2,x_1,x_4)$.

\smallskip

According to the results of~\cite{De1970} or~\cite{Cox1995}, the automorphism group $\Aut(X)$ is isomorphic to $\KK^{\times}\cdot\PSL(2)\rightthreetimes\,\GG_a^{d+1}$. Consider the case $d=1$. Then a maximal unipotent subgroup of $\Aut(X)$ is isomorphic to unitriangular matrix subgroup of $\GL_3(\KK)$ and acts in the following way:
$$
(x_1,x_2,x_3,x_4)\mapsto (x_1+a_{12}x_3,x_2,x_3,x_4+a_{23}x_1x_2+a_{13}x_2x_3), \; a_{12}, a_{23}, a_{13} \in \KK.
$$
Its two-dimensional (commutative) subgroups have the form $H_{[\alpha:\beta]}$, see Example~\ref{Pn_Demaz_ex}. If $[\alpha:\beta]=[0:1]$ we obtain the action of~$\GG_a^2$ on~$\FF_1$ with one-parameter family of one-dimensional orbits, if $[\alpha:\beta]=[1:0]$ we get normalized additive action~\eqref{Fd_normact_eq}, and points $[\alpha:\beta] \in \PP^1\setminus\{0, \infty\}$ define pairwise isomorphic non-normalized additive actions on~$\FF_1$. Thus, there are exactly two equivalence classes of additive actions on~$\FF_1$. This result is obtained in~\cite[Proposition~5.5]{HaTs1999} by geometric arguments. 
\end{example}

Let us mention one more recent result on additive actions on toric varieties. In~\cite{Shaf2021}, Shafarevich classifies toric projective hypersurfaces admitting an additive action. Every toric hypersurface of dimension~$n$ can be represented by a lattice polytope in the lattice~$M$ such that the number of lattice points inside this polytope is~$n+2$. So the question is when such a polytope is inscribed in a rectangle. In~\cite[Proposition~1]{Shaf2021}, all such polytopes are found. It turns out that apart from the projective space there are two toric projective hypersurfaces admitting an additive action in every dimension $n\ge 2$; they are the quadrics of rank~3 and~4, see \cite[Theorem~2]{Shaf2021}. 

Using the results of~\cite{Cox1995}, Shafarevich computes the automorphism groups of these quadrics. Then he applies the correspondence between additive actions on projective hypersurfaces and pairs $(A,U)$, where $A$ is a local algebra and $U$ is a hyperplane in the maximal ideal of~$A$ which generates~$A$ (see Theorem~\ref{hypHaTsch_theorem}), and finds in~\cite[Section~5]{Shaf2021} the number of non-equivalent additive actions on quadrics of rank~3 and~4 in dimension from 2 to~4, see Remark~\ref{rsha}. 

\subsection{Additive actions on complete toric surfaces}
\label{subcts}

We observe that blowing up repeatedly fixed points, one may obtain infinitely many different (smooth) complete toric surfaces that admit an additive action. 
In this subsection we discuss a result of Dzhunusov~\cite{Dz2019} which clarifies how many additive actions we may have on a complete toric surface. 

Let $X_{\Sigma}$ be a complete toric variety of dimension~$n$ admitting an additive action and $\Sigma$ be the corresponding fan. We begin with some results on the structure of the set of Demazure roots of the cone~$\Sigma$ following \cite[Section~5]{Dz2019}. 

Denote primitive vectors on the rays of the fan $\Sigma$ by~$p_i$, where $1\le i\le m$. It follows from Theorem~\ref{cc} that we can order the vectors~$p_i$ in such a way that the first $n$
vectors form a basis of the lattice~$N$ and the remaining vectors $p_j$ $(n < j \le m)$ are equal to $\sum_{i=1}^n -\alpha_{ji}p_i$ for some non-negative integers~$\alpha_{ji}$.

Let us denote the dual basis of the basis $p_1,\ldots, p_n$ by $p_1^*,\ldots,p_n^*$ and let $\RRR_i=\RRR_{\rho_i}$.

\begin{lemma} {\cite[Lemma~2]{Dz2019}} \label{lr1}
Consider $1\le i\le n$. The set $\RRR_i$ is a subset of the set $-p_i^*+\sum_{l=1,l\ne i}^n\ZZ_{\ge 0} p_j^*$ and the vector~$-p_i^*$ belongs to the set~$\RRR_i$. 
\end{lemma}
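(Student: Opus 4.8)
The plan is to translate the defining conditions of a Demazure root into coordinates with respect to the dual basis $p_1^*,\ldots,p_n^*$, where each assertion becomes an immediate computation. Recall from the definition of $\RRR_{\rho_i}$ (whose distinguished ray is $\rho_i$, so that the relevant primitive vector is $p_i$) that any $e\in\RRR_i$ satisfies $\langle p_i,e\rangle=-1$ and $\langle p_{i'},e\rangle\ge 0$ for every other primitive ray generator $p_{i'}$. First I would write $e=\sum_{l=1}^n c_l\,p_l^*$ with $c_l\in\ZZ$, which is legitimate since $p_1^*,\ldots,p_n^*$ is a lattice basis of $M$ dual to the lattice basis $p_1,\ldots,p_n$ of $N$. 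Because $\langle p_l,p_{l'}^*\rangle=\delta_{ll'}$, the coefficient $c_l$ equals $\langle p_l,e\rangle$. Hence $c_i=\langle p_i,e\rangle=-1$, while $c_l=\langle p_l,e\rangle\ge 0$ for every $l\ne i$ with $1\le l\le n$ (these are pairings with pairwise distinct rays). This gives $e=-p_i^*+\sum_{l\ne i}c_l\,p_l^*$ with $c_l\in\Zgezero$, which is precisely the claimed inclusion.

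For the second assertion I would verify directly that $e:=-p_i^*$ lies in $\RRR_i$ by checking condition (1) of the definition of Demazure roots. The pairing with $p_i$ is $\langle p_i,-p_i^*\rangle=-1$, and for $l\ne i$ with $1\le l\le n$ we get $\langle p_l,-p_i^*\rangle=0\ge 0$. For the remaining primitive vectors $p_j$ with $n<j\le m$ I would use the expression $p_j=\sum_{k=1}^n-\alpha_{jk}p_k$ with $\alpha_{jk}\in\Zgezero$, which yields $\langle p_j,-p_i^*\rangle=\alpha_{ji}\ge 0$. Thus all the inequalities of condition (1) hold for $e=-p_i^*$.

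It remains to deal with condition (2) of the definition. Here I would invoke the observation recorded right after the definition of Demazure roots: condition (1) implies condition (2) whenever $\Sigma$ has convex support, and this is the case since $X_\Sigma$ is complete, so that $\Sigma$ fills all of $N_\QQ$. Therefore $-p_i^*$ automatically satisfies (2) as well and belongs to $\RRR_i$.

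There is essentially no deep obstacle in this argument; the only points requiring care are the bookkeeping of signs---both in the dual-basis identity $c_l=\langle p_l,e\rangle$ and in the computation $\langle p_j,-p_i^*\rangle=\alpha_{ji}$---and the explicit appeal to completeness to bypass condition (2), which otherwise would force one to track which cones of $\Sigma$ annihilate $e$ and whether adjoining $\rho_i$ keeps the result in the fan.
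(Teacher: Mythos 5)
Your proof is correct. The survey only quotes this lemma from Dzhunusov's paper without reproducing a proof, but your argument — reading off the coordinates of $e$ in the dual basis as $c_l=\langle p_l,e\rangle$, checking condition (1) for $-p_i^*$ via $\langle p_j,-p_i^*\rangle=\alpha_{ji}\ge 0$, and discharging condition (2) by completeness of the fan — is exactly the standard verification this statement requires.
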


Now let us divide the set of Demazure roots $\RRR$ into two classes:
$$
\SSS=\RRR\cap -\RRR, \quad \UUU=\RRR\setminus\SSS. 
$$
Roots in $\SSS$ and $\UUU$ are called \emph{semisimple} and \emph{unipotent}, respectively. 

Consider the set $\text{Reg}(\SSS) = \{u\in N \, : \, \langle u,e\rangle\ne 0 \ \forall e\in\SSS\}$. Any element $u$ from $\text{Reg}(\SSS)$ divides the set of semisimple roots $\SSS$ into two classes as follows:
$$
\SSS^+_u = \{e\in \SSS \, : \, v=\langle u,e\rangle>0\}, \quad \SSS^-_u = \{e\in \SSS \, : \, v=\langle u,e\rangle<0\}.
$$
Any element in $\SSS^+_u$ is called \emph{positive} and any element in $\SSS^-_u$ is called \emph{negative}.

\begin{lemma} {\cite[Proposition~2]{Dz2019}} \label{lr2}
Let $X_{\Sigma}$ be a complete toric variety admitting an additive action. Then
\begin{itemize}
\item[(1)]
any element in $\RRR_j, j > n$, is equal to $p_{i_j}^*$ for some $1\le i_j\le n$;
\item[(2)]
all unipotent Demazure roots lie in the set $\bigcup_{i=1}^n\RRR_i$;
\item[(3)] 
there exists a vector $u\in\text{Reg}(\SSS)$ such that $\SSS_u^+\subseteq\bigcup_{i=1}^n\RRR_i$. 
\end{itemize}
\end{lemma}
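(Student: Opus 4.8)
The three assertions rest on a single computation together with one global input, completeness of the fan. The plan is to work throughout in the basis $p_1,\dots,p_n$ of $N$ and its dual basis $p_1^*,\dots,p_n^*$ of $M$, writing each candidate root as $e=\sum_{i=1}^n c_ip_i^*$, so that $\langle p_i,e\rangle=c_i$ for $i\le n$ and $\langle p_j,e\rangle=-\sum_{i=1}^n\alpha_{ji}c_i$ for $j>n$ (using $p_j=-\sum_i\alpha_{ji}p_i$ with $\alpha_{ji}\in\Zgezero$ coming from the chosen ordering via Theorem~\ref{cc}). I will use Lemma~\ref{lr1}, which in particular gives $-p_i^*\in\RRR_i$ for every $1\le i\le n$; equivalently, $-p_i^*$ is a Demazure root with distinguished ray $\rho_i$. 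I would first prove (1), and then obtain (2) and (3) as formal consequences.

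For (1), fix $e\in\RRR_j$ with $j>n$. The Demazure conditions translate into: $c_i\ge 0$ for all $i\le n$ (from the rays $\rho_i$); $\sum_i\alpha_{ji}c_i=1$ (from the distinguished ray $\rho_j$); and, for every $j'>n$ with $j'\ne j$, $\sum_i\alpha_{j'i}c_i\le 0$, hence $\sum_i\alpha_{j'i}c_i=0$ since it is a sum of non-negative terms. As the $\alpha_{ji}$ and $c_i$ are non-negative integers, $\sum_i\alpha_{ji}c_i=1$ forces a unique index $i_j$ with $\alpha_{j,i_j}=c_{i_j}=1$ and $\alpha_{ji}c_i=0$ for the remaining $i$. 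The crux, and the step I expect to be the main obstacle, is ruling out extra positive coordinates. Suppose $c_{i_0}>0$ for some $i_0\ne i_j$, so that $\alpha_{j,i_0}=0$. Here completeness enters: since $|\Sigma|=N_{\QQ}$, the vector $-p_{i_0}$ is a non-negative combination $\sum_k\lambda_kp_k$ of ray generators, and reading off the $i_0$-th coordinate gives $\lambda_{i_0}-\sum_{j'>n}\lambda_{j'}\alpha_{j',i_0}=-1$, whence $\sum_{j'>n}\lambda_{j'}\alpha_{j',i_0}\ge 1$ and some $j'>n$ satisfies $\alpha_{j',i_0}>0$. Because $\alpha_{j,i_0}=0$ we have $j'\ne j$, so the vanishing $\sum_i\alpha_{j'i}c_i=0$ is contradicted by the positive term $\alpha_{j',i_0}c_{i_0}>0$. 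Hence $c_i=0$ for all $i\ne i_j$ and $e=p_{i_j}^*$.

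Statement (2) is then immediate: by (1) any $e\in\RRR_j$ with $j>n$ equals some $p_{i_j}^*$, and its negative $-p_{i_j}^*$ is again a root by Lemma~\ref{lr1}, so $e\in\SSS=\RRR\cap-\RRR$; thus every root outside $\bigcup_{i=1}^n\RRR_i$ is semisimple, i.e. all unipotent roots lie in $\bigcup_{i=1}^n\RRR_i$. For (3) I would first observe that $\SSS$ is finite: if $e\in\SSS$, with $\rho$ the distinguished ray of $e$ and $\rho''$ that of $-e$, the two systems of Demazure inequalities force $\langle p_{\rho'},e\rangle=0$ on every ray $\rho'\notin\{\rho,\rho''\}$ and $\pm1$ on $\rho,\rho''$, so each $c_i=\langle p_i,e\rangle\in\{-1,0,1\}$ and $|\SSS|\le 3^n$. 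I then choose $u\in N$ in the open negative orthant $\{u:\langle u,p_i^*\rangle<0\text{ for all }i\}$ generically, so that $\langle u,e\rangle\ne0$ for the finitely many $e\in\SSS$, putting $u\in\text{Reg}(\SSS)$. Any semisimple root with distinguished ray $\rho_j$, $j>n$, equals some $p_i^*$ by (1), and $\langle u,p_i^*\rangle<0$ shows it does not lie in $\SSS_u^+$; therefore $\SSS_u^+\subseteq\bigcup_{i=1}^n\RRR_i$, as required.
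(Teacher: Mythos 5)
Your proof is correct. Note that the survey itself states this lemma only with a citation to Dzhunusov's paper and gives no proof, so there is nothing in the text to compare against; your argument supplies the missing details in what is essentially the intended way. The key steps all check out: for (1), the integrality forcing a single product $\alpha_{j,i_j}c_{i_j}=1$ together with the completeness argument (expressing $-p_{i_0}$ as a non-negative combination of ray generators to produce a $j'\ne j$ with $\alpha_{j',i_0}>0$) correctly rules out extra positive coordinates; (2) follows formally since $-p_{i_j}^*\in\RRR_{i_j}$ by Lemma~\ref{lr1} makes every root with distinguished ray $\rho_j$, $j>n$, semisimple; and for (3) the finiteness bound $|\SSS|\le 3^n$ justifies picking a generic lattice point $u$ in the open negative orthant, which lands in $\text{Reg}(\SSS)$ and, by (1), excludes the roots $p_i^*$ from $\SSS_u^+$.
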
 

\smallskip

Now we consider a complete toric surface $X_{\Sigma}$ with the fan $\Sigma$ admitting an additive action. We still assume that $p_1, p_2$ form a basis of the lattice $N$ and remaining vectors $p_3,\ldots,p_m$ are non-positive integer combinations of $p_1,p_2$. We say that a fan $\Sigma$ is \emph{wide} if there exist indices $2 < i_1, i_2\le m$ such that $\alpha_{i_11} > \alpha_{i_12}$ and $\alpha_{i_21} < \alpha_{i_22}$. One can check that this condition means $\RRR_1=\{-p_1^*\}$ and $\RRR_2=\{-p_2^*\}$. 

\smallskip

Now we are ready to formulate the main result.

\begin{theorem} {\cite[Theorem~3]{Dz2019}} \label{tdh1}
Let $X_{\Sigma}$ be a complete toric surface admitting an additive action. Then there is only one additive action on $X_{\Sigma}$ if and only if the fan $\Sigma$ is wide; otherwise there
exist exactly two non-equivalent additive actions, one is normalized and the other is not.
\end{theorem}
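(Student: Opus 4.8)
The plan is to convert the problem into a conjugacy count for commutative unipotent subgroups of $\Aut(X_\Sigma)^0$ and to read the answer off the Demazure root data, using Lemmas~\ref{lr1} and~\ref{lr2}. Since $X_\Sigma$ is complete, $\Aut(X_\Sigma)$ is a linear algebraic group with maximal torus $T$, and by Definition~\ref{act_equivdef} two additive actions are equivalent exactly when the images of $\GG_a^2$ in $\Aut(X_\Sigma)$ are conjugate. By Proposition~\ref{popor} each such image is a maximal commutative unipotent subgroup acting with an open orbit; being connected and unipotent it lies in a maximal unipotent subgroup $U$, all of which are conjugate with normalizer the Borel subgroup $B=T\ltimes U$. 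Hence classifying additive actions up to equivalence is the same as classifying, up to $B$-conjugacy, the two-dimensional commutative unipotent subgroups $H\subseteq U$ acting with an open orbit. By hypothesis one such action exists, so by Corollary~\ref{cba} and Theorem~\ref{unique} there is exactly one normalized class, attached to the complete collection $\{-p_1^*,-p_2^*\}$; I would fix $U$ with $\Lie U=\bigoplus_{e\in\RRR^+}\Lie H_e$ for a positive system $\RRR^+$, where by Lemma~\ref{lr2}(2),(3) one may take $\RRR^+\subseteq\RRR_1\cup\RRR_2$.

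The wide case is then immediate: wideness means $\RRR_1\cup\RRR_2=\{-p_1^*,-p_2^*\}$, so $\RRR^+\subseteq\{-p_1^*,-p_2^*\}$ and $\dim U\le 2$. As $U$ has an open orbit on the surface (Theorem~\ref{3con}), $\dim U=2$ and $\RRR^+=\{-p_1^*,-p_2^*\}$; these two roots commute by Lemma~\ref{com}, so $U=H_{-p_1^*}\times H_{-p_2^*}\cong\GG_a^2$ is itself the normalized subgroup. Every additive action conjugates into the two-dimensional $U$ and therefore coincides with it, giving a unique additive action.

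For the non-wide case I would first isolate the possibility that both $\RRR_1$ and $\RRR_2$ strictly exceed a single point. The inequalities describing $\RRR_1,\RRR_2$ (Lemma~\ref{lr1}) then force $\alpha_{j1}=\alpha_{j2}$ for every extra ray, and primitivity leaves only $p_3=-(p_1+p_2)$, i.e. $X_\Sigma\cong\PP^2$, which carries exactly two additive actions — the two three-dimensional local algebras of Table~\ref{table_localg6}, one normalized and one not. Hence I may assume exactly one side, say $\RRR_2$, is large while $\RRR_1=\{-p_1^*\}$. Passing to the Cox ring $\KK[x_1,\ldots,x_m]$, the subgroup $H$ is the exponential of a two-dimensional abelian algebra $\fh=\langle\xi,\eta\rangle$, where $\xi,\eta$ are commuting locally nilpotent derivations of $\Cl(X_\Sigma)$-degree zero, that is, linear combinations of the root derivations $\partial_e$. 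An open orbit on the surface requires $\fh$ to move two independent coordinates, which forces it to involve a complete collection $\{e_1,e_2\}$ of roots; normalizing, I can take $\eta=\partial_{e_2}$ and $\xi=\partial_{e_1}+\sum_{e\in\RRR_2}c_e\partial_e$. The relation $[\xi,\eta]=0$ together with Lemma~\ref{com}, the adjoint action of the root subgroups of $U$, and rescaling by $T$ then collapse the coefficients $c_e$ to a single parameter normalized to $0$ or $1$, exactly as in the Hirzebruch computation of Example~\ref{Fd_Demaz_ex}. The value $0$ recovers the complete collection (a normalized action), while $1$ gives a single non-normalized class; the two are inequivalent since the first is $T$-stable and the second is not, and $B$-conjugacy coincides with $\Aut(X_\Sigma)^0$-conjugacy because $N(U)=B$.

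The hard part is this last reduction. One must check that the open-orbit requirement genuinely selects two independent distinguished directions, that the combined action of the adjoint representation of $U$ and the scaling by $T$ contracts the a priori multi-parameter family $\{c_e\}_{e\in\RRR_2}$ to the single binary choice $\{0,1\}$ for every admissible large set $\RRR_2$ (not merely for Hirzebruch fans), and that the finite component group $\Aut(X_\Sigma)/\Aut(X_\Sigma)^0$ introduces no further identification between the normalized and non-normalized classes. It is precisely here that the surface hypothesis $n=2$ and the explicit shape of $\RRR_1,\RRR_2$ from Lemmas~\ref{lr1} and~\ref{lr2} are used in full.
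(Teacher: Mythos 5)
Your plan follows the same route as the paper's own (two\nobreakdash-sentence) sketch of Dzhunusov's argument: reduce to conjugacy of two-dimensional commutative unipotent subgroups of a fixed maximal unipotent subgroup $U\subseteq\Aut(X_\Sigma)^0$, settle the wide case by the dimension count $\dim U=2$, and in the non-wide case classify pairs of commuting homogeneous LNDs on the Cox ring. Your wide case and your reduction of the ``both $\RRR_1,\RRR_2$ large'' subcase to $\PP^2$ are correct and go beyond what the paper writes out.

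Two issues remain. First, a logical slip: $N(U)=B$ does \emph{not} imply that $B$-conjugacy of subgroups of $U$ coincides with $\Aut(X_\Sigma)^0$-conjugacy (in $\GL_3$ the subgroups $\{I+tE_{12}\}$ and $\{I+tE_{13}\}$ of the unitriangular group are conjugate by a permutation matrix but not by any upper-triangular one). This is harmless here only because $B$-conjugacy is the finer relation, so your computation still gives the upper bound ``at most two classes,'' while the lower bound comes from your separate invariant (normalized versus not, preserved under arbitrary conjugation since all maximal tori of $\Aut(X_\Sigma)^0$ are conjugate); but the justification as stated is wrong. Second, the step you flag as ``the hard part'' is the entire content of the non-wide case, and your text stops exactly where Dzhunusov's proof begins. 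The mechanism you name does work; the missing skeleton is this. Completeness forces $\RRR_2=\{-p_2^*+cp_1^*\,:\,0\le c\le d\}$ to be an integer interval. Since every $\partial_e$ with $e\in\RRR_2$ is a multiple of the same partial derivative in the Cox ring, an open orbit forces $\fh$ to contain an element with nonzero $\partial_{-p_1^*}$-component; normalizing, $\xi=\partial_{-p_1^*}+\sum_c c_{(c)}\partial_{-p_2^*+cp_1^*}$ and $\eta=\sum_c d_{(c)}\partial_{-p_2^*+cp_1^*}$. The bracket $[\partial_{-p_1^*},\partial_{-p_2^*+cp_1^*}]=\pm c\,\partial_{-p_2^*+(c-1)p_1^*}$ then shows that $[\xi,\eta]=0$ forces $d_{(c)}=0$ for $c\ge1$, i.e.\ $\eta$ is proportional to $\partial_{-p_2^*}$ --- this is the justification missing for ``I can take $\eta=\partial_{e_2}$.'' Finally, conjugation by $\exp(s\,\partial_{-p_2^*+c'p_1^*})$ for $c'=1,\dots,d$ fixes $\eta$ and shifts $\xi$ by multiples of $\partial_{-p_2^*+(c'-1)p_1^*}$, killing $c_{(0)},\dots,c_{(d-1)}$; the surviving coefficient $c_{(d)}$ is rescaled by $T$ to $0$ or $1$. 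With these two repairs inserted, your plan becomes a complete proof along the lines of the cited one.
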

Lemmas~\ref{lr1}-\ref{lr2} show that for a wide fan $\Sigma$ the maximal unipotent subgroup of the linear algebraic group $\Aut(X_{\Sigma})$ has dimension two, and so it is the only candidate for a commutative unipotent group acting on $X_{\Sigma}$ with an open orbit. To treat the case of a non-wide fan, Dzhunusov classifies pairs of commuting homogeneous LNDs on the Cox ring of the variety $X_{\Sigma}$ and shows that precisely one equivalence class of such pairs corresponds to non-normalized additive actions on $X_{\Sigma}$. 

\subsection{Uniqueness criteria}
\label{subuc}

This subsection contains a criterion of uniqueness for an additive action on a complete toric variety of arbitrary dimension proved by Dzhunusov~\cite{Dz2020}. This result is also based in Lemmas~\ref{lr1}-\ref{lr2}. We keep notation of the previous subsection. 

\begin{theorem} {\cite[Theorem~4]{Dz2020}} \label{tdh2}
Let $X_{\Sigma}$ be a complete toric variety admitting an additive action. Then any additive action on variety $X$ is equivalent to the normalized additive action if and only if the set $\RRR_i$ is equal to $\{-p_i^*\}$ for every $1\le i\le n$.
\end{theorem}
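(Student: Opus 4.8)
The plan is to translate the statement into a classification, up to conjugacy in $\Aut(X_\Sigma)$, of the commutative $n$-dimensional unipotent subgroups acting with an open orbit. Two additive actions are equivalent exactly when the images $H_1,H_2\cong\GG_a^n$ of the acting groups in $\Aut(X_\Sigma)$ are conjugate, since a conjugating automorphism of $X_\Sigma$ supplies the required group isomorphism; so the claim becomes that the normalized subgroup is the unique such conjugacy class if and only if $\RRR_i=\{-p_i^*\}$ for all $i$. By Theorem~\ref{unique} the normalized action is a single well-defined class, and by Proposition~\ref{popor} together with Theorem~\ref{3con} every additive action may, after conjugation, be assumed to lie in one fixed maximal unipotent subgroup $U\subseteq\Aut(X_\Sigma)$. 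Choosing the underlying Borel via the vector $u$ of Lemma~\ref{lr2} so that $\SSS_u^+\subseteq\bigcup_{i=1}^n\RRR_i$, the Demazure-root description shows that $\mathfrak{u}=\Lie U$ is spanned by the derivations $\pa_e$ with $e\in\SSS_u^+\cup\UUU\subseteq\bigcup_{i=1}^n\RRR_i$, and by Lemma~\ref{lr1} each such $\pa_e$ with $e\in\RRR_i$ acts, in Cox coordinates, only on the variable $x_i$. The task thus reduces to describing the $n$-dimensional commutative subalgebras $\mathfrak{h}\subseteq\mathfrak{u}$ whose subgroup has an open orbit.

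For the ``if'' direction, assume $\RRR_i=\{-p_i^*\}$ for every $i$, so that $\bigcup_{i=1}^n\RRR_i=\{-p_1^*,\ldots,-p_n^*\}$. Because $p_1^*,\ldots,p_n^*$ is a basis, no $-p_i^*$ can be a negative semisimple root: otherwise $p_i^*\in\SSS_u^+\subseteq\{-p_1^*,\ldots,-p_n^*\}$ would give $p_i^*=-p_j^*$, which is impossible. Hence each $-p_i^*$ lies in $\SSS_u^+\cup\UUU$, and $U$ is generated by $H_{-p_1^*},\ldots,H_{-p_n^*}$. These commute pairwise, since $\langle p_i,-p_j^*\rangle=-\delta_{ij}=0$ for $i\neq j$ and Lemma~\ref{com} applies. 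Therefore $U$ itself is commutative of dimension $n$ and equals the normalized subgroup. Any additive-action subgroup $H\cong\GG_a^n$ becomes, after conjugation, a closed connected $n$-dimensional subgroup of the connected $n$-dimensional group $U$, hence equals $U$; so every additive action is equivalent to the normalized one.

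For the ``only if'' direction I would prove the contrapositive. Suppose $\RRR_{i_0}\supsetneq\{-p_{i_0}^*\}$; by Lemma~\ref{lr1} there is a root $e'=-p_{i_0}^*+\sum_{l\neq i_0}c_lp_l^*\in\RRR_{i_0}$ with some $c_{j_0}>0$. Since $e'$ and $-p_{i_0}^*$ share the distinguished ray $\rho_{i_0}$, the derivation $\pa_{e'}$ commutes with $\pa_{-p_{i_0}^*}$, whereas $\langle p_{j_0},e'\rangle=c_{j_0}\neq0$ shows, via Lemma~\ref{com}, that $\pa_{e'}$ does not commute with $\pa_{-p_{j_0}^*}$; in particular $\mathfrak{u}$ is non-abelian and strictly larger than the normalized subalgebra. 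Deforming the normalized generator in the $\rho_{j_0}$-direction, I would consider the collection $\pa_{-p_{i_0}^*}$, $\pa_{-p_{j_0}^*}+t\,\pa_{e'}$, and $\pa_{-p_k^*}$ for $k\neq i_0,j_0$, which spans an $n$-dimensional subspace $\mathfrak{h}_t$ reducing to the normalized one at $t=0$. One then checks that $\mathfrak{h}_t$ is abelian and that its leading terms still move all of $x_1,\ldots,x_n$, so that for generic $t$ it defines an additive action, and finally that this action is not equivalent to the normalized one. This is exactly the Heisenberg-type mechanism that yields the second, non-normalized action in the surface case of Theorem~\ref{tdh1}.

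The hard part will be the last two verifications. First, when $e'$ involves several directions $p_l^*$, the single deformation above need not stay abelian, and one must adjust simultaneously all generators $\pa_{-p_k^*}$ with $c_k\neq0$ (and, when $e'$ is semisimple, work with $e'$ or its negative according to the chosen Borel); keeping the whole collection commutative while preserving the open-orbit condition is the delicate combinatorial point, and it is precisely here that the structural information on the sets $\RRR_i$ in Lemmas~\ref{lr1}--\ref{lr2} is needed. Second, and most importantly, proving inequivalence requires a genuine conjugacy invariant: since an arbitrary automorphism of $X_\Sigma$ is allowed, a subgroup that merely looks non-normalized might a priori be conjugate back to the normalized one. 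The natural remedy is to read off such an invariant from the combinatorics of the action---for instance from its orbit structure or from how $\mathfrak{h}_t$ meets the center and the derived subalgebra of $\mathfrak{u}$---and to show it separates the deformed class from the normalized class whenever a deforming root $e'$ exists.
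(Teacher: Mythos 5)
Your ``if'' direction is complete and is exactly the argument the paper sketches: when $\RRR_i=\{-p_i^*\}$ for all $i$, the maximal unipotent subgroup $U\subseteq\Aut(X_\Sigma)$ is itself commutative of dimension $n$, hence is the only candidate for the acting group up to conjugation, and conjugacy of maximal unipotent subgroups finishes the claim. The ``only if'' direction, however, is left as a plan with two genuine gaps, both of which you name but neither of which you close. First, your deformed tuple does not commute for a general $e'=-p_{i_0}^*+\sum_{l\ne i_0}c_lp_l^*$: by Lemma~\ref{com}, $\partial_{e'}$ fails to commute with $\partial_{-p_k^*}$ for every $k\ne i_0$ with $c_k>0$, so ``adjusting all generators simultaneously'' is not the right fix. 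The correct observation is that $\RRR_{i_0}\supsetneq\{-p_{i_0}^*\}$ already forces a root of the special shape $-p_{i_0}^*+d\,p_{j_0}^*$ to lie in $\RRR_{i_0}$: truncating $e'$ to a single positive coordinate preserves the inequalities $\langle p_j,\cdot\rangle\ge 0$ for $j>n$ because those $p_j$ have non-positive coordinates in the basis $p_1,\ldots,p_n$. With this root, the tuple consisting of $\partial_{-p_k^*}$ for $k\ne j_0$ together with $\partial_{-p_{j_0}^*}+\partial_{-p_{i_0}^*+dp_{j_0}^*}$ (the paper's explicit tuple) is pairwise commuting; you would still have to check that the last entry, a sum of two \emph{non-commuting} homogeneous LNDs, is locally nilpotent (it is, being triangular in the Cox coordinates $x_{i_0},x_{j_0}$), and that the open-orbit criterion from the proof of Theorem~\ref{cc} survives the deformation.

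Second, and more seriously, you give no inequivalence argument at all --- you only observe that one is needed. This is the heart of the ``only if'' direction. A workable route: if the constructed subgroup $H'$ were equivalent to the normalized action, it would be conjugate to a subgroup normalized by a maximal torus of $\Aut(X_\Sigma)$, hence, after conjugating that torus back to $T$, equivalent to a $T$-normalized additive action; by Theorem~\ref{unique} it then suffices to produce an invariant separating $H'$ from every $T$-normalized $\GG_a^n$ with open orbit --- for instance the orbit structure, or the position of $\Lie H'$ relative to the derived algebra of $\mathfrak{u}$ --- and actually to compute it for the deformed tuple. Without this step the theorem is not proved; as written, your text establishes only one of the two implications.
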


In the proof, Dzhunusov shows that the second claim of the theorem means that the maximal unipotent subgroup of the linear algebraic group $\Aut(X_{\Sigma})$ is a commutative group of dimension $n$, and it is again the only candidate for a commutative unipotent group acting on $X_{\Sigma}$ with an open orbit. If this condition does not hold, an $n$-tuple of pairwise commuting homogeneous LNDs of the Cox ring of the variety $X_{\Sigma}$ is constructed in \cite[Section~6]{Dz2020} and it is proved that this $n$-tuple defines an additive action on $X_{\Sigma}$ that is not equivalent to the normalized one. More precisely, if the normalized additive action corresponds to the tuple of homogeneous LNDs 
$$
(\partial_{-p_1^*}, \partial_{-p_2^*}, \partial_{-p_3^*}, \ldots,\partial_{-p_n^*}), 
$$
then the second tuple is given as 
$$
(\partial_{-p_1^*}, \partial_{-p_2^*}+\partial_{-p_1^*+dp_2^*}, \partial_{-p_3^*}, \ldots,\partial_{-p_n^*})
$$
with some positive integer $d$. 

\begin{corollary} \label{ccl1}
Let $X_{\Sigma}$ be a complete toric variety admitting an additive action. If the rank of the divisor class group $\Cl(X_{\Sigma})$ equals one, then there are at least two non-equivalent
additive actions on $X_{\Sigma}$.
\end{corollary}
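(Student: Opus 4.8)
The plan is to reduce the statement to the numerical criterion of Theorem~\ref{tdh2}. By Theorem~\ref{3con}, a complete toric variety admitting an additive action admits a normalized one, and by Theorem~\ref{unique} this normalized action is unique up to equivalence. Hence, to produce a second equivalence class it suffices, by Theorem~\ref{tdh2}, to exhibit a single index $i$ with $\RRR_i\ne\{-p_i^*\}$: then not every additive action on $X_\Sigma$ is equivalent to the normalized one, so there are at least two non-equivalent additive actions. The first step is to translate the hypothesis $\rk\Cl(X_\Sigma)=1$ into combinatorics of the fan. Since $X_\Sigma$ admits an additive action, $\Cl(X_\Sigma)$ is free by Proposition~\ref{adcl}, and the standard toric exact sequence $0\to M\to\ZZ^{\Sigma(1)}\to\Cl(X_\Sigma)\to 0$ gives $\rk\Cl(X_\Sigma)=m-n$. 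Thus $m=n+1$, so the fan has exactly one ray beyond the basis $p_1,\ldots,p_n$. I would assume $\dim X_\Sigma=n\ge 2$ throughout; in dimension one the only complete toric variety is $\PP^1$, which carries a unique additive action, so that degenerate case is outside the scope of the assertion.

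The next step is to pin down the extra primitive vector $p_{n+1}=-\sum_{i=1}^n\alpha_ip_i$ with $\alpha_i\in\ZZ_{\ge 0}$, and to show that all $\alpha_i$ are strictly positive. This is where completeness enters: the support of $\Sigma$ is all of $N_\QQ$, so no nonzero linear functional can be nonnegative on every $p_k$. If some $\alpha_i=0$, then $p_i^*$ takes the values $\langle p_i^*,p_k\rangle=\delta_{ik}\ge 0$ on $p_1,\ldots,p_n$ and $\langle p_i^*,p_{n+1}\rangle=-\alpha_i=0$ on the remaining ray, hence is a nonzero functional nonnegative on all rays, contradicting completeness. Therefore $\alpha_i\ge 1$ for every $i$.

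Finally I would construct the desired root. Let $i$ be an index with $\alpha_i$ maximal, pick any $j\ne i$ (possible since $n\ge 2$), and set $e=-p_i^*+p_j^*\in M$. I would then verify the defining inequalities of $\RRR_i$ from Lemma~\ref{lr1} and the definition: $\langle p_i,e\rangle=-1$; $\langle p_k,e\rangle=\delta_{kj}\ge 0$ for $k\ne i$ with $k\le n$; and $\langle p_{n+1},e\rangle=\alpha_i-\alpha_j\ge 0$ by maximality of $\alpha_i$. Because $\Sigma$ has convex support, condition~(1) in the definition of Demazure roots forces condition~(2), so $e$ is a genuine Demazure root lying in $\RRR_i$, and $e\ne-p_i^*$ since $p_j^*\ne 0$. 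Thus $\RRR_i\ne\{-p_i^*\}$, and the reduction of the first paragraph completes the argument.

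The technical heart is the positivity $\alpha_i\ge 1$ established in the second step: it is exactly the place where completeness of $X_\Sigma$ (rather than the mere existence of the fan) is essential, and it is what guarantees enough room to enlarge $\RRR_i$ beyond $\{-p_i^*\}$. Everything else is routine bookkeeping with the dual basis $p_1^*,\ldots,p_n^*$ together with the logical packaging supplied by Theorems~\ref{3con},~\ref{unique}, and~\ref{tdh2}.
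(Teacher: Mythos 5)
Your proof is correct, and since the paper states this corollary without an argument, what you have written is a legitimate filling-in of the intended derivation from Theorem~\ref{tdh2}: translate $\rk\Cl(X_\Sigma)=1$ into $m=n+1$ via the standard exact sequence $0\to M\to\ZZ^{\Sigma(1)}\to\Cl(X_\Sigma)\to 0$, and then exhibit a Demazure root in some $\RRR_i$ other than $-p_i^*$. The root $e=-p_i^*+p_j^*$ with $\alpha_i$ maximal does the job: all pairings are checked correctly, and condition~(2) in the definition of a Demazure root is automatic because a complete fan has convex support. Your observation about $\PP^1$ is also right and worth making explicit: $\PP^1$ is a complete toric variety with $\Cl\cong\ZZ$ carrying a unique additive action (there is only one local algebra of dimension two), so the corollary as literally stated requires $n\ge 2$; this is an implicit hypothesis the paper does not spell out.

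Two small remarks. First, the positivity $\alpha_l\ge 1$ for all $l$, which you call the technical heart, is true and is indeed where completeness bites combinatorially, but it is not actually used in your final construction: the inequality $\langle p_{n+1},e\rangle=\alpha_i-\alpha_j\ge 0$ needs only the maximality of $\alpha_i$, so the argument would survive even without that step (what you do still need from completeness is the convex-support implication $(1)\Rightarrow(2)$ and, of course, that the $(n+1)$st primitive vector is a non-positive integral combination of the basis, which comes from Corollary~\ref{cba}). Second, the same construction shows slightly more: every index $i$ realizing the maximum of the $\alpha_l$ has $\RRR_i\supsetneq\{-p_i^*\}$, which is consistent with the explicit count of additive actions on weighted projective planes cited after the corollary.
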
 

Corollary~\ref{ccl1} covers the case of weighted projective spaces. By \cite[Proposition~2]{AR2017}, the weighted projective space $\PP(a_0,a_1,\ldots, a_n)$, $a_0\le a_1\le\ldots\le a_n$ admits an additive action if and only if $a_0=1$. So there are at least two non-equivalent additive actions on the weighted projective space $\PP(1, a_1,\ldots,a_n)$.

An explicit description of additive actions on weighted projective planes is given in~\cite[Proposition~7]{ABZ2018}. It turns out that, as in the case of the projective plane $\PP^2$, every weighted projective plane $\PP(1,a_1,a_2)$ admits precisely two non-equivalent additive actions. 
%
%
\section{Further results and questions on equivariant completions} 
\label{frec}

The aim of subsection~\ref{sub51} is to collect recent geometric and classificational results on additive actions on Fano varieties. We begin with the surface case and list all singular and generalized del Pezzo surfaces admitting an additive action. In dimension three, we recall a classification of smooth projective 3-folds with an irreducible boundary divisor that admit an additive action due to Hassett and Tschinkel. The next step is a classification of smooth Fano 3-folds of Picard number at least two that admit an additive action. There are 17 varieties satisfying all these conditions. In dimensions starting from four the corresponding classifications are available only under a restriction on the index of a Fano variety. These results are due to Fu and Montero. 

In subsection~\ref{sub-5-1-1} we give a short discussion on so-called Euler-symmetric varieties. Such a variety is defined by the condition that for a generic point $P$ there is a one-dimensional torus $\GG_m$ in the automorphism group such that $P$ is an isolated fixed point for $\GG_m$ and the torus $\GG_m$ acts on the tangent space at the point $P$ by scalar multiplication. It is known that any Euler-symmetric variety admits an additive action, and there is a conjecture that this is a way to describe smooth Fano varieties admitting an additive action. 

In subsection~\ref{sub52} we formulate several open problems and conjectures on equivariant completions of affine spaces. They concern all the subjects discussed in this paper. 

\subsection{Classification results on additive actions on Fano varieties}
\label{sub51}
Let us begin this subsection with the case of surfaces. 
In~\cite{DL2010}, a classification of del Pezzo surfaces that are equivariant compactifications of the group $\GG_a^2$ is given. Recall that del Pezzo surfaces are defined as smooth projective surfaces $X$ whose anticanonical class $-K_X$ is ample. A singular del Pezzo surface is a normal singular projective surface with only ADE-singularities whose anticanonical class is ample. A generalized del Pezzo surface is either a smooth del Pezzo surface or a minimal desingularization of a singular del Pezzo surface. The main result of~\cite{DL2010} claims that if $S$ is a (possibly singular or generalized) del Pezzo surface of degree $d$ defined over a field $K$ of characteristic zero, then $S$ admits an additive action precisely in the following cases:

\begin{enumerate}
\item[(i)]
$S$ has a nonsingular rational $K$-point and is of the form of $\PP^2$, $\PP^1\times\PP^1$, the Hirzebruch surface $\FF_2$, or the corresponding singular del Pezzo surface; 
\item[(ii)]
$S$ is of the form of $\text{Bl}_1(\PP^2)$ or $\text{Bl}_2(\PP^2)$;
\item[(iii)]
$d=7$ and $S$ is of type $A_1$;
\item[(iv)]
$d=6$ and $S$ is of type $A_1$ (with three lines), $2A_1$, $A_2$, or $A_2+A_1$;
\item[(v)]
$d=5$ and $S$ is of type $A_3$ or $A_4$;
\item[(vi)]
$d=4$ and $S$ is of type $D_5$.
\end{enumerate}

\smallskip 

More generally, in \cite{DL2015} the authors determine all (possibly singular) del Pezzo surfaces that are equivariant compactifications of homogeneous spaces of two-dimensional linear algebraic groups. It is well known that besides the torus $\GG_m^2$ and the vector group $\GG_a^2$ the only connected two-dimensional linear algebraic groups are semidirect products $\GG_m\rightthreetimes\GG_a$. The classification result claims that a del Pezzo surface $S$ of degree $d$, possibly singular with rational double points, is an equivariant compactification of some semi-direct product $\GG_m\rightthreetimes\GG_a$ if and only if it has one of the following types:
\begin{enumerate}
\item[(i)]
$d\ge 7$: all types; 
\item[(ii)]
$d=6$: types $A_2+A_1$, $A_2$, $2A_1$, or $A_1$ (with three or four lines);
\item[(iii)]
$d=5$: types $A_3$, $A_2+A_1$, or $A_2$;
\item[(iv)]
$d=4$: types $A_3+2A_1$, $D_4$, or $A_3+A_1$.
\end{enumerate}

Additionally, precisely the following types are equivariant compactifications of a homogeneous space for some semi-direct product $\GG_m\rightthreetimes\GG_a$:
\begin{enumerate}
\item[(i)]
$d=5$: type $A_4$; 
\item[(ii)]
$d=4$: types $D_5$ or $A_4$;
\item[(iii)]
$d=3$: types $E_6$ or $A_5+A_1$.
\end{enumerate}

As we already know, the structure of a torus compactification on a toric variety is unique up to isomorphism, while already the projective plane $\PP^2$ admits two different additive actions. It is proved in \cite[Theorem~3.3]{DL2015} that if $\GG_m\rightthreetimes\GG_a$ is not the direct product $\GG_m\times\GG_a$ then up to equivalence $\PP^2$ admits precisely two different structures of an equivariant compactification of $\GG_m\rightthreetimes\GG_a$. Moreover, it is shown that $\PP^2$ admits infinitely many different structures of an equivariant compactification of a homogeneous space for each $\GG_m\rightthreetimes\GG_a$. 

\smallskip

A characterization of complete $\GG_m$-surfaces admitting an additive action is obtained in~\cite[Proposition~13.17]{HaHu2020}. 

\smallskip

Now we come from surfaces to dimension three.  In~\cite{HaTs1999}, a classification of smooth projective 3-folds of Picard number one admitting an additive action is given. 

\begin{theorem} {\cite[Theorem 6.1]{HaTs1999}}
Let $X$ be a smooth projective 3-fold admitting an additive action with an irreducible boundary divisor $D$. 
Then $X$ is one of the following:
\begin{enumerate}
\item[(i)]
$\PP^3$ with $D$ a hyperplane;
\item[(ii)]
$Q_3\subseteq\PP^4$ a smooth quadric with $D$ a tangent hyperplane section. 
\end{enumerate}
\end{theorem}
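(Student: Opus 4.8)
The plan is to turn the additive action and the irreducibility of the boundary into strong positivity constraints, reduce to a short list of Fano threefolds via the known classification of compactifications of $\CC^3$, and then use the group action to discard the extra candidates. First I would record the consequences of having a single boundary divisor. Since the open orbit $\Uf\cong\AA^3$ is affine with complement the prime divisor $D$, Proposition~\ref{adcl} gives $\Pic(X)=\Cl(X)=\ZZ[D]$, so $X$ has Picard number one. By Theorem~\ref{geHT} the anticanonical class is $-K_X=a[D]$ with $a\ge 2$; in particular $-K_X\sim aD$ is effective. As $[D]$ generates $\Pic(X)\cong\ZZ$, the ample generator is $\pm[D]$, and the sign must be $+$: if $[D]$ were anti-ample, then the effective divisor $aD$ would represent an anti-ample class, which is impossible because such a class has no nonzero sections. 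Hence $[D]$ is ample, $-K_X=a[D]$ is ample, and $X$ is a smooth Fano threefold of Picard number one whose Fano index equals $a\ge 2$.

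Next I would feed the geometry of the open orbit into the classification of compactifications. The variety $X$ is a smooth projective compactification of $\AA^3=\CC^3$ of Picard number one with irreducible boundary $D$. Such compactifications are completely classified: by~\cite{Fur1993}, $X$ is isomorphic to $\PP^3$, to the smooth quadric $Q_3\subseteq\PP^4$, to the quintic del Pezzo threefold $V_5$ (of Fano index $2$), or to the Fano threefold $V_{22}$ of genus $12$ (of Fano index $1$). This step is formal once the classification is invoked; the remaining work is to discard $V_{22}$ and $V_5$.

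The elimination of these two varieties is where the additive action is essential, and it is the main obstacle of the argument. The threefold $V_{22}$ has Fano index $1$, which contradicts the bound $a\ge 2$ coming from Theorem~\ref{geHT}, so $V_{22}$ carries no additive action. For $V_5$ I would use the classical fact that $\Aut(V_5)^0\cong\PGL_2(\KK)$. An additive action realizes $\GG_a^3$ as a three-dimensional commutative unipotent subgroup of $\Aut(X)$ acting with an open orbit (compare Proposition~\ref{popor}), but every unipotent subgroup of $\PGL_2(\KK)$ is contained in the one-dimensional unipotent radical of a Borel subgroup; hence $\Aut(V_5)$ contains no commutative unipotent subgroup of dimension $3$, and $V_5$ admits no additive action. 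Note that $V_5$ is genuinely a non-equivariant compactification of $\CC^3$, which is precisely why a numerical argument alone cannot exclude it and the symmetry input is unavoidable. (Alternatively, one could reach the same list through the Iskovskikh classification of Fano threefolds of Picard number one together with rationality, but then $V_5$ and the degree-four threefold must be handled separately, so the compactification classification is the more economical route.)

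Finally I would identify the two surviving cases as in the statement. Since $[D]$ is the ample generator of $\Pic(X)$, in the case $X=\PP^3$ the divisor $D\in|\Of(1)|$ is a hyperplane. In the case $X=Q_3$ the divisor $D$ is a hyperplane section; as the complement $Q_3\setminus D\cong\AA^3$ must be affine and is not the complement of a smooth hyperplane section, $D$ is forced to be singular, that is, a tangent hyperplane section, in agreement with the unique additive action on $Q_3$ provided by Theorem~\ref{hyp_quadr_nondeg_theor}. Thus $X$ is either $\PP^3$ with $D$ a hyperplane or $Q_3$ with $D$ a tangent hyperplane section, which is the assertion.
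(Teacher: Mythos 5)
Your argument is correct and follows essentially the same route that the paper sketches for \cite[Theorem~6.1]{HaTs1999}: the relation $-K_X=aD$ with $a\ge 2$ from Theorem~\ref{geHT} makes $X$ a rational Fano threefold of Picard number one and index at least $2$, Furushima's classification~\cite{Fur1993} reduces the possibilities to $\PP^3$, $Q_3$, $V_5$, $V_{22}$, and the last two are discarded by the index bound and by $\Aut(V_5)^0\cong\PGL_2(\KK)$ containing no three-dimensional commutative unipotent subgroup. The only organizational difference is that the paper splits into the cases $r>2$ (settled directly by the Kobayashi--Ochiai index classification) and $r=2$ (where Furushima's list is invoked), whereas you call on Furushima's full list once and prune it afterwards.
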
 

Let us recall that for a Fano variety $X$ of dimension $n$, its \emph{index} $i_X$ is the greatest integer such that $-K_X=i_X H$ for some divisor $H$ on $X$. In the proof of~\cite[Theorem 6.1]{HaTs1999} the authors observe that $-K_X = rD$, where $r\ge 2$. Therefore, $X$ is a rational Fano variety of index $r\ge 2$. They consider the cases $r>2$ and $r=2$ separately and use Furushima's classification of non-equivariant compactifications of affine 3-space. 

\smallskip

A classification of all smooth Fano 3-folds of Picard number at least two that admit additive action is given in~\cite{HM2018}. This classification includes 17~varieties. The authors consider firstly the case of smooth toric Fano 3-folds. They use the classification due to Batyrev and Watanabe-Watanabe and apply to them the criterion of the existence of an additive action on a toric variety from Theorem~\ref{cc}. This way they obtain 13 smooth toric Fano 3-folds admitting an additive action. In the non-toric case, they go through the classification of Mori and Mukai and check the existence of an additive action in each case. This analysis results in 4~smooth non-toric Fano 3-folds with an additive action. 

\smallskip

In higher dimensions, a classification of smooth Fano varieties admitting an additive action is available only for varieties with high index. It is well known that the index $i_X$ of a smooth Fano variety of dimension $n$ does not exceed $n+1$. Classification of smooth Fano varieties of index $i_X\ge n-2$ is obtained in the works by Fujita, Mella, Mukai, and Wisniewski. Based on this classification, a complete list of smooth Fano varieties of dimension $n$ and index $i_X\ge n-2$ that admit an additive action is obtained in~\cite{FM2018}. 

\smallskip

Let us begin with the case of Picard number one.

\begin{theorem} {\cite[Theorem~1.1]{FM2018}}
Let $X$ be an $n$-dimensional smooth projective variety of Picard number one admitting an additive action. Assume that $i_X\ge n-2$. Then $X$ is isomorphic to one of the following varieties:
\begin{enumerate}
\item[(1)]
6 homogeneous varieties of algebraic groups: $\PP^n$, $Q_n$, $\Gr(2, 5)$, $\Gr(2, 6)$, $\SAS_5$, $\Lag(6)$.
\item[(2)]
5 non-homogeneous varieties:
\item[(2-a)]
smooth linear sections of $\Gr(2, 5)$ of codimension $1$ or $2$.
\item[(2-b)]
$\PP^4$-general linear sections of $\SAS_5$ of codimension $1$, $2$ or $3$.
\end{enumerate}
Here $\SAS_5$ and $\Lag(6)$ denote the $10$-dimensional spinor variety and the $6$-dimensional Lagrangian Grassmannian, respectively. 
\end{theorem}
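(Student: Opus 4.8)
The plan is to derive this classification from the existing classification of smooth Fano $n$-folds of Picard number one and large index, using the general theory of additive actions to first force $X$ into that classification and then to decide, variety by variety, which members actually carry an additive action.

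First I would show that an additive action together with the Picard number hypothesis forces $X$ to be a Fano variety of index $i_X\ge 2$. Since $X$ carries an additive action it is rational, and by Theorem~\ref{geHT} the complement of the open orbit is supported on boundary divisors whose classes form a basis of $\Cl(X)$. As the Picard number is one we have $\Cl(X)=\Pic(X)\cong\ZZ$, so there is exactly one boundary divisor $D$; it generates $\Pic(X)$, and the coefficient estimate in Theorem~\ref{geHT} gives $-K_X=i_X D$ with $i_X\ge 2$. Because $D$ is a nonzero effective divisor on a projective variety of Picard number one, $D$ is the ample generator of $\Pic(X)$, whence $-K_X$ is ample and $X$ is Fano of index exactly $i_X\ge 2$. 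Combined with the standing assumption $i_X\ge n-2$, this places $X$ in the range covered by the classification of Fano $n$-folds of Picard number one with $i_X\ge n-2$ due to Fujita, Mella, Mukai and Wi\'sniewski, producing a finite explicit list of candidates: $\PP^n$ and $Q_n$ at the top of the index range, the del Pezzo $n$-folds of index $n-1$, the Mukai $n$-folds of index $n-2$, and their linear sections.

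The second step is to establish existence of additive actions on the varieties appearing in the conclusion. The homogeneous members $\PP^n$, $Q_n$, $\Gr(2,5)$, $\Gr(2,6)$, $\SAS_5$ and $\Lag(6)$ are flag varieties $G/P$ with commutative unipotent radical of $P$ — equivalently Hermitian symmetric spaces of compact type — so they admit additive actions by Theorem~\ref{gpex} and the corollary following it. For the non-homogeneous members (smooth linear sections of $\Gr(2,5)$ of codimension $1$ and $2$, and $\PP^4$-general linear sections of $\SAS_5$ of codimension $1$, $2$, $3$) existence is genuine work, since a linear section of a variety with an additive action need not inherit one. The conceptual route is to verify that each of these sections is Euler-symmetric in the sense of subsection~\ref{sub-5-1-1} — every Euler-symmetric variety admits an additive action — which reduces to a computation of the projective second fundamental form and the variety of minimal rational tangents at a general point, inherited from the smooth VMRT of the ambient homogeneous space.

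The third and hardest step is non-existence: for every variety occurring in the Fano classification but \emph{not} in the conclusion one must rule out additive actions. Some sub-cases fall to results already available — smooth hypersurfaces of degree $\ge 3$, in particular the cubic del Pezzo $n$-fold, are excluded by Proposition~\ref{newprop}, and any non-rational member is excluded immediately. The remaining del Pezzo and Mukai varieties (the degree-one and degree-two del Pezzo $n$-folds, the complete intersection of two quadrics, and the low-genus Mukai varieties) require honest obstructions, and this is where the main difficulty lies. The efficient strategy is to combine Proposition~\ref{popor}, which reduces the problem to the nonexistence of a commutative unipotent subgroup of $\Aut(X)$ acting with an open orbit, with a direct study of the would-be open chart $X\setminus D\cong\AA^n$: one analyses the linear system $|D|$ and the geometry of the ample boundary divisor $D$ and shows that it is incompatible with an open affine-space orbit, either by an automorphism-group computation or by showing $X$ fails to be Euler-symmetric. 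Carrying this out uniformly across the finitely many remaining cases, drawing on the detailed projective geometry of each classification member, is the technical core of the argument.
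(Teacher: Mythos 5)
The paper does not actually prove this theorem: it is a survey item quoted from [FM2018], and the only indication of method given is that the list is extracted from the Fujita--Mella--Mukai--Wi\'sniewski classification of Fano $n$-folds of Picard number one with $i_X\ge n-2$. Your overall strategy therefore coincides with the one the paper attributes to Fu and Montero, and your first step is correct and complete: Proposition~\ref{adcl} gives a single boundary divisor $D$ generating $\Pic(X)\cong\ZZ$, Theorem~\ref{geHT} gives $-K_X=i_XD$ with $i_X\ge 2$, and effectivity of $D$ on a variety of Picard number one forces $D$ ample, so $X$ is Fano and lands in the known classification. This is exactly the reduction the paper sketches in the threefold case.

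The gap is that for a classification theorem the case-by-case verification \emph{is} the proof, and your steps 2 and 3 describe it without carrying it out. On the existence side, you would need an actual construction (or Euler-symmetry verification) for the codimension $1$--$2$ sections of $\Gr(2,5)$ and the $\PP^4$-general codimension $1$--$3$ sections of $\SAS_5$, together with an explanation of why the list stops where it does: the codimension-$3$ section of $\Gr(2,5)$ is the quintic del Pezzo threefold $V_5$, which is a compactification of $\AA^3$ yet admits no additive action, so ``sections of an Euler-symmetric variety inherit the property'' cannot be invoked wholesale. On the non-existence side, Proposition~\ref{newprop} only disposes of hypersurfaces and rationality only of a few more; for the degree-$1$ and degree-$2$ del Pezzo varieties, the intersection of two quadrics, and the low-genus Mukai varieties, ``an automorphism-group computation or showing $X$ fails to be Euler-symmetric'' is a plan rather than an argument. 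A tool from the paper that you did not invoke and that settles several of these cases cleanly is the proposition (quoted from [AP2014]) that a complete variety with an additive action whose group $\Aut(X)^0$ is reductive must be a flag variety: for instance $\Aut(V_5)^0\cong\PGL_2(\KK)$ is reductive and $V_5$ is not homogeneous, so $V_5$ carries no additive action. Until the finitely many remaining candidates are each dispatched by such concrete arguments, the proposal remains an outline rather than a proof.
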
 

A classification of smooth $n$-dimensional Fano varieties with $i_X \ge n-2$ of Picard number at least two that admit an additive action is given in~\cite[Section~3]{FM2018}. Here the result is based on Wisniewski's classifications of smooth Fano $n$-dimensional varieties of index $\ge \frac{n+1}{2}$ with Picard number at least two and of Mukai 4-folds with Picard number at least two. 

\subsection{Euler-symmetric varieties} 
\label{sub-5-1-1}
In this subsection we discuss a general construction of varieties with an additive action due to Fu and Hwang~\cite{FH2017}. We work over the field of complex numbers. 

\begin{definition}
\label{ESDef}
Let $Z\subseteq\PP(V)$ be a projective variety. For a nonsingular point $x\in Z$, a $\GG_m$-action on~$Z$ coming from a multiplicative subgroup of $\GL(V)$ is said to be of \emph{Euler type} at~$x$ if $x$ is an isolated fixed point of the restricted $\GG_m$-action on~$Z$ and the induced $\GG_m$-action on the tangent space $T_x(Z)$ is by scalar multiplication. A nonsingular point $x\in Z$ is called \emph{Euler} if there is a $\GG_m$-action on~$Z$ which is of Euler type at~$x$. We say that $Z\subseteq\PP(V)$ is \emph{Euler-symmetric} if there is an open dense subset $W$ in~$Z$ consisting of Euler points.
\end{definition}

\begin{remark}
The condition on the action of~$\GG_m$ on the tangent space~$T_x(Z)$ implies that the $\GG_m$-fixed point~$x$ is isolated. Indeed, since the action of~$\GG_m$ on~$\PP(V)$ is diagonalizable, the point $x$ is contained in a $\GG_m$-invariant open affine chart~$X$ on~$Z$. Since the point~$x$ is nonsingular in~$X$, by~\cite[Theorem~6.4]{PV1994} it is also nonsingular in the subvariety of $\GG_m$-fixed points $X^{\GG_m}$ and $T_x(X^{\GG_m})=T_x(X)^{\GG_m}$. But the action of~$\GG_m$ on~$T_x(X)$ is by scalar multiplication, so the point $x$ is an isolated fixed point in~$X$, and hence in~$Z$. 
\end{remark}

It is proved in~\cite[Proposition~2.3]{FH2017} that for an Euler-symmetric projective variety ${Z\subseteq\PP(V)}$ the group $\Aut_l(Z)\subseteq\PGL(V)$ of linear automorphisms preserving~$Z$ acts on~$Z$ with an open orbit. In fact, Theorem~\ref{FHES} below provides a more concrete version of this result. 

In~\cite[Theorem~3.7]{FH2017}, the authors show that Euler-symmetric varieties are classified by certain algebraic data called symbol systems. Such a description makes this class of varieties accessible for investigation. Our interest in these varieties is explained by the following result; it provides a way for the systematic study of equivariant completions of affine space; see, e.g., Conjecture~\ref{confh} below.

\begin{theorem}\cite[Theorem~3.7(i)]{FH2017}
\label{FHES}
Every Euler-symmetric variety admits an additive action. 
\end{theorem}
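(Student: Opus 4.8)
The plan is to produce an additive action directly from the defining data of an Euler-symmetric variety, namely a $\GG_m$-action of Euler type at a general point together with the transitive (open-orbit) action of the linear automorphism group. First I would recall the key structural fact stated just above, \cite[Proposition~2.3]{FH2017}: for an Euler-symmetric variety $Z\subseteq\PP(V)$ the group $\Aut_l(Z)\subseteq\PGL(V)$ of linear automorphisms acts on $Z$ with an open orbit $W$. Fix an Euler point $x\in W$ and a $\GG_m$-action of Euler type at $x$, realized by a one-parameter subgroup $\lambda\colon\GG_m\to\GL(V)$ whose image normalizes (after a suitable choice) a distinguished subgroup of $\Aut_l(Z)$.

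The core idea is that the Euler-type condition singles out, inside the Lie algebra $\fg=\Lie(\Aut_l(Z))$, a $\GG_m$-eigenspace decomposition $\fg=\fg_{-1}\oplus\fg_0\oplus\fg_1$ with respect to the adjoint action of $\lambda(\GG_m)$, and that the positive part $\fg_1$ is an abelian subalgebra whose exponential provides the desired commutative unipotent group. Concretely, the second key step is to verify that the subspace $\fg_1$ (the part of $\fg$ on which $\lambda$ acts with positive weight) is abelian of dimension equal to $\dim Z=\dim T_x(Z)$, and that the corresponding unipotent subgroup $\GG_a^n=\exp(\fg_1)\subseteq\Aut_l(Z)$ acts on $Z$ with $x$ in its open orbit. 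The Euler condition — that $\lambda$ acts on $T_x(Z)$ by \emph{scalar} multiplication with a single positive weight — is exactly what forces the tangent map of $\exp(\fg_1)$ at $x$ to be an isomorphism onto $T_x(Z)$, so the orbit $\exp(\fg_1)\cdot x$ is open; and the grading length (only weights $-1,0,1$ occur, with the tangent directions all in weight $+1$) is what forces $[\fg_1,\fg_1]\subseteq\fg_2=0$, i.e. commutativity. I would extract these weight statements from the symbol-system description in \cite[Theorem~3.7]{FH2017}, or alternatively argue intrinsically: because $x$ is an isolated fixed point and $\GG_m$ acts by a scalar character on $T_x(Z)$, the tangent action organizes $\fg$ into at most three weight spaces.

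After establishing that $H:=\exp(\fg_1)\cong\GG_a^n$ is commutative, unipotent, and has open orbit $Hx\cong\AA^n$ through the general point $x$, the remaining steps are routine: effectivity follows because the stabilizer of $x$ in a commutative group acting with open orbit is trivial (this is exactly the mechanism in Lemma~\ref{openorb_lem}), and openness of $Hx$ has already been arranged. Translating $Hx$ by $\Aut_l(Z)$ shows the open orbit is dense, so $H$ provides an additive action on $Z$ in the sense of the introduction. The main obstacle, and the step deserving the most care, is the \emph{commutativity} claim $[\fg_1,\fg_1]=0$: one must rule out that brackets of positive-weight elements land in a nonzero higher-weight space, which is precisely where the hypothesis that $\lambda$ acts on $T_x(Z)$ by scalar multiplication (a single weight, not several positive weights) is indispensable. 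Once the three-term grading $\fg=\fg_{-1}\oplus\fg_0\oplus\fg_1$ is justified — so that $[\fg_1,\fg_1]\subseteq\fg_2=0$ — everything else assembles quickly into the existence of the $\GG_a^n$-action with an open orbit.
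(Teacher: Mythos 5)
Your strategy is genuinely different from the paper's: you want to grade $\fg=\Lie(\Aut_l(Z))$ by the adjoint action of the Euler $\GG_m$ and exponentiate the weight\nobreakdash-one piece, whereas the paper works in an affine chart, uses the Euler action to show that $Z\cap U_0$ is the graph of homogeneous polynomials (hence isomorphic to $\AA^n$), and then invokes Cartan's theorem on fundamental forms to extend the translations of this $\AA^n$ to affine, hence linear, automorphisms of the ambient space. Your route can in principle be completed, but as written it has a genuine gap at exactly the step you flag as critical.

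The gap is the assertion that only weights $-1,0,1$ occur, so that $[\fg_1,\fg_1]\subseteq\fg_2=0$. The Euler condition only tells you that $T_x(Z)\cong\fg/\fg_x$ has pure weight $1$, i.e.\ that $\fg_k\subseteq\fg_x$ for $k\ne 1$; it puts no a priori bound on the weights occurring inside the isotropy algebra, and the three-term grading is in fact false in general. For example, the weighted projective plane $\PP(1,1,3)$ is Euler-symmetric (it is toric and admits an additive action, see Theorem~\ref{Shtor}), and the root space of the Demazure root $(3,-1)$ — the vector field $u^3\,\partial/\partial v$ in affine coordinates — has weight $-2$ under the Euler one-parameter subgroup; in general the grading reaches down to $1-d$, where $d$ is the top degree of the fundamental form. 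What you actually need is only the upper bound $\fg_{\ge 2}=0$, and for that you offer no valid argument: appealing to the symbol-system description in \cite[Theorem~3.7]{FH2017} is circular, since the statement being proved is part (i) of that very theorem. The bound $\fg_{\ge 2}=0$ is true and can be established by a jet argument at the smooth fixed point $x$: an element $\xi\in\fg_k$ with $k\ne 1$ vanishes at $x$, and its degree-$j$ Taylor coefficient lies in $\Hom(\Sym^jT_x(Z),T_x(Z))$, which has pure weight $1-j$ because $T_x(Z)$ has pure weight $1$; for $k\ge 2$ no $j\ge 0$ satisfies $1-j=k$, so every coefficient vanishes and $\xi=0$. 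With this inserted, $\fg_1$ is abelian, it surjects onto $T_x(Z)$ by equivariance together with the open-orbit statement of \cite[Proposition~2.3]{FH2017}, and the rest of your argument assembles as you describe. Compared with the paper's proof, your (repaired) approach replaces Cartan's theorem by this prolongation-type vanishing, at the price of relying on the open-orbit result for $\Aut_l(Z)$ as an input rather than obtaining the chart $Z\cap U_0\cong\AA^n$ directly.
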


Let us sketch the proof of Theorem~\ref{FHES}. Let $Z \subseteq\PP(V)$ be a Euler-symmetric variety of dimension $n$ and $x \in Z$ be an Euler point. Choose homogeneous coordinates in $\PP(V) = \PP^m$ in such a way that $x$ has coordinates $[1:0:\ldots:0]$. Since the $\GG_m$-action of Euler type at $x$ is linear on $\PP^m$, we may assume that $\GG_m$ acts in these coordinates diagonally. 

Let $y_i = \frac{z_i}{z_0}$, $1 \le i \le m$, be coordinates on the affine chart $U_0 = \{z_0 \ne 0\}$ in $\PP^m$. We may assume that the tangent space $T_x(Z)$ is given by the equations $y_{n+1} = \ldots = y_m = 0$. It follows that the torus $\GG_m$ acts on $y_1,\ldots,y_n$ by scalar multiplication. 

By the analytic implicit function theorem, there exists some neighbourghood of $x$, in which $y_1, \ldots, y_n$ are coordinates on $Z$ and $Z$ is given by the system of equations
\begin{equation}\label{ES_eq}
y_{n+i} = h_i(y_1, \ldots, y_n), \quad 1 \le i \le k = m-n
\end{equation}
with some holomorphic functions $h_i$. Any function $h_i$ has a Taylor series in $y_1, \ldots, y_n$ at~$x$. Denote by $h_1^0, \ldots, h_k^0$ the sums of nonzero summands of minimal degrees $d_1, \ldots, d_k$ in Taylor series of $h_1, \ldots, h_k$, respectively. Since the functions $y_{n+1},\ldots,y_m$ are homogeneous with respect to the torus $\GG_m$, we conclude that $h_i=h_i^0$ for all $1\le i\le k$. In particular, the functions $h_i$ are homogeneous polynomials.

\begin{remark}
It is shown in \cite[Proposition~5]{Sha2021} that the variety $Z$ is toric if and only if the functions $h_i$ are monomials corresponding to lattice points in some very ample inscribed in a rectangle polytope; see Definition~\ref{iir}.
\end{remark}

Since $Z$ is an irreducible variety, the intersection $Z \cap U_0$ is an irreducible affine variety in~$U_0$. On the other hand, the system of equations~\eqref{ES_eq} defines an irreducible affine variety $Z'$ in $U_0$ that is isomorphic to the affine space $\AA^n$ with coordinates $y_1, \ldots, y_n$. Since the irreducible varieties $Z'$ and $Z \cap U_0$ coincide in some neighborhood, they are equal to each other, so $Z \cap U_0$ is given by the system of equations $y_{n+i} = h_i(y_1, \ldots, y_n)$, $1 \le i \le k$. In particular, the variety $Z \cap U_0$ is isomorphic to~$\AA^n$. 

Let us consider the functions $h_i$ as elements of $\Sym^{d_i} T_x(Z)^*$. Then the space \[F_x = \KK \oplus T_x(Z)^* \oplus \langle h_1^0, \ldots, h_k^0\rangle\] is called the \emph{fundamental form} of $Z$ at the point $x$. It is a subspace of the direct sum $\bigoplus_{l \ge 0} \Sym^lT_x(Z)^*$. 

Below we will need the following classical result; see e.g. \cite[Theorem~3.3]{FH2017}.

\begin{theorem}[Cartan]
\label{ThCar}
Let $Z$ be a projective variety. Then there exists an open subset $W' \subseteq Z$ such that for any point $x \in W'$ the fundamental form $F_x$ is a symbol system, i.e. for any $h \in F_x$ and any $v \in T_x(Z)$ the derivation of $h$ along $v$ belongs to $F_x$. 
\end{theorem}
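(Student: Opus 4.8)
The final statement is Cartan's theorem (Theorem \ref{ThCar}), asserting that for a projective variety $Z$ there is a dense open subset $W' \subseteq Z$ along which the fundamental form $F_x$ is a \emph{symbol system}, i.e.\ is closed under directional derivatives by tangent vectors at $x$. Let me sketch how I would prove this.

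\medskip

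The plan is to work with the relative second (and higher) fundamental forms along $Z$ and to exploit the fact that, for a variety embedded in projective space, these forms organize into a coherent filtration that is generically well-behaved. First I would fix the embedding $Z \subseteq \PP(V)$ and, at a smooth point $x$, recall the analytic description appearing just above in the excerpt: choosing affine coordinates $y_1,\dots,y_n$ on $T_x(Z)$ and $y_{n+1},\dots,y_m$ transverse to it, the variety is locally the graph $y_{n+i} = h_i(y_1,\dots,y_n)$, whose leading homogeneous parts $h_i^0$ of degrees $d_i$ span, together with $\KK$ and $T_x(Z)^*$, the fundamental form $F_x \subseteq \bigoplus_{l \ge 0} \Sym^l T_x(Z)^*$. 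The symbol-system condition is that for every $h \in F_x$ and every $v \in T_x(Z)$ the derivative $\partial_v h$ again lies in $F_x$. Since $F_x$ already contains $\KK$ and all of $T_x(Z)^*$ (the degree $0$ and degree $1$ pieces), the content is purely about the top graded pieces $\langle h_1^0,\dots,h_k^0\rangle$: one must show that differentiating these once in any tangent direction lands back inside $F_x$.

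\medskip

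The key step is to reinterpret differentiation of the $h_i^0$ in terms of moving the base point $x$ within $Z$ and comparing fundamental forms at nearby points. Concretely, I would consider the incidence variety of pairs (point of $Z$, jet of $Z$ at that point) and the associated \emph{prolongation}: the derivative $\partial_v h_i^0$ measures the infinitesimal variation of the $i$-th fundamental form as $x$ moves in the direction $v$. The classical fact—going back to \'E.~Cartan's theory of the higher fundamental forms and the moving frame—is that these variations are governed by the same system of forms, \emph{provided} the base point is generic. The mechanism is that the higher fundamental forms $F^{(l)}_x = \langle h_i^0 : d_i = l\rangle$ satisfy, at a general point, the prolongation relations $\partial_v F^{(l)}_x \subseteq F^{(l-1)}_x \subseteq F_x$; these are precisely the integrability/Maurer--Cartan relations for the submanifold $Z$, and they hold on a dense open set because the ranks of all the fundamental forms are upper semicontinuous and hence locally constant on a dense open subset $W'$.

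\medskip

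The main obstacle will be establishing that the prolongation relations hold \emph{generically} rather than merely at a single point, and pinning down the open set $W'$ on which all the relevant ranks stabilize. To handle this I would argue as follows: the assignment $x \mapsto \dim F^{(l)}_x$ for each $l$ is an upper semicontinuous function on the smooth locus of $Z$, so the locus where all these dimensions attain their minimal (generic) values is a nonempty Zariski open set; on this set the family of fundamental forms varies algebraically in a flat manner, and the Cartan structure equations for the embedded variety then force each $\partial_v h_i^0$ to be expressible in terms of lower-degree elements already present in $F_x$. The delicate point is verifying the structure equations themselves, which amounts to a compatibility computation between the coefficients of the Taylor expansions of the $h_i$ at infinitesimally close points; this is the technical heart and I would carry it out by differentiating the defining graph relations $y_{n+i} = h_i(y)$ along tangent directions and tracking leading terms degree by degree. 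Once the generic prolongation relation $\partial_v F_x \subseteq F_x$ is secured on $W'$, the symbol-system property is immediate, completing the proof.
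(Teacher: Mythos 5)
The paper does not prove this statement: it is quoted as a classical result of Cartan with a pointer to \cite[Theorem~3.3]{FH2017} (which in turn rests on the moving-frames treatment of higher fundamental forms), so there is no internal proof to compare yours against. Judged on its own, your outline follows exactly the standard route: reduce to the top graded pieces $h_i^0$, pass to the locus where the osculating filtration has constant ranks, and derive the prolongation relation $\partial_v F^{(l)}_x \subseteq F^{(l-1)}_x$ from the structure equations of the embedded variety. That is the right strategy, and your identification of the generic locus and of where the difficulty sits is accurate.

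Two caveats. First, the entire mathematical content of the theorem is the prolongation relation, and your proposal only announces that you ``would carry it out by differentiating the defining graph relations and tracking leading terms degree by degree''; as written this is a plan, not a proof. The computation is genuinely nontrivial --- one must compare the Taylor expansions of $Z$ at $x$ and at a nearby point $x'$, and the cancellation that puts $\partial_v h_i^0$ into the span of the lower-order forms uses constancy of the osculating ranks in an essential way, not merely their semicontinuity. Until that step is executed, the argument has a hole precisely at its heart. Second, a small slip: the dimensions $\dim T^{(l)}_x$ of the osculating spaces are \emph{lower} semicontinuous (they are ranks of derivative matrices, so the generic value is the maximum, not the minimum), and $\dim F^{(l)}_x$ is a difference of such ranks, hence not semicontinuous of either type; the correct statement is that all these dimensions are simultaneously constant on the dense open set where every $\dim T^{(l)}_x$ attains its generic value. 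This does not affect the existence of $W'$, but the justification you give should be corrected accordingly.
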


Thus, for an Euler-symmetric variety $Z$ we have two open subsets $W$ and $W'$ in $Z$, see Definition~\ref{ESDef}. Such subsets have a non-empty intersection. So we may assume that $x$ is an Euler point and the fundamental form $F_x$ is a symbol system. 

Since $Z \cap U_0$ is isomorphic to $\AA^n$, we have an additive action on $Z \cap U_0$ by parallel translations. This action can be extended to an action on $\PP^m$ provided it can be extended to an action on $U_0$ by affine transformations. 

Let us show that any $\GG_a$-subgroup $H$ of this action of $\GG_a^n$ on $Z\cap U_0$ can be extended to a $\GG_a$-subgroup of affine transformations of $U_0$. Let $\pa$ be a locally nilpotent derivation on $\KK[Z \cap U_0]$ corresponding to $H$. Since $F_x$ is a symbol system, the derivation $\pa$ applied to $h_i$ belongs to $F_x$ as well. On the other hand, $F_x = \KK \oplus T_x(Z)^* \oplus \langle h_1, \ldots, h_k\rangle = \langle 1, y_1, \ldots, y_m\rangle$ since $x$ is an Euler point. Then for any $1 \le i \le k$ we have
\[
\pa(y_{n+i}) = \pa(h_i(y_1, \ldots, y_n)) = \ell_i(1, y_1, \ldots, y_m),
\]
where $\ell_i$ is a linear form. So the action of $s \in \GG_a$ given by $\exp s\pa$ is an action by affine transformations:
\[
y_{n+i} \mapsto y_{n+i} + s \ell_{i,1}(1, y_1, \ldots, y_m) + \frac{s^2}{2} \ell_{i,2}(1, y_1, \ldots, y_m) + \ldots, 
\]
and all $\ell_{i,j}$ are linear forms. Finally, the group $H$ acts on $y_1, \ldots, y_n$ by shifts, hence by affine transformations as well. 

We conclude that the additive action on $Z\cap U_0$ extends to an action on $\PP^m$ and so it induces an additive action on $Z$ since $Z$ is the closure of $Z\cap U_0$. This completes the proof of Theorem~\ref{FHES}.  

\begin{remark}
There are many arguments showing that $\GG_m$- and $\GG_a$-actions are of completely different nature. At the same time, one may prove that the existence of $\GG_m$-actions of certain type implies the existence of $\GG_a$-actions. For example, if an affine variety $X$ admits two actions of the torus $\GG_m$ that do not commute, then $X$ admits a non-trivial $\GG_a$-action; see \cite[Section~3]{FZ} and~\cite[Proof of Theorem~2.1]{AG}. Further, it is shown in \cite[Theorem~1]{Ar2021} that the existence of a $\GG_m$-action of parabolic type on a normal affine variety implies the existence of a non-trivial $\GG_a$-action. Theorem~\ref{FHES} also may be regarded as a result of this form. 
\end{remark}

It turns out that the condition to be Euler-symmetric is a criterion of the existence of an additive action for wide classes of projective varieties. Let us start with the toric case. 

\begin{theorem}\cite[Theorem~3]{Sha2021}
\label{Shtor}
Let $X$ be a projective toric variety. The following conditions are equivalent: 
\begin{itemize}
\item[i)]
the variety $X$ is Euler-symmetric with respect to some embedding into a projective space;
\item[ii)]
the variety $X$ is Euler-symmetric with respect to any linearly non-degenerate linearly normal embedding into a projective space;
\item[iii)]
the variety $X$ admits an additive action.
\end{itemize}
\end{theorem}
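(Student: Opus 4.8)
The plan is to prove the cyclic chain of implications $\text{i)}\Rightarrow\text{iii)}\Rightarrow\text{ii)}\Rightarrow\text{i)}$, exploiting the fact that $\text{ii)}\Rightarrow\text{i)}$ is trivial (any linearly non-degenerate linearly normal embedding is in particular one embedding, so if the variety is Euler-symmetric with respect to all such embeddings it is Euler-symmetric with respect to some embedding). The implication $\text{i)}\Rightarrow\text{iii)}$ is also immediate: it is exactly the content of Theorem~\ref{FHES}, which asserts that every Euler-symmetric variety admits an additive action. Thus the entire weight of the theorem rests on the single implication $\text{iii)}\Rightarrow\text{ii)}$, and this is where I expect the real work, and the main obstacle, to lie.

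To prove $\text{iii)}\Rightarrow\text{ii)}$, I would start from a projective toric variety $X$ admitting an additive action and fix an arbitrary linearly non-degenerate linearly normal embedding $X\hookrightarrow\PP(V)$. First I would invoke Theorem~\ref{3con} and Corollary~\ref{cba}: since $X$ is complete and admits an additive action, it admits a normalized additive action, and the fan $\Sigma_X$ carries a complete collection of Demazure roots $e_1,\ldots,e_n$ with $\langle p_i,e_j\rangle=-\delta_{ij}$. By Theorem~\ref{ppoo}, the associated very ample polytope $P$ (with $X_P$ corresponding to the given embedding, using that a linearly normal embedding is governed by the lattice points of $P$) is inscribed in a rectangle in the sense of Definition~\ref{iir}: there is a vertex $v_0$ whose adjacent edge directions form a lattice basis $e_1,\ldots,e_n$ of $M$ and all other facet inequalities are non-negative on these directions. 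The key geometric point is that the vertex $v_0$ corresponds to a torus-fixed point $x_0\in X$, and the cone of $\Sigma_X$ dual to $v_0$ is the non-negative orthant in the basis $p_1,\ldots,p_n$, so that the affine chart at $x_0$ is isomorphic to $\AA^n$ with the normalized $\GG_a^n$-action acting by translations.

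The crux is then to produce at $x_0$ (and hence, by the openness of the Euler locus, at a dense set of points) a $\GG_m$-action of Euler type. Here I would use the complete collection to construct the one-parameter subgroup of the big torus $T$ that acts with weight exactly $1$ on each of the $n$ coordinate directions spanned by $e_1,\ldots,e_n$: concretely, the cocharacter $u\in N$ with $\langle u,-e_i\rangle=1$ for all $i$, which exists and is unique because $-e_1,\ldots,-e_n$ is a lattice basis of $M$ dual to $p_1,\ldots,p_n$. Acting on the chart $\{z_{v_0}\neq0\}$, this $\GG_m$ scales each affine coordinate $y_i=\chi^{m_i-v_0}$ (for $m_i$ the lattice points adjacent to $v_0$) with weight one, so it acts on the tangent space $T_{x_0}(X)$ by scalar multiplication and fixes $x_0$ in isolation, by the inscribed-in-a-rectangle condition, which forces all higher lattice points of $P$ to have strictly positive pairing with $u$ relative to $v_0$. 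This exhibits $x_0$ as an Euler point. The main obstacle I anticipate is the bookkeeping needed to verify, directly from the normal-fan combinatorics, that $u$ really produces an \emph{isolated} fixed point with a \emph{purely scalar} tangent action for the given linearly normal embedding — one must check that no lattice point of $P$ other than $v_0$ lies in a facet through $v_0$ in a way that would create extra weight-zero tangent directions, and that linear normality guarantees the homogeneous coordinates are exactly indexed by $P\cap M$. Finally, I would translate the Euler point $x_0$ by the open $\GG_a^n$-orbit to conclude that a dense open subset $W$ of $X$ consists of Euler points, whence $X\subseteq\PP(V)$ is Euler-symmetric, completing the cycle.
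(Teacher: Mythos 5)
Your cycle i) $\Rightarrow$ iii) $\Rightarrow$ ii) $\Rightarrow$ i) is logically complete, and your argument for the one substantive implication iii) $\Rightarrow$ ii) is essentially the paper's proof reorganized rather than a new route: the paper first proves iii) $\Rightarrow$ i) in three steps (every nonsingular $T$-fixed point is Euler for \emph{some} embedding; Euler points are exactly the automorphism-translates of such fixed points; the open orbit of a normalized additive action contains one) and then upgrades to ii) by linearizing the line bundle, whereas you work directly with the given linearly normal embedding, identify it with $X_P$ for a very ample polytope $P$ inscribed in a rectangle via Theorem~\ref{ppoo}, and write down the Euler torus at the distinguished vertex explicitly as the cocharacter $u=p_1+\cdots+p_n$. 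Your weight computation at $x_0$ is correct: all lattice points $m\in P\cap M$ satisfy $\langle u,m-v_0\rangle\ge 0$ with equality only for $m=v_0$, so $x_0$ is an isolated fixed point with scalar tangent action, and the torus sits inside $\GL(V)$ because it is diagonal in the monomial coordinates. The one step you leave unjustified is the final translation: to conclude that $g\cdot x_0$ is Euler in the sense of Definition~\ref{ESDef} for $g\in\GG_a^n$, you must conjugate the Euler $\GG_m$ by $g$ \emph{inside} $\PGL(V)$, which requires the additive action to be induced, i.e.\ to extend to the ambient $\PP(V)$; otherwise you only obtain abstract Euler points. This is exactly what linear normality supplies (the proposition from \cite{AP2014} quoted in Section~2, or the linearization argument via \cite{KKLV} that the paper itself uses at this point), so the gap is closed in one line, but it should be stated, since it is the place where the hypothesis of linear normality enters the translation step and not merely the identification of coordinates with $P\cap M$.
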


Let us sketch the proof of the theorem. In order to obtain implication $i)\to ii)$, one uses linearizability of a (very ample) line bundle on a normal variety with respect to a torus action; see \cite[Proposition~2.4]{KKLV}. This allows to extend a $\GG_m$-action of Euler type from $X$ to an ambient projective space. This implication does not use that $X$ is toric. Implication $ii)\to i)$ is trivial. 

Implication $i)\to iii)$ follows from Theorem~\ref{FHES}. An alternative proof of this implication that uses the specifics of the toric case~--- a description of orbits of the automorphism group on a complete toric variety due to Bazhov~\cite{Ba2013-1} and Corollary~\ref{ncc}~--- is given in~\cite[Proposition~4]{Sha2021}. 

The proof of implication $iii)\to i)$ is divided into three steps. At the first step it is checked in~\cite[Proposition~2]{Sha2021} that every nonsingular $T$-fixed point $x_0$ on a projective toric variety $X$ is Euler with respect to some linearly non-degenerate linearly normal projective embedding. The second step is the claim that a point $x\in X$ is Euler if and only if $x$ can be moved to a nonsingular $T$-fixed point $x_0$ on $X$ by an automorphism of $X$~\cite[Proposition~3]{Sha2021}. Finally, if $X$ admits an additive action then $X$ admits an additive action normalized by the acting torus $T$; see Theorem~\ref{3con}. The open orbit $\Uf$ of the latter additive action is $T$-invariant and isomorphic to an affine space. This implies that $\Uf$ contains a (nonsingular) $T$-fixed point $x_0$. We know that $x_0$ is Euler with respect to some linearly non-degenerate linearly normal projective embedding. Again using~\cite[Proposition~2.4]{KKLV} one may extend the additive action on $X$ to a $\GG_a^n$-action on the ambient projective space. This implies that all points in $\Uf$ are Euler on $X$, and so $X$ is Euler-symmetric. 

\begin{remark}
The proof of implication $i)\to ii)$ shows that the property to be Euler-symmetric for a normal projective variety $Z$ may be defined in interior terms, without involving an embedding into a projective space. Namely, for a nonsingular point $x\in Z$, a $\GG_m$-action on $Z$ is said to be of \emph{Euler type} at $x$ if $x$ is an isolated fixed point of the given action and the induced $\GG_m$-action on the tangent space $T_x(Z)$ is by scalar multiplication. A nonsingular point $x\in Z$ is called \emph{Euler} if there is a $\GG_m$-action on $Z$ which is of Euler type at $x$. We say that a normal projective variety $Z$ is \emph{Euler-symmetric} if there is an open dense subset in $Z$ consisting of Euler points.

In this case, for any linearly non-degenerate linearly normal embedding $Z\subseteq \PP(V)$ we may extend $\GG_m$-actions of Euler type at all Euler points on $Z$ to the action on $\PP(V)$, and so any such embedding is Euler-symmetric in the sense of Definition~\ref{ESDef}. 

It is an interesting problem to prove Theorem~\ref{FHES} without involving an embedding of $Z$ into a projective space. At the moment we do not know such a proof. 
\end{remark}

\begin{remark}
Shafarevich gives some examples illustrating properties of the set of Euler points on a projective toric variety $X$. In particular, such points may not form one orbit of the group~$\Aut(X)$ \cite[Example~1]{Sha2021} and not every point on a smooth Euler-symmetric projective variety is Euler~\cite[Example~2]{Sha2021}.
\end{remark} 

The next result concerns flag varieties. It is mentioned in~\cite[Example~3.13]{FH2017}; below we give a direct proof. 

\begin{theorem} 
A flag variety $G/P$ is Euler-symmetric if and only if it admits an additive action. 
\end{theorem}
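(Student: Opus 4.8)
The plan is to prove both implications separately. The forward direction is immediate from the already-established Theorem~\ref{FHES}: every Euler-symmetric variety admits an additive action, so if $G/P$ is Euler-symmetric it automatically admits an additive action. The entire content of the theorem therefore lies in the converse: \emph{if a flag variety $G/P$ admits an additive action, then it is Euler-symmetric}.

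For the converse, first I would combine Theorem~\ref{gpex} with the uniqueness result Theorem~\ref{unFHD}. By Theorem~\ref{gpex}, if $G/P$ admits an additive action then (after reducing to the case of a simple group and replacing $(G,P)$ by its covering pair if it is exceptional) the parabolic subgroup $P$ has commutative unipotent radical $P_u$, hence $P$ is maximal and corresponds to a minuscule fundamental weight. The standard additive action is given by the action of $P_u^-$ on $G/P = \Ff_\lambda = G[v_\lambda]\subseteq\PP(V(\lambda))$ by left multiplication, with open orbit the big Schubert cell through $[v_\lambda]$. The key geometric input I would use is that for these minuscule embeddings the maximal torus $T\subseteq G$ already acts linearly on $\PP(V(\lambda))$, and the highest weight point $[v_\lambda]$ is a $T$-fixed point. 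I would then exhibit a one-parameter subgroup $\GG_m\hookrightarrow T$ that acts on $G/P$ with an Euler-type action at $[v_\lambda]$: the grading of $\fg$ induced by $\Delta_Q$ (as in the proof of Theorem~\ref{gpex}) places $\fp_u^-$ in degree $-1$ because $P_u$ is commutative, so the cocharacter dual to this grading contracts the open Schubert cell $\fp_u^-$ toward $[v_\lambda]$ linearly, acting by a single scalar weight on the tangent space $T_{[v_\lambda]}(G/P)\cong\fp_u^-$.

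The heart of the argument is thus to check that this $\GG_m$-action is genuinely of Euler type at $[v_\lambda]$, in the sense of Definition~\ref{ESDef}: that $[v_\lambda]$ is an isolated fixed point and that the induced action on $T_{[v_\lambda]}(G/P)$ is scalar multiplication. Isolation follows from the grading being concentrated (the minuscule condition forces all $T$-weights occurring in $\fp_u^-$ to lie in a single $\Delta_Q$-degree, so the contracting $\GG_m$ has a unique fixed point in the open cell). The scalar property follows because commutativity of $P_u$ means the whole tangent space sits in one graded piece, so $\GG_m$ scales all tangent directions with the same weight. Having established that $[v_\lambda]$ is an Euler point, I would then use homogeneity: $G$ acts transitively on $G/P$, and conjugating the chosen $\GG_m$ by $g\in G$ produces a $\GG_m$-action of Euler type at $g\cdot[v_\lambda]$. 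Hence \emph{every} point of $G/P$ is Euler, so in particular a dense open subset consists of Euler points, and $G/P$ is Euler-symmetric.

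The step I expect to be the main obstacle is verifying cleanly that the induced $\GG_m$-action on the tangent space is by a single scalar, rather than by several distinct weights. This is exactly where the minuscule/commutative-radical hypothesis must be used decisively: for a general parabolic the grading on $\fp_u^-$ would spread over several degrees and the action would not be scalar, breaking the Euler condition. I would make this precise by invoking the characterization recalled after Theorem~\ref{gpex} that $P_u$ is commutative if and only if $\alpha_i$ occurs in the highest root with coefficient one, equivalently $\omega_i$ is minuscule, which guarantees $\deg(\beta)=-1$ for every root space $\fg_\beta\subseteq\fp_u^-$ and hence a single weight on the tangent space. Once this is in hand, the embedding $G/P=\Ff_\lambda\subseteq\PP(V(\lambda))$ is visibly linearly non-degenerate and the $\GG_m$-actions are restrictions of linear ones on $\PP(V(\lambda))$, so the Euler-symmetry is witnessed by an honest projective embedding as required by Definition~\ref{ESDef}.
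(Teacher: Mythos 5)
Your proof is correct, and the substantive direction (additive action $\Rightarrow$ Euler-symmetric) is essentially the argument the paper gives: reduce via Theorem~\ref{gpex} to the case of a simple group and a maximal parabolic with commutative unipotent radical, observe that commutativity forces every root in $\Phi_i^-$ to carry $\alpha_i$ with coefficient exactly $-1$, and conclude that the cocharacter of $T$ dual to $\alpha_i$ acts on $T_{eP}(G/P)\cong\fp_u^-$ by a single nonzero scalar, which gives an Euler point; homogeneity then spreads Euler points over all of $G/P$. Where you genuinely diverge is the forward direction. You dispatch it by citing Theorem~\ref{FHES} (every Euler-symmetric variety admits an additive action), which is legitimate since that theorem is already established, but it drags in the full machinery of fundamental forms, symbol systems and Cartan's theorem. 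The paper instead gives a two-line self-contained argument: a $\GG_m$ of Euler type at $eP$ may be conjugated into $T$, so it acts on $T_{eP}(G/P)=\fp_u^-$ simultaneously by scalar multiplication and by Lie algebra automorphisms, and a scalar operator $t\cdot\mathrm{id}$ preserving the bracket forces $t^2[u,v]=t[u,v]$ for all $t$, hence $[\,\cdot\,,\,\cdot\,]=0$; commutativity of $\fp_u^-$ then yields the additive action by Theorem~\ref{gpex}. This direct route is not only shorter but more informative, as it exhibits the equivalence ``Euler-symmetric $\Leftrightarrow$ $\fp_u^-$ commutative'' as a single Lie-theoretic statement. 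Two small remarks: your invocation of the uniqueness result Theorem~\ref{unFHD} is never actually used and can be dropped; and the isolation of the fixed point need not be argued separately, since (as noted in the remark following Definition~\ref{ESDef}) a nonzero scalar action on the tangent space already forces the fixed point to be isolated.
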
 

\begin{proof}
We use notation and results of subsection~\ref{sub32}. We assume that $G$ is a connected simple linear algebraic group, $P$ is a maximal parabolic subgroup of $G$ corresponding to a simple root $\alpha_i\in\Delta$, and $G$ is the connected component of the automorphism group $\Aut(G/P)$. In view of Theorem~\ref{gpex} it suffices to prove that $G/P$ is Euler-symmetric if and only if the unipotent radical $P_u^-$ is commutative. The latter is equivalent to commutativity of the tangent algebra $\fp_u^-$.   

We know that $\fp_u^-=\bigoplus_{\alpha\in\Phi^-_i}\fg_{\alpha}$, where $\Phi^-_i$ is the set of negative roots whose decomposition as a linear combination of simple roots contains the root $\alpha_i$. Since the variety $G/P$ is homogeneous, it is Euler-symmetric if and only if there is a $\GG_m$-action of Euler type at the point $x=eP$. The subgroup $\GG_m$ is contained
in $P$ and, up to conjugation, we may assume that $\GG_m$ is a subgroup of the maximal torus $T$. 

Since the action of $T$ on $G$ by conjugation descends to $G/P$ as the action by left translations, its differential acts on the tangent space $T_x(G/P)=\fp_u^-$ by endomorphisms of the Lie algebra structure. 

If all operators of scalar multiplication preserve the Lie bracket, then the Lie bracket is zero. 
Conversely, assume that the Lie algebra $\fp_u^-$ is commutative. Then any root in $\Phi^-_i$ contains $\alpha_i$ with coefficient $-1$; otherwise it is a sum of two roots from $\Phi^-_i$, and so $\fp_u^-$ is not commutative. In this case the $\GG_m$-subgroup in $T$ given by equations $\alpha_j(t)=1$ for all $\alpha_j\in\Delta$, $j\ne i$, acts on $\fp_u^-$ by scalar multiplication, and so $G/P$ is Euler-symmetric. 
\end{proof} 

On the other hand, if a projective hypersurface $X$ admits an additive action, then $X$ need not be Euler-symmetric. Indeed, it is proved in \cite[Example~3.14]{FH2018} that a non-degenerate hypersurface is Euler-symmetric if and only if it is a smooth quadric. By Proposition~\ref{newprop}, these are the only smooth hypersurfaces admitting an additive action. 
But we have non-degenerate singular hypersurfaces admitting an additive action; see e.g. Theorem~\ref{tbazhov}. 

At the same time, in~\cite{Shaf2021} one can find examples of additive actions on degenerate toric quadrics. By Theorem~\ref{Shtor}, such quadrics are Euler-symmetric. 

\smallskip 

The question whether Euler-symmetric varieties are complete intersections in projective spaces is considered in a recent preprint~\cite{Lu2022}. 

\smallskip 

We finish this subsection with more examples of Euler-symmetric varieties. A smooth Euler-symmetric projective surface is successive blow-ups of the projective plane $\PP^2$ or a Hirzebruch surface $\FF_n$ along fixed points of the $\GG_a^2$-action~\cite[Example~3.12]{FH2017}. In higher dimensions, it is shown in~\cite[Example~2.2]{FH2017} that scalar multiplications
of $\AA^n$ can be extended to $\GG_m$-actions of Euler type on the blowup of a smooth subvariety in $\PP^n\setminus\AA^n$. This proves that such blowups are Euler-symmetric, compare with Proposition~\ref{proprr}. 

Finally, in \cite{FH2018} a complete classification of Euler-symmetric varieties of rank~2 is obtained; here the rank is defined in terms of fundamental forms. Such varieties are also called quadratically symmetric. Fu and Hwang observe that if one considers Euler-symmetric varieties as quasi-homogeneous generalizations of Hermitian symmetric spaces, then quadratically symmetric varieties are quasi-homogeneous generalizations of Hermitian symmetric spaces of rank~2. 

\subsection{Open problems}
\label{sub52} 
In this subsection we formulate some open problems and conjectures on additive actions. We hope that they will stimulate further progress in this area. 

\smallskip 

It is well known that a complete normal algebraic variety $X$ is toric if and only if the Cox ring $R(X)$ is a polynomial ring. 

\begin{problem}
Characterize complete normal algebraic varieties admitting an additive action in terms of their Cox rings.
\end{problem}

Applying results of Section~\ref{aatv} it is easy to construct two complete toric varieties $X_{\Sigma_1}$ and $X_{\Sigma_2}$ such that the fans $\Sigma_1$ and $\Sigma_2$ have the same number of rays, and $X_{\Sigma_1}$ admits an additive action, but $X_{\Sigma_2}$ does not. This shows that the existence of an additive action can not be characterized in terms of $R(X)$ as an abstract ring. But the ring $R(X)$ is graded by the group $\Cl(X)$, and we believe that there is a characterization in terms of this grading. 

\smallskip

If we are going to study additive actions via lifting the action to the total coordinate space, the solution of the following problem may be very helpful.

\begin{problem}
Fix positive integers $r$ and $n$, and let $d=r+n$. Describe all affine factorial varieties~$X$ of dimension~$d$ equipped with an effective action of the group $\GG_m^r\times\GG_a^n$ with an open orbit. 
\end{problem}

The case $n=0$ corresponds to affine factorial toric varieties that are known to be direct products of a torus and an affine space. The case $n=1$ also corresponds to affine factorial toric varieties, see~\cite{AK2015}. In turn, in the case $r=0$ we have that $X$ is an affine space with the transitive action of the group $\GG_a^n$. All other cases remain open. 

\smallskip 

As we know from Corollary~\ref{caafin}, the projective space $\PP^n$ with $n\ge 6$ admits infinitely many non-equivalent additive actions. At the same time, the results on the uniqueness of additive actions on smooth projective quadrics and, more generally, on flag varieties suggest that the situation with projective spaces may be in some sense exceptional. This motivates the following problem. 

\begin{problem}
Describe all complete toric varieties that admit infinitely many additive actions. 
\end{problem}

\smallskip

It is natural to ask for a description of all additive action on concrete complete toric varieties. 

\begin{problem}
Describe all additive actions on the weighted projective space $\PP(1,a_1,\ldots,a_n)$. 
\end{problem}

The case of weighted projective planes suggests an idea that the number of additive actions on $\PP(1,a_1,\ldots,a_n)$ depends only on~$n$, but not on the values of
$a_1,\ldots,a_n$. One may expect that there is a description of additive actions on $\PP(1,a_1,\ldots,a_n)$ in terms of some ``weighted Hassett-Tschinkel correspondence''. 

\smallskip

In Proposition~\ref{popor} we observed a connection between additive actions on a given complete variety $X$ and maximal commutative unipotent subgroups in the automorphism group $\Aut(X)$. It is interesting to make this correspondence more precise. 

\begin{problem}
Is it true that any maximal commutative unipotent subgroup of dimension~$n$ in the group $\GL_{n+1}(\KK)$ acts on the projective space $\PP^n$ with an open orbit?
\end{problem} 

Let us show that the same question in the case of the action of the group $\SO_{n+2}(\KK)$ on the quadric $Q_n\subseteq\PP^{n+1}$ has negative answer. By Theorem~\ref{hyp_quadr_nondeg_theor}, there is just one conjugacy class of maximal commutative unipotent subgroups of dimension~$n$ in $\SO_{n+2}(\KK)$ that acts on the quadric~$Q_n$ with an open orbit. At the same time in \cite[Section~6]{Sh2009} an example of a maximal commutative unipotent subgroup of dimension $n$ from another conjugacy class in $\SO_{n+2}(\KK)$ is given. Such a subgroup corresponds to a so-called \emph{free-rowed} maximal commutative nilpotent subalgebra in the Lie algebra $\mathfrak{so}_{n+2}(\KK)$ for $n\ge 6$, see \cite{HWZ1990} for details. 

\begin{problem}
Let $G$ be a connected linear algebraic group and $H$ be a commutative unipotent subgroup of $G$ which is maximal among commutative subgroups in $G$. Does there exist a $G$-variety $X$ such that the induced action of $H$ on $X$ has an open orbit? 
\end{problem}

The next conjecture is about additive actions on projective hypersurfaces. It can be a good complement to the result of Theorem~\ref{tnew}.

\begin{conjecture} 
Let $X \subseteq \PP^{n+1}$ be a degenerate hypersurface admitting an induced additive action. Then there are at least two induced additive actions on~$X$ up to equivalence.
\end{conjecture}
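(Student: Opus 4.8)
The plan is to translate the statement into the language of H\nobreakdash-pairs and to reduce it to a purely algebraic assertion about extensions of Artinian Gorenstein algebras. By Theorem~\ref{hypHaTsch_theorem}, induced additive actions on a fixed subvariety correspond bijectively (up to equivalence, Definition~\ref{indac_equivdef}) to H\nobreakdash-pairs, and two \emph{inequivalent} H\nobreakdash-pairs whose invariant $d$-linear forms are $\GL$-equivalent produce two inequivalent induced additive actions on one and the same hypersurface. Since $X$ is degenerate, Definition~\ref{hyp_nondeg_def} together with Lemma~\ref{hyp_invform_lem}(b) shows that the subspace $U$ of the given H\nobreakdash-pair $(A,U)$ contains the nonzero ideal $J=\Ker F$, and by Proposition~\ref{hypAUreduce_prop} the variety $X$ is the projective cone with vertex $\PP(J)$ over the non-degenerate hypersurface $X_0\subseteq\PP(A/J)$ whose H\nobreakdash-pair $(A_0,U_0)=(A/J,U/J)$ has, by Theorem~\ref{hypGor_prop}, a Gorenstein algebra $A_0$. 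Thus the whole problem reduces to producing two inequivalent H\nobreakdash-pairs that both reduce to the fixed $(A_0,U_0)$: any such pair has form $F$ equal to the pullback of $F_0$ along the projection, hence defines the same cone $X$.

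Next I would describe the space of such lifts and reduce to a one-dimensional coning. Choosing $0\ne s\in \Soc A\cap J$ and coning the remaining $\dim J-1$ directions by adjoining free socle generators in a fixed way, it suffices to treat the case $\dim J=1$: here a lift is a square-zero trivial extension $0\to\KK s\to A'\to A_0\to0$ with $s\in\Soc A'$, equipped with a subspace $U'=\widetilde U_0\oplus\KK s$, where $\widetilde U_0$ lifts $U_0$. One checks routinely that such $U'$ generates $A'$, that $\Ker F'=\KK s$ (any ideal of $A'$ inside $U'$ maps to an ideal of $A_0$ inside $U_0$, hence to $0$ by non-degeneracy of $X_0$), and that the degree is preserved, so $(A',U')$ is a valid H\nobreakdash-pair for $X$. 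The split lift $A_0\oplus\KK s$ always exists, and a sufficient condition for inequivalence of two lifts is that their algebras be non-isomorphic; I would detect this through the Hilbert--Samuel sequence, which is an invariant robust under the subsequent coning by free generators (adding a free generator only increments $r_1$).

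The second, genuinely different lift I would build in one of two ways, exactly as in the known degenerate-quadric list (Proposition~\ref{hyp_quadr_cork1_prop} and Remark~\ref{rsha}). If $A_0$ admits a minimal relation in degree $2$, equivalently $\dim \Sym^2(\mm_0/\mm_0^2)>\dim\mm_0^2/\mm_0^3$, then pulling back a symmetric form from $\mm_0/\mm_0^2$ gives a non-coboundary cocycle; the resulting extension $A'$ has $s\in(\mm')^2$, so $\dim(\mm')^2=\dim\mm_0^2+1$ exceeds the value for the split lift, and the two algebras are non-isomorphic. For $A_0=\KK[S_1]/(S_1^{\,d_0+1})$ (the case $\dim\mm_0/\mm_0^2=1$, where no degree-$2$ relation is available) I would instead use the socle-lengthening lift $A'=\KK[S_1]/(S_1^{\,d_0+2})$ with $U'=\langle S_1,\dots,S_1^{\,d_0-1},S_1^{\,d_0+1}\rangle$, whose algebra is the longer chain and hence again non-isomorphic to the split lift. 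These two families already recover the $\Lambda$-family and the chain algebra of Proposition~\ref{hyp_quadr_cork1_prop}.

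The main obstacle is the \emph{uniform} existence of such a second lift for an \emph{arbitrary} Artinian Gorenstein local algebra $A_0$ of socle degree at least $2$, i.e.\ the statement that $A_0$ always carries a trivial square-zero extension by $\KK$ which is non-isomorphic to the split one and realizes either an extra degree-$2$ product or a lengthening of the socle filtration. The delicate case is a Gorenstein algebra with several generators but first relations in degree strictly greater than $2$ (e.g.\ Hilbert--Samuel sequence $(1,2,3,2,1)$), where neither construction above is immediately available. I would attack this through Macaulay's inverse systems (the apolarity $\KK[x]\leftrightarrow\KK[\partial]$ duality of Subsection~\ref{IVsubsection}), under which $A_0$ corresponds to a single form $f_0$ of degree $d_0$; a lift of the cone then corresponds to extending $f_0$ to a form of degree $d_0+1$, the split lift corresponding to the trivial extension by a new linear variable. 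Reducing the conjecture to the always-availability of a nontrivial such extension of $f_0$ is, I expect, the crux; settling it in full generality is precisely the open point, and this is why the assertion is stated here only as a conjecture rather than a theorem.
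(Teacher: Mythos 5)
This statement is posed in the paper as an open conjecture (subsection~\ref{sub52}); no proof is given there, so there is nothing of the authors' to compare your argument against, and the only question is whether your reduction is sound and whether the gap you identify is the genuine obstacle. On both counts your assessment is accurate. The reduction is correct: by Lemma~\ref{hyp_invform_lem}(b), Proposition~\ref{hypAUreduce_prop}, Theorem~\ref{hypGor_prop} and the uniqueness statement of Theorem~\ref{tnew}, every H-pair realizing the degenerate hypersurface $X$ is a lift of the unique non-degenerate H-pair $(A_0,U_0)$ of the core $X_0$ along an ideal of fixed dimension $k=\dim\Ker F$, and conversely any such lift yields a hypersurface projectively equivalent to $X$, since a cone over $X_0$ with a $(k-1)$-dimensional vertex is unique up to linear equivalence. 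So the conjecture becomes: the split small extension is never the only lift up to equivalence. Your two constructions are correct where they apply and do recover Proposition~\ref{hyp_quadr_cork1_prop}; note only that a non-coboundary cocycle by itself does not preclude an abstract isomorphism with the split algebra, so the Hilbert--Samuel comparison is what actually carries the argument, and it must be re-verified after re-attaching the remaining $k-1$ free socle generators.

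The genuine gap is the one you name: for a Gorenstein local algebra $A_0$ of embedding dimension at least two whose minimal relations all occur in degree at least three (so that $\Sym^2(\mm_0/\mm_0^2)\to\mm_0^2/\mm_0^3$ is injective; e.g.\ the apolar algebra of a general binary quartic, with Hilbert--Samuel sequence $(1,2,3,2,1)$), neither the degree-two cocycle nor the socle-lengthening construction is available, and you give no replacement. Until a second, inequivalent one-dimensional socle extension is produced uniformly in $A_0$ (or the corresponding extension problem for the dual socle generator is solved via Macaulay's inverse systems, as you suggest), the argument does not close; this unresolved lifting problem is exactly the open content of the conjecture, so your proposal should be read as a correct and useful reformulation rather than a proof.
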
 

Finally, let us give the following conjectural characterization of a class of varieties with an additive action. In \cite[Conjecture~5.1]{FH2017}, the following is formulated.

\begin{conjecture} \label{confh}
Let $X$ be a smooth Fano variety of Picard number one which is an equivariant compactification of a vector group. Then $X$ can be realized as an Euler-symmetric projective variety under a suitable projective embedding.
\end{conjecture}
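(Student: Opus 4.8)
The plan is to reduce the statement to the construction of a single one-parameter multiplicative subgroup of $\Aut(X)$ and then to attack that construction using the Fano and Picard-number-one hypotheses. By the intrinsic reformulation of Euler-symmetry discussed above, together with the linearizability of a very ample line bundle with respect to a torus action (\cite[Proposition~2.4]{KKLV}, exactly as in the toric case of Theorem~\ref{Shtor}), realizing $X$ as an Euler-symmetric projective variety under a suitable embedding is equivalent to producing a $\GG_m$-action on $X$ that is of Euler type at some point of the open orbit. Indeed, once such a $\GG_m\subseteq\Aut(X)$ is found, KKLV-linearization provides an embedding $X\subseteq\PP(V)$ in which it becomes a linear Euler-type action, and the conjugates of $\GG_m$ by the acting $\GG_a^n$ make every point of the open orbit~$\Uf$ an Euler point, so the dense set of Euler points required by Definition~\ref{ESDef} is automatic. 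Thus the whole difficulty is concentrated in the existence of one Euler-type $\GG_m$.

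First I would write down the natural candidate $\GG_m$-action. Fix a point $x_0\in\Uf$ and use the simply transitive translation action to identify $\Uf$ with $\GG_a^n\cong\AA^n$ so that $x_0\mapsto 0$. The homotheties $\mu_t\colon v\mapsto tv$, $t\in\GG_m$, form a $\GG_m$-action on $\Uf$ that fixes $x_0$, normalizes the translation action (conjugating translation by~$v$ into translation by~$tv$), and satisfies $d\mu_t|_{x_0}=t\cdot\id$, hence is of Euler type at~$x_0$. Consequently it suffices to prove that $\mu_t$ extends from~$\Uf$ to a regular $\GG_m$-action on the complete variety~$X$: the extension then lies in $\Aut(X)^0$, fixes $x_0$ with nonzero tangent weights (so $x_0$ is an isolated fixed point, as in the remark above), and is the sought Euler-type subgroup. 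Equivalently, the isotropy group $(\Aut(X)^0)_{x_0}$ must contain a $\GG_m$ whose image in $\GL(T_{x_0}X)$ is the scalar torus.

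The main obstacle is precisely this extension, and I would pursue two complementary routes. The first is birational: by Proposition~\ref{adcl} the rank-one group $\Cl(X)\cong\ZZ$ forces the boundary $X\setminus\Uf$ to be a single prime divisor~$D$, so $\mu_t$, biregular on~$\Uf$, is a birational self-map of~$X$ whose only possible bad behaviour is along~$D$. I would aim to show that the indeterminacy loci of $\mu_t$ and of $\mu_t^{-1}=\mu_{t^{-1}}$ have codimension at least two and that~$D$ is not contracted, which would make $\mu_t$ a pseudo-automorphism; then, since $X$ has Picard number one and $-K_X$ is ample (so $X$ admits no nontrivial small modification), one concludes that $\mu_t$ is biregular. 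Controlling $\mu_t$ along~$D$ is the genuinely hard step: it requires understanding the local structure of the additive action in a neighbourhood of the boundary divisor, which is not governed by the Picard number alone. The second route is through minimal rational curves: for a smooth Fano variety of Picard number one the variety of minimal rational tangents $C_{x_0}\subseteq\PP T_{x_0}X$ is available (as in Theorem~\ref{tFH}), it is preserved by $(\Aut(X)^0)_{x_0}$, and one may try to prolong the formal or analytic Euler-type scaling near~$x_0$ to a global automorphism by a Cartan--Fubini type extension, matching the symbol-system description of Euler-symmetric varieties in~\cite{FH2017}.

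In both routes the crux is the same structural gap: there is no torus in $\Aut(X)$ given a priori, and the Fano plus Picard-number-one hypotheses must be leveraged to manufacture one. I expect this to be the decisive and, at present, open difficulty, since a smooth Fano variety of Picard number one whose connected automorphism group is unipotent would defeat the argument, and ruling this out (or otherwise forcing the scalar torus into the isotropy representation) appears to require input beyond the elementary reductions above. As a sanity check and source of base cases I would first confirm the program against the settings where it is already established, namely toric varieties (Theorem~\ref{Shtor}), flag varieties (the theorem preceding Conjecture~\ref{confh}), and the high-index classification of Fu and Montero~\cite{FM2018}, all of which exhibit the required Euler-type $\GG_m$; the remaining task is then to replace this case-by-case verification with a uniform, classification-free argument.
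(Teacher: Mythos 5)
You were asked to prove Conjecture~\ref{confh}, which is Fu and Hwang's Conjecture~5.1: the paper records it explicitly as an open problem and offers no proof, only a pointer to partial results in \cite[Section~5]{FH2017}. So there is no paper argument to compare against, and your proposal is, by your own admission, not a proof but a program whose central step is left open. That said, the reductions you do carry out are correct and consistent with the paper: the intrinsic reformulation of Euler-symmetry you invoke is exactly the remark following Theorem~\ref{Shtor}; linearization via \cite{KKLV} works (and for Fano $X$ you could do it canonically, letting $\Aut(X)$ act on $\bigoplus_m H^0(X,-mK_X)$, so no choice of linearization is needed); conjugating the $\GG_m$ by the simply transitive $\GG_a^n$-action spreads the Euler property over the whole open orbit $\Uf$, giving the dense set required by Definition~\ref{ESDef}; the isolatedness of the fixed point follows from the scalar tangent action as in the remark after that definition; and Proposition~\ref{adcl} together with Picard number one does force a single boundary prime divisor $D$. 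The homothety $\mu_t$ on $\Uf\cong\AA^n$ is also the right candidate.

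The genuine gap is the one you name: extending $\mu_t$ to a regular $\GG_m$-action on $X$, equivalently forcing a scalar $\GG_m$ into the image of the isotropy group $(\Aut(X)^0)_{x_0}$ in $\GL(T_{x_0}X)$, and neither of your routes closes it. In the birational route, nothing in the hypotheses controls $\mu_t$ along $D$: there is no a priori reason $\mu_t$ is an isomorphism in codimension one rather than having divisorial indeterminacy or contracting $D$, so the pseudo-automorphism claim is exactly as strong as the statement you need, not a lemma toward it. In the VMRT route, Cartan--Fubini type extension (as used for Theorem~\ref{tFH}) requires smoothness or nondegeneracy hypotheses on $C_{x_0}$ that are not granted here, and prolonging the local Euler vector field to a global element of $\Lie\Aut(X)$ \emph{is} the conjecture, restated infinitesimally. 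One reduction available in the paper that you did not use: by \cite[Proposition~1]{AP2014} (quoted in Section~4.1), if $\Aut(X)^0$ is reductive then $X$ is a flag variety $G/P$, and by the theorem immediately preceding Conjecture~\ref{confh} any flag variety with an additive action is Euler-symmetric; so the genuinely open case is precisely non-reductive $\Aut(X)^0$, where, as you rightly diagnose, no torus is given a priori. Your sanity checks (toric, flag, and the high-index classification of \cite{FM2018}) are apt, but they remain case-by-case confirmations; the uniform argument the conjecture demands is still missing, and the proposal should be assessed as an honest and well-oriented reduction of an open problem rather than a proof.
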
 

Partial positive results on this conjecture can be found in~\cite[Section~5]{FH2017}. 


%

\begin{thebibliography}{99}
%
\bibitem{AH2006}
Klaus Altmann and J\"urgen Hausen. Polyhedral divisors and algebraic torus actions. Math. Ann. 334 (2006), no.~3, 557-607
%
\bibitem{AHS2008}
Klaus Altmann, J\"urgen Hausen, and Hendrik S\"uss. Gluing affine torus actions via divisorial fans. Transform. Groups 13 (2008), no.~2, 215-242
%
\bibitem{Ar2011}
Ivan Arzhantsev. Flag varieties as equivariant compactifications of $\GG_a^n$. Proc. Amer. Math. Soc. 139 (2011), no.~3, 783-786
%
\bibitem{Ar2021}
Ivan Arzhantsev. Limit points and additive group actions. Ric. Mat. (2021), Published online, https://doi.org/10.1007/s11587-021-00630-z, 10 pages 
%
\bibitem{ABZ2018}
Ivan Arzhantsev, Sergey Bragin, and Yulia Zaitseva. Commutative algebraic monoid structures on affine spaces. Commun. Contemp. Math. 22 (2020), no.~8, art.no.~1950064
%
\bibitem{ADHL2015}
Ivan Arzhantsev, Ulrich Derenthal, J\"urgen Hausen, and Antonio Laface. \emph{Cox rings}. Cambridge Studies in Advanced Mathematics, No.~144, Cambridge University Press 2015, 530 pages
%
\bibitem{AG}
Ivan Arzhantsev and Sergey Gaifullin. The automorphism group of a rigid affine variety. Math. Nachr. 290 (2017), no. 5-6, 662-671
%
\bibitem{AK2015}
Ivan Arzhantsev and Polina Kotenkova. Equivariant embeddings of commutative linear algebraic groups of corank one. Doc. Math. 20 (2015), 1039-1053
%
\bibitem{AKZ2012}
Ivan Arzhantsev, Karine Kuyumzhiyan, and Mikhail Zaidenberg. Flag varieties, toric varieties, and suspensions: three instances of infinite transitivity. Sb. Math. 203 (2012), no.~7, 923-949
%
\bibitem{APS2014}
Ivan Arzhantsev, Alexander Perepechko, and Hendrik S\"uss. Infinite transitivity on universal torsors. J.~London Math. Soc. 89 (2014), no.~3, 762-778
%
\bibitem{AP2014}
Ivan Arzhantsev and Andrey Popovskiy. Additive actions on projective hypersurfaces. In: Automorphisms in Birational and Affine Geometry, Springer Proc. Math. Stat., 79, Springer, 2014, 17-33
%
\bibitem{AR2017}
Ivan Arzhantsev and Elena Romaskevich. Additive actions on toric varieties. Proc. Amer. Math. Soc. 145 (2017), no.~5, 1865-1879
%
\bibitem{ArSh2011}
Ivan Arzhantsev and Elena Sharoyko. Hassett-Tschinkel correspondence: Modality and projective hypersurfaces. J.~Algebra 348 (2011), no.~1, 217-232
%
\bibitem{AtMa1969}
Michael Atiyah and Ian Macdonald. \emph{Introduction to Commutative Algebra}. University of Oxford, Addison-Wesley Series in Math., 1969
%
\bibitem{BaHa1990}
Jos\'e Barria and Paul Halmos. Vector bases for two commuting matrices. Linear Multilinear Alg. 27 (1990), 147-157
%
\bibitem{BI2010}
Roberta Basili, Anthony Iarrobino, and Leila Khatami. Commuting nilpotent matrices and Artinian algebras. J. Commut. Algebra 2 (2010), no.~3, 295-325
%
\bibitem{Ba2013-1}
Ivan Bazhov. On orbits of the automorphism group on a complete toric variety. Beitr. Algebra Geom. 54 (2013), no.~2, 471-481
%
\bibitem{Ba2013}
Ivan Bazhov. Additive structures on cubic hypersurfaces. arXiv:1307.6085, 8~pages
%
\bibitem{Be2003}
Florian Berchtold. Lifting of morphisms to quotient presentations. Manuscripta Math. 110 (2003), 33-44
%
\bibitem{BGT2020}
Victoria Borovik, Sergey Gaifullin, and Anton Trushin. Commutative actions on smooth projective quadrics. https://arxiv.org/abs/2011.08514, 8 pages
%
\bibitem{Br1997}
William Brown. Constructing maximal commutative subalgebras of matrix rings in small dimensions. Comm. Algebra 25 (1997), no.~12, 3923-3946
%
\bibitem{BrCa1993}
William Brown and Frederick Call. Maximal commutative subalgebras of $n \times n$ matrices. Comm. Algebra 21 (1993), no.~12, 4439-4460
%
\bibitem{BrGu1999}
Winfried Bruns and Joseph Gubeladze. Polytopal linear groups. J. Algebra 218 (1999), 715-737 
%
\bibitem{BP2002}
Victor Buchstaber and Taras Panov. \emph{Torus actions and their applications in Topology and Combinatorics}. University Lectures Series 24, AMS, Providence, Rhode Island, 2002
%
\bibitem{Ca2002}
Philippe Caldero. Toric degenerations of Schubert varieties. Transform. Groups 7 (2002), no.~1, 51-60
%
\bibitem{Cas2010}
Gianfranco Casnati. Isomorphism types of Artinian Gorenstein local algebras of multiplicity at most~9. Comm. Algebra 38 (2010), no.~8, 2738-2761
%
\bibitem{IFR2012}
Giovanni Cerulli Irelli, Evgeny Feigin, and Markus Reineke. Quiver Grassmannians and degenerate flag varieties. Algebra Number Theory 6 (2012), no.~1, 165-194
%
\bibitem{CLT2002}
Antoine Chambert-Loir and Yuri Tschinkel. On the distribution of points of bounded height on equivariant compactifications of vector groups. Invent. Math. 148 (2002), no.~2, 421-452
%
\bibitem{CLT2012}
Antoine Chambert-Loir and Yuri Tschinkel. Integral points of bounded height on partial equivariant compactifications of vector groups. Duke Math. J. 161 (2012), no.~15, 2799-2836
%
\bibitem{Ch11953}
Bernard Charles. Sur la permutabilit\'e de l'op\'erateur lin\'eare. Compt. Rend. Acad. Sci. 236, 1953
%
\bibitem{Ch21953}
Bernard Charles. Un crit\`ere de maximality pour les anneaux commutatifs d'operateurs lineares. Compt. Rend. Acad. Sci. 236, 1953
%
\bibitem{Ch1954}
Bernard Charles. Sur alg\`ebres des op\'erateurs lineare. J. Math. Pures Appl. 33 (1954), 81-145
%
\bibitem{CPPZ2021}
Ivan Cheltsov, Jihun Park, Yuri Prokhorov, and Mikhail Zaidenberg. Cylinders in Fano varieties. EMS Surv. Math. Sci. 8 (2021), no.~1-2, 39-105
%
\bibitem{Ch2017}
Daewoong Cheong. Equivariant compactifications of a nilpotent group by $G/P$. Transform. Groups 22 (2017), 163-186
%
\bibitem{Co1965} 
R. C. Courter. The dimension of maximal commutative subalgebras of $K_n$. Duke Math. J. 32 (1965), no.~2, 225-232
%
\bibitem{Cox1995}
David Cox. The homogeneous coordinate ring of a toric variety. J. Alg. Geom. 4 (1995), no.~1, 17-50
%
\bibitem{CLS2011}
David Cox, John Little, and Henry Schenck. \emph{Toric Varieties}. Graduate Studies in Math. 124, AMS, Providence, RI, 2011
%
\bibitem{De1970}
Michel Demazure. Sous-groupes algebriques de rang maximum du groupe de Cremona. Ann. Sci. Ecole Norm. Sup. 3 (1970), 507-588
%
\bibitem{DL2010}
Ulrich Derenthal and Daniel Loughran. Singular del Pezzo surfaces that are equivariant compactifications. J.~Math. Sci. 171 (2010), no.~6, 714-724
%
\bibitem{DL2015}
Ulrich Derenthal and Daniel Loughran. Equivariant compactifications of two-dimensional algebraic groups. Proc. Edinburgh Math. Soc. 58 (2015), 149-168
%
\bibitem{De2015}
Rostislav Devyatov. Unipotent commutative group actions on flag varieties and nilpotent multiplications. Transform. Groups 20 (2015), no.~1, 21-64
%
\bibitem{Dz2019}
Sergey Dzhunusov. Additive actions on complete toric surfaces. Internat. J. Algebra Comput. 31 (2021), no.~1, 19-35
%
\bibitem{Dz2020}
Sergey Dzhunusov. On uniqueness of additive actions on complete toric varieties. arXiv:2007.10113, 12~pages
%
\bibitem{EV2011}
Juan Elias and Giuseppe Valla. Isomorphism classes of certain Artinian Gorenstein algebras. Algebr. Represent. Theory 14 (2011), no.~3, 429-448
%
\bibitem{Fe2012}
Evgeny Feigin. $\mathbb{G}^M_a$ degeneration of flag varieties. Selecta Math. New Ser. 18 (2012), no.~3, 513-537
%
\bibitem{FF2013}
Evgeny Feigin and Michael Finkelberg. Degenerate flag varieties of type $A$: Frobenius splitting and BWB theorem. Math. Z. 275 (2013), 55-77
%
\bibitem{FZ}
Hubert Flenner and Mikhail Zaidenberg. On the uniqueness of $\CC^*$-actions on affine surfaces. In: Affine Algebraic Geometry, Contemp. Math. 369, Amer. Math. Soc., Providence, RI (2005), 97-111
%
\bibitem{For2011}
Franc Forstneri\v c. \emph{Stein manifolds and holomorphic mappings. The homotopy principle in complex analysis.} Ergebnisse der Mathematik und ihrer Grenzgebiete. 3. Folge. A Series of Modern Surveys in Mathematics 56. Springer, Heidelberg, 2011, 489 pages
%
\bibitem{Fr2006}
Gene Freudenburg. \emph{Algebraic theory of locally nilpotent derivations}. Encyclopaedia Math. Sciences 136, Springer-Verlag, Berlin, Heidelberg, 2017
%
\bibitem{Fr1896}
Georg Frobenius. \"Uber vertauschbare Matrizen. Sitzber. Berlin, Acad., 1896
%
\bibitem{FH2014}
Baohua Fu and Jun-Muk Hwang. Uniqueness of equivariant compactifications of $\CC^n$ by a Fano manifold of Picard number $1$. Math. Res. Lett. 21 (2014), no.~1, 121-125
%
\bibitem{FH2018}
Baohua Fu and Jun-Muk Hwang. Special birational transformations of type $(2; 1)$. J. Algebraic Geom. 27 (2018), no.~1, 55-89 
%
\bibitem{FH2017}
Baohua Fu and Jun-Muk Hwang. Euler-symmetric projective varieties. Algebr. Geom. 7 (2020), no.~3, 377-389 
%
\bibitem{FM2018}
Baohua Fu and Pedro Montero. Equivariant compactifications of vector groups with high index. C.R.~Math. 357 (2019), no.~5, 455-461
%
\bibitem{Fu1993}
William Fulton. \emph{Introduction to toric varieties}. Annals Math. Studies 131, Princeton University Press, Princeton, NJ, 1993
%
\bibitem{Fur1993}
Mikio Furushima. The complete classification of compactifications of $\CC^3$ which are projective manifolds with the second Betti number one. Math. Ann. 297 (1993), no.~4, 627-662 
%
\bibitem{Ge1961} 
Murray Gerstenhaber. On dominance and varieties of commuting matrices. Ann. of Math. (2) 73 (1961), no.~2, 324-348
%
\bibitem{GL1996}
Nicolae Gonciulea and Venkatraman Lakshmibai. Degenerations of flag and Schubert varieties to toric varieties. Transform. Groups 1 (1996), no.~3, 215-248
%
\bibitem{Gr1989}
Mikhail Gromov. Oka's principle for holomorphic sections of elliptic bundles. J. Amer. Math. Soc. 2 (1989), 851-897
%
\bibitem{GuSe2000}
Robert Guralnick and Bharath Sethuraman. Commuting pairs and triples of matrices and related varieties. Linear Algebra Appl. 310 (2000), 139-148
%
\bibitem{Ha1973}
David Handelman. Commutative nilpotent matrix subalgebras. Master Thesis, Faculty of Graduate Studies and Research, Department of Mathematics, McGill University, Montreal, Canada, 1973 
%
\bibitem{HaTs1999}
Brendan Hassett and Yuri Tschinkel. Geometry of equivariant compactifications of $\GG_a^n$. Int. Math. Res. Not. IMRN 1999 (1999), no.~22, 1211-1230
%
\bibitem{HaHu2020}
J\"urgen Hausen and Timo Hummel. The automorphism group of a rational projective $\KK^*$-surface. arXiv:2010.06414, 64 pages
%
\bibitem{Hir1954}
Friedrich Hirzebruch. Some problems on differentiable and complex manifolds. Ann. of Math. (2) 60 (1954), no.~2, 213-236
%
\bibitem{HM2018}
Zhizhong Huang and Pedro Montero. Fano threefolds as equivariant compactifications of the vector group. Michigan Math. J. 69 (2020), no.~2, 341-368
%
\bibitem{Hu1975}
James Humphreys. \emph{Linear Algebraic Groups}. Graduate Texts Math. 21, Springer Verlag, New York, 1975
%
\bibitem{HWZ1990}
V\'eronique Hussin, Pavel Winternitz, and Hans Zassenhaus. Maximal abelian subalgebras of complex orthogonal Lie algebras. Linear Algebra Appl. 141 (1990), 183-220
%
\bibitem{Hw2001}
Jun-Muk Hwang. Geometry of minimal rational curves on Fano manifolds. ICTP Lect. Notes 6 (2001), 335-393
%
\bibitem{Iar1977}
Anthony Iarrobino. Punctual Hilbert schemes. Mem. Amer. Math. Soc. 10 (1977), no.~188, 112 pages
%
\bibitem{Iar1987}
Anthony Iarrobino. Hilbert scheme of points: overview of last ten years. Algebraic geometry, Bowdoin, 1985, 297-320, Proc. Sympos. Pure Math., 46, Part 2, Amer. Math. Soc., Providence, RI, 1987
%
\bibitem{Iar1994}
Anthony Iarrobino. Associated graded algebra of a Gorenstein Artin algebra. Mem. Amer. Math. Soc. 107 (1994), no.~514, 115 pages
%
\bibitem{IaKa1999}
Anthony Iarrobino and Vassil Kanev. \emph{Power Sums, Gorenstein Algebras, and Determinantal Loci}. Lect. Notes in Math. 1721, Springer-Verlag, Berlin, Heidelberg, 1999
%
\bibitem{Ja1944}
Nathan Jacobson. Schur's theorems on commutative matrices. Bull. Amer. Math. Soc. 50 (1944), no.~6, 431-436
%
\bibitem{Jel2017}
Joachim Jelisiejew. Classifying local Artinian Gorenstein algebras. Collect. Math. 68 (2017), no.~1, 101-127
%
\bibitem{KKLV}
Friedrich Knop, Hanspeter Kraft, Domingo Luna, and Therry Vust. Local properties of algebraic group actions. Algebraische Transformationsgruppen und Invariantentheorie, 63-75, DMV Sem.~13, Birkh\"auser, Basel (1989)
%
\bibitem{KnLa1984}
Friedrich Knop and Hanspeter Lange. Commutative algebraic groups and intersections of quadrics. Math. Ann. 267 (1984), no.~4, 555-571
%
\bibitem{La1985}
Thomas Laffey. The minimal dimension of maximal commutative subalgebras of full matrix algebras. Linear Algebra Appl. 71 (1985), 199-212
%
\bibitem{LaLa1991}
Thomas Laffey and Susan Lazarus. Two-generated commutative matrix subalgebras. Linear Algebra Appl. 147 (1991), 249-273
%
\bibitem{Lak1995}
Venkatraman Lakshmibai. Degenerations of flag varieties to toric varieties. C. R. Acad. Sci. Paris 321 (1995), 1229-1234
%
\bibitem{Li2010}
Alvaro Liendo. Affine $\mathbb{T}$-varieties of complexity one and locally nilpotent derivations. Transform. Groups 15 (2010), no.~2, 389-425
%
\bibitem{LP2019}
Alvaro Liendo and Charlie Petitjean. Uniformly rational varieties with torus action. Transform. Groups 24 (2019), no.~1, 149-153
%
\bibitem{Liu2022} 
Yingqi Liu. Additive actions on hyperquadrics of corank two. Electron. Res. Arch. 30 (2022), no.~1, 1-34 
%
\bibitem{Lo2017}
Konstantin Loginov. Hilbert-Samuel sequences of homogeneous finite type. J. Pure Appl. Algebra 221 (2017), 821-831
%
\bibitem{LV1983}
Domingo Luna and Thierry Vust. Plongements d'espaces homog\'enes. Comment. Math. Helv. 58 (1983), no.~2, 186-245
%
\bibitem{Lu2022} 
Zhijun Luo. Euler-symmetric complete intersections in projective space. arXiv:2203.16068, 16~pages 
%
\bibitem{Ma1916}
Francis Sowerby Macaulay. \emph{The algebraic theory of modular systems}. Cambridge Univ. Press, Cambridge, U. K., 1916; reprinted with a foreword by P. Roberts, Cambridge Univ. Press, London and New York, 1994
%
\bibitem{MM1964}
Hideyuki Matsumura and Paul Monsky. On the automorphisms of hypersurfaces. J. Math. Kyoto Univ.~3 (1964), no.~3, 347-361
%
\bibitem{Ma1980}
Guerino Mazzola. Generic finite schemes and Hochschild cocycles. Comment. Math. Helv. 55 (1980), no.~1, 267-293
%
\bibitem{Nag2021}
Masaru Nagaoka. $\mathbb{G}_a^3$-Structures on del Pezzo Fibrations. Michigan Math. J. (2021), https://doi.org/10.1307/mmj/20195835
%
\bibitem{Nak1999}
Hiraku Nakajima. Lectures on Hilbert schemes of points on surfaces. University Lecture Series, 18. American Mathematical Society, Providence, RI, 1999, 132 pp
%
\bibitem{Od1988}
Tadao Oda. \emph{Convex bodies and algebraic geometry: an introduction to toric varieties}. A Series of Modern Surveys in Math. 15, Springer Verlag, Berlin, 1988
%
\bibitem{On1962}
Arkady Onishchik. Inclusion relations between transitive compact transformation groups. Tr. Mosk. Mat. O.-va 11 (1962), 199-242 (Russian)
%
\bibitem{Pey2002} 
Emmanuel Peyre. Points de hauteur born\'ee et g\'eom\'etrie des vari\'et\'es (d'apr\'es Y.~Manin et al.). Ast\'erisque 282: Exp. No. 891, 323-344, 2002. S\'eminaire Bourbaki, Vol. 2000/2001
%
\bibitem{Poo2008}
Bjorn Poonen. Isomorphism types of commutative algebras of finite rank over an algebraically closed field. Computational arithmetic geometry, 111-120, Contemp. Math., 463, Amer. Math. Soc., Providence, RI, 2008
%
\bibitem{PV1994}
Vladimir Popov and Ernest Vinberg. \emph{Invariant Theory}. In: Algebraic Geometry IV, A.N.~Parshin, I.R.~Shafarevich (Editors), Berlin, Heidelberg, New York: Springer-Verlag, 1994
%
\bibitem{PZ2018}
Yuri Prokhorov and Mikhail Zaidenberg. Fano-Mukai fourfolds of genus~$10$ as compactifications of $\CC^4$. Eur. J. Math. 4 (2018), no.~3, 1197-1263
%
\bibitem{RRS1992}
Roger Richardson, Gerhard R\"ohrle, and Robert Steinberg. Parabolic subgroups with Abelian unipotent radical. Invent. Math. 110 (1992), 649-671
%
\bibitem{Sc1905}
Issai Schur. Zur Theorie der vertauschb\"aren Matrizen. J. Reine Angew. Math. 130 (1905), 66-76
%
\bibitem{Shaf2021}
Anton Shafarevich. Additive actions on toric projective hypersurfaces. Results Math. 76 (2021), no.~3, art.~145
%
\bibitem{Sha2021}
Anton Shafarevich. Euler-symmetric projective toric varieties and additive actions. arXiv:2111.09231, 11~pages 
%
\bibitem{Sha2020}
Kirill Shakhmatov. Smooth non-projective equivariant completions of affine spaces. Math. Notes 109 (2021), no.~6, 884-895
%
\bibitem{ShT2016}
Joseph Shalika and Yuri Tschinkel. Height zeta functions of equivariant compactifications of unipotent groups. Comm. Pure Appl. Math. 69 (2016), no.~4, 693-733
%
\bibitem{Sh2009}
Elena Sharoiko. Hassett-Tschinkel correspondence and automorphisms of a quadric. Sb. Math. 200 (2009), no.~11, 1715-1729
%
\bibitem{So2003} 
Young-Kwon Song. A construction of maximal commutative subalgebra of matrix algebras. J. Korean Math. Soc. 40 (2003), no.~2, 241-250
%
\bibitem{SuTy1968}
Dmitri Suprunenko and Regina Tyshkevich. \emph{Commutative Matrices}. Academic Press, New York, 1968
%
\bibitem{TT2012}
Sho Tanimoto and Yuri Tschinkel. Height zeta functions of equivariant compactifications of semi-direct products of algebraic groups. In: Zeta Functions in Algebra and Geometry, Contemp. Math., 566, Amer. Math. Soc., Providence, RI, 2012, 119-157
%
\bibitem{Ti2011}
Dmitri Timashev. \emph{Homogeneous spaces and equivariant embeddings}. Encyclopaedia Math. Sciences 138, Springer-Verlag, Berlin, Heidelberg, 2011
%
\bibitem{Ti1962} 
Jacques Tits. Espaces homog\'enes complexes compacts. Comm. Math. Helv. 37 (1962), 111-120
%
\bibitem{Wa1990}
Adrian Wadsworth. The algebra generated by two commuting matrices. Linear Multilinear Alg. 27 (1990), 159-162
\end{thebibliography}
\end{document}